\newcommand{\bC}{\mathbf{C}}
\newcommand{\bN}{\mathbf{N}}
\newcommand{\bR}{\mathbf{R}}
\newcommand{\bZ}{\mathbf{Z}}
\newcommand{\bK}{\mathbf{K}}
\newcommand{\ord}{\mbox{\rm ord }}
\newcommand{\ini}{\mbox{\rm in}}
\newlength{\szer}
\newcommand{\Teiss}[2]{%
\settowidth{\szer}{$\displaystyle\frac{#1}{#2}$}%
\setlength{\szer}{0.5\szer}%
\left\{\hspace{\szer}%
\raisebox{0.15ex}{\makebox[0pt]{$\displaystyle\frac{#1}{\phantom{#2}}$}}%
\raisebox{-0.15ex}{\makebox[0pt]{$\displaystyle\frac{\phantom{#1}}{#2}$}}%
\hspace{\szer}\right\}%
}
\newtheorem{defi}{Definition}[section]
\newtheorem{nota}[defi]{Remark}
\newtheorem{ejemplo}[defi]{Example}
\newtheorem{ejemplos}[defi]{Examples}
\newtheorem{teorema}[defi]{Theorem}
\newtheorem{prop}[defi]{Proposition}
\newtheorem{lema}[defi]{Lemma}
\newtheorem{coro}[defi]{Corollary}
\newtheorem{propiedad}[defi]{Property}
\newenvironment{proof}[1][Proof]{\textbf{#1.} }{\
\rule{0.5em}{0.5em}}
\begin{document}
\title{An approach to plane algebroid  branches
\footnotetext{
     \noindent   \begin{minipage}[t]{4in}
       {\small
       2000 {\it Mathematics Subject Classification:\/} Primary 32S55;
       Secondary 14H20.\\
       Key words and phrases: plane algebroid curve, branch, semigroup associated with a branch, key polynomials, logarithmic distance, Abhyankar-Moh theory.\\
       The first-named author was partially supported by the Spanish Project
       PNMTM 2007-64007.}
       \end{minipage}}}

\author{Evelia R.\ Garc\'{\i}a Barroso and Arkadiusz P\l oski}

\maketitle

\begin{abstract}
\noindent Our aim is to reprove the basic results of the theory of branches of plane
algebraic curves over algebraically closed fields of arbitrary characteristic. We do not use the Hamburger-Noether expansions. Our basic tool is the logarithmic distance on the set of branches satisfying the strong triangle inequality which permits to make calculations directly on the equations of branches.
\end{abstract}

\hfill{\it What can be explained on fewer principles}

\hfill{\it is explained needlessly by more.}

\hfill{\it William of Ockham (1280-1349)\footnote{Quoted after
Samuel E. Stumpf. Socrates to Sartre. A History of Philosophy. 
Mc \\Graw-Hill, Inc. 1993.}}

\section*{Introduction}
\label{intro}
\noindent We present a new approach to the theory of  plane algebroid branches over an algebraically closed field of arbitrary characteristic. We prove the structure theorem for the semigroup of plane branches, the fundamental theorems of the Abhyankar-Moh theory, the intersection formula and the existence of a branch with given semigroup. These results are well-known (at least in characteristic $0$) but our proofs are new. In constrast to classical treatments of the subject given by Ancochea (1947), Lejeune-Jalabert (1973), Moh (1973), Angerm\"uller (1977), Russel (1980) and Campillo (1980)  we do not use the quadratic transformations. To avoid the Hamburger-Noether expansions we base our approach on the direct construction of key polynomials (the notion introduced by MacLane (1936)) given by Seidenberg in his PhD thesis on the valuation ideals in polynomial rings. As far as we know the Seidenberg article of 1945 is the first publication in which appears the God-given inequality
$n_k\overline{\beta_k}<\overline{\beta_{k+1}}$ (we use the notation introduced by Zariski).

\medskip

\noindent In all this paper we use the strong triangle inequality (STI) proved by the second author in 1985. It allows to give simple proofs of all basic properties of key polynomials in any characteristic.
 Using the STI we prove the Abhyankar-Moh irreducibility criterion in arbitrary characteristic, the description of branches with given semigroup and the Merle-Granja factori\-zation theorem.

\medskip

\noindent A plane algebroid branch may be given either by an irreducible
equation $f(x,y)=0$ or by a parametrization $x=\phi(t)$, $y=\psi(t)$. The
treatments of the subject which use the Hambuger-Noether expansions 
(or Puiseux' expansion in the case of characteristic 0) are based on the
interplay between the equations and the parametrizations of branches. 
In
this paper after having proved the STI we make calculations on the equations of branches without 
recourse to their parametrizations.  In particular we prove a new formula for the intersection multiplicity of two
branches, which does not involve any reference to their parametrizations.
In this
way we get shorter and conceptually simpler proofs of basic theorems than
in the classical approach to plane algebroid branches.

\medskip

\noindent The contents of this article are

\begin{enumerate}
\item Preliminaries
\begin{enumerate}
\item [1.1] Arithmetical lemmas and semigroups of naturals
\item [1.2] Plane algebroid curves
\end{enumerate}
\item The strong triangle inequality
\item The semigroup of a plane algebroid branch
\item A proof of the Semigroup Theorem
\item Key polynomials
\item The Abhyankar-Moh theory
\item A formula for the intersection multiplicity of two branches
\item The Abhyankar-Moh irreducibility criterion
\item Characterization of the semigroups associated with branches
\item Description of branches with given semigroup
\item Merle-Granja's Factorization Theorem
\end{enumerate}

\medskip
\noindent The following notation is used in the sequel. The set of all integers (resp. non-negative integers) is denoted by 
$\bZ$ (resp. $\bN$). We write $\gcd S$ for the greatest common divisor of a nonempty subset $S\subset \bN$.  Conventions about calculating with $+\infty$ are usual. In all this note $\bK$ is an algebraically closed field of arbitrary characteristic.

\section{Preliminaries}

\noindent In this section we fix our notations and recall some useful notions and results.

\subsection{Arithmetical lemmas and semigroups of naturals}
\label{arithmetical-lemmas}
\noindent We recall  here some properties of semigroups of natural numbers
 that we will use in Section \ref{semigroup-section} of this paper.

\begin{lema}
\label{Bezout}
Let $v_0,\ldots,v_k$ be a sequence of positive integers. Set $d_i=\gcd (v_0,\ldots,v_i)$ for $i\in\{0,1,\ldots,k\}$ and $n_i=\frac{d_{i-1}}{d_i}$ for $i\in\{1,\ldots,k\}$. Then for every $a\in \bZ d_k$ we have B\'ezout's relation:
\[a=a_0v_0+a_1v_1+\cdots+a_kv_k,\]

\noindent where $a_0\in \bZ$ and $0\leq a_i<n_i$ for $i\in\{1,\ldots,k\}$. The sequence $(a_0,\ldots,a_k)$ is unique.
\end{lema}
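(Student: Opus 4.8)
The plan is to argue by induction on $k$, the guiding idea being to peel off the last coefficient $a_k$ by working modulo $\bZ d_{k-1}$ and then to invoke the statement for the shorter sequence $v_0,\ldots,v_{k-1}$. First I would dispose of the base case $k=0$: here $d_0=v_0$ and $\bZ d_0=\bZ v_0$, so every $a\in\bZ d_0$ equals $a_0v_0$ for a unique $a_0\in\bZ$, and there are no coefficients $a_i$ to constrain. Before the inductive step I would record the elementary facts that, since $d_i=\gcd(d_{i-1},v_i)$, each $d_i$ divides $d_{i-1}$ and hence $n_i=d_{i-1}/d_i$ is a positive integer, and that $d_{k-1}$ divides each of $v_0,\ldots,v_{k-1}$, so that $\bZ d_{k-1}$ contains $v_0,\ldots,v_{k-1}$.

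The heart of the matter is an observation about the quotient group $G=\bZ d_k/\bZ d_{k-1}$. Because $d_{k-1}=n_k d_k$, this group is cyclic of order $n_k$, and the assignment $d_k m+\bZ d_{k-1}\mapsto m\bmod n_k$ identifies $G$ with $\bZ/n_k\bZ$. Under this identification the class $\overline{v_k}$ of $v_k$ corresponds to $(v_k/d_k)\bmod n_k$; and since dividing $d_k=\gcd(d_{k-1},v_k)$ by $d_k$ gives $\gcd(v_k/d_k,\,n_k)=1$, the element $\overline{v_k}$ is a generator of $G$. This is the key step and, I expect, the only delicate point: everything rests on $\overline{v_k}$ generating the cyclic group of order $n_k$, which is precisely where the defining relation $d_k=\gcd(d_{k-1},v_k)$ is used.

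Granting this, existence follows at once. For $a\in\bZ d_k$ its class in $G$ is a multiple of the generator, so there is a unique $a_k\in\{0,1,\ldots,n_k-1\}$ with $\overline{a}=a_k\overline{v_k}$ in $G$, which means $a-a_k v_k\in\bZ d_{k-1}$. Applying the induction hypothesis to $v_0,\ldots,v_{k-1}$ (whose associated data $d_i,n_i$ for $i\le k-1$ are unchanged) yields $a-a_k v_k=a_0v_0+\cdots+a_{k-1}v_{k-1}$ with $a_0\in\bZ$ and $0\le a_i<n_i$, hence the desired representation of $a$. For uniqueness I would take two admissible representations and reduce modulo $\bZ d_{k-1}$: since $v_0,\ldots,v_{k-1}\in\bZ d_{k-1}$, both collapse in $G$ to $a_k\overline{v_k}=a_k'\overline{v_k}$, and as $\overline{v_k}$ has order $n_k$ while $0\le a_k,a_k'<n_k$ this forces $a_k=a_k'$; subtracting $a_k v_k$ and applying the inductive uniqueness then gives $a_i=a_i'$ for all $i$, completing the argument.
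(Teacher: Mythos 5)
Your proof is correct and follows essentially the same route as the paper's: induction on $k$, peeling off the last coefficient $a_k$ by working modulo $d_{k-1}$ and using the coprimality of $v_k/d_k$ and $n_k$, then invoking the inductive hypothesis for $v_0,\ldots,v_{k-1}$. The only difference is presentational --- you phrase the key step as the statement that $\overline{v_k}$ generates the cyclic group $\bZ d_k/\bZ d_{k-1}$ of order $n_k$, whereas the paper carries out the equivalent computation explicitly via the identity $(d_k)\bZ=(d_{k-1},v_k)\bZ$ followed by Euclidean division of the coefficient of $v_k$ by $n_k$.
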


\noindent \begin{proof}
\noindent \underline{Existence:} if $k=0$ the lemma is obvious. Suppose that $k>0$ and that the lemma is true for $k-1$. Since $(d_k)\bZ=(d_{k-1},v_k)\bZ$ we can write for every $a\in (d_k)\bZ$: $a=a'd_{k-1}+a^{''}v_k$ with $a',a^{''}\in \bZ$. For any integer $l$ we have $a=(a'-l v_k)d_{k-1}+(a^{''}+l d_{k-1})v_k$. Thus we can take $a^{''}\geq 0$. Dividing $a^{''}$ by $\frac{d_{k-1}}{d_k}$ we get $a^{''}=\left(\frac{d_{k-1}}{d_k}\right)a^{'''}+a_k$ with $0\leq a_k<\frac{d_{k-1}}{d_k}$. Therefore
\[a=a'd_{k-1}+\left(\frac{d_{k-1}}{d_k}a^{'''}+a_k\right)v_k=\left(a'+
\frac{v_{k}}{d_k}a^{'''}\right)d_{k-1}+a_kv_k.\]

\noindent By induction hypothesis we get $\left(a'+
\frac{v_{k}}{d_k}a^{'''}\right)d_{k-1}=a_0v_0+\cdots+a_{k-1}v_{k-1}$ with $0\leq a_i < \frac{d_{i-1}}{d_i}$ for $0<i\leq k-1$ and we are done.

\medskip

\noindent \underline{Unicity:} Suppose that $a_0v_0+\cdots+a_kv_k=a'_0v_0+\cdots+a'_kv_k$ with
$0\leq a_i,a'_i<\frac{d_{k-1}}{d_k}$ for $i>0$. Let $a_k\leq a'_k$. Then $(a'_k-a_k)v_k\equiv 0$ mod$(v_0,\ldots, v_{k-1})\bZ$ and consequently $(a'_k-a_k)v_k\equiv 0$ mod$(d_{k-1})\bZ$, which implies
$(a'_k-a_k)\left(\frac{v_{k}}{d_k}\right)\equiv 0$ mod $\frac{d_{k-1}}{d_k}\bZ$.

\medskip

\noindent Therefore $a'_k-a_k\equiv 0$ mod $\frac{d_{k-1}}{d_k}\bZ$ and $a'_k-a_k=0$ since
$0\leq a'_k-a_k<\frac{d_{k-1}}{d_k}$. Unicity follows by induction.
\end{proof}

\begin{lema}
\label{aritmetico}
With the above notations assume that $n_{i-1}v_{i-1}<v_i$ for $i\in \{2,\ldots, k\}$. Then

\begin{enumerate}
\item[(i)] $n_kv_k\in \mathbf N
v_0+\cdots+\mathbf N v_{k-1}$,
\item [(ii)] if $a\in \mathbf N
v_0+\cdots+\mathbf N v_{k}$ then there are integers $a_0,\ldots,a_k$ such that $a=a_0v_0+a_1v_1+ \cdots+a_kv_k$, where $0\leq a_0$ and $0\leq a_i<n_i$ for  $i\in \{1,\ldots,k\}$.
\end{enumerate}
\end{lema}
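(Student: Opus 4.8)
The plan is to derive (ii) from (i) applied to every prefix of the sequence, and to reduce (i) to B\'ezout's relation (Lemma \ref{Bezout}) together with one elementary growth estimate. First I would isolate the estimate that the hypothesis $n_{i-1}v_{i-1}<v_i$ is designed to produce: by induction on $j$ I claim
\[\sum_{i=1}^{j}(n_i-1)v_i<v_{j+1}\qquad(1\le j\le k-1).\]
The base case $j=1$ reads $(n_1-1)v_1<n_1v_1<v_2$, and the inductive step is the one-line telescoping
\[\sum_{i=1}^{j}(n_i-1)v_i=\sum_{i=1}^{j-1}(n_i-1)v_i+(n_j-1)v_j<v_j+(n_j-1)v_j=n_jv_j<v_{j+1},\]
using the induction hypothesis and then the standing inequality $n_jv_j<v_{j+1}$.

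For (i) I would first observe that $n_kv_k=\frac{d_{k-1}}{d_k}v_k=d_{k-1}\cdot\frac{v_k}{d_k}$ is a positive multiple of $d_{k-1}$, since $d_k=\gcd(d_{k-1},v_k)$ divides $v_k$; hence $n_kv_k\in\bZ d_{k-1}$. Applying Lemma \ref{Bezout} to the truncated sequence $v_0,\dots,v_{k-1}$ (whose gcd's and ratios are again $d_0,\dots,d_{k-1}$ and $n_1,\dots,n_{k-1}$) yields $n_kv_k=b_0v_0+\cdots+b_{k-1}v_{k-1}$ with $b_0\in\bZ$ and $0\le b_i<n_i$ for $1\le i\le k-1$. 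Every coefficient except possibly $b_0$ is already non-negative, so it remains only to control the sign of $b_0$. Using $b_i\le n_i-1$ and the estimate above with $j=k-1$,
\[\sum_{i=1}^{k-1}b_iv_i\le\sum_{i=1}^{k-1}(n_i-1)v_i<v_k\le n_kv_k,\]
whence $b_0v_0=n_kv_k-\sum_{i=1}^{k-1}b_iv_i>0$ and in particular $b_0\ge 0$. This exhibits $n_kv_k$ in $\bN v_0+\cdots+\bN v_{k-1}$, proving (i); the case $k=1$ is covered by the same computation, the displayed sum being empty.

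For (ii) the point is that (i) holds verbatim for each prefix $v_0,\dots,v_i$, giving $n_iv_i\in\bN v_0+\cdots+\bN v_{i-1}$ for every $i\in\{1,\dots,k\}$. Starting from any expression $a=c_0v_0+\cdots+c_kv_k$ with all $c_i\ge 0$, I would sweep $i$ from $k$ down to $1$: write $c_i=q_in_i+r_i$ with $0\le r_i<n_i$ and replace the $q_i$ copies of $n_iv_i$ by their non-negative expansions in $v_0,\dots,v_{i-1}$ coming from (i). Each such replacement keeps all coefficients non-negative and touches only the indices below $i$, so it leaves the already-normalized coefficients $c_{i+1},\dots,c_k$ intact; after the full sweep one has $0\le c_i<n_i$ for $i\ge1$ while $c_0\ge0$ stays unconstrained, as required.

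I expect the only genuine obstacle to be the sign of $b_0$ in (i): the B\'ezout representation and the reduction in (ii) are bookkeeping, but non-negativity of the $v_0$-coefficient is exactly what fails without the hypothesis $n_{i-1}v_{i-1}<v_i$, and it is precisely the telescoping estimate that converts that hypothesis into the bound $\sum_{i=1}^{k-1}(n_i-1)v_i<n_kv_k$ needed to close the argument.
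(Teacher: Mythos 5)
Your proof is correct and takes essentially the same approach as the paper: part (i) is B\'ezout's relation (Lemma \ref{Bezout}) applied to the truncated sequence $v_0,\ldots,v_{k-1}$ plus the growth estimate $\sum_{i=1}^{k-1}(n_i-1)v_i<v_k$ forcing the $v_0$-coefficient to be positive, and part (ii) is Euclidean division of the coefficients by $n_i$ with the excess multiples of $n_iv_i$ absorbed into lower indices via (i). The only cosmetic differences are that you establish the estimate by a separate induction where the paper telescopes it inline, and your downward sweep in (ii) is the paper's induction on $k$ written iteratively.
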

\noindent\begin{proof}
\begin{enumerate}
\item Since $n_kv_k=d_{k-1}\frac{v_k}{d_k}\equiv 0$ (mod $d_{k-1}\bZ$), by Lemma \ref{Bezout}  we can write B\'ezout's identity
$$n_kv_k=a_0v_0+a_1v_1+\cdots+a_{k-1}v_{k-1},$$

\noindent where $a_0\in \bZ$ and $0\leq a_i<n_i$ for $i\in \{1,\ldots,k-1\}$ .

\noindent Therefore we get

\begin{eqnarray*}
a_0v_0&=&n_kv_k-a_1v_1-\cdots-a_{k-1}v_{k-1}\\
&\geq& n_kv_k-(n_1-1)v_1-\cdots-(n_{k-1}-1)v_{k-1}\\
&=& n_kv_k-[(n_1v_1-v_1)+\cdots+(n_{k-1}v_{k-1}-v_{k-1})]\\
&>&n_kv_k-[(v_2-v_1)+\cdots+(v_k-v_{k-1})]\\
&=&n_kv_k-v_k+v_1>0
\end{eqnarray*}

\noindent which proves $(i)$.

\item We have to check that $a\in  \mathbf N
v_0+\cdots+\mathbf N v_{k}$ implies $a_0\geq 0$, where $a_0\in \bZ$ is defined by B\'ezout's identity.
If $k=0$ it is obvious. Suppose that $k>0$ and that the property is true for $k-1$. By assumption we have
$a=q_0v_0+\cdots+q_kv_k$ with $q_i\geq 0$ for $i\in \{0,\ldots,k\}$. By the Euclidean division of $q_k$ by $n_k$ we get
$q_k=q'_kn_k+a_k$ with $0\leq a_k<n_k$. Thus $a=q_0v_0+\cdots+q_{k-1}v_{k-1}+q'_kn_kv_k+a_kv_k=a'+a_kv_k$, where
$0\leq a_k<n_k$ and $a' \in \mathbf N
v_0+\cdots+\mathbf N v_{k-1}$ by Property $(i)$. Use the induction hypothesis.
 \end{enumerate}  \end{proof}

\begin{nota}
In fact we have proved the following property, stronger that the first part of Lemma \ref{aritmetico}: if
$n_kv_k=a_0v_0+a_1v_1+\cdots+a_{k-1}v_{k-1}$ is B\'ezout's relation then $a_0>0$.
\end{nota}

\begin{nota}
\label{n}
Obviously $n_k\geq 1$. From the first part of  Lemma \ref{aritmetico} it follows that $n_k>1$ if  (and only if) $\bN v_0+\cdots +\bN v_{k-1}\neq \bN v_0+\cdots +\bN v_{k}$.
\end{nota}

\medskip

\noindent Let $n>0$ be an integer. A sequence of positive integers $(v_0,\ldots,v_h)$ is said to be a {\em Seidenberg n-characteristic sequence} or {\em n-characteristic sequence} if $v_0=n$ and it satisfies the following two axioms
\begin{enumerate}
\item Set $d_i=\gcd (v_0,\ldots,v_i)$ for $0\leq i \leq  h$ and $n_i=\frac{d_{i-1}}{d_i}$ for $1\leq i\leq h$. Then $d_h=1$ and $n_i>1$ for $1\leq i \leq  h$.

\item $n_{i-1}v_{i-1}<v_i$ for $2\leq i\leq h$.
\end{enumerate}

\noindent Note that condition $(2)$ implies that the sequence $(v_1,\ldots,v_h)$ is strictly increasing. If $n> 1$ then $h\geq 1$. If $h=1$ then the sequence $(v_0,v_1)$ is a Seidenberg  $n$-characteristic sequence if and only if $v_0=n$ and $\gcd (v_0,v_1)=1$. There is exactly one $1$-sequence which is $(1)$. Note also that $2^h\leq n$.

\medskip

\noindent If $(v_0,\ldots,v_h)$ is an $n$-characteristic sequence then for any $k\in \{1,\ldots,h\}$ the sequence $\left(\frac{v_0}{d_k},\ldots, \frac{v_k}{d_k}\right)$ is an $\frac{n}{d_k}$-characteristic sequence. Its associated sequences are $\left(\frac{d_0}{d_k},\ldots, \frac{d_k}{d_k}\right)$ and $(n_1,\ldots, n_k)$.

\medskip

\noindent We say that a subset $G$ of $\bN$ is a {\em semigroup} if it contains $0$ and if it is closed under addition.

\medskip

\noindent Let $G$ be a nonzero semigroup and let $n\in G$, $n>0$. Then there exists (cf. \cite{Hefez}, Chapter 6, Proposition 6.1) a unique sequence $v_0,\ldots,v_h$ such that $v_0=n$, $v_k=\min (G \backslash v_0\bN+\cdots+v_{k-1}\bN)$  for $k\in\{1,\ldots,h\}$ and $G=v_0\bN+\cdots+v_h\bN$. We call the sequence $(v_0,\ldots,v_h)$ the $n$-{\em minimal system of generators of } $G$. If $n=\min(G\backslash\{0\})$ then we say that $(v_0,\ldots, v_h)$ is the {\em minimal set} of generators of $G$. We will study semigroups generated by  $n$-characteristic sequences.

\begin{prop}
\label{prop-min}
Let $G=v_0\bN+\cdots+v_h\bN$ where $(v_0,\ldots,v_h)$ is an $n$-characteristic sequence. Then
\begin{enumerate}
\item The sequence $(v_0,\ldots,v_h)$ is the $n$-minimal system of generators of $G$.
\item $\min (G\backslash\{0\})=\min(v_0,v_1)$.

\item The minimal system of generators of $G$ is $(v_0,v_1,\ldots,v_h)$ if $v_0<v_1$, $(v_1,v_0,\ldots,v_h)$ if $v_1<v_0$  and
$v_0\not\equiv 0$ (mod $v_1$) and $(v_1,v_2,\ldots,v_h)$ if $v_0\equiv 0$ (mod $v_1$). Moreover, the minimal system of generators of $G$ is a
$\min (G\backslash \{0\})$-characteristic sequence. 

\item Let $c=\sum_{k=1}^h(n_k-1)v_k-v_0+1$. Then for every $a,b\in \bZ$: if $a+b=c-1$ then exactly one element of the pair $(a,b)$ belongs to $G$. Consequently $c$ is the smallest element of $G$  such that all integers bigger than or equal to it are in $G$.
\item $c$ is an even number and $\sharp (\bN\backslash G)=\frac{c}{2}$.
\end{enumerate}
\end{prop}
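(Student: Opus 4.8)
The plan is to base the whole proposition on one clean characterization of membership in $G$, extracted from Lemmas \ref{Bezout} and \ref{aritmetico}. Since $d_h=1$, Lemma \ref{Bezout} (taken with $k=h$) says that every $a\in\bZ$ has a \emph{unique} representation
\[a=a_0v_0+a_1v_1+\cdots+a_hv_h,\qquad a_0\in\bZ,\ 0\le a_i<n_i\ (1\le i\le h),\]
which I will call the normal form of $a$. Lemma \ref{aritmetico}(ii) shows that $a\in G$ implies $a_0\ge 0$ in this representation, and the converse is trivial since then all coefficients are non-negative; hence $a\in G\iff a_0\ge 0$. This equivalence is the engine for parts (1), (4) and (5).

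For (1) I would first note that $v_k\notin v_0\bN+\cdots+v_{k-1}\bN$, because every element of the latter is divisible by $d_{k-1}$ while $d_k=\gcd(d_{k-1},v_k)<d_{k-1}$ forces $d_{k-1}\nmid v_k$; conversely, if $a\in G$ with $a<v_k$, writing $a$ in normal form and observing that a nonzero coefficient $a_j$ with $j\ge k$ would give $a\ge v_j\ge v_k$ (the sequence $v_1<\cdots<v_h$ is increasing), I conclude $a\in v_0\bN+\cdots+v_{k-1}\bN$. Thus $v_k=\min\bigl(G\setminus(v_0\bN+\cdots+v_{k-1}\bN)\bigr)$, and uniqueness of the $n$-minimal system gives (1). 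Part (2) is immediate: each generator is $\ge\min(v_0,v_1)$, and $\min(v_0,v_1)\in G$. For (3) I would treat the three cases by exhibiting the candidate sequence, checking it is a $\min(G\setminus\{0\})$-characteristic sequence generating $G$, and then invoking (1). The only delicate point is the growth axiom after swapping $v_0$ and $v_1$: the new leading ratio is $v_1/d_1$, and the identity $(v_1/d_1)v_0=(v_0/d_1)v_1=n_1v_1<v_2$ keeps the axiom valid, while the condition $n_1>1$ for the swapped sequence is precisely $v_1\nmid v_0$; when $v_1\mid v_0$ the generator $v_0$ is redundant and dropping it yields $(v_1,\ldots,v_h)$, whose axioms are inherited from the original via $d_1=v_1$.

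Part (4) is the heart of the matter. The normal form of $c-1=\sum_{k=1}^h(n_k-1)v_k-v_0$ is visibly $a_0=-1$ and $a_i=n_i-1$, which is legal since $0\le n_i-1<n_i$. Given $a+b=c-1$, I write $a$ in normal form and subtract term by term: then $b=c-1-a$ has $v_i$-coefficient $n_i-1-a_i\in\{0,\ldots,n_i-1\}$ and $v_0$-coefficient $b_0=-1-a_0$, so this is the normal form of $b$. By the characterization, $a\in G\iff a_0\ge 0$ and $b\in G\iff b_0\ge 0\iff a_0<0$, whence exactly one of $a,b$ lies in $G$. Taking $b=0$ shows $c-1\notin G$; for $a\ge c$ the partner $b=c-1-a<0$ lies outside $\bN$, forcing $a\in G$, which is the conductor statement.

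Finally, for (5): applying (4) to $a=b=(c-1)/2$ would demand that exactly one of two equal integers lie in $G$, which is absurd, so $c-1$ is odd and $c$ is even; then the fixed-point-free involution $a\mapsto c-1-a$ splits $\{0,\ldots,c-1\}$ into $c/2$ pairs, each contributing exactly one element to $\bN\setminus G$, and since all integers $\ge c$ lie in $G$ we get $\sharp(\bN\setminus G)=c/2$. The step I expect to be the main obstacle is (4): pinning down the normal-form criterion for $G$-membership precisely and verifying that the reflection $a\mapsto c-1-a$ respects the remainder constraints $0\le a_i<n_i$, since once this is in place both the conductor claim and the parity count in (5) follow formally.
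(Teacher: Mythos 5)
Your proposal is correct and for parts (4) and (5) follows essentially the same route as the paper: the unique B\'ezout normal form from Lemmas \ref{Bezout} and \ref{aritmetico} (membership in $G$ being equivalent to $a_0\ge 0$) together with the reflection $a\mapsto c-1-a$, which is exactly the paper's argument. The paper leaves (1)--(3) to the reader, and your divisibility-by-$d_{k-1}$ argument for the minimality of the generators and your case analysis for the swapped sequence are sound ways to fill that in.
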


\noindent \begin{proof}
We leave to the reader the proof of the first three claims. To prove the fourth claim (see \cite{Sathaye-Stenerson} ) we take two integers $a,b\in \bZ$ such that $a+b=c-1$. Let us write B\'ezout's relation $a=a_0v_0+a_1v_1+\cdots+a_hv_h$ where $a_0\in \bZ$ and $0\leq a_i<n_i$ for $i\in \{1,\ldots,h\}$. Then by definition of $c$ we get $b=c-1-a=-v_0+\sum_{k=1}^h(n_k-1)v_k-a_0v_0-
\sum_{k=1}^h a_k v_k=-(a_0+1)v_0+\sum_{k=1}^h (n_k-1-a_k)v_k$. This is a
B\'ezout relation. To finish the proof it suffices to remark that exactly one element of the pair
$(a_0,-a_0-1)$ is greater than or equal to zero. For the last remark, note that $(c+N)+(-N-1)=c-1$ and hence,
if $N\geq 0$, then $-N-1\not\in G$ and consequently $c+N\in G$ for all $N\geq 0$. On the other
hand, since $0\in G$ we have $c-1\not\in G$ and hence $c$ is the smallest integer such that
all integers bigger than or equal to it are in $G$. 

\medskip

\noindent Finally we will prove the fifth claim. The mapping $[0,c-1]\cap G \ni a \rightarrow c-1-a\in [0,c-1]\cap (\bN\backslash G)$ is bijective. Therefore we have $2 \cdot \sharp ([0,c-1]\cap G)=c$ and the last claim follows.
\end{proof}

\medskip

\noindent The number $c$ is called the {\em conductor} of the semigroup $G$.

\subsection{Plane algebroid curves}

\noindent We review here some basic notions from the local theory of algebraic curves. For more details we refer the reader to \cite{Seidenberg2}.

\medskip

\noindent Let $f\in \bK[[x,y]]$ be a non-zero power series without constant term. An algebroid curve $\{f=0\}$ is defined to be the ideal generated by
$f$ in $\bK[[x,y]]$. We say that $\{f=0\}$ is irreducible (reduced) if $f$ in $\bK[[x,y]]$ is irreducible ($f$ has no multiple factors). The irreducible curves are also called {\em branches}. The order $\ord f$ of the power series $f$ is, by definition, the {\em multiplicity} of the curve $\{f=0\}$. The {\em initial form} $\ini f$ of $f$ defines the tangent lines of $\{f=0\}$. If $\{f=0\}$ is irreducible then it has only one tangent line i.e. $\ini f=l^{\ord f}$ where $l$ is a linear form.

\medskip

\noindent A {\em formal isomorphism} $\Phi$ is a pair of power series $\Phi(x,y)=(ax+by+\cdots, a'x+b'y+\cdots)$ where $ab'-a'b\neq 0$ and the dots denote terms in $x,y$ of order bigger than 1. The map $f\longrightarrow f\circ \Phi$ is an isomorphism of the ring $\bK[[x,y]]$. Two curves $\{f=0\}$ and $\{g=0\}$ are said to {\em{ be formally equivalent}} if there is a formal isomorphism $\Phi$ such that $f\circ \Phi=g \cdot \mathrm{unit}$.

\medskip

\noindent For any power series $f,g\in \bK[[x,y]]$ we define the {\em intersection multiplicity or intersection number} $i_0(f,g)$ by putting
\[i_0(f,g)=\mathrm{dim}_{\bK}\bK[[x,y]]/(f,g), \]

\noindent where $(f,g)$ is the ideal of $\bK[[x,y]]$ generated by $f$ and $g$. If $f,g$ are non-zero power series without constant term then $i_0(f,g)<+\infty$ if and only if $\{f=0\}$ and $\{g=0\}$ have no common branch. The following properties are basic

\begin{enumerate}
\item if $\Phi$ is a formal isomorphism then $i_0(f,g)=i_0(f\circ \Phi, g\circ \Phi)$.
\item $i_0(f,gh)=i_0(f,g)+i_0(f,h)$.
\end{enumerate}

\noindent Let $t$ be a variable. A {\em parametrization} is a pair $(\phi(t),\psi(t))\in \bK[[t]]^2$ such that $\phi(t)\neq 0$ or $\psi(t)\neq 0$ in $\bK[[t]]$ and $\phi(0)=\psi(0)=0$. We say that the parametrization $(\phi(t),\psi(t))$ is {\em{good}} if the field of fractions of the ring $\bK[[\phi(t),\psi(t)]]$ is equal to the field $\bK((t))$.

\begin{teorema}[Normalization Theorem]
Let $f=f(x,y)\in \bK [[x,y]]$ be an irreducible power series. Then there is a good parametrization $(\phi(t),\psi(t))$ such that $f(\phi(t), \psi(t))=0$. If $(\alpha(s), \beta(s))\in \bK [[s]]^2$ is a parametrization such that $f(\alpha(s),\beta(s))=0$ then there is a power series $\sigma(s)\in \bK [[s]]$, $\sigma(0)=0$ such that $\alpha(s)=\phi(\sigma(s))$ and $\beta(s)=\psi(\sigma(s))$.
\end{teorema}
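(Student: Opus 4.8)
The plan is to translate the statement into commutative algebra by passing from the equation $f$ to the local ring of the branch and recognizing its normalization as a formal power series ring in one variable. First I would reduce to the case where $f$ is $y$-regular of order $n=\ord f$: since $f$ is irreducible its initial form is $l^n$ for a linear form $l$, and after a formal isomorphism I may assume $l=y$, so that $f(0,y)=c\,y^n+\cdots$ with $c\neq 0$. By the Weierstrass Preparation Theorem $f$ equals, up to a unit, a Weierstrass polynomial $W\in\bK[[x]][y]$ of degree $n$, so the local ring $R=\bK[[x,y]]/(f)=\bK[[x]][y]/(W)$ is a domain (because $f$ is irreducible) which is free of rank $n$ over $\bK[[x]]$. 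Thus $R$ is a one-dimensional complete Noetherian local domain with residue field $\bK$, and its fraction field $K$ is a finite extension of $\bK((x))$.

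Next I would invoke normalization. Let $\bar R$ be the integral closure of $R$ in $K$. Because $\bK[[x]]$ is a complete discrete valuation ring it is Nagata (Japanese), so $\bar R$ is module-finite over $R$, hence a complete local ring; being a normal one-dimensional local domain it is a discrete valuation ring. Its residue field is a finite extension of $\bK$, hence equals $\bK$ since $\bK$ is algebraically closed, and a complete discrete valuation ring containing $\bK$ with residue field $\bK$ is isomorphic to $\bK[[t]]$ for any uniformizing parameter $t$. I would then set $\phi(t)$ and $\psi(t)$ to be the images of $x$ and $y$ under the inclusion $R\hookrightarrow\bar R=\bK[[t]]$: we get $f(\phi(t),\psi(t))=0$ because $f=0$ in $R$, and $\phi(0)=\psi(0)=0$ because the maximal ideal of $R$ maps into that of $\bK[[t]]$. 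The parametrization is good, since $\bK[[\phi,\psi]]$ is the image of $R$, isomorphic to $R$, with fraction field $K=\mathrm{Frac}(\bar R)=\bK((t))$.

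For the factorization statement, let $(\alpha(s),\beta(s))$ be any parametrization annihilating $f$. Sending $x\mapsto\alpha(s)$, $y\mapsto\beta(s)$ defines a local homomorphism $\bK[[x,y]]\to\bK[[s]]$ that kills $f$, hence factors through a local homomorphism $\rho\colon R\to\bK[[s]]$. As the parametrization is nontrivial, $\rho$ is nonconstant, so its kernel is a prime of the one-dimensional local domain $R$ strictly below the maximal ideal, namely $(0)$; thus $\rho$ is injective and extends to an embedding $K\hookrightarrow\bK((s))$. Since $\bK[[s]]$ is integrally closed, every element of $K$ integral over $R$ lands in $\bK[[s]]$, so $\rho$ extends to $\bar R=\bK[[t]]\hookrightarrow\bK[[s]]$. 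This map sends $t$ to some $\sigma(s)\in\bK[[s]]$ with $\sigma(0)=0$, and applying it to $x=\phi(t)$ and $y=\psi(t)$ gives $\alpha(s)=\phi(\sigma(s))$ and $\beta(s)=\psi(\sigma(s))$.

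The main obstacle is the normalization step itself: establishing that $\bar R$ is module-finite over $R$ and identifying it with $\bK[[t]]$. Everything else is formal manipulation built on two universal properties, Weierstrass preparation and the integral closedness of $\bK[[s]]$; the genuine content is that the complete local domain of a branch has a formal power series ring in one variable as normalization. This is exactly the fact that holds in every characteristic and that replaces the Puiseux or Hamburger--Noether expansion used in the classical treatments.
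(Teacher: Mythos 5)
Your argument is essentially correct, but note that there is nothing in the paper to compare it against: the Normalization Theorem is stated in the preliminaries as a recalled fact, with the reader referred to \cite{Seidenberg2} for details, and the paper's own machinery (the STI, key polynomials, etc.) is built on top of it. Your route --- pass to $R=\bK[[x,y]]/(f)=\bK[[x]][y]/(W)$ via Weierstrass preparation, invoke finiteness of the integral closure $\bar R$ over $R$ (complete Noetherian local domains are Nagata), identify $\bar R$ with $\bK[[t]]$ as a complete discrete valuation ring with residue field $\bK$, and deduce the universal property from the integral closedness of $\bK[[s]]$ --- is the standard normalization-theoretic proof, valid in every characteristic; it is exactly the kind of argument that replaces Puiseux or Hamburger--Noether expansions, so it is consistent with the paper's stated philosophy, and you correctly identify the Nagata finiteness step as the real content.

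Two points should be tightened. First, the opening appeal to ``$\ini f=l^{\ord f}$ for irreducible $f$'' is best avoided: that tangent-line fact is usually deduced \emph{from} the existence of a parametrization, so invoking it here risks circularity, and it is unnecessary --- a generic linear change of coordinates makes any nonzero $f$ $y$-regular, which is all that Weierstrass preparation requires. Second, in the last step you need the extension $\rho'\colon\bar R=\bK[[t]]\to\bK[[s]]$ to be a \emph{local} homomorphism, both to get $\sigma(0)=0$ and to justify $\rho'(\phi(t))=\phi(\sigma(s))$: a bare ring homomorphism need not commute with the infinite sums defining $\phi$ and $\psi$. This is true and quick --- if $\sigma(0)=c\neq 0$, then $t-c$ is a unit of $\bK[[t]]$ whose image $\sigma(s)-c$ vanishes at $s=0$, a contradiction; locality then gives $\rho'\bigl((t)^m\bigr)\subset (s)^m$ for all $m$, hence continuity for the adic topologies and termwise substitution --- but it must be said, since both the conclusion $\sigma(0)=0$ and the identities $\alpha=\phi\circ\sigma$, $\beta=\psi\circ\sigma$ depend on it. With these repairs the proof is complete.
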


\noindent Let us recall also
\begin{teorema}
\label{classical-formula}
Under the above assumptions and notations, for any power series $g=g(x,y)\in \bK[[x,y]]$ we have $i_0(f,g)=\ord g(\phi(t),\psi(t))$.
\end{teorema}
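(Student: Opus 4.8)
The plan is to read the intersection number as a colength and then pass to the normalization $\bK[[t]]$ of the branch. Set $R=\bK[[\phi(t),\psi(t)]]\subseteq\bK[[t]]$ and consider the substitution homomorphism $\sigma\colon\bK[[x,y]]\to\bK[[t]]$, $x\mapsto\phi(t)$, $y\mapsto\psi(t)$, whose image is $R$. First I would identify $\ker\sigma$. Since $\bK[[t]]$ is a domain, $\ker\sigma$ is a prime ideal containing $f$; as $f$ is irreducible, $(f)$ is a height-one prime and $\bK[[x,y]]/(f)$ is a one-dimensional local domain. If $\ker\sigma$ were strictly larger than $(f)$, the induced surjection $\bK[[x,y]]/(f)\to R$ would have nonzero kernel, so $\dim R=0$ and $R$ would be a field; but $R$ contains the nonzero non-unit $\phi(t)$ or $\psi(t)$ (one of them has positive order), a contradiction. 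Hence $\ker\sigma=(f)$ and $\sigma$ induces an isomorphism $\bK[[x,y]]/(f)\cong R$.

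Under this isomorphism the image of $g$ is $\bar g:=g(\phi(t),\psi(t))$, and reduction modulo $g$ gives $\bK[[x,y]]/(f,g)\cong R/\bar gR$. If $\bar g=0$ then $g\in(f)$, the curves share a branch, and both sides of the claimed identity equal $+\infty$; so assume $\bar g\neq0$ and put $m=\ord\bar g$. It remains to prove $\dim_{\bK}R/\bar gR=m$.

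This is where goodness of the parametrization enters: it says $\mathrm{Frac}(R)=\bK((t))$, so $R$ and $\bK[[t]]$ share the same fraction field and $\bK[[t]]$ is the normalization of $R$. The value semigroup $\Gamma=\{\ord h:0\neq h\in R\}$ then generates $\bZ$, hence has finite complement in $\bN$ with some conductor; using completeness of $R\cong\bK[[x,y]]/(f)$ one checks $(t^N)\subseteq R$ for $N\gg0$, so $\bK[[t]]/R$ has finite length and $\bK[[t]]$ is a finite $R$-module (alternatively one may cite that the normalization of a complete one-dimensional local domain is finite over it). Both rings are local with residue field $\bK$, so length over $R$ agrees with $\dim_{\bK}$. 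Now compare colengths: from the exact sequences $0\to R/\bar gR\to\bK[[t]]/\bar gR\to\bK[[t]]/R\to0$ and $0\to\bar g\bK[[t]]/\bar gR\to\bK[[t]]/\bar gR\to\bK[[t]]/\bar g\bK[[t]]\to0$, together with the isomorphism $\bar g\bK[[t]]/\bar gR\cong\bK[[t]]/R$ given by multiplication by the nonzerodivisor $\bar g$, additivity of length yields $\dim_{\bK}R/\bar gR=\dim_{\bK}\bK[[t]]/\bar g\bK[[t]]$. Finally $\bar g=(\text{unit})\,t^{m}$ in $\bK[[t]]$, so $\bK[[t]]/\bar g\bK[[t]]\cong\bK[[t]]/(t^{m})$ has dimension $m=\ord g(\phi,\psi)$, and the theorem follows.

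The main obstacle I anticipate is the length-comparison lemma $\dim_{\bK}R/aR=\dim_{\bK}\bK[[t]]/a\bK[[t]]$ for a one-dimensional local domain $R$ with finite normalization $\bK[[t]]$; the genuinely delicate point is deducing the finiteness of $\bK[[t]]$ over $R$ from goodness (via the conductor of $\Gamma$ and completeness of $R$), whereas the identification of $\ker\sigma$ and the reduction to $R/\bar gR$ are routine.
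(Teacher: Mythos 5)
The paper does not actually prove Theorem \ref{classical-formula}: it is introduced with the words ``Let us recall also'' and treated as known background (the reference for this material is \cite{Seidenberg2}), so there is no in-paper argument to compare yours against. Your proof is correct and is the standard normalization/colength argument. The identification $\ker\sigma=(f)$, the reduction to $\dim_{\bK}R/\bar gR$, and the transfer of the colength to $\bK[[t]]$ via the two exact sequences together with the multiplication-by-$\bar g$ isomorphism $\bK[[t]]/R\cong\bar g\bK[[t]]/\bar gR$ are all sound, and you correctly isolate where goodness enters (without it one only gets $\ord g(\phi,\psi)=(\ord\sigma)\,i_0(f,g)$, which is essentially Lemma \ref{ordenes}). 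The one step you should tighten is the finiteness of $\bK[[t]]$ over $R$, equivalently $t^N\bK[[t]]\subseteq R$ for $N\gg0$: the conductor of the value semigroup only hands you, for each large $n$, \emph{some} element of $R$ of $t$-order $n$, and correcting a series term by term produces a sum converging $t$-adically, so to land in $R$ you must trade $t$-adic smallness for smallness in the maximal-ideal-adic topology of $R$ (doable, e.g.\ by taking the correcting elements to be monomials in a fixed finite generating set of the semigroup, but this is exactly the point a referee would poke at). A cleaner route, using only the paper's toolkit: assuming $\phi\neq0$ with $e=\ord\phi$, Weierstrass division of $u(t)\in\bK[[x,t]]$ by the $t$-regular series $x-\phi(t)$, followed by the substitution $x=\phi(t)$, shows $\bK[[t]]=\sum_{i=0}^{e-1}\bK[[\phi]]\,t^i$, so $\bK[[t]]$ is a finite module over $\bK[[\phi]]\subseteq R$; finiteness over $R$, the existence of a nonzero conductor ideal (hence of $N$ with $t^N\bK[[t]]\subseteq R$), and the finite length of $\bK[[t]]/R$ all drop out at once, and goodness is then used only to identify $\bK[[t]]$ as the integral closure of $R$ in $\mathrm{Frac}(R)=\bK((t))$.
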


\noindent Taking $g=x$  (respect. $g=y$) we get from the above formula that $\ord f(0,y)=i_0(f,x)=\ord \phi(t)$ and $\ord f(x,0)=i_0(f,y)=\ord \psi(t)$.

\medskip

\noindent Using  Theorem \ref{classical-formula} we check the following two properties of intersection numbers:

\begin{enumerate}
\item [3.] If  $f$ is irreducible, then $i_0(f,g+g')\geq \inf \{i_0(f,g),i_0(f,g')\}$ with equa\-lity if $i_0(f,g)\neq i_0(f,g')$.
\item [4.] If $f$ is irreducible and $i_0(f,g)=i_0(f,h)<+\infty$ then there exists a constant $c\in \bK$ such that $i_0(f,g-ch)>i_0(f,g)$.
\end{enumerate}

\medskip

\noindent In what follows we need

\begin{lema}
\label{ordenes}
Let $f(x,y)\in \bK[[x,y]]$ be an irreducible power
series such that $f(0,y)\neq 0$ and let $(\alpha(s),\beta(s))$, $\alpha(s)\neq 0$ in
$\bK[[s]]$, be a parametrization such that
$f(\alpha(s),\beta(s))=0$. Then, for every power series $g(x,y)\in
\bK[[x,y]]$ we have
\[\ord g(\alpha(s),\beta(s))=\frac{i_0(f,g)}{i_0(f,x)}\ord
\alpha(s).\]
\end{lema}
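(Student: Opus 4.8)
The plan is to reduce the arbitrary parametrization $(\alpha(s),\beta(s))$ to the good one furnished by the Normalization Theorem and then apply Theorem \ref{classical-formula}. First I would invoke the Normalization Theorem to obtain a good parametrization $(\phi(t),\psi(t))$ with $f(\phi(t),\psi(t))=0$, together with a power series $\sigma(s)\in\bK[[s]]$, $\sigma(0)=0$, such that $\alpha(s)=\phi(\sigma(s))$ and $\beta(s)=\psi(\sigma(s))$. This is exactly the reparametrization statement in the second half of the Normalization Theorem.

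The second step is to apply Theorem \ref{classical-formula} to the good parametrization, which gives $i_0(f,g)=\ord g(\phi(t),\psi(t))$ for every $g$, and in particular $i_0(f,x)=\ord \phi(t)$. Here the hypothesis $f(0,y)\neq 0$ guarantees $i_0(f,x)=\ord f(0,y)<+\infty$, hence $\phi(t)\neq 0$; and the hypothesis $\alpha(s)\neq 0$ forces $\sigma(s)\neq 0$, so that $m:=\ord \sigma(s)$ is finite.

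The key computation is the multiplicativity of the order under substitution of a series of positive order: for any $h(t)\in\bK[[t]]$ one has $\ord h(\sigma(s))=m\cdot \ord h(t)$, as is seen by writing $h(t)=ct^d+\cdots$ with $c\neq 0$ and substituting $t=\sigma(s)$. Applying this with $h(t)=g(\phi(t),\psi(t))$ and with $h(t)=\phi(t)$ yields
\[\ord g(\alpha(s),\beta(s))=m\cdot i_0(f,g),\qquad \ord \alpha(s)=m\cdot i_0(f,x),\]
and dividing these two equalities eliminates $m$ and produces the claimed formula.

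The only point requiring care is the degenerate case $g(\phi(t),\psi(t))=0$: then $f$ divides $g$, so $i_0(f,g)=+\infty$, while also $g(\alpha(s),\beta(s))=g(\phi(\sigma(s)),\psi(\sigma(s)))=0$, and both sides of the asserted identity equal $+\infty$ in accordance with the usual conventions for $+\infty$. Apart from this, every order in sight is finite and the argument is purely formal; I expect no real obstacle beyond verifying that $\sigma(s)\neq 0$, which is precisely where the assumption $\alpha(s)\neq 0$ enters.
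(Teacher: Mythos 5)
Your proof is correct and follows essentially the same route as the paper's: reduce to a good parametrization via the Normalization Theorem, use multiplicativity of $\ord$ under substitution of $\sigma(s)$, and conclude with Theorem \ref{classical-formula}. The only difference is that you explicitly treat the degenerate case $g\equiv 0 \pmod f$, which the paper leaves to the conventions on $+\infty$.
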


\noindent \begin{proof} \noindent Let $(\phi(t),\psi(t))$ be a good parametrization of
the branch $\{f(x,y)=0\}$. Then $\alpha(s)=\phi(\sigma(s))$,
$\beta(s)=\psi(\sigma(s))$ for a power series $\sigma(s)\in \bK[[s]]$,
$\sigma(0)=0$. We get
$\ord \alpha(s)=\ord \phi(t)\ord \sigma(s)=\ord f(0,y)\ord \sigma(s)=
i_0(f,x)\ord \sigma(s)$

\noindent and consequently

\[ \ord \sigma(s)=\frac{\ord \alpha(s)}{i_0(f,x)}.
\]

\noindent On the other hand $\ord g(\alpha(s),\beta(s))=\ord g(\phi(t),\phi(t)).
\ord \sigma(s)$ and by Theorem \ref{classical-formula} we get
\[
\ord g(\alpha(s),\beta(s))=i_0(f,g) \ord \sigma(s).
\]

\noindent Now the
formula for $\ord g(\alpha(s),\beta(s))$ follows.
\end{proof}

\medskip

\noindent For any irreducible power series $f\in \bK[[x,y]]$ we put
\[\Gamma(f)=\{i_0(f,g)\;:\; g\; \hbox{\rm runs over all power series such
that } g\not\equiv 0 \;\hbox{\rm (mod $f$)}  \}.\]

\noindent Clearly $\Gamma(f)$ is a semigroup. We call $\Gamma(f)$ the {\em semigroup associated with the branch} $\{f=0\}$. 

\medskip

\noindent Two branches $\{f=0\}$ and $\{g=0\}$ are {\em equisingular} if and only if $\Gamma(f)=\Gamma(g)$. Two formally equivalent branches are equisingular. The branch $\{f=0\}$ is {\em non-singular} (that is of multiplicity $1$) if and only if $\Gamma(f)=\bN$. We have $\min (\Gamma(f)\backslash\{0\})=\ord f$.

\medskip

\noindent Different (but equivalent) definitions of equisingularity were given by Zariski in \cite{Zariski-1965}.

\medskip

\noindent Note that the mapping $g\mapsto i_0(f,g)$ induces a valuation $v_f$ of the ring $\bK[[x,y]]/(f)$. The semigroup $\Gamma(f)$ can be described as the semigroup of values of $v_f$.

\section{The strong triangle inequality}

\noindent In this section we generalize the well-known property of the intersection multiplicity \cite{Ploski} to any characteristic.  Let us begin with the notion of logarithmic distance.

\noindent Let $A$ be a non-empty set. A function $d:A\times
A\longrightarrow \mathbf R\cup\{+\infty\}$ satisfying for arbitrary
$a,b,c\in A$, the conditions:
\begin{enumerate}
\item[(i)] $d(a,a)=+\infty$,
\item[(ii)] $d(a,b)=d(b,a)$,
\item[(iii)] $d(a,b)\geq \hbox{\rm inf}\{d(a,c),d(b,c)\}$,
\end{enumerate}

\noindent will be called a {\em logarithmic distance} (for short
log-distance). We call the third property the {\em Strong Triangle
Inequality} (the STI). It is equivalent to the following

\medskip

\noindent (iii') at least two of the numbers $d(a,b),d(a,c),d(b,c)$
are equal and the third is not smaller than the other two.

\begin{lema}
\label{elementos} Let $d$ be a log-distance in the set $A$. For any
$a_1,\ldots,a_m$, $b_1,\ldots,b_n,c\in A$ at least one of the
following conditions holds:
\begin{enumerate}
\item[\hbox{\rm (I)}] there exists a $j\in\{1,\ldots,n\}$ such that for any
$i\in\{1,\ldots,m\}$, $d(a_i,c)\leq d(a_i,b_j)$,
\item[\hbox{\rm (II)}] there exists an $i\in\{1,\ldots,m\}$ such that for any
$j\in\{1,\ldots,n\}$, $d(b_j,c)\leq d(a_i,b_j)$.
\end{enumerate}
\end{lema}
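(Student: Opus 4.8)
The plan is to translate everything into the three families of distances $\alpha_i:=d(a_i,c)$, $\beta_j:=d(b_j,c)$ and $\gamma_{ij}:=d(a_i,b_j)$, and then argue by contradiction. In this notation condition (I) asserts that some index $j$ satisfies $\alpha_i\le\gamma_{ij}$ for all $i$, while condition (II) asserts that some index $i$ satisfies $\beta_j\le\gamma_{ij}$ for all $j$. So I would assume that both fail: for every $j$ there is an $i$ with $\alpha_i>\gamma_{ij}$, and for every $i$ there is a $j$ with $\beta_j>\gamma_{ij}$, and derive a contradiction.

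The engine is the reformulation (iii$'$) of the STI applied to the triple $a_i,b_j,c$, which forces the two smallest of $\alpha_i,\beta_j,\gamma_{ij}$ to coincide. First I would record the elementary consequence used twice below: if $\beta_j>\gamma_{ij}$ then necessarily $\gamma_{ij}=\alpha_i$, and hence $\beta_j>\alpha_i$; symmetrically, if $\alpha_i>\gamma_{ij}$ then $\gamma_{ij}=\beta_j$, and hence $\alpha_i>\beta_j$. Indeed, when $\beta_j>\gamma_{ij}$ the value $\beta_j$ cannot be among the two equal smallest entries, so the smallest pair is $\{\alpha_i,\gamma_{ij}\}$, giving $\alpha_i=\gamma_{ij}<\beta_j$; a short case check rules out $\alpha_i<\gamma_{ij}$ and $\gamma_{ij}<\alpha_i\le\beta_j$, and the second assertion is the same statement with the roles of the $a$'s and $b$'s exchanged.

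With this in hand the contradiction comes in two moves. Choose $i^\ast$ with $\alpha_{i^\ast}=\max_{1\le i\le m}\alpha_i$, which exists since the index set is finite. Because (II) fails at $i^\ast$, there is a $j^\ast$ with $\beta_{j^\ast}>\gamma_{i^\ast j^\ast}$; the consequence above gives $\gamma_{i^\ast j^\ast}=\alpha_{i^\ast}$ and $\beta_{j^\ast}>\alpha_{i^\ast}$, in particular $\alpha_{i^\ast}$ is finite. Because (I) fails at this $j^\ast$, there is an $i'$ with $\alpha_{i'}>\gamma_{i'j^\ast}$; the symmetric consequence gives $\alpha_{i'}>\beta_{j^\ast}$. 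Combining, $\alpha_{i'}>\beta_{j^\ast}>\alpha_{i^\ast}=\max_i\alpha_i\ge\alpha_{i'}$, which is absurd. Hence at least one of (I), (II) must hold.

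I expect the only delicate point to be the bookkeeping inside the STI trichotomy, namely verifying that the strict inequality $\beta_j>\gamma_{ij}$ really pins down which two of the three distances form the equal pair, and checking that admitting the value $+\infty$ (for instance when $c$ coincides with some $a_i$, forcing $\alpha_i=+\infty$) does not open an exceptional branch: if $\beta_{j^\ast}=+\infty$ the final chain $\alpha_{i'}>\beta_{j^\ast}$ is already impossible. Everything else is a purely extremal argument on the finite value set $\{\alpha_i\}$, which is exactly why selecting the index realizing $\max_i\alpha_i$ closes the loop so cleanly.
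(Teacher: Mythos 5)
Your proof is correct and follows essentially the same route as the paper's: negate both conditions, apply the STI reformulation (iii$'$) to triples $(a_i,b_j,c)$ to pin down which two distances coincide, and then derive a contradiction from an extremal choice. The only cosmetic difference is that you maximize $d(a_i,c)$ over $i$ while the paper maximizes $d(a_{p(j)},b_j)$ over $j$; your bookkeeping is if anything slightly cleaner, and your remark that the $+\infty$ case closes itself is accurate.
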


\noindent \begin{proof} Let us suppose that neither (I) nor (II) holds. Then,
for any $j\in\{1,\ldots,n\}$ there exists an index $p(j)\in
\{1,\ldots,m\}$ such that $d(a_{p(j)},c)> d(a_{p(j)},b_j)$ and, for
any $i\in\{1,\ldots,m\}$, there exists $s(i)\in \{1,\ldots,n\}$
such that $d(b_{s(i)},c)> d(a_{i},b_{s(i)})$. Applying the
STI to $a_{p(j)},b_j,c$ and to $a_i,b_{s(i)},c$ we get

\begin{equation}
\label{formula1}
d(a_{p(j)},b_j)=d(b_j,c)<d(a_{p(j)},c),
\end{equation}

\noindent and

\begin{equation}
\label{formula2}
d(a_i,b_{s(i)})=d(a_i,c)<d(b_{s(i)},c).
\end{equation}

\noindent We may assume without loss of generality that

\begin{equation}
\label{formula3} d(a_{p(1)},b_1)=\displaystyle \sup_{j=1}^n\{d(a_{p(j)},b_j)\}.
\end{equation}

\noindent Using successively (\ref{formula1}), (\ref{formula2}) and
again (\ref{formula1}), we get
$$ d(a_{p(1)},b_1)<d(a_{p(1)},c)=d(a_{p(1)},b_{s(p(1))})<
 d(b_{s(p(1))},c)=d(a_{p(j_1)},b_{j_1})   $$

\noindent with $j_1=s(p(1))$. Thus we have
$d(a_{p(1)},b_1)<d(a_{p(j_1)},b_{j_1})$, which contradicts
assumption (\ref{formula3}).
\end{proof}

\medskip
\noindent An important log-distance on the set of branches can be defined by means of the intersection multiplicity. Let $\{l=0\}$ be a smooth branch. For any branches $\{f=0\}$ and $\{g=0\}$ different from the branch $\{l=0\}$ we put

\[
d_l(f,g)=\frac{i_0(f,g)}{i_0(f,l)i_0(g,l)}.
\]

\noindent Our aim is to prove

\begin{teorema}
The function $d_l$ is a log-distance in the set of all branches different from $\{l=0\}$.
\end{teorema}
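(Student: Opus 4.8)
Conditions (i) and (ii) are immediate. Since a branch shares a branch with itself, $i_0(f,f)=+\infty$, so $d_l(f,f)=+\infty$; and $d_l$ is symmetric because $i_0(f,g)=i_0(g,f)$ and the denominator is symmetric. Thus the whole content is the strong triangle inequality (iii). First I would use the invariance of $i_0$ under a formal isomorphism (property (1)) to assume $l=x$; then every branch $f\neq\{x=0\}$ satisfies $f(0,y)\neq 0$, so $a:=i_0(f,x)=\ord f(0,y)$ is finite and positive, and likewise $b:=i_0(g,x)$, $c:=i_0(h,x)$. Clearing these positive denominators, the inequality $d_l(f,g)\geq\inf\{d_l(f,h),d_l(g,h)\}$ is equivalent to
\[
c\,i_0(f,g)\;\geq\;\min\{\,b\,i_0(f,h),\;a\,i_0(g,h)\,\}.
\]
Because the three branches enter symmetrically, establishing this inequality for every labelling yields (iii), and the sharper form (iii') then follows formally.

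The tool I would use to reach the intersection numbers is a good parametrization. Fixing one branch, say $f$, with good parametrization $(\phi(t),\psi(t))$ where $\ord\phi=a$, Theorem \ref{classical-formula} turns $i_0(f,g)$ and $i_0(f,h)$ into the orders of the substituted series $g(\phi,\psi)$ and $h(\phi,\psi)$ in $\bK[[t]]$. In this guise the map $u\mapsto i_0(f,u)$ is the $t$-adic valuation pulled back through the parametrization, hence automatically non-archimedean; this is precisely the content of properties (3) and (4), which I would use as the ultrametric engine of the argument. The difficulty is that the third distance $d_l(g,h)$ is governed by $i_0(g,h)$, which is read off along the parametrization of $g$ (or of $h$), not of $f$, so the three quantities above are measured along three different trajectories and cannot be compared through a single valuation.

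Reconciling these normalizations is where the real work lies, and I expect it to be the main obstacle. The natural plan is a comparison lemma: given two branches $f$ and $h$, their good parametrizations can be chosen to agree to an order dictated by the contact $d_l(f,h)$, so that evaluating a third branch $g$ along one trajectory or the other affects the order only beyond that threshold. This is the classical ``order of coincidence'' computation; in characteristic $0$ one would read it off from Puiseux expansions, but here that device is unavailable, and the point is to carry out the comparison in arbitrary characteristic. I would do this by successive approximation, using Lemma \ref{ordenes} --- which evaluates orders along an arbitrary, not necessarily good, parametrization --- together with property (4) to refine a parametrization of $f$ so that it agrees with that of $h$ to ever higher order until the discrepancy passes the contact of $f$ with $h$, and then transporting orders between the two trajectories by the ratio formula of Lemma \ref{ordenes}.

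Once the comparison lemma is available, the displayed inequality splits into two cases. When the contacts $d_l(f,h)$ and $d_l(g,h)$ differ, the comparison forces $d_l(f,g)$ to equal the smaller of the two, so the inequality holds with equality; when they are equal, properties (3)--(4) force the mutual contact of $f$ and $g$ to be at least as large, so $d_l(f,g)$ dominates. In both cases the strong triangle inequality, and hence the theorem, follows.
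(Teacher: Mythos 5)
Your reduction to $l=x$, the reformulation $c\,i_0(f,g)\geq\min\{b\,i_0(f,h),\,a\,i_0(g,h)\}$, and your diagnosis of the crux (the three intersection numbers are read along three different trajectories) are all correct, but the proof stops exactly at the crux. The ``comparison lemma'' you invoke --- that good parametrizations of $f$ and $h$ can be chosen to agree to an order dictated by $d_l(f,h)$, and that orders of a third series transported between the two trajectories move only beyond that threshold --- \emph{is} the content of the theorem, and the method you sketch for proving it does not go through. First, property (4) operates on power series (it replaces $g$ by $g-ch$ to raise $i_0(f,\cdot)$); it gives no mechanism for ``refining a parametrization of $f$'' towards that of $h$. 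Second, in positive characteristic the two trajectories cannot in general even be placed over a common parameter: achieving $\phi(\sigma(u))=u^N$ forces $\phi$ to be an $N$-th power in $\bK[[t]]$, and a wildly ramified branch such as the paper's Example A, with $\phi(t)=t^p+t^{p^2-1}$, has first coordinate which is not a $p$-th power (in characteristic $p$ the $p$-th powers are exactly the series in $t^p$); so ``agreement to ever higher order'' between parametrizations of two distinct branches is not a well-defined notion here, and this is precisely where the Puiseux-based argument you are imitating breaks. Third, the classical dictionary between contact and order of coincidence is mediated by the characteristic exponents, whose theory in this paper is developed \emph{after}, and on top of, the present theorem --- importing it would be circular.

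The paper sidesteps any pairwise comparison of parametrizations. After Weierstrass preparation, all three branches become distinguished polynomials, and by the Normalization Theorem (Seidenberg) there is a \emph{single} substitution $x=\alpha(s)$ over which all three split into linear factors in $\bK[[s]][y]$, e.g.\ $f(\alpha(s),y)=\prod_{i=1}^m\bigl(y-\alpha_i(s)\bigr)$. All roots then live in the one ring $\bK[[s]]$, where $d(\alpha,\beta)=\ord(\alpha-\beta)$ is trivially a log-distance; the purely combinatorial Lemma \ref{elementos} (applied to the roots of $f$, the roots of $g$, and each root $\gamma_k$ of $h$) yields, after summing over a family of roots, an inequality of orders, and Lemma \ref{ordenes} converts it into $d_x(f,h)\leq d_x(f,g)$ or $d_x(g,h)\leq d_x(f,g)$, which is the STI. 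If you want to rescue your plan, the missing ingredient is exactly this common splitting trajectory; once you have it, your comparison lemma becomes unnecessary.
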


\noindent \begin{proof}(cf. \cite{Chadzynski-Ploski})

\noindent We may assume $l=x$. Since $d_x(f,f)=+\infty$ and $d_x(f,g)=d_x(g,f)$ it suffices to check the STI.
Let $\{f=0\}$, $\{g=0\}$ and $\{h=0\}$ be three branches different from $\{x=0\}$. Let $m=i_0(f,x)=\ord f(0,y)$, $n=i_0(g,x)=\ord g(0,y)$, $p=i_0(h,x)=\ord h(0,y)$. Using the Weierstrass preparation theorem we may
assume that
$f,g,h$ are distinguished polynomials of degree $m,n,p$
respectively. Using the Normalization Theorem we check (see
\cite{Seidenberg2}, Theorem 21.18) that  there exist power series
$\alpha(s)$, $\alpha_i(s)$, $\beta_j(s)$ and $\gamma_k(s)$ such that
$ f(\alpha(s),y)=\prod_{i=1}^m(y-\alpha_i(s))$,
$g(\alpha(s),y)=\prod_{j=1}^n(y-\beta_j(s))$ and
$h(\alpha(s),y)=\prod_{k=1}^p(y-\gamma_k(s))$.

\medskip

\noindent The function $d:\bK[[s]]\times \bK[[s]]\longrightarrow
\mathbf R\cup \{+\infty\}$ given by $d(\alpha(s),\beta(s))=\hbox{\rm
ord}(\alpha(s)-\beta(s))$ is a log-distance in $\bK[[s]]$. Fix
$k\in\{1,\ldots,p\}$ and use Lemma \ref{elementos} to
$\alpha_1(s),\ldots,\alpha_m(s)$, $\beta_1(s),\ldots,\beta_n(s)$ and
$\gamma(s)=\gamma_k(s)$. Then

\begin{enumerate}
\item[(I)] there exists a $j\in\{1,\ldots,n\}$ such that $\ord (\alpha_i(s)-\gamma(s))\leq
\ord (\alpha_i(s)-\beta_j(s))$ for all $i\in\{1,\ldots,m\}$, or

\item[(II)] there exists an $i\in\{1,\ldots,m\}$ such that $\ord (\beta_j(s)-\gamma(s))\leq \ord (\alpha_i(s)-\beta_j(s))$, for all $j\in\{1,\ldots,n\}$.
\end{enumerate}

\medskip

\noindent If (I) holds then $\sum_{i=1}^m \ord (\alpha_i(s)-\gamma(s))
\leq \sum_{i=1}^m \ord (\alpha_i(s)-\beta_j(s))$ that is $\ord f(\alpha(s),
\gamma(s))\leq \ord f(\alpha(s),\beta_j(s))$. By Lemma \ref{ordenes} we get
$\frac{i_0(h,f)}{i_0(x,h)}\leq \frac{i_0(g,f)}{i_0(x,g)}$ which implies
$d_l(f,h)\leq d_l(f,g)$.

\medskip

\noindent If (II) holds then $\sum_{j=1}^n \ord (\beta_j(s)-\gamma(s))\leq
\sum_{j=1}^n \ord (\alpha_i(s)-\beta_j(s))$ that is $\ord g(\alpha(s),\gamma(s))
\leq \ord g(\alpha(s),\alpha_i(s))$ and again by Lemma \ref{ordenes} we get
$\frac{i_0(h,g)}{i_0(x,h)}\leq \frac{i_0(f,g)}{i_0(x,f)}$ which implies
$d_l(g,h)\leq d_l(f,g)$.

\medskip
\noindent Consequently $d_l(f,g)\geq \inf \{d_l(f,h),d_l(g,h)\}$.
\end{proof}

\medskip

\begin{coro}
The function $d(f,g)=\frac{i_0(f,g)}{\ord f\;\;\ord g}$ is a log-distance
in the set of all branches.
\end{coro}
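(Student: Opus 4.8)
The plan is to deduce the Corollary from the Theorem by choosing, for each given triple of branches, a smooth auxiliary branch $\{l=0\}$ transversal to all of them, so that the two log-distances coincide on that triple. First I would dispose of the two easy axioms: condition (i) holds because $i_0(f,f)=+\infty$, whence $d(f,f)=+\infty$, and condition (ii) is immediate from the symmetry $i_0(f,g)=i_0(g,f)$.

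The main observation is that $d$ is nothing but $d_l$ computed with a well-chosen $l$. Recall that for a smooth branch $\{l=0\}$ one has $i_0(f,l)\geq \ord f$, with equality precisely when $\{l=0\}$ is not tangent to $\{f=0\}$; this follows from the description $\ini f=l_0^{\ord f}$ of the tangent together with Theorem \ref{classical-formula} (in transversal coordinates $l=x$ one gets $i_0(f,x)=\ord f(0,y)=\ord f$, since the coefficient of $y^{\ord f}$ in $f$ is then nonzero). Consequently, if a smooth branch $\{l=0\}$ is transversal to $\{f=0\}$ and to $\{g=0\}$ simultaneously, then $i_0(f,l)=\ord f$ and $i_0(g,l)=\ord g$, so that $d_l(f,g)=\frac{i_0(f,g)}{i_0(f,l)i_0(g,l)}=\frac{i_0(f,g)}{\ord f\;\ord g}=d(f,g)$.

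To verify the STI for $d$, I would fix three branches $\{f=0\}$, $\{g=0\}$, $\{h=0\}$ and produce a single line $\{l=0\}$ transversal to all three. Each branch has exactly one tangent line, so only finitely many directions are forbidden; since $\bK$ is algebraically closed (hence infinite) there is a line $\{l=0\}$ avoiding all of them, and such a line is automatically distinct from $\{f=0\}$, $\{g=0\}$ and $\{h=0\}$. By the previous paragraph, $d_l$ and $d$ then agree on every pair among these three branches, and the Theorem gives $d_l(f,g)\geq \inf\{d_l(f,h),d_l(g,h)\}$, that is $d(f,g)\geq \inf\{d(f,h),d(g,h)\}$, which is precisely the STI.

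I expect the only point requiring care to be the transversality identity $i_0(f,l)=\ord f$ together with the guarantee that one line can be chosen transversal to all three branches at once. Both are elementary once one uses the single-tangent property of a branch and the invariance of $i_0$ and of $\ord$ under linear changes of coordinates (formal isomorphisms), but they are exactly what converts the fixed-$l$ Theorem into the intrinsic statement of the Corollary.
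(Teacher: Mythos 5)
Your proposal is correct and is exactly the deduction the paper intends: the Corollary is stated there without proof, the implicit argument being precisely yours, namely that for any three branches one can choose a line $\{l=0\}$ transversal to all of them (only their finitely many tangent lines are forbidden, and $\bK$ is infinite), so that $i_0(f,l)=\ord f$ for each of them, $d$ coincides with $d_l$ on the triple, and the STI follows from the Theorem. The two supporting facts you isolate --- the transversality identity $i_0(f,l)=\ord f$, proved via $\ini f$ and Theorem \ref{classical-formula} in coordinates where $l=x$, and the invariance of $i_0$ and $\ord$ under formal isomorphisms --- are checked correctly, so there is no gap.
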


\section{The semigroup of a plane algebroid branch}

\label{semigroup}
\label{semigroup-section}
\noindent The aim of this section is to study the structure of the semigroup associated with a plane branch. We follow the method developped by Seidenberg in \cite{Seidenberg}.

\medskip

\noindent Let $f=f(x,y)\in \bK[[x,y]]$ be an irreducible power series
and let  $\Gamma(f)$ be the semigroup associated with the branch $\{f=0\}$. Suppose that $\{f=0\}\neq \{x=0\}$ and put $n=i_0(f,x)$.
Let $(\overline{b_0},\ldots,\overline{b_h})$, $\overline{b_0}=n$ be the $n$-minimal system of generators of $\Gamma(f)$.

\begin{lema}
\label{complementario}
$\Gamma(f)$ is a numerical semigroup i.e. $\gcd(\Gamma(f))=1$.
\end{lema}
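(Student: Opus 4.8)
The plan is to translate the statement into the language of the good parametrization furnished by the Normalization Theorem, where the gcd condition becomes transparent. First I would apply the Normalization Theorem to obtain a good parametrization $(\phi(t),\psi(t))$ with $f(\phi(t),\psi(t))=0$, and recall that by Theorem \ref{classical-formula} one has $i_0(f,g)=\ord g(\phi(t),\psi(t))$ for every $g\not\equiv 0$ (mod $f$). Writing $R=\bK[[\phi(t),\psi(t)]]$ for the image of the substitution homomorphism $g\mapsto g(\phi(t),\psi(t))$, whose kernel is $(f)$ since $f$ is irreducible, this identifies $\Gamma(f)$ with the set $\{\ord a : a\in R,\ a\neq 0\}$ of orders of the nonzero elements of $R$.

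Next I would set $e=\gcd(\Gamma(f))$ and observe that the subgroup of $\bZ$ generated by the set of positive integers $\Gamma(f)$ is exactly $e\bZ$; the goal is then to prove $e=1$. Every nonzero element of $R$ has order lying in $\Gamma(f)\subseteq e\bZ$. Hence for any nonzero quotient $a/b$ with $a,b\in R$, $a,b\neq 0$, the order $\ord(a/b)=\ord a-\ord b$ again lies in $e\bZ$. In other words, the $t$-adic valuation $\ord$ sends the multiplicative group $\mathrm{Frac}(R)^{\times}$ into $e\bZ$.

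The key step, and the only place where the hypothesis is genuinely used, is the goodness of the parametrization: by definition this means $\mathrm{Frac}(R)=\bK((t))$. Since the $t$-adic valuation of $\bK((t))^{\times}$ is all of $\bZ$ (for instance $\ord t=1$), the inclusion of the previous paragraph forces $e\bZ=\bZ$, that is $e=1$, which is precisely the assertion $\gcd(\Gamma(f))=1$.

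I do not expect a serious obstacle here; the whole content lies in recognizing that \emph{good parametrization} is exactly the condition which makes the subgroup of $\bZ$ generated by the value semigroup equal to all of $\bZ$. The only two points that need a line of care are the identification of $\Gamma(f)$ with the order-set of $R$ (resting on Theorem \ref{classical-formula} together with the irreducibility of $f$) and the elementary remark that the subgroup of $\bZ$ generated by a nonempty set of positive integers is generated by their greatest common divisor.
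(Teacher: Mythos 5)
Your proof is correct and is essentially the paper's own argument: both use the goodness of the parametrization to express $t$ as a quotient of elements of $R=\bK[[\phi(t),\psi(t)]]$ and then take orders, so that $1=\ord t$ lies in the group generated by $\Gamma(f)$. The paper just phrases this concretely (writing $t=p(\phi,\psi)/q(\phi,\psi)$ and obtaining $1=i_0(f,p)-i_0(f,q)$ with both terms in $\Gamma(f)$) rather than in your group-theoretic language.
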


\noindent \begin{proof}
\noindent Let $(\phi(t),\psi(t))$ be a good parametrization of the branch $f(x,y)=0$. Then we have  $\bK((t))=\bK ((\phi(t),\psi(t)))$ and we can write $t=\frac{p(\phi(t),\psi(t))}{q(\phi(t),\psi(t))}$ for some $p(x,y),q(x,y)\in \bK[[x,y]]$, $q\not\equiv 0$ (mod $f$). Taking orders gives
$1=i_0(f,p)-i_0(f,q)$. Put $a:=i_0(f,p)$ and $b:=i_0(f,q)$. Then $a,b\in \Gamma(f)$ and $\gcd(a,b)=1$, which proves the lemma.
\end{proof}

\medskip

\noindent We put $e_0=n$, $e_k=\gcd(e_{k-1},  \overline{b_{k}})$ for $k\in \{1,\ldots,h\}$ and
$n_k=\frac{e_{k-1}}{e_k}$  for $k\in \{1,\ldots,h\}$.
By Lemma  \ref{complementario} we have $e_h=1$. 
In what follows we write $v_f(g)$ instead of $i_0(f,g)$.

\begin{teorema}[Semigroup Theorem]
\label{structure}
Let $\{f=0\}$ be a branch such that  $\{f=0\}\neq \{x=0\}$. Set $n=v_f(x)$ and let $\overline{b_0},\ldots,\overline{b_h}$ be
the $n$-minimal system of generators of the semigroup $\Gamma(f)$.
There exists a sequence of monic
polynomials $f_0,f_1,\ldots,f_{h-1}\in \bK [[x]][y]$ such that for  $k\in \{1,\ldots,h\}$:
\begin{enumerate}
\item[\hbox{\rm ($a_k$)}] $\hbox{\rm deg}_y(f_{k-1})=\frac{n}{e_{k-1}}$,
\item [\hbox{\rm ($b_k$)}] $v_f(f_{k-1})=\overline{b_{k}}$
\noindent for $k\in \{1,\ldots,h\}$,

\item [\hbox{\rm ($c_k$)}] if $k>1$ then $n_{k-1}\overline{b_{k-1}}<\overline{b_{k}}$.
\end{enumerate}

\noindent Moreover $n_k>1$ for all $k\in\{1,\ldots, h\}$.
\end{teorema}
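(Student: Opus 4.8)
The plan is to construct $f_0,\dots,f_{h-1}$ by induction on $k$, establishing $(a_k),(b_k),(c_k)$ simultaneously, with each value $\overline{b_{k+1}}$ emerging as $v_f(f_k)$. First I would apply the Weierstrass preparation theorem to replace $f$ by a distinguished polynomial of degree $n$ in $y$, which changes neither $v_f$ nor $\Gamma(f)$ but makes division by monic polynomials and $\deg_y$ available. For the inductive step, suppose $f_{k-1}$ is built with $v_f(f_{k-1})=\overline{b_k}$ and $\deg_y f_{k-1}=n/e_{k-1}$, and that $(c_2),\dots,(c_k)$ are already recorded; then $(\overline{b_0},\dots,\overline{b_k})$ satisfies axiom (2) of a characteristic sequence, so Lemma \ref{aritmetico}(i) gives $n_k\overline{b_k}\in\bN\overline{b_0}+\cdots+\bN\overline{b_{k-1}}$. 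Fixing a Bézout representation $n_k\overline{b_k}=a_0\overline{b_0}+\cdots+a_{k-1}\overline{b_{k-1}}$ with $0\le a_i<n_i$, the monomial $M=x^{a_0}f_0^{a_1}\cdots f_{k-2}^{a_{k-1}}$ satisfies $v_f(M)=n_k\overline{b_k}=v_f(f_{k-1}^{n_k})$, so property (4) of intersection numbers produces $c\in\bK$ with $v_f(f_{k-1}^{n_k}-cM)>n_k\overline{b_k}$. I set $f_k=f_{k-1}^{n_k}-cM$, and while its value still lies in $\bN\overline{b_0}+\cdots+\bN\overline{b_k}$ I keep subtracting admissible monomials (property (4) again) until the value escapes that sub-semigroup. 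The base case is the same device applied to $f_0$: start from $y$ and subtract suitable $c\,x^{j}$ until $v_f$ leaves $n\bN$.

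The degree bookkeeping is routine telescoping: $\deg_y(f_{k-1}^{n_k})=n_k\,n/e_{k-1}=n/e_k$, whereas each correction has $\deg_y\le\sum_{i=1}^{k-1}(n_i-1)\,n/e_{i-1}=n/e_{k-1}-1<n/e_k$, so $f_k$ stays monic of degree $n/e_k$, giving $(a_{k+1})$. The engine behind the value statements is a completeness lemma: once $f_0,\dots,f_{k-1}$ are in hand, every nonzero $g\in\bK[[x]][y]$ with $\deg_y g<n/e_k$ has $v_f(g)\in\bN\overline{b_0}+\cdots+\bN\overline{b_k}$. I would prove this by expanding $g$ in the $(f_0,\dots,f_{k-1})$-adic basis $x^{a_0}f_0^{a_1}\cdots f_{k-1}^{a_k}$ with $0\le a_i<n_i$, which is a free $\bK[[x]]$-basis of the polynomials of $y$-degree $<n/e_k$ via the mixed-radix bijection on $y$-degrees. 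By the uniqueness part of Lemma \ref{Bezout}, distinct basis monomials carry pairwise distinct values, so the Strong Triangle Inequality (property (3)) forbids any cancellation and forces $v_f(g)$ to equal the least monomial value, which lies in the stated sub-semigroup.

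Granting completeness, $(b_{k+1})$ follows in both directions: the construction guarantees $v_f(f_k)\in\Gamma(f)\setminus(\bN\overline{b_0}+\cdots+\bN\overline{b_k})$, whence $v_f(f_k)\ge\overline{b_{k+1}}$; conversely, taking a minimal realiser of $\overline{b_{k+1}}$, reducing it modulo $f$ (its $y$-degree is then forced to be $\ge n/e_k$ by completeness), and reading its $f_k$-adic expansion shows no overshoot, since the coefficients have value in the sub-semigroup while every positive power of $f_k$ contributes value $\ge v_f(f_k)$. Thus $v_f(f_k)=\overline{b_{k+1}}$, and $(c_{k+1})$ is immediate from $v_f(f_k)>n_k\overline{b_k}$. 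Finally $n_{k+1}>1$ is pure arithmetic: if $e_k\mid\overline{b_{k+1}}$ then $\overline{b_{k+1}}/e_k$ is a gap of the numerical semigroup $\langle\overline{b_0}/e_k,\dots,\overline{b_k}/e_k\rangle$, hence bounded by its conductor, forcing $\overline{b_{k+1}}\le\sum_{i=1}^k(n_i-1)\overline{b_i}-\overline{b_0}<n_k\overline{b_k}$ and contradicting $(c_{k+1})$; for $k=0$ the bound $n_1>1$ holds because $\overline{b_1}\notin n\bN$. The induction halts at $k=h$, where $e_h=1$ by Lemma \ref{complementario} forces the would-be $f_h$ of degree $n$ to coincide with $f$. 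I expect the delicate point to be the completeness lemma, and within it the exact, non-overshooting identification $v_f(f_k)=\overline{b_{k+1}}$: this is precisely where irreducibility of $f$, acting through the Strong Triangle Inequality, must be used in full to rule out cancellation and to exclude any value strictly between $n_k\overline{b_k}$ and $\overline{b_{k+1}}$.
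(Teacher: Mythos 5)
Your proposal follows the paper's own (Seidenberg-style) strategy very closely: Weierstrass preparation, induction on $k$, a completeness lemma (the paper's Property~I$_k$, Lemma \ref{Ik}, which you prove in an equivalent way via the mixed-radix expansion in the monomials $x^{a_0}f_0^{a_1}\cdots f_{k-1}^{a_k}$ together with the uniqueness part of Lemma \ref{Bezout}, where the paper uses an iterated $f_{l-1}$-adic expansion and a congruence argument), and an escape step that repeatedly subtracts monomials using property (4) of intersection numbers (the paper's Property~III$_k$, Lemma \ref{IIIk}). Your degree bookkeeping, the identification $v_f(f_k)=\overline{b_{k+1}}$, the deduction of ($c_{k+1}$), and your proof that $n_{k+1}>1$ (via the conductor bound --- correct, though longer than the paper's Remark \ref{n}) are all sound in substance.

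The one genuine gap is the termination of the loop ``keep subtracting admissible monomials until the value escapes the sub-semigroup.'' Each subtraction strictly increases $v_f$, but a strictly increasing sequence of integers need not leave $\bN\overline{b_0}+\cdots+\bN\overline{b_k}$ in finitely many steps unless the values are bounded above. The paper supplies this bound as Property~II$_k$ (Lemma \ref{IIk}): for \emph{every} $\psi$ with $\deg_y\psi<n/e_k$ one has $v_f(y^{n/e_k}+\psi)\leq\overline{b_{k+1}}$, proved by Euclidean division of a realiser of $\overline{b_{k+1}}$ by $y^{n/e_k}+\psi$, and crucially this is established \emph{before} the construction, for all candidate polynomials simultaneously. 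You prove the corresponding ``no overshoot'' bound only for the finished polynomial $f_k$, via its $f_k$-adic expansion; as a justification of termination this is circular, since $f_k$ exists only if the loop halts. The fix lies entirely within your own toolkit: run your expansion argument with an arbitrary monic $F$ of degree $n/e_k$ in place of $f_k$ (expand a realiser $g$ of $\overline{b_{k+1}}$ $F$-adically; the constant term has value in the sub-semigroup, hence different from $\overline{b_{k+1}}$, and if $v_f(F)>\overline{b_{k+1}}$ every other term has value $>\overline{b_{k+1}}$, so $v_f(g)\neq\overline{b_{k+1}}$, a contradiction); that is exactly Property~II$_k$ in different clothing. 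A second, harmless slip: the corrections in your loop may involve $f_{k-1}$ with exponent up to $n_k-1$, so their degree bound is $\sum_{i=1}^{k}(n_i-1)\,n/e_{i-1}=n/e_k-1$ rather than $n/e_{k-1}-1$; the needed conclusion, degree strictly less than $n/e_k$, still holds.
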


\noindent We give the proof of the Semigroup Theorem in Section \ref{proof-structure}.
The sequence $\overline{b_0},\ldots,\overline{b_h}$ is a Seidenberg $n$-characteristic
sequence and  will be called the {\em Seidenberg}  $n$-{\em characteristic}
of the branch $\{f=0\}$ (with respect to the regular branch  $\{x=0\}$). We will write  $\overline{\hbox {\rm char}_x}
f=(\overline{b_0},\ldots,\overline{b_h})$. Therefore  $\overline{\hbox {\rm char}_x}f$ is
determined by $n=v_f(x)$ and the
semigroup $\Gamma(f)$. Let $f_h$ be the distinguished polynomial associated with $f$
and let $\overline{b_{h+1}}=+\infty$. Then $\deg_y f_h=\frac{n}{e_h}=n$ and
$v_f(f_h)=\overline{b_{h+1}}=+\infty.$
The polynomials $f_0,f_1,\ldots,f_{h}\in \bK [[x]][y]$ will be called {\em key polynomials} of $f$. They
are not uniquely determined by $f$.

\begin{coro}
Suppose that two branches $\{f=0\}$ and $\{g=0\}$ intersect the axis
$\{x=0\}$ with the same multiplicity $n<+\infty$. Then
$\overline{\hbox{\rm char}}_x f=\overline{\hbox{\rm char}}_x g$ if and
only if $\{f=0\}$ and $\{g=0\}$ are equisingular.
\end{coro}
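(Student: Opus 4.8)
The plan is to reduce everything to the observation, recorded immediately after the Semigroup Theorem, that $\overline{\hbox{\rm char}}_x f$ is nothing but the $n$-minimal system of generators of $\Gamma(f)$ computed with $n=v_f(x)$, and to combine this with the uniqueness of that system established in the Preliminaries. Once this identification is in place, both implications of the corollary follow from the defining properties of the $n$-minimal system of generators: it generates the semigroup, and it is uniquely determined by the pair (semigroup, distinguished element $n$).

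First I would dispose of the easy implication. Suppose $\overline{\hbox{\rm char}}_x f=\overline{\hbox{\rm char}}_x g=(\overline{b_0},\ldots,\overline{b_h})$. Since an $n$-minimal system of generators generates its semigroup by definition, we get $\Gamma(f)=\overline{b_0}\bN+\cdots+\overline{b_h}\bN=\Gamma(g)$. By the definition of equisingularity (namely $\Gamma(f)=\Gamma(g)$), the branches $\{f=0\}$ and $\{g=0\}$ are equisingular.

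For the converse, suppose $\{f=0\}$ and $\{g=0\}$ are equisingular, so that $\Gamma(f)=\Gamma(g)=:G$. The hypothesis that both branches meet $\{x=0\}$ with the same multiplicity gives $v_f(x)=v_g(x)=n$, and this common value lies in $G$, since $n=v_f(x)\in\Gamma(f)=G$ (here $\{f=0\}\neq\{x=0\}$, because $n<+\infty$). Consequently both $\overline{\hbox{\rm char}}_x f$ and $\overline{\hbox{\rm char}}_x g$ are the $n$-minimal system of generators of one and the same semigroup $G$ relative to one and the same element $n$. By the uniqueness of the $n$-minimal system of generators, the two sequences coincide.

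There is no genuine obstacle in this argument: the content is entirely in recognizing that the Seidenberg characteristic is just the $n$-minimal system of generators. The only point that deserves attention is the role of the hypothesis $v_f(x)=v_g(x)=n$. This common multiplicity is precisely what allows us to read both characteristics relative to the same $n$, so that equality of the semigroups forces equality of the sequences. If the multiplicities were allowed to differ, two equisingular branches could have distinct characteristics, since the $n$-minimal system of generators depends on the chosen element $n\in G$; the equal-multiplicity hypothesis removes this ambiguity.
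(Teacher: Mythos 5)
Your proof is correct and follows exactly the route the paper intends: the corollary is stated there as an immediate consequence of the identification of $\overline{\hbox{\rm char}}_x f$ with the $n$-minimal system of generators of $\Gamma(f)$, so that the sequence both generates the semigroup (giving one implication) and is uniquely determined by the pair $(\Gamma(f),n)$ (giving the other). Your explicit treatment of the two directions, including the remark on why the equal-multiplicity hypothesis is needed, matches the paper's implicit argument.
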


\medskip

\noindent Let $\overline{\beta_0},\ldots,\overline{\beta_g}$ be the minimal
system of generators of the semigroup $\Gamma(f)$ ($\overline{\beta_0}=\min \{\Gamma(f)\backslash\{0\}\}=\ord f$).
We put $\overline{\hbox{\rm char}}f=(\overline{\beta_0},\ldots,\overline{\beta_g}).$
Note that $\overline{\hbox{\rm char}}f=\overline{\hbox{\rm char}}_x f$ if and only if $v_f(x)=\ord f$. 

\begin{coro}[Inversion formulae]
Let $\overline{\hbox{\rm char}}_x f=(\overline{b_0},\overline{b_1},\ldots,\overline{b_h})$. Then $\overline{\hbox{\rm char}}f=\overline{\hbox{\rm char}}_x f$ if and only if $\overline{b_0}<\overline{b_1}$. If $\overline{b_1}<\overline{b_0}$ and $\overline{b_0}\not\equiv 0$ (mod
$\overline{b_1})$ then $\overline{\hbox{\rm char}}f=(\overline{b_1},\overline{b_0},\ldots,\overline{b_h})$. If $\overline{b_0}\equiv 0$ (mod
$\overline{b_1})$ then $\overline{\hbox{\rm char}}f=(\overline{b_1},\overline{b_2},\ldots,\overline{b_h})$.
\end{coro}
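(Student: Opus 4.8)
The plan is to reduce the entire statement to Proposition \ref{prop-min}(3), which already computes the minimal system of generators of a semigroup generated by a characteristic sequence. By the Semigroup Theorem (Theorem \ref{structure}) the sequence $\overline{\hbox{\rm char}}_x f=(\overline{b_0},\ldots,\overline{b_h})$ is a Seidenberg $n$-characteristic sequence, with $n=\overline{b_0}=v_f(x)$, and it is the $n$-minimal system of generators of $\Gamma(f)$; in particular $\Gamma(f)=\overline{b_0}\bN+\cdots+\overline{b_h}\bN$. Hence $\Gamma(f)$ is exactly a semigroup of the form treated in Proposition \ref{prop-min}, with $v_i=\overline{b_i}$ for $i\in\{0,\ldots,h\}$.

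First I would apply Proposition \ref{prop-min}(3) verbatim. Since $\overline{\hbox{\rm char}}f$ is by definition the minimal system of generators of $\Gamma(f)$, that proposition yields at once that $\overline{\hbox{\rm char}}f$ equals $(\overline{b_0},\overline{b_1},\ldots,\overline{b_h})$ when $\overline{b_0}<\overline{b_1}$, equals $(\overline{b_1},\overline{b_0},\ldots,\overline{b_h})$ when $\overline{b_1}<\overline{b_0}$ and $\overline{b_0}\not\equiv 0$ (mod $\overline{b_1}$), and equals $(\overline{b_1},\overline{b_2},\ldots,\overline{b_h})$ when $\overline{b_0}\equiv 0$ (mod $\overline{b_1}$). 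These are precisely the last three assertions of the corollary, so nothing further is needed for them.

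It remains to establish the biconditional $\overline{\hbox{\rm char}}f=\overline{\hbox{\rm char}}_x f\iff\overline{b_0}<\overline{b_1}$. The \emph{if} direction is the first case above. For the \emph{only if} direction I would note that the three cases partition the possibilities: by construction $\overline{b_1}=\min(\Gamma(f)\setminus\overline{b_0}\bN)$ is not a multiple of $\overline{b_0}$, so $\overline{b_0}\neq\overline{b_1}$, and thus either $\overline{b_0}<\overline{b_1}$ or $\overline{b_1}<\overline{b_0}$. In the two latter cases the leading entry of $\overline{\hbox{\rm char}}f$ is $\overline{b_1}\neq\overline{b_0}$, whence $\overline{\hbox{\rm char}}f\neq\overline{\hbox{\rm char}}_x f$; therefore equality forces $\overline{b_0}<\overline{b_1}$. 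Equivalently one may invoke the remark preceding the corollary, that $\overline{\hbox{\rm char}}f=\overline{\hbox{\rm char}}_x f$ iff $v_f(x)=\ord f$, together with Proposition \ref{prop-min}(2), which gives $\ord f=\min(\Gamma(f)\setminus\{0\})=\min(\overline{b_0},\overline{b_1})$.

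There is no genuine obstacle here, since all the substantive work is already carried out in Proposition \ref{prop-min}: the corollary is simply its translation from the arithmetic setting of semigroups generated by characteristic sequences to the geometric setting of the semigroup $\Gamma(f)$ of a branch. The only point demanding a short argument is the \emph{only if} half of the biconditional, and that rests on the elementary observation $\overline{b_0}\neq\overline{b_1}$.
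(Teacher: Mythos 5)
Your proof is correct and follows essentially the same route as the paper: the paper's own proof is precisely ``the corollary follows from the Semigroup Theorem and from the third claim of Proposition \ref{prop-min}.'' Your additional care with the \emph{only if} direction (via $\overline{b_1}\notin\overline{b_0}\bN$, hence $\overline{b_0}\neq\overline{b_1}$, or equivalently via Proposition \ref{prop-min}(2) and the remark that $\overline{\hbox{\rm char}}f=\overline{\hbox{\rm char}}_x f$ iff $v_f(x)=\ord f$) only makes explicit what the paper leaves implicit.
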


\noindent \begin{proof}
The corollary follows from the Semigroup Theorem and from the third claim of Proposition \ref{prop-min}.
\end{proof}

\medskip

\medskip

\noindent Let  $\overline{\cal O}$  be the normalization of the ring ${\cal O}=\bK[[x,y]]/(f)$ and let ${\cal C}$ be
 the conductor ideal of  $\overline{\cal O}$  in  ${\cal O}$. 
Put $c(f)=\dim_{\bK}  \overline{\cal O}/ {\cal C}$.
Then $c(f)$ is the smallest element of $\Gamma(f)$ such that $c(f)+N\in \Gamma(f)$ for any integer $N\geq 0$ (see \cite{Campillo-libro}, p. 136).

\begin{coro}[Conductor formula]
\label{ST2}
If $\overline{\hbox{\rm char}}_x f=(\overline{b_0},\overline{b_1},\ldots,\overline{b_h})$ then \\ $c(f)=\sum_{k=1}^h
(n_k-1)\overline{b_k}-\overline{b_0}+1$.
\end{coro}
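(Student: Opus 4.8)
The plan is to reduce the statement to the purely arithmetical conductor computation already performed in Proposition \ref{prop-min}, using the Semigroup Theorem to supply the hypotheses. First I would invoke Theorem \ref{structure}: it asserts that the $n$-minimal system of generators $(\overline{b_0},\ldots,\overline{b_h})$ of $\Gamma(f)$ is a Seidenberg $n$-characteristic sequence and that $\Gamma(f)=\overline{b_0}\bN+\cdots+\overline{b_h}\bN$. Thus $G:=\Gamma(f)$ is exactly a semigroup generated by an $n$-characteristic sequence, which is precisely the situation covered by Proposition \ref{prop-min} with $v_k=\overline{b_k}$.

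Before citing that proposition I would reconcile the two definitions of the auxiliary integers. In the present section one sets $e_0=n$, $e_k=\gcd(e_{k-1},\overline{b_k})$ and $n_k=\frac{e_{k-1}}{e_k}$, whereas the preliminaries use $d_k=\gcd(\overline{b_0},\ldots,\overline{b_k})$ and $n_k=\frac{d_{k-1}}{d_k}$. By associativity of the $\gcd$ an immediate induction gives $e_k=\gcd(\overline{b_0},\ldots,\overline{b_k})=d_k$, so the sequences $(n_1,\ldots,n_h)$ agree and the right-hand side $\sum_{k=1}^h(n_k-1)\overline{b_k}-\overline{b_0}+1$ is unambiguous. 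With this identification, the core step is Proposition \ref{prop-min}(4) applied to $G=\Gamma(f)$: it shows that $c:=\sum_{k=1}^h(n_k-1)\overline{b_k}-\overline{b_0}+1$ is the smallest element of $\Gamma(f)$ such that every integer $\geq c$ lies in $\Gamma(f)$.

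It then remains only to match this with $c(f)$. The normalization description recalled just above the statement identifies $c(f)=\dim_{\bK}\overline{\cal O}/{\cal C}$ with the smallest element of $\Gamma(f)$ from which all larger integers belong to $\Gamma(f)$. Since in a numerical semigroup (and $\Gamma(f)$ is numerical by Lemma \ref{complementario}) there is a unique integer with that minimality property, the two numbers coincide and $c(f)=c$, as claimed. I do not expect a genuine obstacle here: the entire arithmetic content sits inside Proposition \ref{prop-min}(4), whose proof rests on the B\'ezout normal form of Lemma \ref{Bezout}, while the geometry enters only through the Semigroup Theorem and the standard interpretation of $c(f)$ as the conductor of $\Gamma(f)$. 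The one point that genuinely requires care is the bookkeeping identification $e_k=d_k$ ensuring the $n_k$ of the two sections agree; once that is checked, the corollary is a direct citation.
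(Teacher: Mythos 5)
Your proof is correct and takes essentially the same route as the paper, whose entire proof is the one-line instruction to combine the Semigroup Theorem with the fourth claim of Proposition \ref{prop-min}. The extra details you supply (the identification $e_k=d_k$ and the matching of $c(f)$ with the conductor of $\Gamma(f)$ via the normalization description) are exactly the bookkeeping the paper leaves implicit.
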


\noindent \begin{proof}
Use the Semigroup Theorem and  the fourth claim of Proposition \ref{prop-min}. 
\end{proof}

\medskip
\noindent {\bf Notes}

\noindent Seidenberg gave in \cite{Seidenberg} the description of the semigroup of a zero-dimensio\-nal valuation of
the extension $\bK(x,y)/\bK$ (\cite{Seidenberg}, Theorem 6, p. 398) in terms of generators. The case of the semigroup associated with an algebroid plane branch was studied by Azevedo in \cite{Azevedo}. His method based on the Ap\`ery sequences was extended by Angerm\"uller in \cite{Angermuller} to the case of arbitrary characteristic. For different characterizations of the numerical semigroups we refer  the reader to \cite{Hefez}, Chapter 6.

\medskip

\noindent If $n=v_f(x)\not\equiv 0$ (mod char $\bK$) the Puiseux series are available. Zariski in \cite{Zariski-1986} (see also
\cite{G-P2}, \cite{Popescu}) constructed the sequence $\overline{\beta_0},\ldots,\overline{\beta_g}$ and the corresponding sequence of key polynomials by using Puiseux series expansion determined by the equation
$f(x,y)=0$. This method turned out efficient when applied to the semigroups of integers associated with meromorphic curves (see 
\cite{Abhyankar-Moh}, \cite{Abhyankar3}). A proof of the Semigroup Theorem based on the Hamburger-Noether expansion was given by Russel in \cite{Russell} and Campillo in \cite{Campillo-libro}, \cite{Campillo}. To describe classes of equisingular plane algebroid branches one uses characteristic pairs (see \cite{Moh},\cite{LJ}).

\section{A proof of the Semigroup Theorem}
\label{proof-structure}

\noindent Let $\{f=0\}$ be a branch such that $n=i_0(f,x)<+\infty$ and let
$\overline{b_0},\ldots,\overline{b_h}$ be the $n$-minimal system of generators of the semigroup $\Gamma(f)$. Observe that by the Weierstrass Division Theorem:

\[\Gamma(f)=\{v_f(g)\;:\;g\in \bK[[x]][y]\backslash\{0\}\;:\;\deg_y g<n\}.\]

\begin{prop}
\label{maximal}
There exists a monic polynomial $f_0\in \bK [[x]][y]$  such that
\begin{enumerate}
\item[\hbox{\rm ($a_1$)}] $\hbox{\rm deg}_y(f_{0})=\frac{n}{e_{0}}=1$,
\item [\hbox{\rm ($b_1$)}] $v_f(f_{0})=\overline{b_{1}}$.
\end{enumerate}
\end{prop}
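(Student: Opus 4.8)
The polynomial I am after is a linear one, $f_0 = y - a(x)$ with $a(x) \in \bK[[x]]$, of \emph{maximal contact} with the branch: I will choose $a(x)$ so as to make $v_f(y - a(x))$ as large as possible. Since $e_0 = n$, requirement $(a_1)$ only asks $\deg_y f_0 = 1$, which holds automatically; the whole content is $(b_1)$. Thus the plan reduces to two tasks: first produce some $a(x)$ for which $v_f(y - a(x)) \notin n\bN$, and then show that any such value is forced to equal $\overline{b_1}$.

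For the first task I would build $a(x)$ by successive approximation. Start with $a := 0$; here $v_f(y) = \ord f(x,0)$ is finite because $\{f=0\} \neq \{y=0\}$ (otherwise $v_f(x) = 1$, not $n \ge 2$). As long as $v_f(y - a) = jn = v_f(x^j)$ lies in $n\bN$, property $(4)$ of the intersection multiplicity furnishes a (necessarily nonzero) constant $c \in \bK$ with $v_f\bigl((y-a) - c\,x^{j}\bigr) > v_f(y-a)$, and I replace $a$ by $a + c\,x^{j}$. Each step strictly increases the finite value $v_f(y-a)$, so the exponent $j$ strictly increases; the successive approximants therefore converge $x$-adically to a limit $a^\ast \in \bK[[x]]$.

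The crucial point is that this procedure must stop, i.e. must reach a value outside $n\bN$. Suppose not, and write $a^{(k)}$ for the $k$-th approximant, $\ell_k = v_f(y - a^{(k)}) \in n\bN$. The tail $a^\ast - a^{(k)}$ is $x^{j}$ times a unit for the relevant $j$, so $v_f(a^\ast - a^{(k)}) = \ell_k$ as well; applying property $(3)$ (the strong triangle inequality) to $y - a^\ast = (y - a^{(k)}) - (a^\ast - a^{(k)})$ gives $v_f(y - a^\ast) \ge \ell_k$ for every $k$, whence $v_f(y - a^\ast) = +\infty$. This is impossible, since $\{f=0\} \neq \{y - a^\ast = 0\}$ (again because $v_f(x) = n \ge 2$ while the right-hand branch meets $\{x=0\}$ transversally). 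Hence some approximant, which I rename $f_0 = y - a^\ast$, has $M := v_f(f_0) \notin n\bN$.

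It remains to identify $M$ with $\overline{b_1}$, and this is the step I expect to be the real obstacle: a priori the maximal-contact value could be any element of $\Gamma(f) \setminus n\bN$, and I must rule out everything above $\overline{b_1}$. Since $M \in \Gamma(f)\setminus n\bN$ we already have $M \ge \overline{b_1}$. For the reverse, suppose $M > \overline{b_1}$, pick $g$ with $\deg_y g < n$ and $v_f(g) = \overline{b_1}$ (available because $\Gamma(f) = \{v_f(g) : \deg_y g < n\}$), and expand it as a polynomial in the monic linear $f_0$, say $g = \sum_{i=0}^{\deg_y g} c_i(x)\,f_0^{\,i}$ with $c_i \in \bK[[x]]$ and every index $i < n$. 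Each summand has value $v_f(c_i f_0^{\,i}) = n\,\ord c_i + iM$; the $i=0$ summand lies in $n\bN$, while every summand with $i \ge 1$ has value $\ge M > \overline{b_1}$. By property $(3)$ the value $v_f(g) = \overline{b_1}$ must coincide with the unique least summand-value, which can only be the $i=0$ term; hence $c_0 \neq 0$ and $\overline{b_1} = n\,\ord c_0 \in n\bN$ --- contradicting $\overline{b_1} \notin n\bN$. Therefore $M = \overline{b_1}$, giving $(b_1)$. This expansion-against-$f_0$ device, driven by the STI, is exactly the mechanism to be reused for the higher key polynomials $f_1, \ldots, f_{h-1}$.
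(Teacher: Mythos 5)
Your proof is correct, and it reaches the two halves of the statement by routes that only partly coincide with the paper's. Your approximation step (applying property (4) against $x^{j}$ whenever $v_f(y-a)\in n\bN$) is exactly the paper's Property III$_0$, and your final identification of $M$ with $\overline{b_1}$ --- expanding a witness $g$ of value $\overline{b_1}$ in powers of $f_0$ and letting the STI force the minimum onto the $i=0$ coefficient --- is in substance the paper's Property II$_0$ (Euclidean division by the linear polynomial plus the STI), recast as a proof by contradiction; the paper proves that upper bound for \emph{every} monic linear polynomial at once, while you prove it only for the maximal-contact one you constructed. Where you genuinely diverge is the termination of the approximation process: the paper gets termination for free from II$_0$, since the values $v_f(y-a)$ form a strictly increasing sequence of multiples of $n$ bounded above by $\overline{b_1}$, whereas you pass to the $x$-adic limit $a^{\ast}$ and rule out $v_f(y-a^{\ast})=+\infty$ on the grounds that $\{y-a^{\ast}=0\}$ meets $\{x=0\}$ with multiplicity $1<n$ and so cannot coincide with the branch $\{f=0\}$. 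Your termination argument costs the completeness of $\bK[[x]]$ and the dichotomy ``$i_0=+\infty$ iff common branch,'' but needs no a priori bound on the values (it would produce a linear polynomial with $v_f\notin n\bN$ even before knowing $\overline{b_1}<+\infty$); the paper's argument is effective (at most $\overline{b_1}/n$ corrections) and, more importantly, II$_0$ is formulated so that it iterates verbatim to Properties I$_k$, II$_k$, III$_k$, which is the form the induction in Proposition \ref{intermedio} actually consumes --- as your closing sentence correctly anticipates, though note that for $k\geq 1$ the pairwise distinctness of the summand values in the $f_{k-1}$-adic expansion is no longer automatic and requires the coprimality argument of Lemma \ref{Ik}.
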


\noindent To prove Proposition \ref{maximal} we check the following three properties:
\begin{lema}[Property I$_0$]
If $\psi$ is a non-zero polynomial with $\deg_y \psi<1$ then $v_f(\psi)\in \bN \overline{b_{0}}$.
\end{lema}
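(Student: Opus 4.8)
The plan is to exploit how restrictive the hypothesis $\deg_y \psi < 1$ is. A polynomial in $\bK[[x]][y]$ of $y$-degree strictly less than $1$ has no $y$-dependence at all, so $\psi = \psi(x) \in \bK[[x]]$ is simply a nonzero power series in the single variable $x$. The whole statement then reduces to computing $v_f$ of such a series, and the target value $\bN\,\overline{b_0}$ with $\overline{b_0} = n = v_f(x)$ strongly suggests that $v_f(\psi)$ will come out as an integer multiple of $v_f(x)$.

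First I would factor out the order: write $\psi(x) = x^m u(x)$, where $m = \ord \psi(x) \geq 0$ and $u(x) \in \bK[[x]]$ is a unit (that is $u(0)\neq 0$), which is possible precisely because $\psi$ is nonzero. Then I would invoke the additivity of the intersection multiplicity from Subsection 1.2, namely $v_f(gh) = v_f(g) + v_f(h)$, to split $v_f(\psi) = v_f(x^m) + v_f(u)$. The factor $u$ is a unit in $\bK[[x,y]]$, so $(f,u)$ is the whole ring and $v_f(u) = i_0(f,u) = \dim_{\bK}\bK[[x,y]]/(f,u) = 0$. Applying additivity again to $x^m = x\cdots x$ gives $v_f(x^m) = m\,v_f(x) = m\,\overline{b_0}$.

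Combining these yields $v_f(\psi) = m\,\overline{b_0} \in \bN\,\overline{b_0}$, which is exactly the claim; the edge case $m=0$, in which $\psi$ is itself a unit, is covered since then $v_f(\psi) = 0 = 0\cdot\overline{b_0}\in \bN\,\overline{b_0}$. There is no real obstacle here: the only point that needs to be recognized is that $\deg_y\psi<1$ kills all $y$-dependence, after which everything follows from additivity of $i_0$ and the identification $\overline{b_0}=v_f(x)$. The one fact worth stating explicitly is why a unit has intersection number $0$, which is immediate from the definition $i_0(f,g)=\dim_{\bK}\bK[[x,y]]/(f,g)$.
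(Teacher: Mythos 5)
Your proof is correct and follows essentially the same route as the paper: the paper simply notes that $\deg_y\psi<1$ forces $\psi\in\bK[[x]]$ and then asserts $v_f(\psi)=(\ord\psi)\,v_f(x)\in\bN\,\overline{b_0}$, which is exactly the identity you derive by factoring $\psi=x^m u$ and using additivity of $i_0$ together with $v_f(\mathrm{unit})=0$. You merely make explicit the one-line computation the paper leaves implicit.
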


\noindent \begin{proof}
Obviously $\psi  \in \bK [[x]]$. Thus $v_f(\psi)=(\ord \psi)v_f(x)\in \bN \overline{b_{0}}$.
\end{proof}

\begin{lema}[Property II$_0$]
If $\deg_y \psi<1$ then $v_f(y+\psi)\leq \overline{b_{1}}$.
\end{lema}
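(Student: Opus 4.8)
The plan is to argue by contradiction, transport everything to a good parametrization, and reduce the statement to the single arithmetic fact that a nonzero power series lying in the subring $\bK[[\phi]]$ has $t$-order divisible by $n$. Note first that $\deg_y\psi<1$ means $\psi\in\bK[[x]]$, and that $\overline{b_1}=\min(\Gamma(f)\setminus n\bN)$ belongs to $\Gamma(f)$ but is \emph{not} a multiple of $n=\overline{b_0}$ (this is what I will ultimately contradict).

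First I would fix a good parametrization $(\phi(t),\eta(t))$ of the branch $\{f=0\}$ furnished by the Normalization Theorem, so that $f(\phi,\eta)=0$ and $\ord\phi=i_0(f,x)=n$. By Theorem \ref{classical-formula} one has $v_f(g)=\ord g(\phi,\eta)$ for every series $g$; in particular $v_f(y+\psi)=\ord(\eta(t)+\psi(\phi(t)))$. The decisive elementary observation is that, since $\ord\phi=n$, every nonzero element of $\bK[[\phi]]\subset\bK[[t]]$ has $t$-order a multiple of $n$ (its lowest term is $c_{k_0}\phi^{k_0}$ of order $nk_0$, and the tails of the higher powers cannot affect the \emph{leading} order). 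Consequently any substitution of the form $g(\phi,-\psi(\phi))$, obtained by plugging $y=-\psi(x)\in\bK[[x]]$ into $g$ and then $x=\phi(t)$, lies in $\bK[[\phi]]$ and hence has order in $n\bN$ whenever it is nonzero.

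Now suppose for contradiction that $v_f(y+\psi)>\overline{b_1}$. Since $\overline{b_1}\in\Gamma(f)$, I choose $g\not\equiv 0\ (\mathrm{mod}\ f)$ with $v_f(g)=\overline{b_1}$, so that $\ord g(\phi,\eta)=\overline{b_1}$. Writing $u=\eta$ and $w=-\psi(\phi)$, the hypothesis reads $\ord(u-w)>\overline{b_1}$. The crux of the argument is the substitution estimate: because the difference quotient $\bigl(g(X,Y_1)-g(X,Y_2)\bigr)/(Y_1-Y_2)$ is again a power series in $X,Y_1,Y_2$, one gets $g(\phi,u)-g(\phi,w)=(u-w)\,G(t)$ with $\ord G\ge 0$, whence $\ord\bigl(g(\phi,u)-g(\phi,w)\bigr)\ge\ord(u-w)>\overline{b_1}$. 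Since $\ord g(\phi,u)=\overline{b_1}$ is strictly smaller than this, the identity $g(\phi,w)=g(\phi,u)-\bigl(g(\phi,u)-g(\phi,w)\bigr)$ forces $\ord g(\phi,w)=\overline{b_1}$.

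This gives the contradiction: $g(\phi,w)=g(\phi,-\psi(\phi))$ is a nonzero element of $\bK[[\phi]]$, so by the second paragraph its order is a multiple of $n$; thus $\overline{b_1}\in n\bN$, contradicting $\overline{b_1}=\min(\Gamma(f)\setminus n\bN)$. Hence $v_f(y+\psi)\le\overline{b_1}$. The step I expect to need the most care is precisely the substitution estimate of the third paragraph — certifying that replacing $\eta$ by its $\bK[[\phi]]$-approximant $-\psi(\phi)$ perturbs $g(\phi,\cdot)$ only in orders $>\overline{b_1}$, via the fact that the difference quotient of a bivariate power series is again a power series — together with the clean statement and use of the leading-order divisibility; the remainder is bookkeeping. (It is worth remarking that combined with the reverse inequality this bound is sharp, which is what will later yield $v_f(f_0)=\overline{b_1}$ in Proposition \ref{maximal}.)
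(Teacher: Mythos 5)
Your argument is correct, but it is the paper's proof transported to the parametrization side, which is precisely what the paper's program tries to avoid. The paper divides $g$ by the monic linear polynomial $y+\psi$, writes $g=Q\cdot(y+\psi)+\psi_1$ with $\psi_1\in\bK[[x]]$, notes that $v_f(\psi_1)\in\bN\overline{b_0}$ by Property I$_0$ while $v_f(g)=\overline{b_1}\notin\bN\overline{b_0}$, and concludes from $\overline{b_1}=v_f(g)\geq\inf\{v_f(g),v_f(\psi_1)\}=v_f(Q\cdot(y+\psi))\geq v_f(y+\psi)$. Your proof has the same skeleton in disguise: $g(x,-\psi(x))$ \emph{is} the remainder $\psi_1$ of that division (remainder theorem for a degree-one monic divisor), your difference-quotient estimate $\ord\bigl(g(\phi,u)-g(\phi,w)\bigr)\geq\ord(u-w)$ is exactly $v_f(g-\psi_1)=v_f(Q)+v_f(y+\psi)\geq v_f(y+\psi)$, and the observation that a nonzero element of $\bK[[\phi]]$ has $t$-order in $n\bN$ is the parametrized form of Property I$_0$. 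What the paper's formulation buys is uniformity: the identical division-plus-ultrametric argument gives Property II$_k$ for every $k$ (dividing by $y^{n/e_k}+\psi$ and invoking Property I$_k$ for the remainder), whereas the evaluation trick $\psi_1=g(x,-\psi(x))$ and the difference quotient are specific to divisors of degree one in $y$ and do not generalize. Two small points to tighten: (i) if $\psi(0)\neq 0$ then $-\psi(\phi)$ has nonzero constant term and the substitutions $g(\phi,-\psi(\phi))$ and $G(\phi,u,w)$ are undefined for a general $g\in\bK[[x,y]]$; either dispose of that case first (then $y+\psi$ is a unit and $v_f(y+\psi)=0\leq\overline{b_1}$ trivially) or take $g$ to be a $y$-polynomial, which is legitimate since by Weierstrass division $\Gamma(f)$ is realized by elements of $\bK[[x]][y]$ of $y$-degree less than $n$, as recalled at the start of Section \ref{proof-structure}. (ii) You should record that $g(x,-\psi(x))\neq 0$, which indeed follows because you have shown its $t$-order equals $\overline{b_1}<+\infty$.
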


\noindent \begin{proof}
Let $g\in \bK [[x]][y]$ be such that $v_f(g)= \overline{b_{1}}$. By the Euclidean division we get
$g=Q \cdot (y+\psi)+\psi_1$ with $\psi_1 \in  \bK [[x]]$. Clearly $v_f(g)\neq v_f(\psi_1)$ and we get
$\overline{b_{1}}\geq \inf \{v_f(g),v_f(\psi_1)\}=v_f(g-\psi_1)=v_f(Q \cdot (y+\psi))\geq v_f(y+\psi)$.
\end{proof}

\begin{lema}[Property III$_0$]
If $\psi\in \bK [[x]]$ and $v_f(y+\psi)\in  \bN \overline{b_{0}}$ then there exists a power series
$\overline{\psi}\in \bK[[x]]$
such that $v_f(y+\overline{\psi})>v_f(y+\psi)$.
\end{lema}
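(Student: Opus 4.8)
The plan is to exploit the hypothesis $v_f(y+\psi)\in\bN\overline{b_0}$ by comparing $y+\psi$ with a suitable power of $x$. Since $\overline{b_0}=n=v_f(x)$, every element of $\bN\overline{b_0}$ has the form $m\overline{b_0}=m\,v_f(x)=v_f(x^m)$ for some integer $m\geq 0$. Thus the assumption says precisely that there is an $m\geq 0$ with $v_f(y+\psi)=v_f(x^m)$; in other words the two series $y+\psi$ and $x^m$ have one and the same $f$-value, which puts us in position to cancel leading contributions.

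First I would record that $v_f(y+\psi)$ is finite. Working, as in Properties I$_0$ and II$_0$, with $f$ a distinguished polynomial of degree $n$ in $y$, the polynomial $y+\psi$ satisfies $\deg_y(y+\psi)=1<n$, so $f$ cannot divide it; hence $v_f(y+\psi)<+\infty$, and $v_f(x^m)<+\infty$ trivially. This is the only point that needs a small check, and it is exactly what licenses the cancellation step that follows. With $g=y+\psi$ and $h=x^m$ we then have $v_f(g)=v_f(h)<+\infty$, so the fourth of the basic properties of intersection numbers recalled in Section~1.2 applies: there is a constant $c\in\bK$ with $v_f(g-ch)>v_f(g)$, that is $v_f\bigl((y+\psi)-c\,x^m\bigr)>v_f(y+\psi)$.

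To conclude I would set $\overline{\psi}=\psi-c\,x^m$, which again lies in $\bK[[x]]$, so that $y+\overline{\psi}=(y+\psi)-c\,x^m$ and therefore $v_f(y+\overline{\psi})>v_f(y+\psi)$, as required. I expect no serious obstacle: once the hypothesis is read as $v_f(y+\psi)=v_f(x^m)$, the statement is an immediate application of the standard cancellation property of the valuation $v_f$. The only care needed is the finiteness remark above (so that property~(4) is applicable) and the observation that subtracting $c\,x^m$ keeps the correction inside $\bK[[x]]$, so that $\overline{\psi}$ is again an admissible coefficient series.
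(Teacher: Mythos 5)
Your proof is correct and is essentially the paper's own argument: write $v_f(y+\psi)=v_f(x^a)$ for some $a\in\bN$, apply the cancellation property (4) of intersection numbers to get $c\in\bK$ with $v_f(y+\psi-cx^a)>v_f(y+\psi)$, and set $\overline{\psi}=\psi-cx^a$. Your explicit finiteness check is harmless but redundant, since $v_f(y+\psi)\in\bN\overline{b_0}$ already forces $v_f(y+\psi)$ to be a finite natural number.
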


\noindent \begin{proof}
There exists an integer $a\geq 0$ such that $v_f(y+\psi)=a \overline{b_{0}}=v_f(x^a)$. Therefore there  is an element $c\in \bK$
such that $v_f(y+\psi-cx^a)>v_f(y+\psi)$. We put  $\overline{\psi}=\psi-cx^a$. \end{proof}

\medskip

\noindent \begin{proof}[Proof of Proposition \ref{maximal}] From Properties (II$_0$) and (III$_0$) it follows that there exists
a monic polynomial $f_0$ of degree $1$ such that $v_f(f_0)\not\in  \bN \overline{b_{0}}$. By definition of $\overline{b_{1}}$
we get $v_f(f_0)\geq \overline{b_{1}}$. The equality follows from  Property (II$_0$).
\end{proof}

\begin{prop}
\label{intermedio}
Suppose that there exist monic polynomials $f_0,f_1,\ldots,f_{k-1}$
in $\bK[[x]][y]$ such that
\begin{enumerate}
\item[\hbox{\rm ($a_i$)}] $\hbox{\rm deg}_y(f_{i-1})=\frac{n}{e_{i-1}}$,
\item [\hbox{\rm ($b_i$)}] $v_f(f_{i-1})=\overline{b_{i}}$
\noindent for $ i\in \{1,\ldots,k\}$,

\item [\hbox{\rm ($c_i$)}] $n_{i-1}\overline{b_{i-1}}<\overline{b_{i}}$
\noindent for $ i \in \{2,\ldots,k\}$.
\end{enumerate}

\noindent Then there exists a monic polynomial $f_k\in \bK[[x]][y]$ such that

\begin{enumerate}
\item[\hbox{\rm ($a_{k+1}$)}] $\hbox{\rm deg}_y(f_{k})=\frac{n}{e_{k}}$,
\item [\hbox{\rm ($b_{k+1}$)}] $v_f(f_{k})=\overline{b_{k+1}}$,
\item [\hbox{\rm ($c_{k+1}$)}] $n_{k}\overline{b_{k}}<\overline{b_{k+1}}$.
\end{enumerate}
 \end{prop}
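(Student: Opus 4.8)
The plan is to imitate the three-step structure (Properties I$_0$, II$_0$, III$_0$) of the base case, Proposition \ref{maximal}, but now relative to the whole block of already-constructed key polynomials $f_0,\dots,f_{k-1}$. The candidate for $f_k$ will be a monic polynomial of the form $f_{k-1}^{n_k}-\theta$ with $\deg_y\theta<\frac{n}{e_k}$; since $f_{k-1}^{n_k}$ is monic of degree $n_k\cdot\frac{n}{e_{k-1}}=\frac{n}{e_k}$, property $(a_{k+1})$ will hold automatically for every such $\theta$.

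First I would establish the analogue of Property I$_0$: every nonzero polynomial $g\in\bK[[x]][y]$ with $\deg_y g<\frac{n}{e_k}$ satisfies $v_f(g)\in\bN\overline{b_0}+\cdots+\bN\overline{b_k}$. The mechanism is iterated Euclidean division by the monic polynomials $f_{k-1},f_{k-2},\dots,f_0$, which writes $g$ uniquely as a sum of monomials $a_j(x)\,f_0^{j_0}\cdots f_{k-1}^{j_{k-1}}$ with $a_j\in\bK[[x]]$ and $0\le j_i<n_{i+1}$ (the degree bookkeeping uses $\deg_y f_{i-1}=\frac{n}{e_{i-1}}$ and $\frac{n}{e_i}=n_i\frac{n}{e_{i-1}}$). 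Each monomial has value $(\ord a_j)\overline{b_0}+j_0\overline{b_1}+\cdots+j_{k-1}\overline{b_k}$, which is precisely a Bézout representation in the sense of Lemma \ref{Bezout} for the sequence $\overline{b_0},\dots,\overline{b_k}$, whose gcd's are the $e_i$ and whose ratios are the $n_i$. Hence, by the uniqueness in that lemma, distinct monomials have distinct values, so the ultrametric property forces $v_f(g)$ to equal the uniquely attained minimum of these values, which lies in the subsemigroup. This distinctness step, which makes $v_f$ single out one dominating monomial, is the crux on which everything rests.

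Next, the upper bound (Property II$_k$): for every $\theta$ with $\deg_y\theta<\frac{n}{e_k}$ one has $v_f(f_{k-1}^{n_k}-\theta)\le\overline{b_{k+1}}$. I would take a polynomial $g$ with $v_f(g)=\overline{b_{k+1}}$ and divide it by the monic candidate $P=f_{k-1}^{n_k}-\theta$, obtaining $g=QP+R$ with $\deg_y R<\frac{n}{e_k}$. By Property I$_k$, $v_f(R)\in\bN\overline{b_0}+\cdots+\bN\overline{b_k}$, whereas $\overline{b_{k+1}}=v_f(g)$ is, by its definition as the next $n$-minimal generator, not in that subsemigroup; hence $v_f(g)\ne v_f(R)$, so by the ultrametric property $v_f(QP)=v_f(g-R)=\inf\{v_f(g),v_f(R)\}\le\overline{b_{k+1}}$, giving $v_f(P)\le v_f(QP)\le\overline{b_{k+1}}$. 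For the upward step (Property III$_k$): if $v_f(P)\in\bN\overline{b_0}+\cdots+\bN\overline{b_k}$ and is finite, then by Lemma \ref{aritmetico}(ii) we may write $v_f(P)=c_0\overline{b_0}+\cdots+c_k\overline{b_k}$ with $c_0\ge0$ and $0\le c_i<n_i$; the monomial $M=x^{c_0}f_0^{c_1}\cdots f_{k-1}^{c_k}$ then has $v_f(M)=v_f(P)$ and, by the telescoping estimate $\sum_{i=1}^k(n_i-1)\frac{n}{e_{i-1}}=\frac{n}{e_k}-1$, satisfies $\deg_y M<\frac{n}{e_k}$. Property $4$ of intersection numbers then yields $c\in\bK$ with $v_f(P-cM)>v_f(P)$, and replacing $\theta$ by $\theta+cM$ keeps the degree below $\frac{n}{e_k}$.

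Finally I would assemble these. Starting from $\theta=0$, i.e.\ $P=f_{k-1}^{n_k}$, whose value is $n_k\overline{b_k}$; by Lemma \ref{aritmetico}(i) (which uses precisely the inequalities $(c_i)$) this value lies in $\bN\overline{b_0}+\cdots+\bN\overline{b_{k-1}}\subseteq\bN\overline{b_0}+\cdots+\bN\overline{b_k}$, so Property III$_k$ applies and strictly increases $v_f(P)$. Iterating III$_k$ produces a strictly increasing sequence of integer values, all bounded above by $\overline{b_{k+1}}$ via Property II$_k$, so the process must stop; it can only stop when $v_f(P)\notin\bN\overline{b_0}+\cdots+\bN\overline{b_k}$. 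Calling the resulting polynomial $f_k$, we then have $v_f(f_k)\in\Gamma(f)\setminus(\bN\overline{b_0}+\cdots+\bN\overline{b_k})$, so $v_f(f_k)\ge\overline{b_{k+1}}$ by minimality; combined with Property II$_k$ this forces $v_f(f_k)=\overline{b_{k+1}}$, which is $(b_{k+1})$. Since the very first value $n_k\overline{b_k}$ lay in the subsemigroup, at least one strict increase occurred, whence $n_k\overline{b_k}<\overline{b_{k+1}}=v_f(f_k)$, giving $(c_{k+1})$, while $(a_{k+1})$ holds by construction. The main obstacle throughout is Property I$_k$, and within it the Bézout-uniqueness argument guaranteeing that the monomial values are pairwise distinct; everything else reduces to division of monic polynomials and the arithmetic of Lemma \ref{aritmetico}.
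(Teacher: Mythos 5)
Your proof is correct and follows essentially the same route as the paper: the same three-step scheme (the analogues of Properties I$_k$, II$_k$, III$_k$), the same degree/telescoping bookkeeping, and the same assembly via a strictly increasing, bounded sequence of values. The only (immaterial) variations are that you establish Property I$_k$ by the full multi-adic expansion in $x,f_0,\ldots,f_{k-1}$ together with the uniqueness of B\'ezout's relation — which is exactly the argument the paper itself uses later in Lemma \ref{lemm1} — instead of the single-level $f_{l-1}$-adic induction of Lemma \ref{Ik}, and that you parametrize the candidates as $f_{k-1}^{n_k}-\theta$ rather than $y^{n/e_k}+\psi$, which also lets you extract $(c_{k+1})$ from the fact that at least one strict increase occurred rather than from a separate application of Property II$_k$ to $f_{k-1}^{n_k}$.
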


\noindent To prove Proposition \ref{intermedio} we check the following three properties:
\begin{lema}[Property I$_k$]
\label{Ik}
If $\psi$ is a non-zero polynomial with $\deg_y \psi<\frac{n}{e_k}$ then $v_f(\psi)\in  \mathbf N
\overline{b_0}+\cdots+\mathbf N \overline{b_k}$.
\end{lema}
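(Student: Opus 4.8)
The plan is to prove Property I$_k$ by induction on $k$, the base case $k=0$ being Property I$_0$ already established above. So I would assume $k\geq 1$ together with Property I$_{k-1}$ as induction hypothesis, namely that every nonzero polynomial of $y$-degree $<\frac{n}{e_{k-1}}$ has its $v_f$-value in $\bN\overline{b_0}+\cdots+\bN\overline{b_{k-1}}$. Note that since $e_{k-1}=\gcd(\overline{b_0},\ldots,\overline{b_{k-1}})$, every element of this sub-semigroup is divisible by $e_{k-1}$.

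First I would expand $\psi$ in the $f_{k-1}$-adic form. Since $f_{k-1}$ is monic in $y$ of degree $\deg_y f_{k-1}=\frac{n}{e_{k-1}}$ by $(a_k)$, repeated Euclidean division writes $\psi$ uniquely as $\psi=\sum_j c_j f_{k-1}^{\,j}$ with $c_j\in\bK[[x]][y]$ and $\deg_y c_j<\frac{n}{e_{k-1}}$. Because $f_{k-1}$ is monic the summands $c_j f_{k-1}^{\,j}$ occupy disjoint $y$-degree ranges, so $\deg_y\psi=\max\{\,j\cdot\frac{n}{e_{k-1}}+\deg_y c_j : c_j\neq 0\,\}$; combined with the hypothesis $\deg_y\psi<\frac{n}{e_k}=n_k\cdot\frac{n}{e_{k-1}}$ this forces $c_j=0$ for $j\geq n_k$, so only indices $j\in\{0,\ldots,n_k-1\}$ occur. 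Each nonzero $c_j$ has $y$-degree $<n$, hence $c_j\not\equiv 0\pmod f$ and $v_f(c_j)<+\infty$; by the induction hypothesis $v_f(c_j)\in\bN\overline{b_0}+\cdots+\bN\overline{b_{k-1}}$, and $(b_k)$ gives $v_f(f_{k-1})=\overline{b_k}$, whence $v_f(c_j f_{k-1}^{\,j})=v_f(c_j)+j\overline{b_k}\in\bN\overline{b_0}+\cdots+\bN\overline{b_k}$.

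It then remains to see that $v_f(\psi)$ coincides with one of these term-values. Here the strong triangle inequality enters: by property $3$ of the intersection multiplicity it suffices to show that the finite values $v_f(c_j f_{k-1}^{\,j})$ (over nonzero $c_j$) are pairwise distinct, for then the sum has $v_f$-value equal to the uniquely attained minimum, which lies in $\bN\overline{b_0}+\cdots+\bN\overline{b_k}$. This distinctness is the only real point of the proof and it is purely arithmetic. Since each $v_f(c_j)$ is divisible by $e_{k-1}$, one has $v_f(c_j f_{k-1}^{\,j})\equiv j\overline{b_k}\pmod{e_{k-1}}$. For $0\leq j<j'\leq n_k-1$ we get $(j'-j)\overline{b_k}\not\equiv 0\pmod{e_{k-1}}$, because $e_{k-1}/\gcd(e_{k-1},\overline{b_k})=e_{k-1}/e_k=n_k$ and $0<j'-j<n_k$; hence $v_f(c_j f_{k-1}^{\,j})\neq v_f(c_{j'} f_{k-1}^{\,j'})$, which completes the argument.

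I expect this congruence step, which is exactly where the definition $n_k=e_{k-1}/e_k$ is used, to be the crux; the rest is the bookkeeping of the $f_{k-1}$-adic expansion together with a routine application of the STI via property $3$.
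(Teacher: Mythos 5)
Your proof is correct and follows essentially the same route as the paper's: the $f_{k-1}$-adic expansion of $\psi$, the degree bound forcing at most $n_k$ terms, the induction hypothesis giving $v_f(c_j)\equiv 0 \pmod{e_{k-1}}$, the congruence argument (via $\gcd(e_{k-1},\overline{b_k})=e_k$ and $0<j'-j<n_k$) showing the term-values are pairwise distinct, and the ultrametric property of $v_f$ to conclude that $v_f(\psi)$ is the uniquely attained minimum. The only cosmetic difference is that the paper runs the induction on an auxiliary index $l\leq k$ inside a single proof rather than on $k$ itself, and writes the expansion in decreasing powers of $f_{k-1}$.
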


\noindent \begin{proof} Let $l\leq k$.
We will prove that for  $\deg_y \psi<\frac{n}{e_l}$ we have $v_f(\psi)\in
\mathbf N \overline{b_0}+\cdots+\mathbf N \overline{b_l}$.  We proceed by induction on $l$.
The case $l=0$ is already proved (see Property I$_0$). Let
$l>0$ and suppose the property holds  for
polynomials of degree less than $\frac{n}{e_{l-1}}$. Fix $\psi \in
\mathbf K[[x]][y]$ with $\hbox{\rm deg}_y(\psi)<\frac{n}{e_{l}}$ and
consider the $f_{l-1}$-adic expansion of $\psi$:

\begin{equation}
\label{semigrupo1} \psi=\psi_0 f_{l-1}^s+\psi_1
f_{l-1}^{s-1}+\cdots+\psi_s,
\end{equation}

\noindent where $\psi_0\neq 0$, $\hbox{\rm deg}_y(\psi_i)<\hbox{\rm
deg}_y(f_{l-1})=\frac{n}{e_{l-1}}$.

\medskip

\noindent Note that $s\leq \frac{\hbox{\rm deg}_y(\psi)}{\hbox{\rm
deg}_y(f_{l-1})}<n_{l}$. Let $I$ be the set of all $i\in
\{0,\ldots,s\}$ such that $\psi_i\neq 0$. Therefore, by the
induction hypothesis we get $v_f(\psi_i)\in \mathbf N
\overline{b_0}+\cdots+\mathbf N \overline{b_{l-1}}$, and

\begin{equation}
\label{semigrupo2} v_f(\psi_i)\equiv 0 \;\;\hbox{\rm mod }e_{l-1}
\;\;\hbox{\rm for }i\in I.
\end{equation}

\noindent Moreover

\begin{equation}
\label{semigrupo3} v_f(\psi_i f_{l-1}^{s-i})\neq v_f(\psi_j
f_{l-1}^{s-j}) \;\;\hbox{\rm for } i\neq j\in I.
\end{equation}

\medskip

\noindent Indeed, suppose that (\ref{semigrupo3}) is not true, so
there exist $i,j\in I$ such that $i<j$ and $v_f(\psi_i
f_{l-1}^{s-i})=v_f(\psi_j f_{l-1}^{s-j})$. Therefore
$v_f(\psi_i)+(s-i)v_f(f_{l-1})=v_f(\psi_j)+(s-j)v_f(f_{l-1})$
and $(j-i)\overline{b_l}=v_f(\psi_j)-v_f(\psi_i)\equiv 0$ mod
$e_{l-1}$ by (\ref{semigrupo2}). The last relation implies
$(j-i)\frac{\overline{b_l}}{e_l}\equiv 0$ mod $n_l$ and consequently
$j-i\equiv 0$ mod $n_l$ because $\frac{\overline{b_l}}{e_l}$ and
$n_l$ are co-prime. We get a contradiction because $0<j-i\leq
s<n_l$. Now by (\ref{semigrupo1}) and (\ref{semigrupo3}) we get

\begin{eqnarray*}
v_f(\psi)&=&\hbox{\rm min}_{i=0}^s v_f(\psi_i
f_{l-1}^{s-i})=v_f(\psi_j f_{l-1}^{s-j})\\
& = & v_f(\psi_j)+(s-j)\overline{b_l}\in \mathbf N
\overline{b_0}+\cdots+\mathbf N \overline{b_{l}},
\end{eqnarray*}

\noindent for some $j\in I$.
\end{proof}

\begin{lema}[Property II$_k$]
\label{IIk}
If $\deg_y \psi<\frac{n}{e_k}$ then $v_f(y^{\frac{n}{e_k}}+\psi)\leq \overline{b_{k+1}}$.
\end{lema}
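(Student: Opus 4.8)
The plan is to imitate the proof of Property II$_0$, replacing the linear polynomial $y+\psi$ by the monic polynomial $P:=y^{n/e_k}+\psi$ of $y$-degree $\frac{n}{e_k}$, and replacing Property I$_0$ by the already established Property I$_k$ (Lemma \ref{Ik}). First I would choose a witness: since $\overline{b_{k+1}}\in\Gamma(f)$, the description $\Gamma(f)=\{v_f(g):g\in\bK[[x]][y]\backslash\{0\},\ \deg_y g<n\}$ recalled at the start of this section provides a polynomial $g$ with $v_f(g)=\overline{b_{k+1}}$. Because $P$ is monic in $y$, Euclidean division gives $g=Q\cdot P+R$ with $\deg_y R<\frac{n}{e_k}$.

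Next I would control the value of the remainder. If $R\neq 0$, Property I$_k$ yields $v_f(R)\in\mathbf N\overline{b_0}+\cdots+\mathbf N\overline{b_k}$, whereas by the definition of the $n$-minimal system of generators $v_f(g)=\overline{b_{k+1}}=\min(\Gamma(f)\backslash(\mathbf N\overline{b_0}+\cdots+\mathbf N\overline{b_k}))$ does not lie in that subsemigroup; hence $v_f(g)\neq v_f(R)$. The same membership argument shows $Q\neq 0$: if $Q=0$ we would have $g=R$ of $y$-degree $<\frac{n}{e_k}$, forcing $\overline{b_{k+1}}=v_f(g)\in\mathbf N\overline{b_0}+\cdots+\mathbf N\overline{b_k}$, a contradiction.

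Finally I would invoke the relevant property of $v_f$ (property $3$ of intersection numbers: $v_f(g-R)\geq\inf\{v_f(g),v_f(R)\}$ with equality since the two values differ) to obtain $v_f(g-R)=\inf\{v_f(g),v_f(R)\}\leq v_f(g)=\overline{b_{k+1}}$. Since $g-R=Q\cdot P$ and $v_f(Q)\geq 0$, this gives $v_f(P)\leq v_f(Q)+v_f(P)=v_f(Q\cdot P)=v_f(g-R)\leq\overline{b_{k+1}}$, which is the claim. In the remaining case $R=0$ one has $v_f(P)\leq v_f(Q\cdot P)=v_f(g)=\overline{b_{k+1}}$ directly.

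The step I expect to be the crux is the inequality $v_f(R)\neq v_f(g)$, since it is precisely here that the two ingredients of the section meet: Property I$_k$ pins $v_f(R)$ inside $\mathbf N\overline{b_0}+\cdots+\mathbf N\overline{b_k}$, while the defining minimality of $\overline{b_{k+1}}$ keeps $v_f(g)$ outside it. Everything else is the routine valuation bookkeeping already carried out in Property II$_0$.
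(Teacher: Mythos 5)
Your proposal is correct and follows essentially the same route as the paper's own proof: choose $g$ with $v_f(g)=\overline{b_{k+1}}$, divide by the monic polynomial $y^{n/e_k}+\psi$, use Property I$_k$ together with the minimality defining $\overline{b_{k+1}}$ to see that the remainder has a different value, and conclude via the ultrametric property and additivity of $v_f$. The only (harmless) additions are your explicit treatment of the cases $R=0$ and $Q=0$, which the paper dispatches with ``we may assume $\psi_1\neq 0$.''
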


\noindent \begin{proof}
Let $g\in \bK [[x]][y]$ be such that $v_f(g)= \overline{b_{k+1}}$. By the Euclidean division we get
$g=Q \cdot (y^{\frac{n}{e_k}}+\psi)+\psi_1$ with $\psi_1 \in  \bK [[x]][y]$ and  $\deg_y \psi_1<\frac{n}{e_k}$.
We may assume $\psi_1\neq 0$.
Therefore $v_f(\psi_1)\in \mathbf N
\overline{b_0}+\cdots+\mathbf N \overline{b_{k}}$ by Property (I$_k$) and
 $v_f(g)=\overline{b_{k+1}}\neq v_f(\psi_1)$.
Now  we get
$\overline{b_{k+1}}\geq \inf \{v_f(g),v_f(\psi_1)\}=v_f(g-\psi_1)=v_f(Q \cdot (y^{\frac{n}{e_k}}+\psi))\geq v_f(y^{\frac{n}{e_k}}+\psi)$.
\end{proof}

\begin{lema}[Property III$_k$]
\label{IIIk}
If $\psi\in \bK [[x]][y]$ with $\deg_y \psi < \frac{n}{e_k}$ and $v_f(y^{\frac{n}{e_k}}+\psi)\in \mathbf N
\overline{b_0}+\cdots+\mathbf N \overline{b_{k}}$ then there is a polynomial $\overline{\psi}\in \bK [[x]][y]$,   $\deg_y \overline{\psi}
 < \frac{n}{e_k}$
such that $v_f(y^{\frac{n}{e_k}}+\overline{\psi})>v_f(y^{\frac{n}{e_k}}+\psi)$.
\end{lema}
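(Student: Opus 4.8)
The plan is to exploit that any value in $\mathbf N \overline{b_0}+\cdots+\mathbf N \overline{b_k}$ is attained by an explicit monomial in $x$ and the previously constructed key polynomials $f_0,\ldots,f_{k-1}$, and then to remove the leading value by subtracting a suitable scalar multiple of that monomial.

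First I would set $a:=v_f(y^{n/e_k}+\psi)$. By hypothesis $a\in\mathbf N\overline{b_0}+\cdots+\mathbf N\overline{b_k}$, so in particular $a<+\infty$, i.e. $y^{n/e_k}+\psi\not\equiv 0\pmod f$. The sequence $\overline{b_0},\ldots,\overline{b_k}$ satisfies the hypotheses of Lemma \ref{aritmetico}: here the associated gcd sequence is $d_i=e_i$, and the inequalities $n_{i-1}\overline{b_{i-1}}<\overline{b_i}$ for $2\le i\le k$ are precisely the assumptions $(c_i)$ of Proposition \ref{intermedio}. Hence part (ii) of Lemma \ref{aritmetico} yields a B\'ezout expansion
$$a=a_0\overline{b_0}+a_1\overline{b_1}+\cdots+a_k\overline{b_k},\qquad a_0\ge 0,\ \ 0\le a_i<n_i\ (1\le i\le k).$$

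Next I would introduce the monomial $M:=x^{a_0}f_0^{a_1}f_1^{a_2}\cdots f_{k-1}^{a_k}$. By additivity $i_0(f,gh)=i_0(f,g)+i_0(f,h)$, together with $v_f(x)=\overline{b_0}$ and the assumptions $v_f(f_{i-1})=\overline{b_i}$, we obtain $v_f(M)=a=v_f(y^{n/e_k}+\psi)$. The key computation is the degree bound: using $\deg_y f_{i-1}=n/e_{i-1}$ and $a_i\le n_i-1=\frac{e_{i-1}}{e_i}-1$, the sum telescopes,
$$\deg_y M=\sum_{i=1}^{k} a_i\,\frac{n}{e_{i-1}}\le\sum_{i=1}^{k}\Big(\frac{n}{e_i}-\frac{n}{e_{i-1}}\Big)=\frac{n}{e_k}-\frac{n}{e_0}=\frac{n}{e_k}-1<\frac{n}{e_k}.$$

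Finally, since $v_f(y^{n/e_k}+\psi)=v_f(M)<+\infty$, the fourth basic property of intersection numbers provides a constant $c\in\bK$ with $v_f\big((y^{n/e_k}+\psi)-cM\big)>a$. As $\deg_y(cM)<n/e_k$, this difference still has the form $y^{n/e_k}+\overline{\psi}$ with $\overline{\psi}:=\psi-cM$ and $\deg_y\overline{\psi}<n/e_k$, which is exactly the polynomial sought. I expect the main obstacle to be the telescoping degree estimate: it is precisely the normalized ranges $0\le a_i<n_i$ delivered by Lemma \ref{aritmetico} that force $\deg_y M<n/e_k$, so that subtracting $cM$ preserves the monic leading term $y^{n/e_k}$; without this control the construction would fail.
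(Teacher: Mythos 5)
Your proposal is correct and follows essentially the same route as the paper's proof: both invoke Lemma \ref{aritmetico}(ii) to realize the value $v_f(y^{n/e_k}+\psi)$ as $v_f(x^{a_0}f_0^{a_1}\cdots f_{k-1}^{a_k})$ with $a_0\geq 0$ and $0\leq a_i<n_i$, subtract a suitable scalar multiple of this product, and verify the degree bound $\deg_y(x^{a_0}f_0^{a_1}\cdots f_{k-1}^{a_k})\leq \frac{n}{e_k}-1$ (your telescoping sum is just a cleaner arrangement of the paper's identical estimate).
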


\noindent \begin{proof} By Lemma \ref{aritmetico} any element of the semigroup $ \mathbf N
\overline{b_0}+\cdots+\mathbf N \overline{b_{k}}$ has the form $ a_0
\overline{b_0}+ a_1
\overline{b_1}+\cdots+a_k\overline{b_{k}}$ with $a_0\geq 0$ and $0\leq a_i < n_i$ for $i\in \{1,\ldots,k\}$. Therefore
we can write $v_f(y^{\frac{n}{e_k}}+{\psi})=v_f(x^{a_0}f_0^{a_1}\cdots f_{k-1}^{a_k})$ and
there  is an element $c\in \bK$
such that $v_f(y^{\frac{n}{e_k}}+{\psi}-cx^{a_0}f_0^{a_1}\cdots f_{k-1}^{a_k})>v_f(y^{\frac{n}{e_k}}+{\psi})$.  Let   $\overline{\psi}=\psi-cx^{a_0}f_0^{a_1}\cdots f_{k-1}^{a_k}$.
Then we have $v_f(y^{\frac{n}{e_k}}+\overline{\psi})>v_f(y^{\frac{n}{e_k}}+\psi)$. Since
$\deg_y(x^{a_0}f_0^{a_1}\cdots f_{k-1}^{a_k})=a_1+a_2\frac{n}{e_1}+\cdots+({a_k})\frac{n}{e_{k-1}}\leq
(n_1-1)+(n_2-1)\frac{n}{e_1}+\cdots+(n_k-1)\frac{n}{e_{k-1}}=\frac{n{n_k}}{e_{k-1}}-1<\frac{n}{e_{{k}}}$, 
$\deg_y \overline{\psi}<\frac{n}{e_k}$.
\end{proof}

\medskip

\noindent \begin{proof}[Proof of Proposition \ref{intermedio}] From Properties (II$_k$) and (III$_k$) it follows that there exists
a monic polynomial $f_k$ of degree $\frac{n}{e_k}$ such that $v_f(f_k)\not\in \mathbf N
\overline{b_0}+\cdots+\mathbf N \overline{b_{k}}$. By definition of $\overline{b_{k+1}}$
we get $v_f(f_k)\geq \overline{b_{k+1}}$. The equality follows from  Property (II$_k$).

\medskip

\noindent To check ($c_{k+1}$) observe that $v_f(f_{k-1}^{n_k})=n_k\overline{b_{k}}$ and
$\deg_y f_{k-1}^{n_k}=n_k\frac{n}{e_{k-1}}=\frac{n}{e_{k}}$. Therefore $n_k\overline{b_{k}}\leq \overline{b_{k+1}}$
by  Property (II$_k$).  By Lemma \ref{aritmetico} we get $n_k\overline{b_{k}}< \overline{b_{k+1}}$ since $\overline{b_{k+1}}\not\in  \mathbf N
\overline{b_0}+\cdots+\mathbf N \overline{b_{k}}$.
\end{proof}

\medskip

\noindent \begin{proof}[Proof of Theorem \ref{structure}]
The theorem follows by induction from Proposition \ref{maximal}, Proposition \ref{intermedio} and
from Remark \ref{n}.
\end{proof}

\begin{nota}
From Theorem \ref{structure} and Lemmas \ref{Ik}, \ref{IIk}, and \ref{IIIk} it follows that
Properties I$_k$, II$_k$ and III$_k$ hold for all $0\leq k<h$.
\end{nota}

\begin{nota}
\label{nota-key}
Let $\{f=0\}\neq \{x=0\}$ be a branch such that $n=i_0(f,x)>1$. Let 
$\overline{b_0},\ldots,\overline{b_h}$, $\overline{b_0}=n$ be the $n$-minimal system of generators of $\Gamma(f)$. Suppose that the first $k+1$ terms 
$\overline{b_0},\ldots,\overline{b_k}$, with $k<h$ are given. Let $f_k\in\bK[[x]][y]$ be a monic polynomial of degree $\frac{n}{e_k}$ such that $i_0(f,f_k)\not\equiv 0$ $(\hbox{\rm mod } e_k)$. Then by Property II$_k$ we get
$i_0(f,f_k)\leq \overline{b_{k+1}}$. The inequality $i_0(f,f_k)\geq \overline{b_{k+1}}$ follows from the fact that $i_0(f,f_k)\not\in 
\bN \overline{b_{0}}+\cdots+\bN \overline{b_{k}}$. We get $i_0(f,f_k)=\overline{b_{k+1}}$ and $f_k$ is a $k$-th key polynomial of $f$.
\end{nota}

\begin{ejemplos}

$\;$

\noindent Let $\bK$ be an algebraically closed field of characteristic $p>2$.
\begin{enumerate}
\item [A.] Let $f(x,y)=y^{p^2-1}+y^p-x^{p^2}=(y-x^p)^p+y^{p^2-1}$.  Put $(\phi(t),\psi(t))=(t^p+t^{p^2-1},t^{p^2})$.
We have $f(\phi(t),\psi(t))=0$ and $f$ is irreducible in $\bK[[x,y]]$. Since the order of $f$ is the prime number $p$ we get $\overline{\hbox{\rm char}} f=(\overline{b_0},\overline{b_1})$ where $\overline{b_0}=\ord f=p$. To calculate $\overline{b_1}$ we have to construct a monic polynomial $f_0=y+\cdots$ of degree $1$ such that $i_0(f,f_0)\not\equiv 0$ $($mod $e_0)$, $e_0=\overline{b_0}=p$. Take $g=y-x^p$. Then $i_0(f,g)=p(p^2-1)=i_0(f,x^{p^2-1})$. There is a constant $c\in \bK$ such that $i_0(f,g-cx^{p^2-1})>i_0(f,g)$. Using the parametrization $(\phi(t),\psi(t))$ we find that $c=-1$. Let $f_0=g-cx^{p^2-1}=y-x^p+x^{p^2-1}$.  Then $i_0(f,f_0)=p
^3+p^2-2p-1\not\equiv 0$ $($mod $p)$ and  by Remark \ref{nota-key} we get $\overline{b_1}=p
^3+p^2-2p-1$
and consequently $\overline{\hbox{\rm char}} f=(p,p
^3+p^2-2p-1)$.

\item [B.] Let $f(x,y)=y^{p^2-1}+y^{p^2-p}-x^{p^2}=(y^{p-1}-x^p)^p+y^{p^2-1}$ and $(\phi(t),\psi(t))=(t^{p^2-p}+t^{p^2-1},t^{p^2})$.
We have $f(\phi(t),\psi(t))=0$ and $f$ is irreducible in $\bK[[x,y]]$.
Since $\ord f=p^2-p$ and $i_0(f,y)=p^2\not\equiv 0$ $($ mod $(p^2-p))$ we get $ \overline{b_0}=p^2-p$, $\overline{b_1}=p^2$, $e_1=\gcd(\overline{b_0},\overline{b_1})=p$ and $\overline{\hbox{\rm char}} f=(\overline{b_0},\overline{b_1},\overline{b_2})$. To compute $\overline{b_2}$ we have to construct a monic polynomial $f_1\in \bK[[x]][y]$ of degree $\frac{\overline{b_0}}{e_1}=p-1$ such that $i_0(f,f_1)\not\equiv 0$ $($mod $p)$. Starting with the polynomial $g=y^{p-1}-x^p$ and proceeding like in Example A we find $f_1=y^{p-1}-x^p+x^{p+1}$. Since $i_0(f,f_1)=p^3-1$ we get $\overline{b_2}=p^3-1$ and consequently $\overline{\hbox{\rm char}} f=(p^ 2-p,p^2,p^3-1)$.
\end{enumerate}
\end{ejemplos}

\section{Key polynomials}
\label{key}

\noindent The key polynomials under the name of semi-roots were studied by Abhyankar \cite{Abhyankar2}
and Popescu-Pampu \cite{Popescu}. Here we propose the treatment without any restriction on the field
characteristic.

\medskip

\noindent Let $f=f(x,y)\in \bK[[x,y]]$ be an  irreducible power series such that
$i_0(f,x)=\ord f(0,y)=n<+\infty$ and let $\overline{\hbox{\rm char}}_x f=(\overline{b_0},\ldots,
\overline{b_h})$, $\overline{b_0}=n$. Let $k\in \{0,\ldots,h\}$. Recall that a monic polynomial $g\in \bK [[x]][y]$ is a
$k$-th {\em key polynomial} of $f$ if $\deg_y g=\frac{n}{e_k}$ and $v_f(g)=\overline{b_{k+1}}$.
By the Semigroup Theorem,  for any $k\in\{0,\ldots,h\}$ there exists a $k$-th key polynomial of $f$. We fix a sequence $f_0,\ldots, f_h$ of key polynomials of $f$ such that $f_k$ is a $k$-th key
polynomial.

\begin{prop}
Let $g\in \bK[[x]][y]$ be a $k$-th key polynomial of $f$. Then $g$ is an irreducible \hbox{\rm (}in $\bK[[x]][y]$\hbox{\rm )}
distinguished polynomial.
\end{prop}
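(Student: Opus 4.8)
The plan is to establish the two conclusions in turn, \emph{distinguished} first and \emph{irreducible} second, both times reducing to the single decisive fact that a factor of $g$ of $y$-degree strictly below $\frac{n}{e_k}$ cannot produce the value $\overline{b_{k+1}}$: by Property~(I$_k$) (Lemma~\ref{Ik}) the $v_f$-value of such a factor lies in $\bN\overline{b_0}+\cdots+\bN\overline{b_k}$, whereas $\overline{b_{k+1}}=\min\bigl(\Gamma(f)\backslash(\bN\overline{b_0}+\cdots+\bN\overline{b_k})\bigr)$ does not. If $k=h$, then $\deg_y g=\frac{n}{e_h}=n$ and $v_f(g)=\overline{b_{h+1}}=+\infty$, so $f\mid g$; comparing $y$-degrees with the distinguished polynomial associated with $f$ shows that $g$ coincides with it, and the claim is immediate. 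Thus I may assume $0\le k<h$, so that $e_k>1$ and $\deg_y g=\frac{n}{e_k}<n$.

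First I would prove that $g$ is distinguished. Since $\bK$ is algebraically closed I write $g(0,y)=y^r\cdot q(y)$ with $q(0)\neq 0$ and $r+\deg q=\frac{n}{e_k}$. As $y^r$ and $q(y)$ are coprime in $\bK[y]$ and $\bK[[x]]$ is complete, Hensel's lemma lifts this to a factorization $g=G_0\cdot H$ in $\bK[[x]][y]$ with $G_0,H$ monic, $G_0(0,y)=y^r$ and $H(0,y)=q(y)$. Then $G_0$ is a distinguished polynomial of degree $r$, while $H(0,0)=q(0)\neq 0$ makes $H$ a unit of $\bK[[x,y]]$, so $i_0(f,H)=0$ and, by additivity of the intersection multiplicity, $v_f(G_0)=v_f(g)=\overline{b_{k+1}}$. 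If $r<\frac{n}{e_k}$, then Property~(I$_k$) applied to $G_0$ gives $\overline{b_{k+1}}=v_f(G_0)\in\bN\overline{b_0}+\cdots+\bN\overline{b_k}$, a contradiction. Hence $r=\frac{n}{e_k}$, that is $g(0,y)=y^{n/e_k}$, and $g$ is distinguished.

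Finally, to see that $g$ is irreducible in $\bK[[x]][y]$, suppose $g=g_1g_2$ with $g_1,g_2$ non-units. Absorbing units of $\bK[[x]]$ I may take $g_1,g_2$ monic; since $g(0,y)=y^{n/e_k}=g_1(0,y)g_2(0,y)$, each $g_i(0,y)$ is a power of $y$, so both $g_i$ are distinguished, and being non-units they satisfy $1\le\deg_y g_i<\frac{n}{e_k}$. Because $\deg_y g_i<n$, neither $g_i$ is divisible by $f$ (otherwise the distinguished polynomial of $f$, of degree $n$, would divide $g_i$), so $v_f(g_i)<+\infty$ and Property~(I$_k$) gives $v_f(g_1),v_f(g_2)\in\bN\overline{b_0}+\cdots+\bN\overline{b_k}$. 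By additivity, $\overline{b_{k+1}}=v_f(g)=v_f(g_1)+v_f(g_2)\in\bN\overline{b_0}+\cdots+\bN\overline{b_k}$, again contradicting the minimality defining $\overline{b_{k+1}}$. Hence $g$ is irreducible. The only genuinely delicate points are the two standard facts I use without reproving them—that the nonzero-root part of $g(0,y)$ lifts to a \emph{unit} factor via Hensel's lemma, and that a proper monic factorization of a distinguished polynomial again consists of distinguished polynomials of smaller degree—so that Property~(I$_k$) and the definition of $\overline{b_{k+1}}$ can be brought to bear; everything else is bookkeeping.
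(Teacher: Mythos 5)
Your proof is correct and rests on exactly the same decisive ingredient as the paper's: Property~(I$_k$) together with additivity of $v_f$ forces the value of any proper monic factor of $y$-degree less than $\frac{n}{e_k}$ into $\bN\overline{b_0}+\cdots+\bN\overline{b_k}$, contradicting the minimality defining $\overline{b_{k+1}}$. The only difference is cosmetic: you prove \emph{distinguished} first (via the Hensel/Weierstrass splitting of $g$ into a distinguished factor times a unit) and then \emph{irreducible}, whereas the paper proves irreducibility first and then gets $g(0,y)=(y-c)^{n/e_k}$ from Hensel's lemma, with $c=0$ because $v_f(g)=\overline{b_{k+1}}>0$ forces $g(0,0)=0$.
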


\noindent \begin{proof}

\noindent Suppose that $g$
is not irreducible. Then $g=g_1g_2$ in $\bK[[x]][y]$ with
monic polynomials $g_1,g_2$ of positive degrees. Consequently,
$\deg_y g_1,\deg_y g_2<\deg_y g=\frac{n}{e_{k}}$ and by Property I$_k$
we get $i_0(f,g_1),i_0(f,g_2)\in \bN
\overline{b_0}+\cdots+\mathbf N \overline{b_{k}}$ and
$\overline{b_{k+1}}=i_0(f,g)=i_0(f,g_1)+i_0(f,g_2)\in \bN
\overline{b_0}+\cdots+\mathbf N \overline{b_{k}}$ which is a
contradiction. Therefore $g$ is irreducible in $\bK[[x]][y]$.
\medskip

\noindent To check that $g=g(x,y)\in \bK[[x]][y]$ is
distinguished observe that from irreducibility of $g$ in
$\bK[[x]][y]$ and from Hensel's Lemma we get
$g(0,y)=(y-c)^{n/e_{k}}$ in $\bK[y]$. On the other hand the
condition $i_0(f,g)=\overline{b_{k+1}}$ implies $g(0,0)=0$ since
$\overline{b_{k+1}}>0$. Hence $c=0$ and $g$ is a distinguished
polynomial. \end{proof}

\begin{coro}
\label{key polynomials} The key polynomials of $f\in \bK[[x,y]]$ are
distinguished and irreducible in $\bK[[x,y]]$.
\end{coro}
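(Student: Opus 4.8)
The plan is to deduce the corollary from the preceding proposition together with the Weierstrass preparation theorem. The proposition already shows that each $k$-th key polynomial $g$ is a distinguished polynomial and is irreducible in $\bK[[x]][y]$; being distinguished means $g(0,y)=y^{n/e_k}$, so the ``distinguished'' half of the statement is inherited directly once we view $g$ as an element of $\bK[[x,y]]$. The only genuinely new content is the irreducibility of $g$ as a power series, and the task is to upgrade irreducibility in the polynomial ring $\bK[[x]][y]$ to irreducibility in the larger ring $\bK[[x,y]]$.

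I would argue by contradiction. Suppose $g=FG$ in $\bK[[x,y]]$ with $F$ and $G$ non-units, i.e. $F(0,0)=G(0,0)=0$. Since $g$ is distinguished we have $g(0,y)=y^{d}$ with $d=\deg_y g=\frac{n}{e_k}$, whence $F(0,y)\,G(0,y)=y^{d}$. Neither $F(0,y)$ nor $G(0,y)$ can vanish identically, for otherwise $x$ would divide $g$, contradicting $g(0,y)=y^{d}\neq 0$; hence each is a nonzero element of $\bK[[y]]$ with finite order. Writing $d_1=\ord F(0,y)$ and $d_2=\ord G(0,y)$ gives $d_1+d_2=d$, and $d_1,d_2\geq 1$ because $F$ and $G$ are non-units.

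Next I would apply the Weierstrass preparation theorem to $F$ and to $G$ separately. As $F(0,y)$ has order $d_1$ in $y$, we may write $F=u_1P_1$ with $u_1$ a unit of $\bK[[x,y]]$ and $P_1\in\bK[[x]][y]$ a distinguished polynomial of degree $d_1$; likewise $G=u_2P_2$ with $u_2$ a unit and $P_2$ distinguished of degree $d_2$. Then $g=(u_1u_2)(P_1P_2)$, and $P_1P_2$ is again a distinguished polynomial, of degree $d_1+d_2=d$. Since $g$ itself is distinguished of degree $d$, the uniqueness part of the Weierstrass preparation theorem forces $u_1u_2=1$ and $g=P_1P_2$. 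This exhibits a factorization of $g$ inside $\bK[[x]][y]$ into two monic polynomials of positive degrees $d_1,d_2$, contradicting the irreducibility of $g$ in $\bK[[x]][y]$ established in the proposition. Hence $g$ is irreducible in $\bK[[x,y]]$.

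The argument is routine once the proposition is available; the only points requiring care are the bookkeeping at $x=0$ ensuring that both factors genuinely involve $y$ (so the preparation theorem applies to each with positive degree), and the correct use of the uniqueness clause of the Weierstrass preparation theorem, namely that the representation of a power series as (unit)$\times$(distinguished polynomial) is unique. This uniqueness is exactly what transfers the factorization from $\bK[[x,y]]$ back to $\bK[[x]][y]$, and I do not expect any real obstacle beyond it.
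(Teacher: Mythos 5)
Your argument is correct and follows the same route as the paper: Corollary \ref{key polynomials} is deduced from the preceding proposition via the fact that a distinguished polynomial irreducible in $\bK[[x]][y]$ is irreducible in $\bK[[x,y]]$. The only difference is that the paper simply cites this fact (to Abhyankar's book), whereas you supply its standard proof via the Weierstrass preparation theorem and its uniqueness clause, which is done correctly.
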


\noindent \begin{proof}
The corollary follows from the fact that a distinguished polynomial irreducible
in $\bK [[x]][y]$ is irreducible in $\bK [[x,y]]$ (see \cite{Abhyankar1}, p. 75).
\end{proof}

\begin{lema}
\label{il3}
Let $f=f(x,y)\in \bK[[x,y]]$ be an irreducible power series such that $n=i_0(f,x)<+\infty$ and let $(\overline{b_0},\ldots,\overline{b_h})$ be an $n$-characteristic sequence. Suppose that there exist monic polynomials $f_0, \ldots,f_{h-1}\in \bK[[x]][y]$ such that $\deg_y f_k=\frac{n}{e_k}$ and $i_0(f,f_k)=\overline{b_{k+1}}$ for $k\in\{0,\ldots,h-1\}$. Then 
$\overline{\hbox{\rm char}}_x f=(\overline{b_0},\ldots,
\overline{b_h})$ and $f_0, \ldots,f_{h-1}$ are key polynomials of $f$.
\end{lema}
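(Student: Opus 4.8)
The plan is to prove the statement by identifying $(\overline{b_0},\ldots,\overline{b_h})$ with the $n$-minimal system of generators of $\Gamma(f)$. Once this identification is made, the conclusion that $f_0,\ldots,f_{h-1}$ are key polynomials is automatic: the hypotheses $\deg_y f_k=\frac{n}{e_k}$ and $v_f(f_k)=\overline{b_{k+1}}$ are exactly the defining conditions of a $k$-th key polynomial, and the $e_k$ attached to the sequence will then coincide with those used in that definition.

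First I would establish the inclusion $\Gamma(f)\supseteq \bN\overline{b_0}+\cdots+\bN\overline{b_h}$, which is immediate: $\overline{b_0}=n=v_f(x)\in\Gamma(f)$, and for $k\in\{1,\ldots,h\}$ we have $\overline{b_k}=v_f(f_{k-1})\in\Gamma(f)$, where $f_{k-1}\not\equiv 0$ (mod $f$) since $\deg_y f_{k-1}=\frac{n}{e_{k-1}}<n$ (because $e_0>e_1>\cdots>e_h=1$ forces $e_{k-1}>1$ for $k-1<h$). Hence every generator lies in $\Gamma(f)$, so the generated semigroup is contained in $\Gamma(f)$.

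The reverse inclusion is the heart of the matter. The point I would stress is that the hypotheses of the lemma are precisely the assumptions used in the proof of Property I$_k$ (Lemma \ref{Ik}): condition $(a_i)$ $\deg_y f_{i-1}=\frac{n}{e_{i-1}}$ and condition $(b_i)$ $v_f(f_{i-1})=\overline{b_i}$ hold for $i\le h$ by assumption, while $(c_i)$ $n_{i-1}\overline{b_{i-1}}<\overline{b_i}$ is exactly axiom $(2)$ of the $n$-characteristic sequence. Consequently the proof of Lemma \ref{Ik} applies verbatim for each $k\le h$, and in particular Property I$_h$ is available: every non-zero $\psi\in\bK[[x]][y]$ with $\deg_y\psi<\frac{n}{e_h}=n$ satisfies $v_f(\psi)\in\bN\overline{b_0}+\cdots+\bN\overline{b_h}$. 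Combining this with the description $\Gamma(f)=\{v_f(g):g\in\bK[[x]][y]\setminus\{0\},\ \deg_y g<n\}$ recorded at the start of Section \ref{proof-structure} yields $\Gamma(f)\subseteq\bN\overline{b_0}+\cdots+\bN\overline{b_h}$, hence equality.

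Finally, since $\Gamma(f)=\bN\overline{b_0}+\cdots+\bN\overline{b_h}$ with $(\overline{b_0},\ldots,\overline{b_h})$ an $n$-characteristic sequence by hypothesis, the first claim of Proposition \ref{prop-min} identifies $(\overline{b_0},\ldots,\overline{b_h})$ as the $n$-minimal system of generators of $\Gamma(f)$, i.e. $\overline{\hbox{\rm char}}_x f=(\overline{b_0},\ldots,\overline{b_h})$; the $f_k$ are then key polynomials by definition. I expect the reverse inclusion to be the only genuine obstacle, and it dissolves once one observes that Property I$_h$ is valid under these exact hypotheses, so that the remaining work is just an appeal to Proposition \ref{prop-min}(1).
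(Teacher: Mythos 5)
Your proposal is correct and follows essentially the same route as the paper: the paper's proof also invokes the Weierstrass-division description $\Gamma(f)=\{v_f(g):g\in\bK[[x]][y]\setminus\{0\},\ \deg_y g<n\}$, applies Lemma \ref{Ik} (Property I$_k$) to get $\Gamma(f)=\bN\overline{b_0}+\cdots+\bN\overline{b_h}$, and concludes by Proposition \ref{prop-min}(1). Your explicit check that the inductive hypotheses underlying Lemma \ref{Ik} are supplied here by the stated assumptions together with axiom (2) of the $n$-characteristic sequence is exactly the justification the paper leaves implicit.
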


\noindent \begin{proof} 
Recall that $\Gamma(f)=\{v_f(g)\;:\;g\in \bK[[x]][y]\backslash\{0\}\;:\;\deg_y g<n\}.$
By Lemma \ref{Ik} we get $\Gamma(f)=\bN \overline{b_0}+\cdots+\bN \overline{b_h}$. According to the first statement of Proposition \ref{prop-min} the sequence $\overline{b_0}, \ldots,\overline{b_h}$ is the $n$-minimal system of generators of the semigroup $\Gamma(f)$ and the lemma follows.
 \end{proof}

\begin{prop}
\label{p1}
Let $g$ be a $k$-th key polynomial of $f$. Then $g$ is a
distinguished polynomial, irreducible in
$\bK[[x,y]]$ with characteristic $\overline{\hbox{\rm char}}_x g
=\left(\frac{\overline{b_0}}{e_k},\ldots,\frac{\overline{b_k}}{e_k}\right)$.
Moreover the polynomials $f_0,f_1,\ldots,f_{k-1}$ are 
key polynomials of $g$.
\end{prop}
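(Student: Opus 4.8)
The plan is to reduce everything to Lemma~\ref{il3} applied to the branch $\{g=0\}$. First I would record what is already known: by Corollary~\ref{key polynomials} (and the proposition preceding it) $g$ is a distinguished polynomial, irreducible in $\bK[[x,y]]$, so it defines a branch $\{g=0\}\neq\{x=0\}$ with $i_0(g,x)=\deg_y g=\frac{n}{e_k}=\frac{\overline{b_0}}{e_k}$. By the remark following the definition of an $n$-characteristic sequence, the sequence $\left(\frac{\overline{b_0}}{e_k},\ldots,\frac{\overline{b_k}}{e_k}\right)$ is an $\frac{n}{e_k}$-characteristic sequence whose associated gcd-sequence is $\left(\frac{e_0}{e_k},\ldots,\frac{e_k}{e_k}\right)$ and whose multiplicity sequence is $(n_1,\ldots,n_k)$. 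Thus the only thing to check in order to invoke Lemma~\ref{il3} is that the fixed key polynomials $f_0,\ldots,f_{k-1}$ satisfy $\deg_y f_j=\frac{n/e_k}{e_j/e_k}$ and $i_0(g,f_j)=\frac{\overline{b_{j+1}}}{e_k}$ for $0\leq j\leq k-1$. The degree condition is immediate, since $\deg_y f_j=\frac{n}{e_j}=\frac{n/e_k}{e_j/e_k}$, so the whole proposition comes down to computing $i_0(g,f_j)$.

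To compute $i_0(g,f_j)$ I would use the strong triangle inequality for the log-distance $d_x$ applied to the three branches $f$, $g$, $f_j$. Writing out the definitions,
$$d_x(f,g)=\frac{i_0(f,g)}{i_0(f,x)i_0(g,x)}=\frac{e_k\overline{b_{k+1}}}{n^2},\qquad d_x(f,f_j)=\frac{e_j\overline{b_{j+1}}}{n^2},\qquad d_x(g,f_j)=\frac{e_ke_j\, i_0(g,f_j)}{n^2}.$$
The key point is the strict inequality $d_x(f,g)>d_x(f,f_j)$ for $j<k$. Using $e_j=(n_{j+1}\cdots n_k)e_k$, this amounts to $\overline{b_{k+1}}>(n_{j+1}\cdots n_k)\overline{b_{j+1}}$, which follows by multiplying the ``God-given'' inequalities $n_i\overline{b_i}<\overline{b_{i+1}}$ (condition $(c_{i+1})$ of the Semigroup Theorem) for $i=j+1,\ldots,k$. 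Once this is in hand, property (iii$'$) of the STI forces the two smaller distances to coincide, i.e.\ $d_x(f,f_j)=d_x(g,f_j)$; unwinding the definition gives exactly $i_0(g,f_j)=\frac{\overline{b_{j+1}}}{e_k}$.

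With $\deg_y f_j$ and $i_0(g,f_j)$ now matching the requirements of Lemma~\ref{il3} for the $\frac{n}{e_k}$-characteristic sequence $\left(\frac{\overline{b_0}}{e_k},\ldots,\frac{\overline{b_k}}{e_k}\right)$, that lemma yields $\overline{\hbox{\rm char}}_x g=\left(\frac{\overline{b_0}}{e_k},\ldots,\frac{\overline{b_k}}{e_k}\right)$ and simultaneously that $f_0,\ldots,f_{k-1}$ are key polynomials of $g$, completing the proof. The extreme cases deserve a one-line check: for $k=0$ the sequence is $(1)$, $g$ is smooth and there are no $f_j$ to consider, while for $k=h$ one has $g=f_h$, $e_h=1$, $d_x(f,g)=+\infty$, and the statement reduces to $\overline{\hbox{\rm char}}_x f$ itself. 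I expect the only real obstacle to be the strict inequality $d_x(f,g)>d_x(f,f_j)$, since this is precisely what selects the correct pair of equal distances in the STI; the rest is bookkeeping with the scaling $\overline{b_i}\mapsto\overline{b_i}/e_k$ and an appeal to Lemma~\ref{il3}.
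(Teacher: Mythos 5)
Your proposal is correct and follows essentially the same route as the paper: both compute $i_0(g,f_j)$ by applying the STI to the triple $f,g,f_j$, using the strict monotonicity of the sequence $\bigl(e_{j-1}\overline{b_j}\bigr)$ (which you rederive from the inequalities $n_i\overline{b_i}<\overline{b_{i+1}}$) to force $d_x(g,f_j)=d_x(f,f_j)$, and then conclude via Lemma~\ref{il3}. The only difference is that you spell out the verification that $e_j\overline{b_{j+1}}<e_k\overline{b_{k+1}}$ and the degenerate cases $k=0$, $k=h$, which the paper leaves implicit.
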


\noindent \begin{proof} We have already checked
that  the key polynomials are distinguished and
irreducible. Let us calculate
$i_0(f_i,g)$ for $i<k$. Consider $f_i,g,f$ and the log-distances
$d_x(f_i,g)=\frac{e_i e_k i_0(f_i,g)}{n^2}$,
$d_x(f_i,f)=\frac{e_i\overline{b_{i+1}}}{n^2}$ and
$d_x(g,f)=\frac{e_k\overline{b_{k+1}}}{n^2}$. The sequence
$\left(e_{i-1}\overline{b_i}\right)$ is strictly increasing, therefore
$d_x(f_i,f)<d_x(g,f)$ and by the STI we get
$d_x(f_i,g)=d_x(f_i,f)$ which implies
$i_0(f_i,g)=\frac{\overline{b_{i+1}}}{e_k}$.

\medskip

\noindent On the other hand $\deg_y f_i=\frac{n}{e_i}=
\frac{n}{e_k}:\frac{e_i}{e_k}$ and $\frac{e_i}{e_k}=\gcd\left(\frac{\overline{b_0}}{e_k},\ldots,
\frac{\overline{b_i}}{e_k}\right)$.
The proposition follows from Lemma \ref{il3}. \end{proof}

\medskip

\noindent We finish this section with

\begin{prop}
\label{adic-expansion}
Let $h\in\bK[[x]][y]$ be a $(k-1)$-th key polynomial of $f$ and
let $g\in \bK[[x]][y]$ be a monic polynomial such that $\deg_y g=\frac{n}{e_k}$
and $v_f(g)>n_k\overline{b_k}$. Let $g=h^{n_k}+a_1h^{n_k-1}+\cdots+a_{n_k}$,
$\deg_y a_i< \deg_y h=\frac{n}{e_{k-1}}$ be the $h$-adic expansion of $g$.
Then $v_f(a_i)>i\overline{b_k}$ if $1\leq i<n_k$ and $v_f(a_{n_k})=n_k\overline{b_k}$.

\end{prop}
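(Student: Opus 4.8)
The plan is to analyse the $h$-adic expansion $g=h^{n_k}+a_1h^{n_k-1}+\cdots+a_{n_k}$ term by term, writing $T_i=a_ih^{n_k-i}$ for $0\le i\le n_k$ (so that $a_0=1$ and $T_0=h^{n_k}$), and to read off the valuations $v_f(T_i)=v_f(a_i)+(n_k-i)\overline{b_k}$ using the ultrametric behaviour of $v_f$ (property $3$ of the intersection number, a consequence of the strong triangle inequality). Since $h$ is a $(k-1)$-th key polynomial we have $v_f(h)=\overline{b_k}$ and $\deg_y h=\frac{n}{e_{k-1}}$, whence $v_f(T_0)=n_k\overline{b_k}=:m$; the whole point is to locate which term carries the valuation of $g$ once the hypothesis $v_f(g)>m$ is imposed.

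First I would invoke Property I$_{k-1}$ (Lemma \ref{Ik}): because $\deg_y a_i<\deg_y h=\frac{n}{e_{k-1}}$, every nonzero $a_i$ satisfies $v_f(a_i)\in\bN\overline{b_0}+\cdots+\bN\overline{b_{k-1}}$, hence $v_f(a_i)\equiv 0$ modulo $e_{k-1}$. Reducing the term valuations modulo $e_{k-1}$ then gives $v_f(T_i)\equiv(n_k-i)\overline{b_k}$ modulo $e_{k-1}$. The key arithmetic observation, exactly as in the proof of Property I$_k$, is that $\gcd(n_k,\overline{b_k}/e_k)=1$ together with $e_{k-1}=n_ke_k$ forces the residues $(n_k-i)\overline{b_k}$ modulo $e_{k-1}$ to be pairwise distinct for $i=0,1,\ldots,n_k-1$; moreover $T_{n_k}=a_{n_k}$ has residue $0$, the same as $T_0$. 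Thus the only pair of terms that can possibly share a valuation is $(T_0,T_{n_k})$, and no two of the middle terms $T_1,\ldots,T_{n_k-1}$ can cancel one another or cancel against $T_0$.

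With this in hand the conclusion follows from the strong triangle inequality. If $v_f(a_{n_k})\neq m$ then all the $T_i$ have pairwise distinct valuations, so $v_f(g)=\min_i v_f(T_i)\le v_f(T_0)=m$, contradicting $v_f(g)>m$; hence $v_f(a_{n_k})=m=n_k\overline{b_k}$, which is the second assertion. For the first assertion, suppose some middle term had $v_f(T_i)\le m$; since its residue modulo $e_{k-1}$ is nonzero it is in fact $<m$, and the middle term of least valuation is then the unique minimiser among $h^{n_k}+a_{n_k},T_1,\ldots,T_{n_k-1}$ (the grouped term $h^{n_k}+a_{n_k}$ having valuation $\ge m$, as $v_f(h^{n_k})=v_f(a_{n_k})=m$), forcing $v_f(g)<m$ — again a contradiction. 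Therefore $v_f(T_i)>m$, i.e. $v_f(a_i)>i\overline{b_k}$, for $1\le i<n_k$.

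I expect the main obstacle to be the residue count: one must verify cleanly that the valuations of $T_0,\ldots,T_{n_k-1}$ are distinct modulo $e_{k-1}$, so that the only admissible cancellation is the expected one between the leading term $h^{n_k}$ and the constant term $a_{n_k}$, and then apply the ultrametric inequality with enough care to separate the cases $v_f(a_{n_k})<m$, $=m$ and $>m$. Once the distinctness of residues is in place, the single hypothesis $v_f(g)>m$ does all of the remaining work.
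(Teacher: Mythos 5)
Your proof is correct and is essentially the paper's own argument: both rest on Property I (giving $v_f(a_i)\equiv 0 \pmod{e_{k-1}}$ for the coefficients of the $h$-adic expansion), on the coprimality of $n_k$ and $\overline{b_k}/e_k$ to force the term valuations to be pairwise distinct away from the pair $(h^{n_k},a_{n_k})$, and on the strong triangle inequality to read off $v_f$ of the sum. The only difference is bookkeeping: the paper computes $v_f(g-h^{n_k})=\min_{i}v_f(a_ih^{n_k-i})=n_k\overline{b_k}$ and shows the minimizing index must be $n_k$, whereas you run a case split on $v_f(a_{n_k})$ and group $h^{n_k}+a_{n_k}$; the mathematical content is identical.
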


\noindent \begin{proof}
Consider the $h$-adic expansion of $g$
\begin{equation}
\label{expansion}
g=h^{n_k}+a_1h^{n_k-1}+\cdots+a_{n_k},
\end{equation}

\noindent where $\deg_y a_i<\deg_y h=n/e_{k-1}$.

\noindent Let $I$ be the set of all $i\in\{1,\ldots,n_k\}$ such that $a_i\neq 0$.
Since $v_f(g)>n_k\overline{b_k}=v_f(h^{n_k})$, $I\neq \emptyset$.
There is $v_f(a_i)<+\infty$ for $i\in I$ and by Property I$_k$ we get
$v_f(a_i)\in \bN \overline{b_0}+\cdots + \bN \overline{b_{k-1}}$,  hence
$v_f(a_i)\equiv 0$ mod $e_{k-1}$ for every $i\in I$. We have

\begin{equation}
\label{noigual}
v_f(a_i h^{n_k-i})\neq v_f(a_j h^{n_k-j})
\end{equation}

\noindent for $i,j\in I$ with $i\neq j$.

\noindent Indeed, $v_f(a_i h^{n_k-i})=v_f(a_j h^{n_k-j})$ with $i<j$ implies, as in
the proof of Property I$_k$, the congruence $(j-i)\overline{b_k}/e_k\equiv 0$ mod
$n_k$, which leads to a contradiction for $0<j-i<n_k$.

\medskip

\noindent From (\ref{expansion}) and (\ref{noigual}) we have

\begin{equation}
\label{min}
v_f(g-h^{n_k})=\min_{i=1}^{n_k}v_f(a_ih^{n_k-i}).
\end{equation}

\noindent By assumption $v_f(g)>n_k\overline{b_k}=v_f(h^{n_k})$, so $v_f(g-h^{n_k})=n_k\overline{b_k}$
and (\ref{min}) implies $n_k\overline{b_k}\leq v_f(a_ih^{n_k-i})=v_f(a_i)+(n_k-i)\overline{b_k}$ for
$i\in \{1,\ldots,n_k\}$. Therefore we get

\begin{equation}
\label{ineq}
v_f(a_i)\geq i\overline{b_k}
\end{equation}

\noindent for $i\in \{1,\ldots,n_k\}$.

\noindent Moreover,  
\begin{equation}
\label{e1}
\hbox{\rm if}\; v_f(a_i)=i\overline{b_k}\; \hbox{\rm for } i\in \{1,\ldots,n_k\}
\;\hbox{\rm then } i=n_k.
\end{equation}

\noindent Indeed, from $v_f(a_i)=i\overline{b_k}$ it follows that $i\overline{b_k}\equiv 0$
mod $e_{k-1}$ and $i\overline{b_k}/e_k\equiv 0$ mod $n_k$, so $i\equiv 0$ mod $n_k$ because
$\overline{b_k}/e_k$ and $n_k$ are coprime. Hence we get $i=n_k$. According to (\ref{min}) there
exists $i_0\in I$ such that $v_f(a_{i_0}h^{n_k-i_0})=v_f(g-h^{n_k})=n_k\overline{b_k}$. Thus
$v_f(a_{i_0})=i_0\overline{b_k}$ and by (\ref{e1}) we get $i_0=n_k$.
\end{proof}

\medskip
\noindent {\bf Notes}

\medskip
\noindent Key polynomials of $f$  introduced in \cite{MacLane}, define curves of maximal contact with \{$f=0$\} (see \cite{LJ}) and are connected
with {\em curvettes} associated with extremal points in the dual graph of \{$f=0$\} (see, for example \cite{GB} p. 54, \cite{Popescu} p.13). They also play an important role in studying  valuations
\cite{Spivakovsky}.

\section{The Abhyankar-Moh theory}

\label{Abhyankar-Moh theory}
\noindent We are going to prove the Abhyankar-Moh Theorem  on approximate roots using
the properties of key polynomials explained in Section \ref{key}. First let us recall the basic notions of Abhyankar-Moh theory (see \cite{Abhyankar-Moh}, \cite{Abhyankar3} or \cite{Popescu}).

\medskip

\noindent Let $R$ be an integral domain and let $d>1$ be a positive integer such that $d$ is a unit in
$R$. Denote $\deg f:=\deg_y f$ the degree of the polynomial $f\in R[y]$ 
in one variable $y$ and
assume that $d$ divides $\deg f$. According to Abhyankar and Moh (\cite{Abhyankar-Moh}, Section 1)
the {\em approximate d-th} root of $f$, denoted by $\sqrt[d]{f}$ is defined to be the unique
monic polynomial satisfying $\deg (f-\left(\sqrt[d]{f}\right)^d)<\deg f- \deg \sqrt[d]{f}$. For
the existence and uniqueness of $\sqrt[d]{f}$ see
\cite{Abhyankar-Moh}. We put by convention $\sqrt[1]{f}=f$. Obviously $\deg \sqrt[d]{f}=\frac{\deg f}{d}$. From the definition it follows that
$\sqrt[e]{\sqrt[d]{f}}=\sqrt[ed]{f}$ if $ed$ is a unit which divides $\deg f$ (see \cite{G-P2}).

\medskip

\noindent Given any monic polynomial $g\in R[y]$ of degree $\deg f/d$ we have the $g$-adic expansion of $f$, namely
\[f=g^d+a_1g^{d-1}+\cdots+a_d,\]

\noindent where $a_i\in R[y]$, $\deg a_i<\deg g$.

\medskip

\noindent The polynomials $a_i$ are uniquely determined by $f$ and $g$.

\medskip

\noindent The {\em Tschirnhausen operator} $\tau_f(g):=g+\frac{1}{d}a_1$ maps $g$ to
$\tau_f(g)$ which is again a monic polynomial of degree $\deg f/d$. One checks (see \cite{Abhyankar-Moh}, Section 1
and Section 6) that

\begin{enumerate}
\item $a_1=0$ if and only if $g=\sqrt[d]{f}$,
\item if $f=(\tau_f(g))^d+\overline{a_1}(\tau_f(g))^{d-1}+\cdots+\overline{a_d}$ is the
$\tau_f(g)$-expansion of $f$ then $\deg \overline{a_1}<\deg a_1$ or $\overline{a_1}=0$.

\noindent Using the above properties we get

\item $\sqrt[d]{f}=\tau_f(\tau_f \cdots (\tau_f(g)))$ with $\tau_f$ repeated $\deg f/d$ times.
\end{enumerate}

\medskip

\noindent Let $f=f(x,y)\in \bK[[x]][y]$ be an irreducible distinguished polynomial of degree $n>1$ such that $\overline{\hbox{\rm char}}_x f=(\overline{b_0},\ldots,\overline{b_h})$, $\overline{b_0}=n$.

\begin{prop}
\label{pre-A}
Let $g=g(x,y)\in \bK[[x]][y]$ be a monic polynomial such that
$\deg_y g=\frac{n}{e_k}$ and $v_f(g)>n_k\overline{b_k}$. Assume that
$n_k\not\equiv 0$ mod char $\bK$. Then
\begin{enumerate}
\item [(i)] if $h$ is a $(k-1)$-th key polynomial of $f$ then $\tau_g(h)$ is a $(k-1)$-th
key polynomial of $f$ as well,

\item [(ii)] $v_f(\sqrt[n_k]{g})=\overline{b_k}$.
\end{enumerate}
\end{prop}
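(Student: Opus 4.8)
The plan is to read everything off the $h$-adic expansion of $g$ furnished by Proposition~\ref{adic-expansion}, together with the iterative description of the approximate root recalled in this section. Throughout, put $d=n_k$: for a $(k-1)$-th key polynomial $h$ one has $\deg_y h=\frac{n}{e_{k-1}}$, so $\deg_y g/\deg_y h=e_{k-1}/e_k=n_k$, and the $h$-adic expansion reads $g=h^{n_k}+a_1h^{n_k-1}+\cdots+a_{n_k}$ with $\deg_y a_i<\deg_y h$. The hypothesis $n_k\not\equiv 0$ (mod char $\bK$) makes $n_k$ a unit in $\bK[[x]]$, so that $\tau_g(h)=h+\frac{1}{n_k}a_1$ and the approximate root $\sqrt[n_k]{g}$ (of degree $\frac{n}{e_{k-1}}$) are both well defined.

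For (i) I would first observe that $\tau_g(h)$ is monic of degree $\frac{n}{e_{k-1}}$, since $\deg_y a_1<\deg_y h$ leaves the leading term of $h$ untouched. It then remains to compute $v_f(\tau_g(h))$. Proposition~\ref{adic-expansion} applied to $h$ and $g$ gives $v_f(a_1)>\overline{b_k}=v_f(h)$, hence $v_f\!\left(\frac{1}{n_k}a_1\right)=v_f(a_1)>v_f(h)$. By the property of the intersection multiplicity that $v_f(u+u')=\min\{v_f(u),v_f(u')\}$ when $v_f(u)\neq v_f(u')$, we obtain $v_f(\tau_g(h))=\overline{b_k}$. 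Thus $\tau_g(h)$ is monic of degree $\frac{n}{e_{k-1}}$ with $v_f(\tau_g(h))=\overline{b_k}$, i.e. a $(k-1)$-th key polynomial of $f$, which is (i).

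For (ii) I would invoke the iterative characterization of the approximate root: $\sqrt[n_k]{g}=\tau_g(\tau_g\cdots(\tau_g(h)))$, with $\tau_g$ applied $\frac{n}{e_{k-1}}$ times, where $h$ may be taken to be \emph{any} monic polynomial of degree $\frac{n}{e_{k-1}}$. By the Semigroup Theorem (Theorem~\ref{structure}) such an $h$ exists that is a $(k-1)$-th key polynomial of $f$ (note $k\ge 1$, so $k-1$ is a legitimate index). Feeding part (i) back into itself along this chain — the hypotheses on $g$ stay fixed, only the trial polynomial is updated — an immediate induction shows that every iterate is again a $(k-1)$-th key polynomial; in particular so is the final one, $\sqrt[n_k]{g}$. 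Therefore $v_f(\sqrt[n_k]{g})=\overline{b_k}$.

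The substantive input is Proposition~\ref{adic-expansion}; once $v_f(a_1)>\overline{b_k}$ is in hand, both parts are short. The point I expect to require care is precisely this self-application in (ii): one must check that Proposition~\ref{adic-expansion} remains applicable at each step, which holds because its hypothesis $v_f(g)>n_k\overline{b_k}$ concerns only the fixed $g$, while the role of the updated $(k-1)$-th key polynomial at each stage is exactly what part (i) re-establishes. The characteristic assumption $n_k\not\equiv 0$ (mod char $\bK$) enters only to make $\frac{1}{n_k}$, and hence both $\tau_g$ and $\sqrt[n_k]{g}$, meaningful.
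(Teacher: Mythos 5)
Your proposal is correct and follows essentially the same route as the paper: part (i) is obtained from the $h$-adic expansion via Proposition \ref{adic-expansion} (which gives $v_f(a_1)>\overline{b_k}$ since $n_k>1$), and part (ii) by iterating (i) $\frac{n}{e_{k-1}}$ times through the Tschirnhausen description of $\sqrt[n_k]{g}$. Your explicit remark that the hypothesis $v_f(g)>n_k\overline{b_k}$ is attached to the fixed $g$ and so persists along the iteration is exactly the point the paper leaves implicit.
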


\noindent \begin{proof}
Consider the $h$-adic development of $g$: $g=h^{n_k}+a_1h^{n_k-1}+\cdots+a_{n_k}$. By
Proposition \ref{adic-expansion} we get $v_f(a_1)>\overline{b_k}$ (because
$n_k>1$). Therefore $v_f(\tau_g(h))=v_f(h+\frac{1}{n_k}a_1)=v_f(h)=\overline{b_k}$.
Clearly $\deg_y \tau_g(h)=\deg_ y h$ and $(i)$ follows.

\medskip

\noindent To check $(ii)$ use $\deg_y g/n_k=n/e_{k-1}$ times $(i)$ and the
formula for the approximate root $\sqrt[n_k]{g}$ in terms of $\tau_g$.
\end{proof}
\medskip

\noindent Now we can prove the Abhyankar-Moh Theorem (see 
\cite{Abhyankar-Moh}).

\begin{teorema}[Abhyankar-Moh Fundamental Theorem on approximate roots]
\label{Abhyankar-Moh}
Let $f=f(x,y)\in \bK[[x]][y]$ be an irreducible distinguished polynomial of degree
$n>1$ with $\overline{\hbox{\rm char}}_x f=(\overline{b_0},\overline{b_1},\ldots,
\overline{b_h})$ and $\overline{b_0}=v_f(x)=n$. Let $1\leq k \leq h+1$. Suppose that
$e_{k-1}\not\equiv 0$ mod $char \bK$. Then:

\begin{enumerate}
\item $v_f(\sqrt[e_{k-1}]{f})=\overline{b_k}$,
\item $\sqrt[e_{k-1}]{f}$ is an irreducible distinguished polynomial of degree $n/e_{k-1}$ such
that $\overline{\hbox{\rm char}}_x \sqrt[e_{k-1}]f=(\overline{b_0}/e_{k-1},\overline{b_1}/e_{k-1},\ldots,
\overline{b_{k-1}}/e_{k-1})$.
\end{enumerate}
\end{teorema}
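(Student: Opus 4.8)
The plan is to identify $\sqrt[e_{k-1}]{f}$ with a $(k-1)$-th key polynomial of $f$ and then read off both assertions from the structural results of Section~\ref{key}. The starting point is the observation that $f$ itself is the $h$-th key polynomial: being a distinguished polynomial of degree $n=n/e_h$ (recall $e_h=1$) with $v_f(f)=i_0(f,f)=+\infty=\overline{b_{h+1}}$, it meets the definition of an $h$-th key polynomial. Writing $g_j:=\sqrt[e_j]{f}$, I note $g_h=\sqrt[1]{f}=f$, so the base case $g_h$ is an $h$-th key polynomial.

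The engine will be a descending induction on $j$, from $h$ down to $k-1$, establishing that $g_j=\sqrt[e_j]{f}$ is a $j$-th key polynomial. Assume $g_j$ is a $j$-th key polynomial for some $j$ with $k\leq j\leq h$. Since the $e_i$ form a divisibility chain $e_h\mid\cdots\mid e_{k-1}$ and $e_{k-1}$ is a unit by hypothesis, both $e_{j-1}=n_je_j$ and its divisor $n_j$ are units in $\bK[[x]]$ dividing $\deg f=n$; hence the composition law for approximate roots applies and gives $g_{j-1}=\sqrt[e_{j-1}]{f}=\sqrt[n_je_j]{f}=\sqrt[n_j]{\sqrt[e_j]{f}}=\sqrt[n_j]{g_j}$. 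Now $g_j$ is monic of degree $n/e_j$ with $v_f(g_j)=\overline{b_{j+1}}>n_j\overline{b_j}$, the inequality being exactly property $(c_{j+1})$ of the Semigroup Theorem~\ref{structure}. These are precisely the hypotheses of Proposition~\ref{pre-A} with its running index taken to be $j$, so part~(ii) yields $v_f(g_{j-1})=v_f(\sqrt[n_j]{g_j})=\overline{b_j}$. As an approximate root is automatically monic and $\deg_y g_{j-1}=(n/e_j)/n_j=n/e_{j-1}$, the polynomial $g_{j-1}$ is a $(j-1)$-th key polynomial, which closes the inductive step.

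Iterating down to $j=k-1$ shows that $g_{k-1}=\sqrt[e_{k-1}]{f}$ is a $(k-1)$-th key polynomial of $f$. Assertion~1 is then immediate from the definition, $v_f(\sqrt[e_{k-1}]{f})=\overline{b_{(k-1)+1}}=\overline{b_k}$. For assertion~2, I apply Proposition~\ref{p1} to this $(k-1)$-th key polynomial: it gives at once that $\sqrt[e_{k-1}]{f}$ is a distinguished polynomial, irreducible in $\bK[[x,y]]$, of degree $n/e_{k-1}$, with $\overline{\hbox{\rm char}}_x\sqrt[e_{k-1}]{f}=\left(\overline{b_0}/e_{k-1},\ldots,\overline{b_{k-1}}/e_{k-1}\right)$.

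I expect no serious obstacle, since Propositions~\ref{pre-A} and~\ref{p1} together with the composition law for approximate roots already carry the analytic content and the argument is essentially inductive bookkeeping. The only points demanding care are the checks performed at each stage: that $e_{j-1}$ and $n_j$ are units so that the roots $\sqrt[e_{j-1}]{f}$ and $\sqrt[n_j]{g_j}$ are defined — this is exactly where the single hypothesis $e_{k-1}\not\equiv 0$ mod char $\bK$ is consumed, via the divisibility chain — and that the valuation inequality $v_f(g_j)>n_j\overline{b_j}$ required by Proposition~\ref{pre-A} holds, which is supplied by $(c_{j+1})$. Finally, the boundary case $k=h+1$ is simply the base case itself, where $e_{k-1}=e_h=1$, $\sqrt[e_h]{f}=f$, and $\overline{b_{h+1}}=+\infty$, so both assertions hold trivially.
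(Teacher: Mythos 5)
Your proof is correct and follows essentially the same route as the paper: descending induction anchored at $\sqrt[e_h]{f}=f$ with $\overline{b_{h+1}}=+\infty$, the composition law $\sqrt[e_{j-1}]{f}=\sqrt[n_j]{\sqrt[e_j]{f}}$ combined with Proposition~\ref{pre-A}(ii) for the inductive step, and Proposition~\ref{p1} for the second assertion. The only cosmetic difference is that you phrase the induction as maintaining the key-polynomial property of $\sqrt[e_j]{f}$ rather than inducting directly on the index $k$ of the statement.
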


\noindent \begin{proof}
According to Proposition \ref{p1} it suffices to check the first part of the theorem.
We use descendent induction on $k$. If $k=h+1$ then $e_{k-1}=e_h=1$, $\overline{b_k}=
\overline{b_{h+1}}=+\infty$ and obviously $v_f(\sqrt[e_h]{f})=\overline{b_{h+1}}$. Let
$k\leq h$. Suppose that $e_k\not\equiv 0$ (mod char $\bK$) and $v_f(\sqrt[e_k]{f})=
\overline{b_{k+1}}$. The polynomial $\sqrt[e_k]{f}$ is of degree $n/e_k$ and $v_f
(\sqrt[e_k]{f})>n_k\overline{b_{k}}$ so we can apply Proposition \ref{pre-A} (ii) to
$g=\sqrt[e_k]{f}$ to get $v_f(\sqrt[n_k]{g})=\overline{b_k}$ provided that $n_k\not\equiv 0$
(mod char $\bK$).

\medskip

\noindent Assume that $e_{k-1}\not\equiv 0$ (mod char $\bK$). Then $e_k,n_k\not\equiv 0$
(mod char $\bK$) and we have $\sqrt[n_k]{g}=\sqrt[n_k]{\sqrt[e_k]{f}}=\sqrt[e_{k-1}]{f}$.
Consequently, $v_f(\sqrt[e_{k-1}]{f})=\overline{b_k}$ and we are done.
\end{proof}

\begin{coro}
Suppose that $n\not\equiv 0$ (mod char $\bK$). Then $\sqrt[e_{0}]{f}$, $\sqrt[e_{1}]{f},\ldots,$
$\sqrt[e_{h}]{f}$ is a sequence of key polynomials of $f$.
\end{coro}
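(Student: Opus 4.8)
The plan is to show, for each $k\in\{0,1,\ldots,h\}$, that $\sqrt[e_k]{f}$ meets the two defining requirements of a $k$-th key polynomial recalled at the start of Section~\ref{key}: that it is monic of $y$-degree $n/e_k$ and that $v_f(\sqrt[e_k]{f})=\overline{b_{k+1}}$. Once this is established for every such $k$, the list $\sqrt[e_0]{f},\ldots,\sqrt[e_h]{f}$ is by definition a sequence of key polynomials of $f$.

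First I would check that each approximate root in the list is actually defined and that Theorem~\ref{Abhyankar-Moh} is applicable to it. Forming $\sqrt[e_k]{f}$ requires $e_k$ to divide $\deg_y f=n$ and to be a unit in $\bK[[x]]$. The divisibility is immediate, since $e_k=\gcd(e_{k-1},\overline{b_k})$ divides $e_0=n$; and because the hypothesis is $n\not\equiv 0$ (mod char $\bK$), every divisor of $n$---in particular $e_k$---is nonzero modulo the characteristic, hence a unit. This elementary divisibility remark is the only arithmetic input needed.

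The substance is then supplied entirely by Theorem~\ref{Abhyankar-Moh}, read at the index $k+1$ rather than $k$ (so that its ``$e_{k-1}$'' becomes our $e_k$); this is legitimate because $1\le k+1\le h+1$ and the required hypothesis $e_k\not\equiv 0$ (mod char $\bK$) has just been verified. Its first conclusion yields $v_f(\sqrt[e_k]{f})=\overline{b_{k+1}}$, and its second conclusion yields that $\sqrt[e_k]{f}$ is an irreducible distinguished polynomial of degree $n/e_k$. These are exactly the two conditions in the definition of a $k$-th key polynomial, so $\sqrt[e_k]{f}$ is one.

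I expect no real obstacle here: the corollary is a direct reading of Theorem~\ref{Abhyankar-Moh} across all admissible indices, and the only points demanding attention are bookkeeping rather than mathematics---the index shift just described, and the boundary case $k=h$, where $e_h=1$, the convention $\sqrt[1]{f}=f$ applies, and $f$ itself is the $h$-th key polynomial with $v_f(f)=\overline{b_{h+1}}=+\infty$.
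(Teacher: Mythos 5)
Your proposal is correct and follows exactly the route the paper intends: the corollary is an immediate application of Theorem \ref{Abhyankar-Moh} at each index $k+1$, once one notes that $e_k$ divides $n$ and hence $e_k\not\equiv 0$ (mod char $\bK$). The index bookkeeping and the boundary case $k=h$ are handled as the paper's conventions require, so there is nothing to add.
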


\medskip

\noindent We say that a projective plane curve $C$ defined over $\bK$ {\em has one
branch at infinity} if there is a line $L$ (line at infinity) intersecting $C$ in only one 
point $O$, and $C$ is analytically irreducible at $O$, that is the equation of $C$ in an affine
coordinate system centered at $O$ is irreducible in the ring of formal power series.

\medskip

\noindent In what follows we denote by $n$ the degree of $C$, by $n'$ the multiplicity of $C$ at
$O$ and we put $d:=\gcd(n,n')$.

\medskip

\noindent We call $C$ {\em permissible} if $d\not\equiv 0$ (mod char $\bK$).

\medskip

\noindent Let us denote by $\Gamma_O$ the semigroup of the branch at infinity of the curve
$C$. Since $C$ and $L$ intersect with multiplicity $n$ at $O$ there exists the $n$-minimal system
of generators of $\Gamma_O$.

\begin{teorema}[Abhyankar-Moh inequality]
\label{Abhyankar-Moh inequality}
Assume that $C$ is a curve of degree $n>1$ with one branch at infinity and let 
$(\overline{b_0},\ldots,\overline{b_h})$ be the $n$-minimal system of generators of the 
semigroup $\Gamma_O$. If $C$ is permissible, then $e_{h-1}\overline{b_h}<n^2$.
\end{teorema}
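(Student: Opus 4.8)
The plan is to translate the Abhyankar--Moh inequality $e_{h-1}\overline{b_h}<n^2$ into a statement about intersection numbers that can be attacked with the key polynomials and the strong triangle inequality. The branch at infinity has $\overline{\hbox{\rm char}}_x f=(\overline{b_0},\ldots,\overline{b_h})$ with $\overline{b_0}=n$, where $f$ is the local equation of $C$ at $O$ in a coordinate system with $\{x=0\}=L$. Since $C$ is permissible, $d=\gcd(n,n')=e_h\cdot(\text{something})$; more precisely $d\not\equiv 0$ (mod char $\bK$), so in particular $e_{h-1}\not\equiv 0$ (mod char $\bK$), and the Abhyankar--Moh Theorem (Theorem \ref{Abhyankar-Moh}) makes the approximate root $g:=\sqrt[e_{h-1}]{f}$ available. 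This $g$ is an irreducible distinguished polynomial of degree $n/e_{h-1}$ with $v_f(g)=\overline{b_h}$.

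\textbf{The global input.}
The quantity $n^2$ must come from \emph{global} (projective) geometry, not from the local ring alone, since the local data $(\overline{b_0},\ldots,\overline{b_h})$ know nothing about the degree of $C$ by themselves. The key idea I would use is that $g$, being an approximate root of $f$ of degree $n/e_{h-1}$, is the localization at $O$ of a \emph{global} algebraic curve $G$ of degree $n/d$ (one takes the approximate root of the projective equation and dehomogenizes). Then B\'ezout's theorem in the projective plane gives
\[
\deg C\cdot\deg G=\sum_{P}i_P(C,G),
\]
and because $C$ has only one branch at infinity meeting $L$ at the single point $O$, the intersection $C\cap G$ is concentrated at $O$ up to the contribution counted by the line at infinity. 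The local intersection number $i_O(C,G)$ is exactly $v_f(g)=\overline{b_h}$ computed with respect to the branch, after rescaling by the multiplicity $i_0(f,x)=n$ of $L$ with the branch. The inequality $e_{h-1}\overline{b_h}<n^2$ should then emerge as the assertion that the local intersection multiplicity at $O$ is strictly smaller than the B\'ezout total $n\cdot(n/e_{h-1})$, the strictness coming from the fact that $G$ must meet $C$ somewhere other than $O$ (or with extra multiplicity along $L$).

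\textbf{The decisive step.}
The main obstacle is making the B\'ezout bookkeeping precise: I must show that $i_O(C,G)$, interpreted in the affine chart, equals $e_{h-1}\overline{b_h}$ up to the correct normalization, and that the total $\deg C\cdot\deg G=n\cdot(n/e_{h-1})=n^2/e_{h-1}$ strictly exceeds it. Concretely, I would use the logarithmic distance and the fact that $g=\sqrt[e_{h-1}]{f}$ is a semiroot: the contact of $\{g=0\}$ with the branch $\{f=0\}$ is measured by $v_f(g)=\overline{b_h}$, and Lemma \ref{ordenes} converts this into the affine intersection number $i_O(f,g)$. The global curve $G$ has degree $n/d$ while its branch at $O$ has degree $n/e_{h-1}$ in $y$; the gap between $d$ and $e_{h-1}$ (when $d\ne e_{h-1}$) and the points of $G$ away from $O$ together account for the strict inequality. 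The hard part will be controlling the intersection of $G$ with $C$ at points other than $O$ and along the line at infinity, ensuring this leftover contribution is strictly positive so that $i_O(C,G)<\deg C\cdot\deg G$; this is where permissibility ($d$ a unit) is essential, since it guarantees the approximate root exists globally and that its degree computation is not corrupted by the characteristic.
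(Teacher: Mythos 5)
Your plan is in fact the paper's own strategy: permissibility gives $e_{h-1}\not\equiv 0$ (mod char $\bK$), Theorem \ref{Abhyankar-Moh} (case $k=h$) produces $g=\sqrt[e_{h-1}]{f}$ with $i_0(f,g)=\overline{b_h}$, and B\'ezout applied to $C$ and the curve $G=\{g=0\}$ bounds $\overline{b_h}$. But two of your decisive steps are wrong or missing. First, the degree of $G$: the approximate root is taken of the affine polynomial $f\in\bK[x][y]$ (there is no meaningful ``approximate root of the projective equation''), and the fact one needs (cited in the paper from \cite{G-P2}) is that its \emph{total} degree equals its $y$-degree, namely $n/e_{h-1}$ --- not $n/d$. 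Since $\overline{b_1}=n'$, one has $d=\gcd(n,n')=e_1$, and $e_{h-1}$ divides $e_1$ (this, rather than your ``$d=e_h\cdot(\text{something})$'', is also the correct reason why permissibility makes $e_{h-1}$ prime to the characteristic); hence $n/d\leq n/e_{h-1}$, and your own bookkeeping is inconsistent: the branch of $G$ at $O$ meets $L$ with multiplicity $n/e_{h-1}$, which already forces $\deg G\geq n/e_{h-1}$. Also, no ``rescaling by $i_0(f,x)$'' enters anywhere: $i_O(C,G)=i_0(f,g)=v_f(g)=\overline{b_h}$ outright, and B\'ezout requires no claim that the intersection of $C$ and $G$ is concentrated at $O$; the one-sided bound $i_O(C,G)\leq \deg C\cdot\deg G$ at the single point $O$ is all that is used (there is no common component, $C$ being irreducible of degree $n>\deg G$).

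The genuine gap is the strict inequality. You propose to get $i_O(C,G)<\deg C\cdot\deg G$ geometrically, by showing that $G$ must meet $C$ away from $O$ or pick up extra multiplicity along the line at infinity, and you yourself flag this as ``the hard part'' without giving an argument. A priori nothing prevents the full B\'ezout number from concentrating at $O$, so as it stands your plan only yields $e_{h-1}\overline{b_h}\leq n^2$. The paper closes this with a one-line arithmetic observation that your setup already contains: if $\overline{b_h}=n\cdot\frac{n}{e_{h-1}}$, then, writing $n=e_{h-1}m$, we get $\overline{b_h}=e_{h-1}m^2\equiv 0$ (mod $e_{h-1}$), which is impossible because $\gcd(e_{h-1},\overline{b_h})=e_h=1$ while $e_{h-1}=n_h>1$. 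Without this (or an actual proof of your geometric claim, which would be harder than the theorem itself), the proof is incomplete.
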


\noindent \begin{proof}
Let $(x,y)$ be an affine coordinate system centered at $O$ such that $L$ has the
equation $x=0$. Let $f(x,y)=0$ be a polynomial equation of $C$ of total degree $n$.
Multiplying $f$ by a constant we may assume that $f$ is an $y$-distinguished polynomial (of degree
$n$ since $C$ and $L$ intersect only at $O$) irreducible in $\bK[[x,y]]$.  Let $\overline{\hbox{\rm
char}_x}f=(\overline{b_0},\ldots,\overline{b_h})$. We have $\overline{b_0}=n$ and $\overline{b_1}=n'$. 
Therefore $e_1=\gcd(\overline{b_0},\overline{b_1})=\gcd(n,n')\not\equiv 0$ (mod char $\bK$) and consequently
$e_{h-1}\not\equiv 0$ (mod char $\bK$).

\medskip

\noindent By Theorem \ref{Abhyankar-Moh} applied in the case $k=h$ the approximate root $\sqrt[e_{h-1}]{f}$
exists and $i_0(f,\sqrt[e_{h-1}]{f})=\overline{b_h}$. One checks that the total degree of $\sqrt[e_{h-1}]{f}$
is $\frac{n}{e_{h-1}}$ (see, for example \cite{G-P2}, Remark on p. 201). Thus by B\'ezout's theorem applied to $f$ and
$\sqrt[e_{h-1}]{f}$ we get $\overline{b_h}=i_0(f,\sqrt[e_{h-1}]{f})\leq n\frac{n}{e_{h-1}}$. In fact, we have
$\overline{b_h}< n\frac{n}{e_{h-1}}$ for $\overline{b_h}= n\frac{n}{e_{h-1}}$ would imply $\overline{b_h}\equiv 0$
(mod $e_{h-1}$) which is impossible.
\end{proof}

\medskip

\noindent The Abhyankar-Moh inequality implies an upper bound for the conductor $c$ of the semigroup $\Gamma_O$:

\begin{coro}
\label{coro-ABi}
Under the notations and assumptions of Theorem \ref{Abhyankar-Moh inequality},
$$c\leq (n-1)^2-(n_1-1)(n-\overline{b_1})=(n-1)^2-\left(\frac{n}{d}-1\right)(n-n').$$
\end{coro}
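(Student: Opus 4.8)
The plan is to feed the Abhyankar--Moh inequality (Theorem \ref{Abhyankar-Moh inequality}) into the Conductor formula (Corollary \ref{ST2}) by an Abel-summation argument. By Corollary \ref{ST2} we have $c=S-n+1$, where $S:=\sum_{k=1}^h(n_k-1)\overline{b_k}$ and $\overline{b_0}=n$. Recall $\overline{b_1}=n'$ and $d=\gcd(n,n')=e_1$, so that $n_1=n/d$. Since $(n-1)^2+(n-1)=n(n-1)$, the asserted bound is equivalent, via $c=S-(n-1)$, to
\[ S\le n(n-1)-(n_1-1)(n-n')=:B. \]
So it suffices to bound $S$ by $B$.

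The first step is an algebraic rewriting. Because $e_{k-1}/e_k=n_k$, one has $(n_k-1)\overline{b_k}=(e_{k-1}\overline{b_k})\bigl(\frac{1}{e_k}-\frac{1}{e_{k-1}}\bigr)$. Setting $c_k:=e_{k-1}\overline{b_k}$, this turns $S$ into the weighted sum $S=\sum_{k=1}^h c_k\bigl(\frac{1}{e_k}-\frac{1}{e_{k-1}}\bigr)$, whose weights are positive (as $n_k>1$ forces $e_k<e_{k-1}$) and telescope: those with $k\geq 2$ add up to $\frac{1}{e_h}-\frac{1}{e_1}=1-\frac{1}{d}$, while the $k=1$ weight is $\frac{1}{d}-\frac{1}{n}$.

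The second step controls the coefficients $c_k$. Multiplying the inequality $n_k\overline{b_k}<\overline{b_{k+1}}$ of the Semigroup Theorem (Theorem \ref{structure}) by $e_k$ gives $c_k=e_{k-1}\overline{b_k}<e_k\overline{b_{k+1}}=c_{k+1}$, so $(c_k)_{k=1}^h$ is strictly increasing; its largest value is pinned down by Theorem \ref{Abhyankar-Moh inequality}, namely $c_h=e_{h-1}\overline{b_h}<n^2$. Hence $c_k<n^2$ for every $k$. The smallest coefficient, on the other hand, is known exactly: $c_1=n\overline{b_1}=nn'$.

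The final step assembles the estimate by isolating the $k=1$ term (evaluated exactly) and bounding the remaining coefficients by $n^2$. Since all the weights are positive, for $h\ge 2$ this yields
\[ S< nn'\Bigl(\frac{1}{d}-\frac{1}{n}\Bigr)+n^2\Bigl(1-\frac{1}{d}\Bigr)=(n_1-1)n'+n^2-nn_1, \]
and a direct computation identifies the right-hand side with $B$; thus $S<B$, hence $S\le B$. When $h=1$ the tail is empty and $S=(n_1-1)n'=(n-1)n'=B$ exactly. Either way $S\le B$, which is the desired bound. The only genuine external input is the Abhyankar--Moh inequality, used solely to guarantee $c_h<n^2$; the rest is the telescoping identity and the exact value $c_1=nn'$. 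The point requiring care is the bookkeeping matching $(n_1-1)n'+n^2-nn_1$ with $B$, together with the observation that isolating the first term is \emph{necessary}: discarding it and bounding every $c_k$ by $n^2$ gives only the weaker $c<(n-1)^2$, without the correction term.
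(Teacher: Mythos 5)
Your proof is correct and is essentially the paper's own argument in different notation: the paper writes $c=(n-1)^2-\sum_{k=1}^h(n_k-1)\left(\frac{n^2}{e_{k-1}}-\overline{b_k}\right)$, keeps the $k=1$ term exactly and discards the nonnegative terms with $k\ge 2$, and since $(n_k-1)\left(\frac{n^2}{e_{k-1}}-\overline{b_k}\right)=\left(\frac{1}{e_k}-\frac{1}{e_{k-1}}\right)(n^2-e_{k-1}\overline{b_k})$ this is exactly your weighted telescoping sum with the bound $c_k=e_{k-1}\overline{b_k}<n^2$. The external inputs (the Conductor formula, the monotonicity of $e_{k-1}\overline{b_k}$, and the Abhyankar--Moh inequality applied at $k=h$) coincide with those of the paper.
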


\noindent \begin{proof}
By the Abhyankar-Moh inequality we have $e_{k-1}\overline{b_k}<n^2$ for $k\in\{1,\ldots,h\}$ since the sequence
$(e_{k-1}\overline{b_k})$ is increasing. By the Conductor formula (Corollary \ref{ST2}) we get
\begin{eqnarray*}
c&=&\sum_{k=1}^h(n_k-1)\overline{b_k}-\overline{b_0}+1=(n-1)^2-\sum_{k=1}^h(n_k-1)\left(\frac{n^2}{e_{k-1}}-\overline{b_k}\right)\\
& = & (n-1)^2-(n_1-1)(n-\overline{b_1})-\sum_{k=2}^h(n_k-1)\left(\frac{n^2}{e_{k-1}}-\overline{b_k}\right)\\
&\leq&(n-1)^2-(n_1-1)(n-\overline{b_1})=(n-1)^2-\left(\frac{n}{d}-1\right)(n-n').
\end{eqnarray*}

\noindent The last equality follows from the fact that $\overline{b_1}=\min(\Gamma_O\backslash\{0\})
=\hbox{\rm mult}_O C=n'$.
\end{proof}

\medskip

\noindent Now we can prove
\begin{teorema}[Abhyankar-Moh Embedding Line Theorem, first formulation]
\label{AMel1}
Assume that $C$ is a rational projective irreducible curve of degree $n>1$ with one branch at infinity 
and such that the center of the branch at infinity $O$ is the unique singular point of $C$. Suppose that $C$ 
is permissible and let $n'$ be the multiplicity of $C$ at $O$. Then $n-n'$ divides $n$.
\end{teorema}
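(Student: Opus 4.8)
The plan is to pin down the conductor $c$ of the semigroup $\Gamma_O$ \emph{exactly} using the hypothesis that $C$ is rational, and then to feed this value into the upper bound of Corollary \ref{coro-ABi}. First I would set up coordinates as in the proof of Theorem \ref{Abhyankar-Moh inequality}: choose an affine system centered at $O$ with $L=\{x=0\}$, and take $f(x,y)=0$ to be a $y$-distinguished polynomial of degree $n$, irreducible in $\bK[[x,y]]$, so that $\overline{\hbox{\rm char}}_x f=(\overline{b_0},\ldots,\overline{b_h})$ with $\overline{b_0}=n$ and $\overline{b_1}=n'=\hbox{\rm mult}_O C$. Then $e_1=\gcd(n,n')=d$ and $n_1=n/d$. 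Because $O$ is a genuine singular point we have $n'\geq 2$, hence $\Gamma_O\neq\bN$, $h\geq 1$, and $n_1>1$; and since an irreducible curve of degree $n>1$ cannot carry a point of multiplicity $n$ (that would force it to be a union of lines), we also have $n'<n$.

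The key global input is the exact computation of $c$. Since $C$ is rational, its normalization is $\mathbf{P}^1$, so the geometric genus of $C$ is $0$. The genus formula for an irreducible plane curve of degree $n$ reads $g=\frac{(n-1)(n-2)}{2}-\sum_P\delta_P$, where $\delta_P$ is the local $\delta$-invariant. As $O$ is the only singular point and the branch there is unibranch, the sum reduces to $\delta_O$, and $\delta_O$ equals the number of gaps $\sharp(\bN\backslash\Gamma_O)$, which by Lemma \ref{complementario} and the fifth claim of Proposition \ref{prop-min} equals $c/2$. Setting $g=0$ therefore gives $\delta_O=\frac{(n-1)(n-2)}{2}$, that is,
\[c=(n-1)(n-2).\]

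Now I would substitute this value into Corollary \ref{coro-ABi}, which asserts $c\leq(n-1)^2-(n_1-1)(n-n')$. Subtracting $c=(n-1)(n-2)$ and using the identity $(n-1)^2-(n-1)(n-2)=n-1$ yields the clean inequality $(n_1-1)(n-n')\leq n-1$. Since $d$ divides both $n$ and $n'$, I write $n-n'=dq$ with $q\geq 1$ (as $n'<n$); substituting $n_1=n/d$ turns the inequality into $(n-d)q\leq n-1$. Finally, $n_1>1$ forces $d\leq n/2$, hence $n-d\geq n/2$ and
\[q\leq\frac{n-1}{n-d}\leq\frac{2(n-1)}{n}<2,\]
so $q=1$. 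Consequently $n-n'=d=\gcd(n,n')$, which divides $n$, as required.

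The main obstacle is the one step that is not purely local: obtaining the exact value $c=(n-1)(n-2)$. Everything afterward is the already-developed local theory (Corollary \ref{coro-ABi}) together with elementary estimates, so the entire weight of the rationality hypothesis is concentrated in the passage from geometric genus zero to the conductor. One must also take care to justify the two auxiliary inequalities $n'<n$ and $n_1>1$, since these are exactly what exclude the degenerate possibilities and guarantee $q=1$ rather than merely $q\leq 1$.
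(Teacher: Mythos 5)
Your proof is correct and follows essentially the same route as the paper: combine the conductor bound of Corollary \ref{coro-ABi} with the Noether genus formula $c=(n-1)(n-2)$ (valid since $C$ is rational and unibranch at its unique singular point), and conclude by elementary arithmetic that $\frac{n-n'}{d}=1$. Your only departures are cosmetic — you spell out the genus-to-conductor step via $\delta_O=\sharp(\bN\backslash\Gamma_O)=c/2$ and package the final estimate as a bound on $q=(n-n')/d$, whereas the paper divides the inequality $\left(\frac{n}{d}-1\right)(n-n')\leq n-1$ by $d$ directly.
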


\noindent \begin{proof}
Let $c$ be the conductor of the semigroup $\Gamma_O$. Then we have

\begin{equation}
\label{ineq-coro}
c\leq (n-1)^2-\left(\frac{n}{d}-1\right)(n-n')
\end{equation}

\noindent by Corollary \ref{coro-ABi}.
\medskip

\noindent On the other hand from the Noether formula for the genus of projective plane curve we get
\begin{equation}
\label{eq-coro}
c=(n-1)(n-2).
\end{equation}

\noindent Combining (\ref{ineq-coro}) and (\ref{eq-coro}) we obtain
$\left(\frac{n}{d}-1\right)(n-n')\leq n-1<n$ and $\left(\frac{n}{d}-1\right)\left(\frac{n}{d}-\frac{n'}{d}\right)<\frac{n}{d}$.
Therefore we get $\frac{n}{d}-\frac{n'}{d}=1$ and $n-n'$ divides $n$.
\end{proof}
\medskip

\begin{nota}
From  the proof of Theorem \ref{AMel1} we also conclude that $\gcd(n,n')=n-n'$.
\end{nota}

\medskip

\begin{nota}
Let us keep the assumptions of Theorem \ref{AMel1} and let $\overline{\beta_0},\overline{\beta_1},\ldots$ be the minimal  sequence of generators of the semigroup $\Gamma_O$. Then $n'=\overline{\beta_0}$ and since $n$ is the intersection multiplicity of the branch at infinity with a nonsingular branch we have $n\leq \overline{\beta_1}$ and $n\equiv 0$ (mod $\overline{\beta_0})$ if $n\neq \overline{\beta_1}$. We claim that if $n\neq \overline{\beta_1}$ then $n=2\overline{\beta_0}$. Indeed, if $n\neq \overline{\beta_1}$ then $n=a\overline{\beta_0}$ for an integer $a>0$ and $n=b(n-\overline{\beta_0})$ for an integer $b>0$ by Theorem \ref{AMel1}. Thus we get $a=(a-1)b$ which implies $a=2$. 

\medskip

\noindent We say that a nonsingular projective curve $C'$ has maximal contact with $C$ at infinity if $C'$ intersects $C$ at $0$ with multiplicity
$\overline{\beta_1}$ (see \cite{LJ}). The line at infinity has maximal contact with $C$ if and only if  $n\neq 2n'$. If $n = 2n'$ then there exists a  
nonsingular curve $C'$ of degree $2$ which has maximal contact with $C$ at infinity (if $f=0$ is the affine equation of $C$ such that in the proof of
Theorem \ref{Abhyankar-Moh inequality} then $C'$ is given by the equation $\sqrt[n']{f}=0$).
\end{nota}

\medskip

\noindent A polynomial mapping $F=(P,Q):\bK\longrightarrow \bK^2$ is called a polynomial embedding (of the line $\bK$) 
if there is a polynomial $G:\bK^2\longrightarrow \bK$ such that $G\circ F$ is equal to the identity of $\bK$.

\begin{teorema}[Abhyankar-Moh Embedding Line Theorem, second formulation]
If $F=(P,Q):\bK\longrightarrow \bK^2$ is a polynomial embedding such that $m=\deg P$, $n=\deg Q>0$ and $\gcd(m,n)
\not\equiv 0$ (mod char $\bK$) then $m$ divides $n$ or $n$ divides $m$.
\end{teorema}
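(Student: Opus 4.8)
The plan is to reduce the statement to the first formulation of the Embedding Line Theorem (Theorem~\ref{AMel1}) by studying the projective plane curve $C\subset\mathbf P^2$ obtained as the Zariski closure of the image of $F$. First I would extract the arithmetic content of $G\circ F=\mathrm{id}$: it gives $G(P(t),Q(t))=t$, so $t\in\bK[P(t),Q(t)]$ and hence $\bK[P(t),Q(t)]=\bK[t]$. Consequently the surjection $\bK[x,y]\to\bK[t]$, $x\mapsto P$, $y\mapsto Q$, is a closed immersion of the line $\bK$ into $\bK^2$, so the affine part $C\cap\bK^2$ is isomorphic to $\bK$; in particular it is irreducible, rational (its function field is $\bK(t)$, so the parametrization is birational onto $C$) and \emph{nonsingular at every affine point}. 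The point $O$ at infinity is the image of $t=\infty$; since it is the only point of the normalization lying over the line at infinity $L$, the curve $C$ has exactly one branch at infinity, centered at $O$, and $L$ meets $C$ only at $O$. I would dispose of degenerate cases at once: if $m=0$ then $P$ is constant and $G(c,Q(t))=t$ forces $\deg Q=1$, so $n=1$ and $n\mid m$; thus I may assume $m,n\ge 1$. Since the class of embeddings and the conclusion are symmetric in $P$ and $Q$ (replace $G$ by $(x,y)\mapsto G(y,x)$), I may also assume $n\ge m$; and if $n=m$ the conclusion $m\mid n$ is trivial. So it remains to treat $n>m\ge 1$, where a leading–term count shows $N:=\deg C=\max(m,n)=n$ and $O=[0:1:0]$.

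Next I would compute the two invariants needed to apply Theorem~\ref{AMel1}. Because $L$ meets $C$ only at $O$, B\'ezout's theorem gives that $L$ meets the branch at $O$ with multiplicity $N=n$. The affine lines through $O=[0:1:0]$ are exactly the verticals $\{x=c\}$; each of these is not a component of $C$ and meets the affine curve in the $m$ points given by the roots of $P(t)=c$, so B\'ezout yields intersection multiplicity $n-m<n$ with the branch at $O$. As the multiplicity of a branch equals the minimum of its intersection numbers with the lines through its center, and these lines are precisely $L$ and the verticals, I obtain $n':=\mathrm{mult}_O C=n-m=N-\min(m,n)$, so that $N-n'=\min(m,n)$. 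For permissibility I would note $d=\gcd(N,n')=\gcd(n,n-m)=\gcd(m,n)$, which by hypothesis is not $\equiv 0$ mod $\mathrm{char}\,\bK$, so $C$ is permissible.

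Finally I would feed $C$ into Theorem~\ref{AMel1}. If $N\le 2$ the conclusion $\min(m,n)\mid\max(m,n)$ is immediate by inspection. If $N\ge 3$, a smooth rational plane curve would have positive genus $(N-1)(N-2)/2$, so $C$ must be singular; since its affine part is nonsingular, $O$ is its unique singular point, and all hypotheses of Theorem~\ref{AMel1} are met. That theorem gives $N-n'\mid N$, i.e. $\min(m,n)\mid\max(m,n)$, which is exactly ``$m\mid n$ or $n\mid m$''. I expect the final arithmetic to be routine; the main obstacle is the geometric dictionary of the first two paragraphs — rigorously verifying that the embedding hypothesis forces $C\cap\bK^2$ to be smooth, that $C$ has a single branch at infinity, and that $\deg C=\max(m,n)$ and $\mathrm{mult}_O C=N-\min(m,n)$ — after which the identity $N-n'=\min(m,n)$ and Theorem~\ref{AMel1} close the argument.
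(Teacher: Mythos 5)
Your proposal is correct and follows essentially the same route as the paper: reduce to the first formulation (Theorem \ref{AMel1}) by taking the projective closure $C$ of the image of $F$, showing it is irreducible, rational, with one branch at infinity centered at $O=(0:1:0)$, of degree $n$ and multiplicity $n'=n-m$ at $O$, hence permissible. The only difference is one of detail: you carefully verify the facts the paper merely asserts (smoothness of the affine part via $\bK[P,Q]=\bK[t]$, the degree and multiplicity computations via B\'ezout, and the low-degree case $N\leq 2$ where $C$ could be smooth), which is a welcome but not essentially different elaboration.
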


\noindent \begin{proof}
We may assume that $1<m<n$. Let $C$ be the projective closure of the affine curve $F(\bK^2)$. Then $C$ is 
irreducible, rational with one branch at infinity centered at $O=(0:1:0)$. Moreover $C$ is of degree $n=\deg Q$ and 
its multiplicity at $O$ is $n'=\deg Q-\deg P=n-m$. Therefore $C$ is permissible. Apply Theorem \ref{AMel1} to the curve $C$.
\end{proof}

\medskip
\noindent {\bf Notes}

\medskip

\noindent S.S. Abhyankar and T.T. Moh developed the theory of approximate roots of polynomials with coefficients in the meromorphic series field $\bK((x))$ in the fundamental paper \cite{Abhyankar-Moh}. In 
\cite{Abhyankar-Moh2} they applied approximate roots to prove the Embedding Line Theorem. Later on Abhyankar in \cite{Abhyankar3} gave a simplified version of \cite{Abhyankar-Moh} and \cite{Abhyankar-Moh2}. The approach of Abhyankar and Moh is based on the technique of deformations of power series. H. Pinkham in \cite{Pinkham} 
proposed a method of eliminating the deformations which works in the {\em algebroid case} $\bK[[x]][y]$. P. Russel in \cite{Russell} used the Hamburger-Noether expansions to reprove the Abhyankar-Moh results (in the algebroid case) with weaker assumptions on the field characteristic. In our presentation of the subject we followed \cite{G-P2} (see also\cite{Chang-Wang}, \cite{Chang}).  The reader will find in \cite{Popescu} more references on the approximate roots. The assumption $C$ {\em is permissible} in the Embedding Line Theorem is relevant (see \cite{Nagata}).

\section{A formula for the intersection multiplicity of two branches}

\label{intersection multiplicity}
\noindent The aim of this section is to give a new formula for the intersection multiplicity of two branches.

\medskip

\noindent Let $\{f=0\}$ and $\{g=0\}$ be two branches different from
$\{x=0\}$. Let $\overline{\hbox{\rm
char}_x}f=(\overline{b_0},\ldots,\overline{b_h})$,
$\overline{b_0}=n=i_0(f,x)$ and $\overline{\hbox{\rm
char}_x}g=(\overline{b'_0},\ldots,\overline{b'_{h'}})$,
$\overline{b'_0}=n'=i_0(g,x)$. We denote by $f_0,\ldots,f_h$ and
$g_0,\ldots,g_{h'}$ key polynomials of $f$ and $g$, respectively.

\medskip

\begin{lema}
\label{igualdades} The equalities $\frac{\overline{b_i}}{n}=
\frac{\overline{b'_i}}{n'}$ for all $i\in \{1,\ldots,k\}$ imply
$\frac{n}{e_i}=\frac{n'}{e'_i}$ and $\frac{\overline{b_i}}{e_i}=
\frac{\overline{b'_i}}{e'_i}$ for all $i\in \{1,\ldots,k\}$.
\end{lema}

\noindent \begin{proof} We get $ne'_i=n\gcd (\overline{b'_0},\ldots,\overline{b'_i})=\gcd
(n\overline{b'_0},\ldots,n\overline{b'_i})=n'e_i$. Thus
$\frac{n}{e_i}=\frac{n'}{e'_i}$ and consequently
$\frac{\overline{b_i}}{e_i}= \frac{\overline{b'_i}}{e'_i}$ for all
$i\in \{1,\ldots,k\}$ since $\frac{\overline{b_i}}{n}=
\frac{\overline{b'_i}}{n'}$. \end{proof}

\medskip

\begin{teorema}
\label{igualdad polinomios} Let $n=i_0(f,x)>1$ and suppose that
$\frac{i_0(f,g)}{i_0(x,g)}> \frac{e_{k-1}\overline{b_k}}{n}$ for an
integer $k\in \{1,\ldots,h\}$. Then $k\leq h'$ and
$\frac{\overline{b_i}}{n}= \frac{\overline{b'_i}}{n'}$ for all $i\in
\{1,\ldots,k\}$. The first $k$ key polynomials $f_0,\ldots,f_{k-1}$
of $f$ are the first $k$ key polynomials of $g$.
\end{teorema}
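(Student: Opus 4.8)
The plan is to reformulate the hypothesis in terms of the logarithmic distance $d_x$ and then feed everything into the strong triangle inequality. Recall from the proof of Proposition \ref{p1} that $d_x(f_i,f)=\frac{e_i\overline{b_{i+1}}}{n^2}$, so the $(k-1)$-th key polynomial gives $d_x(f_{k-1},f)=\frac{e_{k-1}\overline{b_k}}{n^2}$. Since $i_0(x,g)=n'$ and $i_0(f,x)=n$, dividing the hypothesis $\frac{i_0(f,g)}{i_0(x,g)}>\frac{e_{k-1}\overline{b_k}}{n}$ by $n$ turns it into the single clean inequality $d_x(f,g)>d_x(f_{k-1},f)$. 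Because the sequence $(e_{i-1}\overline{b_i})$ is strictly increasing (this is condition $(c_i)$ combined with $n_i=e_{i-1}/e_i$), I would deduce $d_x(f_i,f)\le d_x(f_{k-1},f)<d_x(f,g)$ for every $i\in\{0,\dots,k-1\}$.

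Next I would apply the STI to the three branches $f_i$, $f$, $g$ (all distinct from $\{x=0\}$, since key polynomials are distinguished of positive $y$-degree). From $d_x(f_i,f)<d_x(f,g)$ and property (iii'), the equal pair among the three distances cannot involve $d_x(f,g)$ as the strictly larger term, so it must be $d_x(f_i,g)=d_x(f_i,f)$. Writing $d_x(f_i,g)=\frac{e_i\,i_0(f_i,g)}{nn'}$ and equating with $\frac{e_i\overline{b_{i+1}}}{n^2}$ yields the key numerical identity
\[v_g(f_i)=i_0(f_i,g)=\frac{n'}{n}\,\overline{b_{i+1}}\qquad(0\le i\le k-1).\]

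The remaining task is to upgrade these identities into the statement that each $f_i$ is genuinely the $i$-th key polynomial of $g$, and I would do this by induction on $i$. Assuming $f_0,\dots,f_{i-1}$ are the first $i$ key polynomials of $g$ and $\overline{b'_j}/n'=\overline{b_j}/n$ for $j\le i$, Lemma \ref{igualdades} gives $n/e_i=n'/e'_i$, hence $\deg_y f_i=n/e_i=n'/e'_i$. The arithmetic heart is showing $e'_i\nmid v_g(f_i)$: since $e'_i=\frac{n'}{n}e_i$, one has $e'_i\mid\frac{n'}{n}\overline{b_{i+1}}$ iff $e_i\mid\overline{b_{i+1}}$, and the latter fails because $e_{i+1}=\gcd(e_i,\overline{b_{i+1}})<e_i$ (as $n_{i+1}>1$). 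Therefore $v_g(f_i)$ is not divisible by $e'_i=\gcd(\overline{b'_0},\dots,\overline{b'_i})$, so it lies in $\Gamma(g)$ but not in $\mathbf N\overline{b'_0}+\cdots+\mathbf N\overline{b'_i}$; this simultaneously proves $i<h'$ and $\overline{b'_{i+1}}\le v_g(f_i)$ by minimality of $\overline{b'_{i+1}}$. Combining with Property II$_i$ applied to $g$ (which forces $v_g(f_i)\le\overline{b'_{i+1}}$) gives $\overline{b'_{i+1}}=v_g(f_i)=\frac{n'}{n}\overline{b_{i+1}}$ and identifies $f_i$ as the $i$-th key polynomial of $g$, exactly as in Remark \ref{nota-key} applied to $g$. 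Running this for $i=0,\dots,k-1$ yields $k\le h'$, the proportionality $\overline{b_i}/n=\overline{b'_i}/n'$ for $i\le k$, and the coincidence of the first $k$ key polynomials.

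I expect the main obstacle to be the inductive bookkeeping of the last paragraph: the characteristic of $g$ is not known in advance, so one cannot invoke Lemma \ref{igualdades} or Remark \ref{nota-key} globally. The delicate point is to run the divisibility argument in the correct order, using it first to certify that $g$ possesses an $(i+1)$-th generator (so that Property II$_i$ and the minimality definition of $\overline{b'_{i+1}}$ are available at all) before pinning down the value $\overline{b'_{i+1}}$. The STI does the conceptual work almost for free; all the care lies in keeping the induction non-circular.
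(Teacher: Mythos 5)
Your argument is correct, and its first half coincides exactly with the paper's: the single STI application to the triple $(f_i,f,g)$, justified by the strict monotonicity of $(e_{i}\overline{b_{i+1}})$, is precisely Property \ref{propiedad1} of the paper's proof, yielding $i_0(f_i,g)=\frac{n'}{n}\overline{b_{i+1}}$. Where you genuinely diverge is in converting these identities into $k\le h'$ and the proportionality $\overline{b'_{i+1}}/n'=\overline{b_{i+1}}/n$. The paper first proves $n'\equiv 0\ (\mathrm{mod}\ \frac{n}{e_k})$ by writing $e_k$ as a $\bZ$-combination of $\overline{b_0},\dots,\overline{b_k}$, uses this to get $e'_i>1$ (hence $i<h'$), and then applies the STI a \emph{second} time, to the triple $(g,f_i,g_i)$ where $g_i$ is the $i$-th key polynomial of $g$, ruling out the unwanted equalities by the non-divisibility $\overline{b_{i+1}}\not\equiv 0\ (\mathrm{mod}\ e_i)$ (with a separate integer-versus-non-integer trick for the base case $i=0$). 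You instead never touch the key polynomials of $g$ during the induction: from $ne'_i=n'e_i$ you deduce $e'_i\nmid v_g(f_i)$ directly from $e_i\nmid\overline{b_{i+1}}$, conclude $v_g(f_i)\notin \bN\overline{b'_0}+\cdots+\bN\overline{b'_i}$ (so $i<h'$ and $\overline{b'_{i+1}}\le v_g(f_i)$ by minimality), and close with Property II$_i$ applied to $g$ — i.e., Remark \ref{nota-key} for $g$. This buys a uniform treatment of base case and inductive step and trades the second STI application for the valuation-theoretic machinery of Section \ref{proof-structure}; the paper's route keeps everything inside the log-distance formalism at the cost of a case split. Your attention to the order of quantification — certifying $i<h'$ before invoking Property II$_i$ and the minimality of $\overline{b'_{i+1}}$ — is exactly the right non-circularity check, since Properties I$_i$ and II$_i$ for $g$ rest on the Semigroup Theorem applied to $g$, which is available independently. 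Both proofs finish identically: $\deg_y f_{i}=n/e_{i}=n'/e'_{i}$ and $v_g(f_{i})=\overline{b'_{i+1}}$ identify $f_{i}$ as an $i$-th key polynomial of $g$.
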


\noindent \begin{proof}

\noindent Let us start with

\begin{propiedad}
\label{propiedad1} $ni_0(g,f_{i-1})=n'\overline{b_i}$ for $i\in
\{1,\ldots,k\}$.
\end{propiedad}

\noindent \begin{proof} Fix $i\in \{1,\ldots,k\}$ and consider the
power series $f,f_{i-1}$ and $g$. We have  $d_x(f,f_{i-1})=\frac{e_{i-1}\overline{b_i}}{n^2}$,
$d_x(f,g)=\frac{i_0(f,g)}{nn'}>\frac{e_{k-1}\overline{b_k}}{n^2}$
(by  assumption) and $d_x(g,f_{i-1})=\frac{e_{i-1}i_0(g,f_{i-1})}{nn'}$. Since $d_x(f,f_{i-1})<d_x(f,g)$ by  the STI we get
$d_x(g,f_{i-1})=d_x(f,f_{i-1})$, which implies the property.
\end{proof}

\begin{propiedad}
\label{propiedad2} $n'\equiv 0$  $($mod $\frac{n}{e_k})$.
\end{propiedad}

\noindent \noindent \begin{proof} We may write $e_k=a_0\overline{b_0}+a_1
\overline{b_1}+\cdots+a_k\overline{b_k}$ with $a_0,\ldots,a_k\in
\mathbf Z$ since $e_k=\gcd (\overline{b_0},\ldots,\overline{b_k})$. Hence we get
$e_kn'=(a_0n')n+a_1(n'\overline{b_1})+\cdots+a_k(n'\overline{b_k})\equiv
0$ (mod $n$) by Property \ref{propiedad1} and consequently $n'\equiv
0$ (mod $\frac{n}{e_k}$).
\end{proof}

\begin{propiedad}
\label{propiedad3} Let $i>0$ be an integer. Then
$d_x(g,f_{i-1})=\frac{e_{i-1}\overline{b_i}}{n^2}$ for $i\leq k$,
$d_x(g,g_{i-1})=\frac{e'_{i-1}\overline{b'_i}}{(n')^2}$ for $i\leq
h'$, and
$d_x(f_{i-1},g_{i-1})=\frac{e_{i-1}e'_{i-1}i_0(f_{i-1},g_{i-1})}{nn'}$
for $i\leq \hbox{\rm min}(k,h')$.
\end{propiedad}

\noindent \noindent \begin{proof} We have $d_x(g,f_{i-1})=\frac{e_{i-1}
i_0(g,f_{i-1})}{n'n}=\frac{e_{i-1}\overline{b_i}}{n^2}$ by Property
\ref{propiedad1}. The formulae for $d_x(g,g_{i-1})$ and
$d_x(f_{i-1},g_{i-1})$ follow from the definitions.
\end{proof}

\begin{propiedad}
\label{propiedad-4}
We have $h'\geq 1$ and
$\frac{\overline{b_1}}{n}=\frac{\overline{b'_1}}{n'}$.
\end{propiedad}

\noindent \noindent \begin{proof} \label{propiedad4} From Property \ref{propiedad2} it
follows $n'>1$ since $\frac{n}{e_k}>1$ for $k>0$. Thus $h'\geq 1$
and we may apply Property \ref{propiedad3} for $i=1$. We get
$d_x(g,f_0)=\frac{\overline{b_1}}{n}\not\in \mathbf N$,
$d_x(g,g_0)=\frac{\overline{b'_1}}{n'}\not\in \mathbf N$ and
$d_x(f_0,g_0)=i_0(f_0,g_0)\in \mathbf N$. By the STI we obtain
$\frac{\overline{b_1}}{n}=\frac{\overline{b'_1}}{n'}$.
\end{proof}

\begin{propiedad}
\label{propiedad5} Let $i>0$ be an integer such that $i<k$, $i\leq
h'$ and $\frac{\overline{b_j}}{n}=\frac{\overline{b'_j}}{n'}$ for
all $j\leq i$. Then $i<h'$ and
$\frac{\overline{b_{i+1}}}{n}=\frac{\overline{b'_{i+1}}}{n'}$.
\end{propiedad}

\noindent \noindent \begin{proof} From the assumption
$\frac{\overline{b_j}}{n}=\frac{\overline{b'_j}}{n'}$ for all $j\leq
i$ and from Lemma \ref{igualdades} we get
$\frac{e_i}{n}=\frac{e'_i}{n'}$. By Property \ref{propiedad2} we may
write $n'=l\frac{n}{e_k}$,  where $l>0$ is an integer. Thus
$e'_i=n'\frac{e_i}{n}=l\frac{e_i}{e_k}>1$ since $i<k$. From $e'_i>1$
we get obviously $i<h'$. Now we may apply Property \ref{propiedad3}
for the index $i+1$ since $i+1\leq k$ and $i+1\leq h'$. We get
$d_x(g,f_i)=\frac{e_i\overline{b_{i+1}}}{n^2}$,
$d_x(g,g_i)=\frac{e'_i\overline{b'_{i+1}}}{(n')^2}$ and
$d_x(f_i,g_i)=\left(\frac{e_i}{n}\right)\left(\frac{e'_i}{n'}
\right)i_0(f_i,g_i)$. Recall that $\frac{e_i}{n}=\frac{e'_i}{n'}$.
Note that $d_x(g,f_i)\neq d_x(f_i,g_i)$. Indeed if we had
$d_x(g,f_i)=d_x(f,g_i)$ then we would get
$\overline{b_{i+1}}=e_ii_0(f_i,g_i)$ which is impossible since
$\overline{b_{i+1}}\not\equiv 0$ (mod $e_i$). Similarly we check
that $d_x(g,g_i)\neq d_x(f_i,g_i)$. Using the STI we get
$d_x(g,f_i)=d_x(g,g_i)$, which implies
$\frac{\overline{b_{i+1}}}{n}=\frac{\overline{b'_{i+1}}}{n'}$.
\end{proof}
\medskip

\noindent  Now we can finish the proof of Theorem \ref{igualdad polinomios}

\medskip

\noindent From Properties \ref{propiedad-4} and  \ref{propiedad5} we
conclude that $k\leq h'$ and
$\frac{\overline{b_i}}{n}=\frac{\overline{b'_i}}{n'}$ for
$i\in\{1,\ldots,k\}$, which proves the first part of Theorem
\ref{igualdad polinomios}. Let $i\in\{0,1,\ldots,k-1\}$. By Property
\ref{propiedad1}
$i_0(g,f_{i-1})=\frac{n'\overline{b_i}}{n}=\overline{b'_i}$ since
$\frac{\overline{b_i}}{n}=\frac{\overline{b'_i}}{n'}$. Moreover
$\deg _y(f_{i-1})=\frac{n}{e_{i-1}}=\frac{n'}{e'_{i-1}}$ so
$f_{i-1}$ is a key-polynomial of $g$.
\end{proof}

\begin{nota}
Under the notations and assumptions of Theorem \ref{igualdad
polinomios}, we get

\[\frac{i_0(g,f)}{i_0(x,f)}=\frac{i_0(f,g)}{i_0(x,g)}
\frac{i_0(x,g)}{i_0(x,f)}>
\frac{e_{k-1}\overline{b_k}}{n}\frac{n'}{n}=\frac{e'_{k-1}}{n'}
\frac{\overline{b'_k}}{n'}n'=\frac{e'_{k-1}\overline{b'_k}}{n'}.\]
\end{nota}

\vspace{1cm}

\noindent Let $f,g\in \bK[[x,y]]$ be irreducible power series such
that the branches $\{f=0\}$ and $\{g=0\}$ are different from
$\{x=0\}$. We put, by definition:

$$k_x(f,g)=\hbox{\rm min}\left \{k>0\;:\;\frac{i_0(f,g)}{i_0(x,g)}\leq
\frac{e_{k-1}\overline{b_k}}{n}\right \}.$$

\medskip

\noindent Obviously, we have $1\leq k_x(f,g)\leq h+1$. Recall that
$\overline{b_{h+1}}=+\infty$ so $k_x(f,g)=h+1$ if and only if
$\frac{i_0(f,g)}{i_0(x,g)}>\frac{e_{h-1}\overline{b_h}}{n}.$

\medskip

\noindent On the other hand $k_x(f,g)=1$ means that
$\frac{i_0(f,g)}{i_0(x,g)}\leq \overline{b_1}.$

\medskip

\noindent Note that $k_x(f,g)$  is different from the {\em coincidence exponent} defined by means of Puiseux' expansions in the case of charac\-teristic zero (see \cite{Popescu}, p. 299, \cite{G-P2}, p. 205).

\medskip

\begin{lema}
\label{calculo multiplicidad} Suppose that
$\frac{\overline{b_1}}{n}\neq \frac{\overline{b'_1}}{n'}$. Then
$i_0(f,g)\leq \hbox{\rm inf}\{n'\overline{b_1},n\overline{b'_1}\}$.
If $i_0(f,g)< \hbox{\rm inf}\{n'\overline{b_1},n\overline{b'_1}\}$
then $i_0(f,g)=nn'i_0(f_0,g_0)$.
\end{lema}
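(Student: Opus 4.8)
The plan is to translate the whole statement into the language of the logarithmic distance $d_x$ and then to exploit only two inputs: Theorem \ref{igualdad polinomios} for the inequality, and the strong triangle inequality for the equality. First I would record the three distances I actually control. Since $f_0$ and $g_0$ are $0$-th key polynomials, they are monic of $y$-degree $n/e_0=1$ and $n'/e'_0=1$, so that $i_0(f_0,x)=\deg_y f_0=1$ and $i_0(g_0,x)=1$. Hence $d_x(f,f_0)=\frac{i_0(f,f_0)}{i_0(f,x)\,i_0(f_0,x)}=\frac{\overline{b_1}}{n}$, likewise $d_x(g,g_0)=\frac{\overline{b'_1}}{n'}$, and $d_x(f_0,g_0)=i_0(f_0,g_0)$. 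Writing $A:=\frac{\overline{b_1}}{n}$, $B:=\frac{\overline{b'_1}}{n'}$, $N:=i_0(f_0,g_0)$ and $D:=d_x(f,g)=\frac{i_0(f,g)}{nn'}$, the hypothesis becomes $A\neq B$, the first assertion becomes $D\leq\inf\{A,B\}$, and the second becomes: $D<\inf\{A,B\}$ implies $D=N$.

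For the first inequality I would invoke Theorem \ref{igualdad polinomios} in the case $k=1$, read contrapositively, together with its symmetry in $f$ and $g$. Applied with $k=1$ that theorem says $\frac{i_0(f,g)}{i_0(x,g)}>\frac{e_0\overline{b_1}}{n}=\overline{b_1}$ forces $\frac{\overline{b_1}}{n}=\frac{\overline{b'_1}}{n'}$; since $A\neq B$ this must fail, giving $i_0(f,g)\leq n'\overline{b_1}$. Exchanging the roles of $f$ and $g$ gives $i_0(f,g)\leq n\overline{b'_1}$. Together these two bounds yield $i_0(f,g)\leq\inf\{n'\overline{b_1},n\overline{b'_1}\}$, i.e. $D\leq\inf\{A,B\}$.

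The heart of the argument is the second assertion, and there I would apply the strong triangle inequality twice, each time in its sharp form (iii') (two of the three distances coincide and are the smallest, the third being not smaller). Assume $D<\inf\{A,B\}$. Running (iii') on the triple $\{f,g,f_0\}$, with distances $D=d_x(f,g)$, $A=d_x(f,f_0)$ and $P:=d_x(g,f_0)$: since $D<A$, the pair of equal-and-smallest values cannot involve $A$, so necessarily $P=D$. Next I would run (iii') on the triple $\{g,f_0,g_0\}$, with distances $P=D$, $B=d_x(g,g_0)$ and $N=d_x(f_0,g_0)$: since $P=D<B$, the same reasoning forces $N=P=D$. Thus $i_0(f_0,g_0)=\frac{i_0(f,g)}{nn'}$, which is exactly $i_0(f,g)=nn'\,i_0(f_0,g_0)$.

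The only genuine obstacle is the bookkeeping inside the two applications of (iii'): each time one must verify that assigning the coincidence to the ``large'' pair would contradict $D<A$ (respectively $P<B$), so that the forced equality really holds between the two smallest values. Everything else is formal once the three controlled distances $A$, $B$, $N$ are identified; in particular no parametrization or Puiseux expansion enters, only the values of $d_x$ on the degree-one key polynomials $f_0,g_0$ and the strong triangle inequality.
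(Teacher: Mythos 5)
Your proposal is correct, and it takes a route genuinely different from the paper's. The paper proves both assertions simultaneously by a case distinction on whether $f$ and $g$ admit a common key polynomial of degree one: if they do, the STI applied to $f$, $g$ and that common polynomial yields the equality $i_0(f,g)=\inf\{n'\overline{b_1},n\overline{b'_1}\}$ (so the strict-inequality case is vacuous); if they do not, Property II$_0$ gives $i_0(f,g_0)<\overline{b_1}$ and $i_0(g,f_0)<\overline{b'_1}$, and a chain of STI applications then gives both the strict inequality and $i_0(f,g)=nn'i_0(f_0,g_0)$. You instead get the first inequality from the contrapositive of Theorem \ref{igualdad polinomios} with $k=1$, applied in both directions --- legitimate and non-circular, since that theorem precedes the lemma and does not use it --- and you prove the second assertion directly from the hypothesis $D<\inf\{A,B\}$ by two applications of (iii'), to the triple $(f,g,f_0)$ and then to $(g,f_0,g_0)$; your case-checking inside (iii') is right, and this part is in fact cleaner than the paper's, as it never needs the notion of a shared key polynomial. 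What the paper's organization buys in return is the sharper structural fact implicit in its proof --- equality holds exactly when $f$ and $g$ share a degree-one key polynomial --- and self-containedness: it uses only Property II$_0$ and the STI, not the heavier Theorem \ref{igualdad polinomios}.

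One point to patch: Theorem \ref{igualdad polinomios} is stated under the hypothesis $i_0(f,x)>1$, so your two invocations tacitly assume $n>1$ and $n'>1$ respectively. The degenerate cases are harmless --- if $n=1$ then $\overline{b_1}=+\infty$, so the bound $i_0(f,g)\leq n'\overline{b_1}$ is vacuous, and $A\neq B$ then forces $n'>1$, so the symmetric application still gives $i_0(f,g)\leq n\overline{b'_1}$; similarly with the roles exchanged --- but a complete write-up must say this. It is the analogue of the paper's reduction ``we may assume $n>1$''.
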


\noindent \begin{proof}
If $\frac{\overline{b_1}}{n}\neq \frac{\overline{b'_1}}{n'}$ then
$\overline{b_1}\neq +\infty$ or $\overline{b'_1}\neq +\infty$ that is
$n>1$ or $n'>1$. We may assume $n>1$. If $f$ and $g$ have a common
key-polynomial of degree $1$ then $i_0(f,g)=\hbox{\rm
inf}\{n'\overline{b_1},n\overline{b'_1}\}$. Indeed, if $f_0$ is a
key-polynomial of $f$ and $g$ then
$d_x(f,f_0)=\frac{\overline{b_1}}{n}$ and
$d_x(g,f_0)=\frac{\overline{b'_1}}{n'}$. From the assumption
$\frac{\overline{b_1}}{n}\neq \frac{\overline{b'_1}}{n'}$ we get
$d_x(f,g)=\hbox{\rm inf}\left\{\frac{\overline{b_1}}{n},
\frac{\overline{b'_1}}{n'}\right\}$ by the STI whence
$i_0(f,g)=\hbox{\rm inf}\{n\overline{b_1},n'\overline{b'_1}\}$.

\medskip

\noindent If $f$ and $g$ do not have a common key-polynomial of
degree $1$ then $i_0(f,g_0)<\overline{b_1}$ and
$i_0(g,f_0)<\overline{b'_1}$. Consequently we get
$d_x(f,g_0)<\frac{\overline{b_1}}{n}$,
$d_x(f,f_0)=\frac{\overline{b_1}}{n}$ and
$d_x(f_0,g_0)=d_x(f,g_0)<\frac{\overline{b_1}}{n}$ by the STI.
Analogously applying the STI to $g,f_0$ and $g_0$ we get
$d_x(f_0,g_0)=d_x(f_0,g)<\frac{\overline{b'_1}}{n'}$. We may assume
without loss of generality that $\inf\{n\overline{b_1'},n'\overline{b_1}\}=n\overline{b_1'}$. Then 
$\frac{\overline{b_1'}}{n'}<\frac{\overline{b_1}}{n}$ and $d_x(g,f_0)<d_x(f,f_0)$. By the STI we get $d_x(f,g)=d_x(g,f_0)$. Thus
 $d_x(f,g)=d_x(g,f_0)<\frac{\overline{b'_1}}{n'}$ and $i_0(f,g)<nn'\frac{\overline{b'_1}}{n'}=n\overline{b'_1}$. On the other hand $d_x(f,g)=d_x(g,f_0)=d_x(f_0,g_0)$, which gives $i_0(f,g)=nn'i_0(f_0,g_0)$.
\end{proof}

\begin{lema}
\label{cota} If $k_x(f,g)=k$ then $k\leq h'+1$ and
$\frac{i_0(f,g)}{i_0(f,x)}\leq \frac{e'_{k-1}\overline{b'_k}}{n'}$.
\end{lema}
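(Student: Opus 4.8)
The plan is to read Lemma~\ref{cota} as a reciprocity statement and to reduce it to the symmetry $k_x(f,g)=k_x(g,f)$. Once this symmetry is in hand, set $k=k_x(f,g)=k_x(g,f)$. Applying the general bound $1\leq k_x(g,f)\leq h'+1$ to the pair $(g,f)$ gives at once $k\leq h'+1$, which is the first assertion. The second assertion is then nothing more than the defining inequality for $k_x(g,f)=k$, namely $\frac{i_0(g,f)}{i_0(x,f)}\leq \frac{e'_{k-1}\overline{b'_k}}{n'}$, reread through $i_0(f,g)=i_0(g,f)$ and $i_0(x,f)=i_0(f,x)=n$. Thus the whole content of the lemma lies in establishing the equality $k_x(f,g)=k_x(g,f)$, and the boundary case $k=h'+1$ requires no separate treatment since it is absorbed into the bound $k_x(g,f)\leq h'+1$ via the convention $\overline{b'_{h'+1}}=+\infty$.

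The tool I would use is Theorem~\ref{igualdad polinomios} together with the Remark immediately following it. Read as a single implication, they say: if $n>1$ and $j\in\{1,\ldots,h\}$, then the strict inequality $\frac{i_0(f,g)}{i_0(x,g)}>\frac{e_{j-1}\overline{b_j}}{n}$ forces $\frac{i_0(g,f)}{i_0(x,f)}>\frac{e'_{j-1}\overline{b'_j}}{n'}$. This is precisely the reciprocity of the coincidence condition built into the definition of $k_x$, and it is what propagates the strict inequalities from the $f$-side to the $g$-side.

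To prove $k_x(g,f)\geq k_x(f,g)$, write $k=k_x(f,g)$. If $k=1$ the inequality is trivial, so assume $k\geq 2$; then $n>1$, since $n=1$ would force $h=0$ and $k_x(f,g)=1$. By minimality of $k$ one has $\frac{i_0(f,g)}{i_0(x,g)}>\frac{e_{j-1}\overline{b_j}}{n}$ for every $j\in\{1,\ldots,k-1\}$, and because $k\leq h+1$ each such $j$ lies in $\{1,\ldots,h\}$; so the Remark gives $\frac{i_0(g,f)}{i_0(x,f)}>\frac{e'_{j-1}\overline{b'_j}}{n'}$ for all $j<k$, which is exactly $k_x(g,f)\geq k$. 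Interchanging the roles of $f$ and $g$ (and noting that $n'=1$ forces $k_x(g,f)=1$) yields the reverse inequality $k_x(f,g)\geq k_x(g,f)$, and hence the desired equality.

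The step I expect to demand the most care is the bookkeeping around the degenerate low cases: before each appeal to Theorem~\ref{igualdad polinomios} I must verify that the relevant multiplicity ($n$, respectively $n'$) is genuinely greater than $1$ and that the index stays inside the admissible range $\{1,\ldots,h\}$ (respectively $\{1,\ldots,h'\}$), and I must make sure the boundary value is handled through $\overline{b'_{h'+1}}=+\infty$. These are routine consistency checks rather than a genuine obstacle, but they are exactly the places where an argument of this shape tends to slip.
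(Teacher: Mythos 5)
Your proof is correct, and it takes a genuinely different route from the paper's while running on the same engine. You reduce the lemma to the symmetry $k_x(f,g)=k_x(g,f)$, proved by propagating the strict inequalities through the Remark following Theorem~\ref{igualdad polinomios}, applied in both directions; the two assertions of the lemma then fall out as the general bound $k_x(g,f)\leq h'+1$ and the defining inequality of $k_x(g,f)$ at its own value, and your low-case checks ($n=1$, respectively $n'=1$, forcing the corresponding index to equal $1$) are exactly the right ones, with no circularity since everything you invoke precedes the lemma in the text. The paper instead argues directly on the stated inequality: for $k=1$ it splits according to whether $\frac{\overline{b_1}}{n}=\frac{\overline{b'_1}}{n'}$, invoking Lemma~\ref{calculo multiplicidad} in the unequal case; for $k>1$ it applies Theorem~\ref{igualdad polinomios} to the pair $(f,g)$ at index $k-1$, disposes of the sub-case $e'_{k-1}=1$ by the convention $\overline{b'_{h'+1}}=+\infty$, and then reaches a contradiction by applying the same theorem to the pair $(g,f)$ at index $k$. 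Thus both proofs use Theorem~\ref{igualdad polinomios}, together with the ratio conversions of Lemma~\ref{igualdades}, once in each direction; what your packaging buys is uniformity --- the case $k=1$ needs no special treatment, Lemma~\ref{calculo multiplicidad} is never needed, and the symmetry of $k_x$, which the paper leaves implicit, is isolated as a clean statement of independent interest --- while the paper's version is more economical index-by-index and keeps the refined information that Lemma~\ref{calculo multiplicidad} provides in the lowest case. One small streamlining of your argument: since the sequence $\left(e'_{j-1}\overline{b'_j}\right)_j$ is strictly increasing, failure of the defining condition at $j=k-1$ already implies its failure at every $j<k$, so each direction of your symmetry needs only a single application of the Remark, at the top index, rather than one for every $j<k$.
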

\noindent \begin{proof}
Let $k=1$. Then $\frac{i_0(f,g)}{i_0(g,x)}\leq \overline{b_1}$. If
$\frac{\overline{b_1}}{n}=\frac{\overline{b'_1}}{n'}$ then
$$\frac{i_0(f,g)}{i_0(f,x)}=\frac{i_0(g,x)}{i_0(f,x)}\frac{i_0(f,g)}{i_0(g,x)}
\leq
\left(\frac{n'}{n}\right)\overline{b_1}=n'\left(\frac{\overline{b'_1}}{n'}\right)=\overline{b'_1}.
$$

\noindent If $\frac{\overline{b_1}}{n}\neq
\frac{\overline{b'_1}}{n'}$ then $i_0(f,g)\leq \hbox{\rm
inf}\{n'\overline{b_1},n\overline{b'_1}\}\leq n\overline{b'_1}$ by
Lemma \ref{calculo multiplicidad} and consequently
$\frac{i_0(f,g)}{i_0(x, f)}\leq \overline{b'_1}$.

\medskip

\noindent Now suppose that $k>1$. By definition of $k_x(f,g)$  of $f$ and $g$ we get
\begin{equation} \label{d1}
\frac{i_0(f,g)}{i_0(x,g)}> \frac{e_{k-2}\overline{b_{k-1}}}{n}.
\end{equation}

\noindent By Theorem \ref{igualdad polinomios} we get
$\frac{\overline{b_i}}{n}=\frac{\overline{b'_i}}{n'}$ for $i\leq
k-1\leq h'$. Moreover $f_0,\ldots,f_{k-2}$ are key polynomials of
$g$. If $e'_{k-1}=1$ then $h'=k-1$,
$\overline{b'_k}=\overline{b'_{h'+1}}=+\infty$ and the lemma is
obvious. Suppose that $e'_{k-1}>1$. Then $\overline{b'_k}<+\infty$.
If we suppose that Lemma \ref{cota} is not true then
\begin{equation} \label{d2}
\frac{i_0(f,g)}{i_0(x,f)}> \frac{e'_{k-1}\overline{b'_{k}}}{n'}.
\end{equation}

\noindent Applying Theorem \ref{igualdad polinomios} to $g$ and $f$
we get that $k-1<h'$, that is $k\leq h'$ and
$\frac{\overline{b_i}}{n}=\frac{\overline{b'_i}}{n'}$ for $i\leq k$.
So in particular
$\frac{\overline{b_k}}{n}=\frac{\overline{b'_k}}{n'}$ and by the
definition of $k$ we get $\frac{i_0(f,g)}{i_0(g,x)}\leq
\frac{e_{k-1}\overline{b_k}}{n}=e_{k-1}\left(\frac{\overline{b'_k}}{n'}\right)$
and $\frac{i_0(f,g)}{n}\leq 
\frac{n'}{n}e_{k-1}\frac{\overline{b'_k}}{n'}=n'\frac{e'_{k-1}}{n'}
\frac{\overline{b'_k}}{n'}=\frac{e'_{k-1}\overline{b'_k}}{n'}$ which
is a contradiction with inequality (\ref{d2}).
\end{proof}

\begin{teorema}[Formula for the intersection multiplicity]
\label{formula}
Let $f,g\in \bK[[x,y]]$ be irreducible power series such that
$n=i_0(f,x)<+\infty$ and $n'=i_0(g,x)<+\infty$. Let
$\overline{\hbox{\rm
char}_x}f=(\overline{b_0},\ldots,\overline{b_h})$,
$\overline{b_0}=n$ and $\overline{\hbox{\rm
char}_x}g=(\overline{b'_0},\ldots,\overline{b'_{h'}})$,
$\overline{b'_0}=n'$ be the Seidenberg characteristic sequences of the
branches $\{f=0\}$ and $\{g=0\}$. Let $k=k_x(f,g)$. Then we have
\begin{enumerate}
\item $\frac{\overline{b_i}}{n}=\frac{\overline{b'_i}}{n'},$ for
$i<k$,
\item $i_0(f,g)\leq \hbox{\rm inf}\{e'_{k-1}\overline{b_k},e_{k-1}\overline{b'_k}\}.$

\bigskip

\noindent Let $(f_0,\ldots,f_h)$ (resp. $(g_0,\ldots,g_{h'}))$ be a
sequence of key polynomials of $f$ (resp. of $g$).  We get

\bigskip

\item If $i_0(f,g)<\hbox{\rm inf}\{e'_{k-1}\overline{b_k},e_{k-1}
\overline{b'_k}\}$ then
$i_0(f,g)=e_{k-1}e'_{k-1}i_0(f_{k-1},g_{k-1})$,

\item if $k>1$ then
$e'_{k-2}\overline{b_{k-1}}=e_{k-2}\overline{b'_{k-1}}$ and
$i_0(f,g)>e'_{k-2}\overline{b_{k-1}}$.

\noindent Moreover $f_0,\ldots,f_{k-1}$ are the first $k$
key polynomials of $g$ and $g_0,\ldots,g_{k-1}$ are the first $k$
key polynomials of $f$.
\end{enumerate}
\end{teorema}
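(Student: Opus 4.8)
The plan is to set $k=k_x(f,g)$ and read off everything from the two-sided location of the ratio $\frac{i_0(f,g)}{i_0(x,g)}$ relative to the thresholds $\frac{e_{i-1}\overline{b_i}}{n}$. Here I use that the sequence $\left(e_{i-1}\overline{b_i}\right)$ is strictly increasing, since $e_{i-1}\overline{b_i}=n_ie_i\overline{b_i}<e_i\overline{b_{i+1}}$ is equivalent to the inequality $n_i\overline{b_i}<\overline{b_{i+1}}$ of the Semigroup Theorem \ref{structure}. First I would prove claim (1): for $k\geq 2$ the minimality of $k$ gives $\frac{i_0(f,g)}{i_0(x,g)}>\frac{e_{k-2}\overline{b_{k-1}}}{n}$, so Theorem \ref{igualdad polinomios} applied at index $k-1$ yields $\frac{\overline{b_i}}{n}=\frac{\overline{b'_i}}{n'}$ for all $i\leq k-1$ and the fact that $f_0,\dots,f_{k-2}$ are key polynomials of $g$; for $k=1$ claim (1) is vacuous. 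Feeding these equalities into Lemma \ref{igualdades} produces the proportionalities $\frac{n}{e_i}=\frac{n'}{e'_i}$ and $\frac{\overline{b_i}}{e_i}=\frac{\overline{b'_i}}{e'_i}$ for $i\leq k-1$, which I use throughout to pass between primed and unprimed quantities.

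Claim (2) and the numerical half of claim (4) are then bookkeeping. The defining inequality $\frac{i_0(f,g)}{n'}\leq\frac{e_{k-1}\overline{b_k}}{n}$ together with $\frac{n'e_{k-1}}{n}=e'_{k-1}$ gives $i_0(f,g)\leq e'_{k-1}\overline{b_k}$, while Lemma \ref{cota}, which supplies $\frac{i_0(f,g)}{n}\leq\frac{e'_{k-1}\overline{b'_k}}{n'}$, combined with $\frac{ne'_{k-1}}{n'}=e_{k-1}$ gives $i_0(f,g)\leq e_{k-1}\overline{b'_k}$; together these are claim (2). For claim (4) with $k>1$ the proportionalities immediately yield $e'_{k-2}\overline{b_{k-1}}=e_{k-2}\overline{b'_{k-1}}$, and the strict lower bound $\frac{i_0(f,g)}{n'}>\frac{e_{k-2}\overline{b_{k-1}}}{n}$ rewrites as $i_0(f,g)>e'_{k-2}\overline{b_{k-1}}$.

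The conceptual core is claim (3), which I would prove by a double application of the strong triangle inequality to $d_x$. Writing $d_x(f,f_{k-1})=\frac{e_{k-1}\overline{b_k}}{n^2}$, $d_x(g,g_{k-1})=\frac{e'_{k-1}\overline{b'_k}}{(n')^2}$ and $d_x(f_{k-1},g_{k-1})=\frac{e_{k-1}e'_{k-1}i_0(f_{k-1},g_{k-1})}{nn'}$, the hypothesis $i_0(f,g)<\inf\{e'_{k-1}\overline{b_k},e_{k-1}\overline{b'_k}\}$ translates, via the proportionalities, into $d_x(f,g)<d_x(f,f_{k-1})$ and $d_x(f,g)<d_x(g,g_{k-1})$. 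Applying the STI to $f,g,f_{k-1}$ forces $d_x(f,g)=d_x(g,f_{k-1})$, and then applying it to $g,f_{k-1},g_{k-1}$ (legitimate because $d_x(g,f_{k-1})=d_x(f,g)<d_x(g,g_{k-1})$) forces $d_x(g,f_{k-1})=d_x(f_{k-1},g_{k-1})$; chaining the two equalities and clearing denominators gives $i_0(f,g)=e_{k-1}e'_{k-1}i_0(f_{k-1},g_{k-1})$. The case $k=1$ (with $e_0=n$, $e'_0=n'$) is subsumed and recovers Lemma \ref{calculo multiplicidad}.

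For the shared-key-polynomial part of (4), $f_0,\dots,f_{k-2}$ are key polynomials of $g$ by the index-$(k-1)$ use of Theorem \ref{igualdad polinomios} above, and symmetrically $g_0,\dots,g_{k-2}$ are key polynomials of $f$ by applying Theorem \ref{igualdad polinomios} to the pair $(g,f)$, which is legitimate since (4) gives $\frac{i_0(f,g)}{n}>\frac{e'_{k-2}\overline{b'_{k-1}}}{n'}$. The hard part is the top index: whether $f_{k-1}$ itself serves as a $(k-1)$-th key polynomial of $g$ is the borderline behaviour, since by the computation in (3) one has $i_0(g,f_{k-1})=i_0(f,g)/e_{k-1}$, and promoting this via Remark \ref{nota-key} requires $i_0(g,f_{k-1})\not\equiv 0\pmod{e'_{k-1}}$ — a congruence that fails precisely in the strict regime of (3). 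I therefore expect the main obstacle to be disentangling the boundary case $i_0(f,g)=\inf\{e'_{k-1}\overline{b_k},e_{k-1}\overline{b'_k}\}$ from the strict one and keeping the top index honest, this being exactly where the primed and unprimed characteristics first diverge.
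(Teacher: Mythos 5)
Your argument coincides, step for step, with the paper's own proof: part (1) via Theorem \ref{igualdad polinomios} applied at index $k-1$ (using the minimality of $k=k_x(f,g)$), part (2) by combining the defining inequality of $k_x(f,g)$ with Lemma \ref{cota} and the proportionalities from Lemma \ref{igualdades}, part (3) by the same two applications of the STI (first to $f,g,f_{k-1}$, then to $g,f_{k-1},g_{k-1}$), and the numerical half of (4) exactly as you describe. All of this is correct.

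The one item you leave open --- whether $f_{k-1}$ is a $(k-1)$-th key polynomial of $g$ --- is not a defect of your proof but an off-by-one in the statement. The paper disposes of the ``Moreover'' clause with the single sentence that it ``follows from the second part of Theorem \ref{igualdad polinomios}''; but when $k>1$ the only inequality available is $\frac{i_0(f,g)}{i_0(x,g)}>\frac{e_{k-2}\overline{b_{k-1}}}{n}$, and Theorem \ref{igualdad polinomios} applied at index $k-1$ yields precisely what you prove, namely that $f_0,\ldots,f_{k-2}$ are the first $k-1$ key polynomials of $g$ (and symmetrically). The claim for $f_{k-1}$ is in fact false in general: your own identity $i_0(g,f_{k-1})=i_0(f,g)/e_{k-1}$ from part (3) shows that in the strict regime one has $i_0(g,f_{k-1})<\overline{b'_k}$, whereas a $(k-1)$-th key polynomial $\phi$ of $g$ must satisfy $i_0(g,\phi)=\overline{b'_k}$. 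A concrete instance is $f=y^2-x^3$, $g=y^2-x^3-x^4$: here $i_0(f,g)=8$, $k=2$, and $f_1=f$ is not a first key polynomial of $g$ since $i_0(g,f)=8<+\infty=\overline{b'_2}$. So ``first $k$'' should read ``first $k-1$'' (i.e.\ $f_0,\ldots,f_{k-2}$), and with that correction your proposal is a complete proof identical in substance to the paper's.
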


\noindent \begin{proof}
Part $(1)$ of the theorem  follows immediately  from  Theorem \ref{igualdad
polinomios}. From (1) we conclude, by Lemma \ref{igualdades}, that $\frac{n}{e_{k-1}}=\frac{n'}{e'_{k-1}}$.

\medskip

\noindent By the definition of $k_x(f,g)$ we get 
\[i_0(f,g)\leq i_0(x,g)\frac{e_{k-1}\overline{b_k}}{n}=n'\frac{e_{k-1}\overline{b_k}}{n}=e'_{k-1}\overline{b_k}.\]

\noindent On the other hand, by Lemma \ref{igualdades} we get
\[i_0(f,g)\leq i_0(x,f)\frac{e'_{k-1}\overline{b'_k}}{n'}=n\frac{e'_{k-1}\overline{b'_k}}{n'}=e_{k-1}\overline{b'_k}.\]

\noindent Combining the above inequalities we obtain Part (2) of the theorem.

\medskip

\noindent To check Part (3) suppose that $i_0(f,g)<\hbox{\rm
inf}\{e'_{k-1}\overline{b_k},e_{k-1}\overline{b'_k}\}.$ Then we have $\frac{i_0(f,g)}{i_0(x,g)}<
\frac{e_{k-1}\overline{b_k}}{n}$ and $\frac{i_0(f,g)}{i_0(x,f)}<
\frac{e'_{k-1}\overline{b'_k}}{n'}$$d_x(f,g)<\frac{e_{k-1}\overline{b_k}}{n^2}=d_x(f,f_{k-1})$ and
 $d_x(f,g)<\frac{e'_{k-1}\overline{b'_k}}{(n')^{2}}=d_x(g,g_{k-1})$. By
the STI applied to $f, f_{k-1}, g$ and to $f, g_{k-1}, g$ we get
$d_x(f,g)=d_x(f,g_{k-1})=d_x(g,f_{k-1})<d_x(f,f_{k-1}), d_x(g,g_{k-1})$. 
From $d_x(f,g)=d_x(g,f_{k-1})<d_x(g,g_{k-1})$ we conclude, by the STI applied to $f_{k-1}, g, g_{k-1}$ that $d_x(f,g)=d_x(f_{k-1}, g_{k-1})$, which implies $i_0(f,g)=e_{k-1}e'_{k-1} i_0(f_{k-1},g_{k-1})$. This proves Part (3).

\medskip

\noindent To check  Part $(4)$ suppose that $k>1$. Note that
$\frac{e'_{k-2}\overline{b_{k-1}}}{n'n}=\left(\frac{e'_{k-2}}{n'}\right)
\left(\frac{\overline{b_{k-1}}}{n}\right)=\left(\frac{e_{k-2}}{n}\right)
\left(\frac{\overline{b'_{k-1}}}{n'}\right)$ whence
$e'_{k-2}\overline{b_k}=e_{k-2}\overline{b'_k}$. Since $k>1$ we get,
by definition of $k_x(f,g)$, that
$\frac{i_0(f,g)}{i_0(x,g)}>\frac{e_{k-2}\overline{b_{k-1}}}{n}$, which
implies by the first part of the theorem  the inequality
$i_0(f,g)>e'_{k-2}\overline{b_{k-1}}$.

\medskip

\noindent The assertion on the key polynomials follows from the
second part of Theo\-rem \ref{igualdad polinomios}.
\end{proof}

\medskip

\begin{teorema}
\label{congruencia} Let $\{f=0\}$ and $\{g=0\}$ be two different
branches and let $\{l=0\}$ be a smooth branch. Suppose that
$n=i_0(f,l)<+\infty$, $n'=i_0(g,l)<+\infty$ and let $d=\gcd(n,n')$. Then $i_0(f,g)\equiv 0$ mod
$\left(\frac{n}{d}\;\hbox{\rm or }\frac{n'}{d}\right)$.
\end{teorema}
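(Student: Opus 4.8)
The plan is to read off the congruence directly from the explicit description of $i_0(f,g)$ furnished by Theorem \ref{formula}. First I would use the invariance of the intersection multiplicity under formal isomorphisms to reduce to the case $l=x$; since $\{l=0\}$ is smooth it can be carried onto $\{x=0\}$, and neither $i_0(f,g)$ nor $d=\gcd(n,n')$ changes. Thus I may assume $n=i_0(f,x)$, $n'=i_0(g,x)$ and that $f,g$ carry Seidenberg characteristics $\overline{\hbox{\rm char}}_x f=(\overline{b_0},\ldots,\overline{b_h})$ and $\overline{\hbox{\rm char}}_x g=(\overline{b'_0},\ldots,\overline{b'_{h'}})$. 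If $n=1$ or $n'=1$ then $d=1$ and one of $\frac{n}{d},\frac{n'}{d}$ equals $1$, so the assertion is trivial; hence I assume $n,n'>1$, noting that the hypothesis that the branches are distinct guarantees $i_0(f,g)<+\infty$.

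Next I would set $k=k_x(f,g)$ and extract the arithmetic skeleton. By part (1) of Theorem \ref{formula} together with Lemma \ref{igualdades} one has $\frac{n}{e_{k-1}}=\frac{n'}{e'_{k-1}}$; writing $m$ for this common value gives $n=m\,e_{k-1}$ and $n'=m\,e'_{k-1}$. The key observation is then purely multiplicative: $d=\gcd(n,n')=m\,\gcd(e_{k-1},e'_{k-1})$, so that $\frac{n}{d}=\frac{e_{k-1}}{\gcd(e_{k-1},e'_{k-1})}$ is a divisor of $e_{k-1}$ and $\frac{n'}{d}=\frac{e'_{k-1}}{\gcd(e_{k-1},e'_{k-1})}$ is a divisor of $e'_{k-1}$.

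With this in hand the conclusion follows from the dichotomy in parts (2) and (3). If the bound in part (2) is strict, then part (3) gives $i_0(f,g)=e_{k-1}e'_{k-1}\,i_0(f_{k-1},g_{k-1})$, which is a multiple of $e_{k-1}$ and hence of $\frac{n}{d}$ (indeed of both $\frac{n}{d}$ and $\frac{n'}{d}$). If instead equality holds in part (2), then $i_0(f,g)=\inf\{e'_{k-1}\overline{b_k},\,e_{k-1}\overline{b'_k}\}$; since $i_0(f,g)$ is finite, the case in which both candidates equal $+\infty$ cannot occur, and the infimum is attained by one of the two finite terms. If it equals $e'_{k-1}\overline{b_k}$ then $i_0(f,g)$ is a multiple of $e'_{k-1}$, hence of $\frac{n'}{d}$; if it equals $e_{k-1}\overline{b'_k}$ then $i_0(f,g)$ is a multiple of $e_{k-1}$, hence of $\frac{n}{d}$. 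In every case $i_0(f,g)\equiv 0$ modulo $\frac{n}{d}$ or $\frac{n'}{d}$.

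I expect no genuine obstacle here: all the substantive work has already been absorbed into Theorem \ref{formula}, and the present statement is essentially a divisibility reading of that formula. The only points requiring care are formal, namely the bookkeeping of the $+\infty$ conventions in the infimum of part (2), which is precisely where distinctness of the branches is used, and the elementary identity $\gcd(ma,mb)=m\gcd(a,b)$ that makes $\frac{n}{d}\mid e_{k-1}$ and $\frac{n'}{d}\mid e'_{k-1}$.
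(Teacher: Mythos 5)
Your proposal is correct and follows essentially the same route as the paper's proof: reduce to $l=x$ with $n,n'>1$, set $k=k_x(f,g)$, combine part (1) of Theorem \ref{formula} with Lemma \ref{igualdades} to get $\frac{n}{e_{k-1}}=\frac{n'}{e'_{k-1}}$ and hence $\frac{n}{d}\mid e_{k-1}$, $\frac{n'}{d}\mid e'_{k-1}$, and then read off $i_0(f,g)\equiv 0$ mod $(e_{k-1}$ or $e'_{k-1})$ from parts (2) and (3). You merely spell out the dichotomy and the $\gcd$ bookkeeping that the paper leaves implicit.
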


\noindent \begin{proof}
We may assume that $n,n'>1$ and $l=x$. Let $k=k_x(f,g)$. By Theorem
\ref{igualdad polinomios} and Lemma \ref{igualdades} we get
\begin{equation}
\label{i1} \frac{n}{e_i}=\frac{n'}{e'_i} \;\;\hbox{\rm for }i<k.
\end{equation}

\noindent On the other hand from the second and third part of Theorem \ref{formula}  it follows
\begin{equation}
\label{i2} i_0(f,g)\equiv 0 \;\;\hbox{\rm mod }(e_{k-1}\;\hbox{\rm
or }e'_{k-1}).
\end{equation}

\noindent From (\ref{i1}) we get 

\begin{equation}
\label{i3} e_{k-1}\equiv 0 \;\;\left(\hbox{\rm mod }\frac{n}{d}\right)\;\;and
\;\; e'_{k-1}\equiv 0 \;\;\left(\hbox{\rm mod }\frac{n'}{d}\right).
\end{equation}

\noindent Now (\ref{i2}) and (\ref{i3}) imply the theorem.
\end{proof}

\medskip

\noindent Using Theorem \ref{congruencia} we will prove the
following basic property of polynomial automorphisms of the plane (see \cite{Jung} and \cite{van der Kulk}).

\begin{teorema}[Jung-van der Kulk]
Let the mapping $(P,Q):\bK^2\longrightarrow \bK^2$ be a polynomial
automorphism. Then of the two integers $m=\deg P$,
$n=\deg Q$ one divides the other.
\end{teorema}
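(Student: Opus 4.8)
The plan is to translate the statement into a question about the branches at infinity of two level curves and then to invoke Theorem \ref{congruencia}. We may assume $m,n>1$, since otherwise $1$ divides one of the two degrees and there is nothing to prove.

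First I would fix constants $a,b\in\bK$ and consider the affine curves $C_P=\{P=a\}$ and $C_Q=\{Q=b\}$, of degrees $m$ and $n$, together with their projective closures $\overline{C_P},\overline{C_Q}\subset\mathbb P^2$ and the line at infinity $L$. Because $(P,Q)$ is an automorphism we have $\bK[x,y]=\bK[P,Q]$, so $\bK[x,y]/(P-a)\cong\bK[Q]$ and hence $C_P\cong\mathbb A^1$; consequently $\overline{C_P}$ is irreducible and has exactly one branch at infinity, centred at some point $O_P\in L$. The same reasoning applies to $\overline{C_Q}$, giving a point $O_Q\in L$. Moreover $\bK[x,y]/(P-a,Q-b)\cong\bK$, so $C_P$ and $C_Q$ meet in a single affine point with intersection multiplicity $1$.

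Next I would locate the intersection at infinity. By B\'ezout's theorem $\overline{C_P}$ and $\overline{C_Q}$ meet with total multiplicity $mn$ in $\mathbb P^2$, of which exactly $1$ comes from the affine plane; hence the intersection carried by $L$ equals $mn-1>0$. Since the only point of $L$ lying on $\overline{C_P}$ (resp.\ $\overline{C_Q}$) is $O_P$ (resp.\ $O_Q$), this forces $O_P=O_Q=:O$. Working in a local coordinate system at $O$ in which $L=\{l=0\}$ is the smooth branch, let $f$ and $g$ be the irreducible local equations of the unique branches of $\overline{C_P}$ and $\overline{C_Q}$ at $O$. Applying B\'ezout to each curve against $L$ gives $i_0(f,l)=m$ and $i_0(g,l)=n$, all the intersection with $L$ being concentrated at $O$, while the count above gives $i_0(f,g)=mn-1<\infty$; in particular $f\neq g$ and $f,g\neq l$, so all hypotheses of Theorem \ref{congruencia} are satisfied.

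Finally I would feed these numbers into Theorem \ref{congruencia}. With $d=\gcd(m,n)$ it yields $mn-1\equiv 0$ modulo $\frac{m}{d}$ or modulo $\frac{n}{d}$. Since $mn\equiv 0$ modulo both $\frac{m}{d}$ and $\frac{n}{d}$, the congruence reduces to $\frac{m}{d}\mid 1$ or $\frac{n}{d}\mid 1$, i.e.\ $m=d$ or $n=d$, which is to say $m\mid n$ or $n\mid m$. The arithmetic at the end being immediate, the real content of the argument is the geometric set-up: recognising $C_P\cong\mathbb A^1$ to guarantee a single branch at infinity, and combining the affine count ($=1$) with B\'ezout to force the two branches to share the same centre $O$. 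This last point is where I expect the main care to be needed.
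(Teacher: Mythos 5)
Your proposal is correct and takes essentially the same route as the paper: both apply Theorem \ref{congruencia} to the branches at infinity of the level curves of $P$ and $Q$, with B\'ezout giving the intersection multiplicity $mn-1$, and conclude by the same congruence arithmetic. The only difference is that you spell out details the paper leaves implicit --- the single branch at infinity via $\bK[x,y]/(P-a)\cong\bK[Q]$, the affine intersection being a single simple point, and the coincidence of the two centers at infinity --- which is careful filling-in rather than a different argument.
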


\noindent \begin{proof}(see \cite{van der Kulk})
Let $C$ and $D$ be projective curves with affine equations $P=0$ and
$Q=0$. Then $\deg D=n$, $\deg C=m$ and each of the
curves $C,D$ has exactly one branch at infinity.
 By B\'ezout's Theorem
these branches intersect with multiplicity $i=mn-1$. The line at
infinity cuts the branches of $C$ and $D$ with multiplicities $m$
and $n$ respectively. Thus by Theorem \ref{congruencia} we get
$i\equiv 0\;\;(\hbox{\rm mod }\frac{m}{d}\;\hbox{\rm or }
\frac{n}{d})$,  where $d=\gcd(m,n)$. This implies that
$m$ divides $n$ or $n$ divides $m$, since $i=mn-1$.
\end{proof}

\medskip
\noindent {\bf Notes}

\noindent The classical formula for the intersection multiplicity of two branches (see \cite{Hefez}, Chapter 8 or \cite{Popescu}) was well-known to geometers of the nineteenth century: H.J.S. Smith, G.H. Halphen and M. Noether. It was used by Zariski in \cite{Zariski-1971} to study the saturation of the local rings. Ancochea gave in \cite{Ancochea} a formula for the intersection multiplicity of two branches in terms of Hamburger-Noether expansions (see also \cite{Russell}, \cite{Campillo-libro}, \cite{Delgado2}).

\section{The Abhyankar-Moh irreducibility criterion}
\label{Ab-M}

\noindent Let $f\in \bK[[x,y]]$ be an irreducible power series such
that $\{f=0\}\neq \{x=0\}$. Let $n=i_0(f,x)>1$ and $\overline{\hbox
{\rm char}_x} f=(\overline{b_0},\ldots,\overline{b_h})$,
$\overline{b_0}=n$.

\begin{lema}
\label{igual caracteristica} Let $g=g(x,y)\in \bK[[x,y]]$ be an
irreducible power series such that $\{g=0\}\neq \{x=0\}$ and let $k$
be an integer such that $1\leq k\leq h$. If
$\frac{i_0(f,g)}{i_0(g,x)}>\frac{e_{k-1}\overline{b_k}}{n}$ then
$i_0(g,x)\equiv 0$  $\left(mod \;\frac{n}{e_k}\right)$. If, additionally,
$i_0(g,x)=\frac{n}{e_k}$ then $\overline{\hbox {\rm char}_x}
g=\left(\frac{\overline{b_0}}{e_k},\ldots,\frac{\overline{b_k}}{e_k}\right)$.
\end{lema}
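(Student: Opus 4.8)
The plan is to deduce both assertions directly from Theorem \ref{igualdad polinomios}, with the arithmetic bookkeeping supplied by Lemma \ref{igualdades}. Write $n'=i_0(g,x)$ and $\overline{\hbox{\rm char}_x}\,g=(\overline{b'_0},\ldots,\overline{b'_{h'}})$ with $\overline{b'_0}=n'$, and set $e'_i=\gcd(\overline{b'_0},\ldots,\overline{b'_i})$. The hypothesis $\frac{i_0(f,g)}{i_0(g,x)}>\frac{e_{k-1}\overline{b_k}}{n}$ is exactly the hypothesis of Theorem \ref{igualdad polinomios}, so I would first invoke that theorem to obtain $k\leq h'$ and $\frac{\overline{b_i}}{n}=\frac{\overline{b'_i}}{n'}$ for all $i\in\{1,\ldots,k\}$.

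Next I would feed these ratio equalities into Lemma \ref{igualdades} to get $\frac{n}{e_i}=\frac{n'}{e'_i}$ for $i\in\{1,\ldots,k\}$. Taking $i=k$ gives $\frac{n}{e_k}=\frac{n'}{e'_k}$, hence $n'=\frac{n}{e_k}\,e'_k$ with $e'_k$ a positive integer. This immediately yields $i_0(g,x)=n'\equiv 0\pmod{\frac{n}{e_k}}$, which is the first assertion.

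For the second assertion assume $i_0(g,x)=\frac{n}{e_k}$. Comparing with $n'=\frac{n}{e_k}\,e'_k$ forces $e'_k=1$. Since $\overline{\hbox{\rm char}_x}\,g$ is a Seidenberg $n'$-characteristic sequence, its associated quotients $n'_i=\frac{e'_{i-1}}{e'_i}$ all satisfy $n'_i>1$, so $(e'_i)$ is a strictly decreasing sequence of positive integers ending at $e'_{h'}=1$ (indeed $e'_{h'}=\gcd(\Gamma(g))=1$ by Lemma \ref{complementario}); consequently the value $1$ is attained only at the last index, so $e'_k=1$ forces $k=h'$, consistent with the bound $k\leq h'$ already obtained. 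Finally, for every $i\in\{0,1,\ldots,k\}$ the equality $\frac{\overline{b_i}}{n}=\frac{\overline{b'_i}}{n'}$ (which holds trivially also at $i=0$ since $\overline{b_0}=n$ and $\overline{b'_0}=n'$), combined with $\frac{n'}{n}=\frac{1}{e_k}$, gives $\overline{b'_i}=\frac{\overline{b_i}}{e_k}$. As $h'=k$, this is precisely $\overline{\hbox{\rm char}_x}\,g=\left(\frac{\overline{b_0}}{e_k},\ldots,\frac{\overline{b_k}}{e_k}\right)$.

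The only genuinely delicate point is the implication $e'_k=1\Rightarrow k=h'$: it is not enough that $e'_k=1$ in isolation. One must combine the fact that the gcd sequence of a Seidenberg characteristic strictly decreases until it reaches $1$ at the final index with the upper bound $k\leq h'$ already furnished by Theorem \ref{igualdad polinomios}; together these pin down $k=h'$. Everything else in the argument is a direct substitution.
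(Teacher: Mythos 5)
Your proof is correct and follows essentially the same route as the paper, whose entire proof is the citation ``It follows immediately from Theorem \ref{igualdad polinomios} and Lemma \ref{igualdades}''; you have supplied exactly the details behind that citation, namely the divisibility $i_0(g,x)=\frac{n}{e_k}e'_k$ coming from $\frac{n}{e_k}=\frac{n'}{e'_k}$, and the identification $k=h'$ from $e'_k=1$ combined with the strict decrease of the gcd sequence $(e'_i)$ down to $e'_{h'}=1$. No gaps.
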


\noindent \begin{proof}
It follows immediately from Theorem \ref{igualdad polinomios} and
Lemma \ref{igualdades}.
\end{proof}

\medskip

\begin{teorema}
\label{irreducibilidad} Let $g\in \bK[[x,y]]$ be a power series such
that $i_0(g,x)=\frac{n}{e_k}$ and $i_0(f,g)>n_k\overline{b_k}$ for
a $k\in\{1,\ldots,h\}$. Then $g$ is irreducible and
$\overline{\hbox {\rm char}_x}
g=\left(\frac{\overline{b_0}}{e_k},\ldots,\frac{\overline{b_k}}{e_k}\right)$.
\end{teorema}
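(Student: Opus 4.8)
The plan is to reduce to the case where $g$ is a distinguished polynomial, prove irreducibility by a counting argument on its factors, and then read off the characteristic directly from Lemma~\ref{igual caracteristica}. First I would recast the hypotheses in a convenient form. Since $i_0(g,x)=\frac{n}{e_k}<+\infty$ we have $g(0,y)\neq 0$, so by the Weierstrass preparation theorem I may multiply $g$ by a unit and assume that $g$ is a distinguished polynomial of degree $\frac{n}{e_k}$; this changes neither the irreducibility of $g$ nor the number $i_0(f,g)$, because multiplying by a unit adds $i_0(f,\mathrm{unit})=0$ to the intersection number. Dividing the hypothesis $i_0(f,g)>n_k\overline{b_k}$ by $i_0(g,x)=\frac{n}{e_k}$ and using $n_k=\frac{e_{k-1}}{e_k}$ yields
\[
\frac{i_0(f,g)}{i_0(g,x)}>\frac{e_{k-1}\overline{b_k}}{n},
\]
which is exactly the inequality appearing in Theorem~\ref{igualdad polinomios} and Lemma~\ref{igual caracteristica}.

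Next I would establish irreducibility by contradiction. Suppose $g=g_1\cdots g_r$ with $r\geq 2$, where the $g_j$ are irreducible distinguished polynomials. Because $g$ is distinguished, no $g_j$ equals $\{x=0\}$, and each $m_j:=i_0(g_j,x)=\deg_y g_j$ satisfies $1\leq m_j<\frac{n}{e_k}$, while $\sum_{j=1}^r m_j=\frac{n}{e_k}$. For each $j$ I claim $\frac{i_0(f,g_j)}{m_j}\leq\frac{e_{k-1}\overline{b_k}}{n}$. Indeed, if the strict inequality $\frac{i_0(f,g_j)}{i_0(g_j,x)}>\frac{e_{k-1}\overline{b_k}}{n}$ held, then Lemma~\ref{igual caracteristica} applied to the branch $\{g_j=0\}$ would force $m_j\equiv 0\ \left(\hbox{\rm mod }\frac{n}{e_k}\right)$, hence $m_j\geq\frac{n}{e_k}$, contradicting $m_j<\frac{n}{e_k}$.

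Summing these bounds and invoking the additivity property $i_0(f,g_jg')=i_0(f,g_j)+i_0(f,g')$ of the intersection number, I obtain
\[
i_0(f,g)=\sum_{j=1}^r i_0(f,g_j)\leq\frac{e_{k-1}\overline{b_k}}{n}\sum_{j=1}^r m_j=\frac{e_{k-1}\overline{b_k}}{n}\cdot\frac{n}{e_k}=n_k\overline{b_k},
\]
which contradicts $i_0(f,g)>n_k\overline{b_k}$. Therefore $r=1$ and $g$ is irreducible. Finally, with $g$ now known to be an irreducible branch with $\{g=0\}\neq\{x=0\}$, the inequality displayed above together with $i_0(g,x)=\frac{n}{e_k}$ places us precisely in the second situation of Lemma~\ref{igual caracteristica}, giving $\overline{\hbox{\rm char}_x}\,g=\left(\frac{\overline{b_0}}{e_k},\ldots,\frac{\overline{b_k}}{e_k}\right)$, as required. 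I expect the only delicate point to be the factorization step: one must be certain that every irreducible factor of the distinguished polynomial $g$ is again distinguished, so that $i_0(g_j,x)=\deg_y g_j$ and no factor coincides with $\{x=0\}$, and that Lemma~\ref{igual caracteristica} is legitimately applicable to each factor separately. Everything else is bookkeeping with the additivity of $i_0$ and the identity $n_k=\frac{e_{k-1}}{e_k}$.
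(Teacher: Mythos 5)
Your proof is correct and takes essentially the same route as the paper's: both arguments rest on Lemma \ref{igual caracteristica} to force the divisibility $i_0(g_j,x)\equiv 0$ (mod $\frac{n}{e_k}$) for any irreducible factor $g_j$ whose ratio $\frac{i_0(f,g_j)}{i_0(g_j,x)}$ exceeds $\frac{e_{k-1}\overline{b_k}}{n}$, combined with additivity of $i_0$ over factors to bound the total by $n_k\overline{b_k}$. The only difference is organizational --- the paper first exhibits one factor exceeding the threshold and concludes it must be associated to $g$, whereas you assume $g$ reducible and show every factor falls below the threshold so the sum is too small; these are contrapositive arrangements of the same argument.
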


\noindent \begin{proof}
Suppose that $i_0(f,g)>n_k\overline{b_k}$ and let $g=g_1\cdots g_s$
with irreducible $g_j\in \bK[[x,y]]$, for $j\in\{1,\ldots,s\}$. Then there
exists  $j\in \{1,\ldots,s\}$ such that
\begin{equation}
\label{ine}
\frac{i_0(f,g_j)}{i_0(g_j,x)}>\frac{e_{k-1}\overline{b_k}}{n}.
\end{equation}

\noindent Indeed, suppose that  inequality (\ref{ine}) is not true.
Then $i_0(f,g_j)\leq \frac{e_{k-1}\overline{b_k}}{n}i_0(g_j,x)$ for
all $j\in\{1,\ldots,s\}$ and we get $i_0(f,g)=\sum_{j=1}^s i_0(f,g_j)\leq
\sum_{j=1}^s
\frac{e_{k-1}\overline{b_k}}{n}i_0(g_j,x)=\frac{e_{k-1}\overline{b_k}}{n}
i_0(g,x)=n_k\overline{b_k}$ which contradicts the assumption about
$i_0(f,g)$. The inequality (\ref{ine}) implies by Lemma \ref{igual
caracteristica} that $i_0(g_j,x)=q\frac{n}{e_k}$ for some integer
$q>0$. On the other hand $i_0(g_j,x)\leq i_0(g,x)=\frac{n}{e_k}$.
Therefore $q=1$ and $i_0(g_j,x)=i_0(g,x)$. Recall that $g_j$ divides
$g$, $g_j$ is irreducible and $\ord g_j(0,y)=\ord g(0,y)$, thus $g_j$ is associated with $g$, which proves the
irreducibility of $g$. We get $\overline{\hbox {\rm char}_x}
g=\left(\frac{\overline{b_0}}{e_k},\ldots,\frac{\overline{b_k}}{e_k}\right)$
from the second part of Lemma \ref{igual caracteristica}.
\end{proof}

\begin{coro}[Abhyankar-Moh irreducibility criterion]
\label{coro-AM-irr}
If $i_0(g,x)=n$ and $i_0(f,g)>n_h\overline{b_h}$ then $g$ is irreducible and $\overline{\hbox{\rm char}_x} g=\overline{\hbox
{\rm char}_x} f$.
\end{coro}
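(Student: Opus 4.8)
The plan is to recognize this corollary as nothing more than the special case $k=h$ of Theorem \ref{irreducibilidad}, and to verify that under this specialization the hypotheses and conclusion of the theorem transcribe exactly into those of the corollary. So the only substantive task is to confirm the arithmetic of the indices, after which I simply invoke the theorem.

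First I would record the key identity $e_h=1$. This holds because, by the Semigroup Theorem (Theorem \ref{structure}), the $n$-minimal system of generators $(\overline{b_0},\ldots,\overline{b_h})$ of $\Gamma(f)$ is an $n$-characteristic sequence; here the sequence $e_k=\gcd(e_{k-1},\overline{b_k})$ plays the role of the associated $d_k$, and axiom (1) of an $n$-characteristic sequence gives $d_h=1$, hence $e_h=1$. Next I would check that $k=h$ is a legitimate choice, i.e. that $h\geq 1$: this is immediate from $n=\overline{b_0}>1$, as noted in the discussion of $n$-characteristic sequences following Remark \ref{n}.

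With $e_h=1$ in hand, the translation is mechanical. The hypothesis $i_0(g,x)=\frac{n}{e_h}$ becomes $i_0(g,x)=n$, matching the corollary's first assumption; the hypothesis $i_0(f,g)>n_h\overline{b_h}$ is verbatim the corollary's second assumption; and the conclusion $\overline{\hbox{\rm char}_x}\,g=\left(\frac{\overline{b_0}}{e_h},\ldots,\frac{\overline{b_h}}{e_h}\right)$ collapses to $(\overline{b_0},\ldots,\overline{b_h})=\overline{\hbox{\rm char}_x}\,f$, together with the irreducibility of $g$. Thus applying Theorem \ref{irreducibilidad} with $k=h$ yields both assertions of the corollary at once.

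I do not anticipate any genuine obstacle here, since the statement is a pure specialization; the only thing one must not overlook is the justification $e_h=1$, which is precisely what makes the degree condition $\frac{n}{e_k}=n$ and the characteristic $\left(\frac{\overline{b_i}}{e_k}\right)=(\overline{b_i})$ reduce to the stated forms. Everything else—irreducibility and the computation of the characteristic via Lemma \ref{igual caracteristica}—is already packaged inside Theorem \ref{irreducibilidad} and need not be reproved.
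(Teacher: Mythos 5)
Your proposal is correct and is exactly what the paper intends: the corollary is stated without proof precisely because it is the case $k=h$ of Theorem \ref{irreducibilidad}, with $e_h=1$ (guaranteed by Lemma \ref{complementario}, equivalently axiom (1) of the $n$-characteristic sequence produced by the Semigroup Theorem) collapsing $\frac{n}{e_h}$ to $n$ and $\left(\frac{\overline{b_i}}{e_h}\right)$ to $(\overline{b_i})$. Your care in justifying $e_h=1$ and $h\geq 1$ supplies the only details the paper leaves implicit.
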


\medskip

\noindent Using the Abhyankar-Moh inequality and the irreducibility criterion we prove

\begin{teorema} [Moh-Ephraim Pencil Theorem]
 \label{AMpencil}
Assume that $C$ is a plane curve of degree $n>1$ with one branch at infinity and that $C$ is permissible.
Let $D$ be another plane curve of degree $n>1$ with one point at infinity $O$ which is the unique common
point of $C$ and $D$. Then
\begin{enumerate}
\item the curve $D$ has one branch at infinity,
\item the branches at infinity of the curves $C$ and $D$ are equisingular.
\end{enumerate}
\end{teorema}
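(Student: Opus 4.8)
The plan is to reduce both claims to the Abhyankar-Moh inequality (Theorem \ref{Abhyankar-Moh inequality}) together with the irreducibility criterion (Corollary \ref{coro-AM-irr}), computing the intersection numbers that feed these results by B\'ezout's theorem.

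First I would fix an affine chart: choose coordinates $(x,y)$ centered at $O$ so that the line at infinity $L$ has equation $x=0$, and let $f,g\in\bK[[x,y]]$ be the local equations at $O$ of $C$ and $D$. Since $C$ has one branch at infinity, $f$ is irreducible, and since $C$ meets $L$ only at $O$ with $\deg C=n$, B\'ezout gives $i_0(f,x)=n$; write $\overline{\hbox{\rm char}_x}f=(\overline{b_0},\ldots,\overline{b_h})$ with $\overline{b_0}=n$. Because $O$ is the only point of $D$ at infinity, $L$ is not a component of $D$, so B\'ezout applied to $D$ and $L$ gives $i_0(g,x)=n$. Finally, as $O$ is the unique common point of $C$ and $D$, the two curves share no component, and B\'ezout applied to $C,D$ (both of degree $n$) gives $i_0(f,g)=n^2$.

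The decisive numerical step is to combine these with permissibility. Since $\Gamma_O$ is a numerical semigroup we have $e_h=1$, hence $n_h=e_{h-1}/e_h=e_{h-1}$ (and $h\geq 1$ because $n>1$). As $C$ is permissible, Theorem \ref{Abhyankar-Moh inequality} gives $n_h\overline{b_h}=e_{h-1}\overline{b_h}<n^2=i_0(f,g)$. With $i_0(g,x)=n$ and $i_0(f,g)>n_h\overline{b_h}$ in hand, the Abhyankar-Moh irreducibility criterion (Corollary \ref{coro-AM-irr}) applies and yields that $g$ is irreducible with $\overline{\hbox{\rm char}_x}g=\overline{\hbox{\rm char}_x}f$. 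Irreducibility of $g$ means precisely that $D$ is analytically irreducible at $O$, i.e.\ $D$ has one branch at infinity, proving $(1)$.

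For $(2)$, the branches at infinity of $C$ and $D$ both intersect $\{x=0\}$ with multiplicity $n$, and we have just shown $\overline{\hbox{\rm char}_x}f=\overline{\hbox{\rm char}_x}g$; by the Corollary to the Semigroup Theorem this forces $\Gamma(f)=\Gamma(g)$, i.e.\ the two branches are equisingular. The only real content, and the step I would verify most carefully, is the chain $i_0(f,g)=n^2>e_{h-1}\overline{b_h}=n_h\overline{b_h}$: the equality is B\'ezout and the strict inequality is exactly the Abhyankar-Moh bound, so the hypothesis that $C$ is permissible enters here and nowhere else.
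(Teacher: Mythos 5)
Your proposal is correct and follows essentially the same route as the paper: coordinates at $O$ with the line at infinity as $\{x=0\}$, B\'ezout giving $i_0(f,x)=i_0(g,x)=n$ and $i_0(f,g)=n^2$, the Abhyankar-Moh inequality $e_{h-1}\overline{b_h}<n^2$, and then the irreducibility criterion (the paper cites Theorem \ref{irreducibilidad} with $k=h$, which is exactly Corollary \ref{coro-AM-irr}). Your explicit remark that $n_h=e_{h-1}$ because $e_h=1$, and your appeal to the corollary of the Semigroup Theorem for equisingularity, are just the details the paper leaves implicit.
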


\noindent \begin{proof}
Let $(x,y)$ be an affine coordinate system centered at $O$ such that the line at infinity has the equation $x=0$. Let
$f(x,y)=0$ and $g(x,y)=0$ be the polynomial equations of the curves $C$ and $D$. Then 
$i_0(f,x)=i_0(g,x)=n$ and
$i_0(f,g)=n^2$ by
B\'ezout's theorem. Let $\overline{\hbox
{\rm char}_x} f=(\overline{b_0},\ldots,\overline{b_h})$. By the Abhyankar-Moh inequality we have
$e_{h-1}\overline{b_h}<i_0(f,g)$. Now, the assertions follow from Theorem \ref{irreducibilidad} (case $k=h$).
\end{proof}

\begin{nota}
Suppose that the plane curve $C$ of  the Moh-Ephraim pencil theorem is given by the homogeneous equation
$F(X,Y,Z)=0$ of degree $n$ and let $L:Z=0$ be the line at infinity. Consider the pencil $C_{\lambda}:F(X,Y,Z)-\lambda Z^n= 0$,
$\lambda \in \bK$. Applying Theorem \ref{AMpencil} to $C$ and $D=C_{\lambda}$, $\lambda\neq 0$ we prove that the
pencil $C_{\lambda}$ is equisingular at infinity.
\end{nota}

\medskip

\noindent {\bf Notes}

\noindent The Abhyankar-Moh irreducibility criterion was proved in \cite{Abhyankar-Moh} (Lemma 3.4) and explained in details in
\cite{Abhyankar3} (Theorem 12.4). The original version of the criterion was given for meromorphic curves. Using Puiseux series
the authors had to assume $n\not\equiv 0$ (mod char $\bK$). The version of the criterion presented in this paper is borrowed
from \cite{G-P2} where the result is proved for the case char $\bK=0$.

\medskip

\noindent The first part (irreducibility) of the Moh-Ephraim Pencil Theorem is due to Moh \cite{Moh2}, the second part
(equisingularity) to Ephraim \cite{Ephraim} (see also \cite{Chang}). In our treatment of the subject we do not need the
assumption char $\bK=0$, which is necessary in the quoted papers.

\section{Characterization of the semigroups associated with branches}
\label{carac-semi}
\noindent In this section we give a new proof of the well-known theorem on the existence of branches with given
semigroup (see \cite{Bresinsky} and \cite{Angermuller}). Following Teissier \cite{Teissier} we give explicitly the equation of a plane
curve with given characteristic. Our proof is written in the spirit of this paper, we do not use the technique of deformations.
Here is the main result of this section.

\begin{teorema}
\label{caract}
Let $\left(\overline{b_0},\ldots,\overline{b_h}\right)$ be an $n$-characteristic sequence. Suppose there exists a distinguished irreducible
polynomial $f_{h-1}\in \bK[[x]][y]$ such that  $\overline{\hbox{\rm char}_x} f_{h-1}=\left(\frac{\overline{b_0}}{e_{h-1}},\ldots,\frac{\overline{b_{h-1}}}{e_{h-1}}\right)$. Let $f_0,\ldots,f_{h-2}\in \bK[[x]][y]$ be a sequence of key polynomials of $f_{h-1}$. Let
$a_0,\ldots,a_{h-1}$ be the (unique) sequence of integers such that $a_0\overline{b_0}+a_1\overline{b_1}+\cdots+a_{h-1}
\overline{b_{h-1}}=n_h\overline{b_h}$,  where $0<a_0$ and
$0\leq a_i<n_i$ for $ i\in \{1,\ldots,h-1\}$ and let $c\in \bK \backslash \{0\}$. Put $f_h=f_{h-1}^{n_{h}}+cx^{a_0}f_0^{a_1}\cdots
f_{h-2}^{a_{h-1}}$. Then
\begin{enumerate}
\item $f_h$ is a distinguished irreducible polynomial of degree $n$, that is, $i_0(f_h,x)=\deg_y f_h$,
\item   $\overline{\hbox{\rm char}_x} f_h=\left(\overline{b_0},\ldots,\overline{b_h}\right)$ and $f_0,\ldots,f_{h-1}$ are key polynomials of $f_h$.
\end{enumerate}
\end{teorema}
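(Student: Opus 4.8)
The plan is to treat $f_h=f_{h-1}^{n_h}+cx^{a_0}f_0^{a_1}\cdots f_{h-2}^{a_{h-1}}$ as a small perturbation of $f_{h-1}^{n_h}$ and to proceed in three stages, using throughout that $n_h=e_{h-1}$ (because $e_h=1$), that $\deg_y f_{h-1}=\frac{n}{e_{h-1}}$, and that the key polynomials of $f_{h-1}$ satisfy $\deg_y f_{i-1}=\frac{n}{e_{i-1}}$ and $v_{f_{h-1}}(f_{i-1})=\frac{\overline{b_i}}{e_{h-1}}$ (Proposition \ref{p1}). First I would settle the degree: $\deg_y f_{h-1}^{n_h}=n_h\frac{n}{e_{h-1}}=n$, while the correction term has $y$-degree $\sum_{i=1}^{h-1}a_i\frac{n}{e_{i-1}}\leq\sum_{i=1}^{h-1}(n_i-1)\frac{n}{e_{i-1}}=\frac{n}{e_{h-1}}-1<n$ by a telescoping sum (using $a_i<n_i=\frac{e_{i-1}}{e_i}$ and $e_0=n$). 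Since $a_0>0$ the correction vanishes at $x=0$, so $f_h(0,y)=y^n$ and $f_h$ is distinguished of degree $n$, giving $i_0(f_h,x)=\deg_y f_h$. Next I would compute $i_0(f_{h-1},f_h)=v_{f_{h-1}}(f_h)$: here $v_{f_{h-1}}(f_{h-1}^{n_h})=+\infty$, whereas the correction term has value $\frac{1}{e_{h-1}}(a_0\overline{b_0}+\sum_{i=1}^{h-1}a_i\overline{b_i})=\frac{n_h\overline{b_h}}{e_{h-1}}=\overline{b_h}$ by the given B\'ezout relation; being the strictly smaller, finite term it dominates, so $i_0(f_{h-1},f_h)=\overline{b_h}$ and in particular $f_{h-1}\nmid f_h$.

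The crux is irreducibility. I would write $f_h=\prod_j g_j$ with $g_j$ distinguished irreducible, $p_j=\deg_y g_j$, $\sum_j p_j=n$; each $g_j\neq f_{h-1}$, so $d_x(f_{h-1},g_j)<+\infty$. From $\overline{b_h}=\sum_j i_0(f_{h-1},g_j)=\frac{n}{e_{h-1}}\sum_j p_j\,d_x(f_{h-1},g_j)$ together with $\sum_j p_j=n$, the factor $g$ realizing $\max_j d_x(f_{h-1},g_j)$ satisfies $d_x(f_{h-1},g)\geq\frac{e_{h-1}\overline{b_h}}{n^2}$, which strictly exceeds the top characteristic distance $\frac{e_{h-2}\overline{b_{h-1}}}{n^2}$ of $f_{h-1}$ because $e_{i-1}\overline{b_i}$ is strictly increasing. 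Hence $g$ has maximal contact index with $f_{h-1}$ and Theorem \ref{igualdad polinomios} applies, giving $\overline{b_i}(g)=\frac{p}{n}\overline{b_i}$ for $i\leq h-1$ (with $p=\deg_y g$) and that $f_0,\ldots,f_{h-2}$ are key polynomials of $g$. Exploiting $g\mid f_h$, I note $v_g(f_{h-1}^{n_h})=v_g(cx^{a_0}f_0^{a_1}\cdots f_{h-2}^{a_{h-1}})$ (equal since their sum has infinite $v_g$); substituting $v_g(x)=p$ and $v_g(f_{i-1})=\overline{b_i}(g)=\frac{p}{n}\overline{b_i}$ and using the B\'ezout relation again yields $n_h v_g(f_{h-1})=\frac{p}{n}n_h\overline{b_h}$, i.e. $v_g(f_{h-1})=\frac{p}{n}\overline{b_h}$. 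The decisive point is integrality: the numbers $\frac{p}{n}\overline{b_i}$ are integers for all $i\in\{0,\ldots,h\}$ (characteristic exponents of $g$ for $i\leq h-1$, and the intersection number $v_g(f_{h-1})$ for $i=h$), so expressing $1=\gcd(\overline{b_0},\ldots,\overline{b_h})$ as a $\bZ$-combination forces $\frac{p}{n}\in\bZ$. Since $0<p\leq n$, this gives $p=n$, hence $g=f_h$ and $f_h$ is irreducible. I expect this maximal-contact-plus-integrality step to be the main obstacle.

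Granting irreducibility, Part (2) follows from the strong triangle inequality applied to $f_{i-1},f_{h-1},f_h$: since $d_x(f_{h-1},f_{i-1})=\frac{e_{i-1}\overline{b_i}}{n^2}<\frac{e_{h-1}\overline{b_h}}{n^2}=d_x(f_{h-1},f_h)$ for $i\leq h-1$, one gets $d_x(f_{i-1},f_h)=d_x(f_{h-1},f_{i-1})$, whence $i_0(f_h,f_{i-1})=\overline{b_i}$; with $i_0(f_h,f_{h-1})=\overline{b_h}$ this yields $i_0(f_h,f_{i-1})=\overline{b_i}$ for all $i\in\{1,\ldots,h\}$, and as $\deg_y f_{i-1}=\frac{n}{e_{i-1}}$, Lemma \ref{il3} gives $\overline{\hbox{\rm char}_x}f_h=(\overline{b_0},\ldots,\overline{b_h})$ with $f_0,\ldots,f_{h-1}$ as key polynomials. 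The case $h=1$ (where $\deg_y f_{h-1}=1$ and Theorem \ref{igualdad polinomios} is unavailable) is handled directly by the same valuation identity: for any factor $g$, $n_1 v_g(f_0)=\overline{b_1}\,v_g(x)=\overline{b_1}p$, so $n=n_1$ divides $\overline{b_1}p$, and $\gcd(n,\overline{b_1})=1$ forces $n\mid p$, i.e. $p=n$.
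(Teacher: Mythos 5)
Your proposal is correct and follows essentially the same route as the paper's proof, which packages the identical steps as Lemmas \ref{l1}, \ref{l2} and \ref{l3}: compute $i_0(f_h,f_{h-1})=\overline{b_h}$, produce by an averaging argument an irreducible factor whose contact with $f_{h-1}$ exceeds $\frac{e_{h-2}\overline{b_{h-1}}}{n^2}$, apply the maximal-contact theorem (Theorem \ref{igualdad polinomios}, which the paper invokes through Lemma \ref{igual caracteristica}) to that factor, use divisibility of $f_h$ by the factor to get the valuation identity, and finish by coprimality, with part (2) then following from the STI and Lemma \ref{il3} exactly as in the paper. The only differences are cosmetic: your endgame uses $\gcd(\overline{b_0},\ldots,\overline{b_h})=1$ and integrality of all $\frac{p}{n}\overline{b_i}$ where the paper writes $i_0(\phi,x)=\nu\frac{n}{e_{h-1}}$ and uses $\gcd(\overline{b_h},n_h)=1$ to force $\nu=n_h$ (the same congruence in different clothing), and you treat the degenerate case $h=1$ explicitly, which the paper passes over silently.
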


\noindent \begin{proof}
Since $f_{h-1}$ is a distinguished polynomial of degree $\frac{n}{e_{h-1}}$ and $a_0>0$, we have

\begin{eqnarray*}
i_0(f_h,x)&=&i_0(f_{h-1}^{n_{h}}+cx^{a_0}f_0^{a_1}\cdots
f_{h-2}^{a_{h-1}},x)=i_0(f_{h-1}^{n_{h}},x)\\
&=&n_hi_0(f_{h-1},x)=n_h\frac{n}{e_{h-1}}=n.
\end{eqnarray*}

\noindent To calculate $\deg_y f_{h}$ observe that  $\deg_y f_{h-1}^{n_h}=n_h  \deg_y f_{h-1}=n_h\frac{n}{e_{h-1}}=n$
and $\deg_ycx^{a_0}f_0^{a_1}\cdots f_{h-2}^{a_{h-1}}=a_1\frac{n}{e_0}+\cdots+a_{h-1}\frac{n}{e_{h-2}}\leq
(n_1-1)\frac{n}{e_0}+\cdots+(n_{h-1}-1)\frac{n}{e_{h-2}}=\frac{n}{e_{h-1}}-1<n.$ Therefore we get $\deg_y f_h=n$. The proof that $f_h$ is irreducible is harder. We need auxiliary lemmas.

\begin{lema}
\label{l1}
$i_0(f_h,f_{h-1})=\overline{b_h}$.
\end{lema}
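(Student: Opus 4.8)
The plan is to use the identity $i_0(f_h,f_{h-1})=v_{f_{h-1}}(f_h)$, valid because $f_{h-1}$ is irreducible, and to evaluate the valuation $v_{f_{h-1}}$ on the two summands of $f_h=f_{h-1}^{n_h}+cx^{a_0}f_0^{a_1}\cdots f_{h-2}^{a_{h-1}}$ separately. First I would observe that $v_{f_{h-1}}(f_{h-1}^{n_h})=n_h\,i_0(f_{h-1},f_{h-1})=+\infty$, since the branch $\{f_{h-1}=0\}$ meets itself with infinite multiplicity. Hence the term $f_{h-1}^{n_h}$ contributes an infinite value, and, provided the second summand has a strictly smaller finite value, property~3 of the intersection numbers (the ultrametric equality $v_f(g+g')=\inf\{v_f(g),v_f(g')\}$ when the two values differ) forces $v_{f_{h-1}}(f_h)$ to equal the value of the second summand.

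The heart of the matter is therefore to compute $v_{f_{h-1}}(cx^{a_0}f_0^{a_1}\cdots f_{h-2}^{a_{h-1}})$, which by additivity of $v_{f_{h-1}}$ equals $a_0\,v_{f_{h-1}}(x)+\sum_{i=1}^{h-1}a_i\,v_{f_{h-1}}(f_{i-1})$. Here I would invoke the hypothesis $\overline{\hbox{\rm char}}_x f_{h-1}=\left(\frac{\overline{b_0}}{e_{h-1}},\ldots,\frac{\overline{b_{h-1}}}{e_{h-1}}\right)$ together with Proposition~\ref{p1}: the polynomials $f_0,\ldots,f_{h-2}$ are key polynomials of $f_{h-1}$, so $v_{f_{h-1}}(f_{i-1})=\frac{\overline{b_i}}{e_{h-1}}$ for $i\in\{1,\ldots,h-1\}$, while $v_{f_{h-1}}(x)=i_0(f_{h-1},x)=\frac{n}{e_{h-1}}=\frac{\overline{b_0}}{e_{h-1}}$. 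Substituting, the value becomes $\frac{1}{e_{h-1}}\left(a_0\overline{b_0}+a_1\overline{b_1}+\cdots+a_{h-1}\overline{b_{h-1}}\right)$.

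At this point the defining relation $a_0\overline{b_0}+\cdots+a_{h-1}\overline{b_{h-1}}=n_h\overline{b_h}$ turns the value into $\frac{n_h\overline{b_h}}{e_{h-1}}$, and since $e_h=1$ we have $n_h=\frac{e_{h-1}}{e_h}=e_{h-1}$, so the value collapses to exactly $\overline{b_h}$. This is finite and strictly smaller than $+\infty$, so the ultrametric step applies and yields $i_0(f_h,f_{h-1})=\overline{b_h}$. I expect the only delicate points to be bookkeeping: matching the index shift in $v_{f_{h-1}}(f_{i-1})=\frac{\overline{b_i}}{e_{h-1}}$ correctly, and remembering that it is precisely $n_h=e_{h-1}$ which cancels the denominator $e_{h-1}$. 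There is no genuine analytic obstacle here; the argument is a direct valuation computation once the values of the key polynomials of $f_{h-1}$ are in hand.
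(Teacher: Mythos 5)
Your proposal is correct and follows essentially the same route as the paper: both compute $v_{f_{h-1}}(f_h)$ by discarding the term $f_{h-1}^{n_h}$ (of infinite value), evaluating $v_{f_{h-1}}$ on $cx^{a_0}f_0^{a_1}\cdots f_{h-2}^{a_{h-1}}$ via additivity and the values of the key polynomials of $f_{h-1}$, and concluding with the B\'ezout relation and $n_h=e_{h-1}$. The only difference is presentational: you justify dropping $f_{h-1}^{n_h}$ explicitly by the ultrametric property, which the paper leaves implicit in its first equality.
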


\noindent \begin{proof}

\begin{eqnarray*}
i_0(f_h,f_{h-1})&=&i_0(f_{h-1}^{n_{h}}+cx^{a_0}f_0^{a_1}\cdots
f_{h-2}^{a_{h-1}},f_{h-1})=i_0(x^{a_0}f_0^{a_1}\cdots
f_{h-2}^{a_{h-1}},f_{h-1})\\
&=& a_0i_0(x,f_{h-1})+a_1i_0(f_0,f_{h-1})+\cdots+a_{k-1}i_0(f_{h-2},f_{h-1})\\
&=&a_0\frac{\overline{b_0}}{e_{h-1}}+
a_1\frac{\overline{b_1}}{e_{h-1}}+\cdots+a_{h-1}\frac{\overline{b_{h-1}}}{e_{h-1}}=\frac{1}{e_{h-1}}n_h\overline{b_h}=
\overline{b_h}.
\end{eqnarray*}
\end{proof}

\begin{lema}
\label{l2}
There exists an irreducible factor $\phi$ of $f_h$ such that 
\[\frac{i_0(f_{h-1},\phi)}{i_0(\phi,x)}>\frac{n_{h-1}\overline{b_{h-1}}}{n}.\]
\end{lema}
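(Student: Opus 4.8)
The plan is to reuse the averaging (pigeonhole) argument from the proof of Theorem~\ref{irreducibilidad}, now with $f_{h-1}$ playing the role of the base branch. First I would factor $f_h$ into irreducible power series $f_h=\phi_1\cdots\phi_s$ in $\bK[[x,y]]$. As established at the start of the proof of Theorem~\ref{caract}, $f_h$ is a distinguished polynomial with $i_0(f_h,x)=\deg_y f_h=n$; hence $f_h(0,y)=y^n$ and each factor satisfies $\phi_j(0,y)=(\mathrm{unit})\,y^{m_j}$ with $m_j=i_0(\phi_j,x)\ge 1$ and $\sum_{j=1}^s m_j=n$. In particular every $\{\phi_j=0\}$ is a branch different from $\{x=0\}$, so the quotients in the statement are well defined.

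Next I would record the two intersection numbers that drive the argument: $i_0(f_h,x)=n$ and, by Lemma~\ref{l1}, $i_0(f_h,f_{h-1})=\overline{b_h}<+\infty$. Finiteness of this last number means $f_{h-1}$ and $f_h$ share no common branch, so each $i_0(f_{h-1},\phi_j)$ is finite; additivity of the intersection multiplicity then gives $\sum_{j=1}^s i_0(f_{h-1},\phi_j)=\overline{b_h}$ and $\sum_{j=1}^s i_0(\phi_j,x)=n$.

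Then I would argue by contradiction. Suppose that every factor fails the desired inequality, i.e. $i_0(f_{h-1},\phi_j)\le\frac{n_{h-1}\overline{b_{h-1}}}{n}\,i_0(\phi_j,x)$ for all $j$. Summing over $j$ and using the two additivity relations yields
\[\overline{b_h}=i_0(f_{h-1},f_h)=\sum_{j=1}^s i_0(f_{h-1},\phi_j)\le\frac{n_{h-1}\overline{b_{h-1}}}{n}\sum_{j=1}^s i_0(\phi_j,x)=\frac{n_{h-1}\overline{b_{h-1}}}{n}\cdot n=n_{h-1}\overline{b_{h-1}}.\]
But $(\overline{b_0},\ldots,\overline{b_h})$ is an $n$-characteristic sequence, so its second defining axiom gives $n_{h-1}\overline{b_{h-1}}<\overline{b_h}$, a contradiction. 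Hence some irreducible factor $\phi$ of $f_h$ must satisfy the strict inequality, which is exactly the claim.

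I expect there to be no genuine obstacle here: the substantive computation is already isolated in Lemma~\ref{l1}, and the present lemma is a one-line pigeonhole argument. The only points requiring care are the bookkeeping that all the relevant intersection numbers are finite and positive (handled by $f_h(0,y)=y^n$ and Lemma~\ref{l1}), and the observation that the threshold $\frac{n_{h-1}\overline{b_{h-1}}}{n}$ is precisely the value that converts the summed inequality into the characteristic-sequence inequality $n_{h-1}\overline{b_{h-1}}<\overline{b_h}$.
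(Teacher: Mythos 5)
Your proof is correct and is essentially identical to the paper's: both argue by contradiction, assuming every irreducible factor $\phi_j$ of $f_h$ satisfies $i_0(f_{h-1},\phi_j)\leq\frac{n_{h-1}\overline{b_{h-1}}}{n}i_0(\phi_j,x)$, then sum over the factors using additivity, Lemma~\ref{l1}, and $i_0(f_h,x)=n$ to obtain $\overline{b_h}\leq n_{h-1}\overline{b_{h-1}}$, contradicting the characteristic-sequence inequality $n_{h-1}\overline{b_{h-1}}<\overline{b_h}$. Your extra bookkeeping on finiteness and on the factors being distinct from $\{x=0\}$ is sound but not a departure from the paper's route.
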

\noindent \begin{proof}
Let $f_h=\phi_1\cdots \phi_s$ with irreducible factors $\phi_i\in \bK[[x,y]]$ for $i\in \{1,\ldots,s\}$. Suppose that
$\frac{i_0(f_{h-1},\phi)}{i_0(\phi,x)}\leq\frac{n_{h-1}\overline{b_{h-1}}}{n}$ for all $i\in\{1,\ldots,s\}$. By Lemma 
\ref{l1} we get
\begin{eqnarray*}
\overline{b_h}&=&i_0(f_h,f_{h-1})=\sum_{i=1}^s i_0(\phi_i,f_{h-1})\leq \sum_{i=1}^s \frac{n_{h-1}\overline{b_{h-1}}}{n}
i_0(\phi_i,x)\\
& =& \frac{n_{h-1}\overline{b_{h-1}}}{n}\sum_{i=1}^si_0(\phi_i,x)=\frac{n_{h-1}\overline{b_{h-1}}}{n}i_0(f_h,x)=n_{h-1}\overline{b_{h-1}}<\overline{b_h},
\end{eqnarray*}

\noindent which is a contradiction.
\end{proof}

\begin{lema}
\label{l3}
Let $\phi$ be an irreducible factor of $f_h$ such that $\frac{i_0(f_{h-1},\phi)}{i_0(\phi,x)}>\frac{n_{h-1}\overline{b_{h-1}}}{n}.$
Then there exists $\nu\in\{1,\ldots,n_h\}$ such that $i_0(\phi,x)=\nu\frac{\overline{b_0}}{e_{h-1}}$ 
and $i_0(\phi,f_k)=\nu \frac{\overline{b_{k+1}}}{e_{h-1}}$ for $k<h-1$. 
\end{lema}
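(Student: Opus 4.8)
The plan is to apply the comparison machinery of Section~\ref{intersection multiplicity} to the pair $(f_{h-1},\phi)$, using $f_{h-1}$ as the reference branch (so we assume $h\ge 2$; for $h=1$ the second assertion is empty). Recall that $f_{h-1}$ is irreducible and distinguished with $\overline{\hbox{\rm char}_x} f_{h-1}=\left(\frac{\overline{b_0}}{e_{h-1}},\ldots,\frac{\overline{b_{h-1}}}{e_{h-1}}\right)$, so $i_0(f_{h-1},x)=\frac{\overline{b_0}}{e_{h-1}}=\frac{n}{e_{h-1}}$, its associated gcd-sequence is $\left(\frac{e_0}{e_{h-1}},\ldots,\frac{e_{h-1}}{e_{h-1}}\right)$ and its associated sequence of $n_i$'s is $(n_1,\ldots,n_{h-1})$. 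The first step is to rewrite the hypothesis in the form required by Theorem~\ref{igualdad polinomios} relative to $f_{h-1}$ at the index $k=h-1$: since $\frac{e_{h-2}}{e_{h-1}}=n_{h-1}$, the relevant threshold is
\[
\frac{\left(\frac{e_{h-2}}{e_{h-1}}\right)\left(\frac{\overline{b_{h-1}}}{e_{h-1}}\right)}{\frac{n}{e_{h-1}}}=\frac{n_{h-1}\overline{b_{h-1}}}{n},
\]
which is exactly the quantity in the assumption $\frac{i_0(f_{h-1},\phi)}{i_0(\phi,x)}>\frac{n_{h-1}\overline{b_{h-1}}}{n}$.

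With this matching in place, Theorem~\ref{igualdad polinomios} (with $f_{h-1}$ playing the role of $f$, $\phi$ that of $g$, and $k=h-1$) applies, provided $\{\phi=0\}\neq\{x=0\}$; this holds because $\phi$ divides $f_h$, whence $i_0(\phi,x)\le i_0(f_h,x)=n<+\infty$. Writing $\overline{\hbox{\rm char}_x}\phi=(\overline{b'_0},\ldots,\overline{b'_{h'}})$ and $n'=i_0(\phi,x)$, the theorem then gives $h-1\le h'$, the proportionality $\frac{\overline{b_i}}{n}=\frac{\overline{b'_i}}{n'}$ for $i\in\{1,\ldots,h-1\}$, and the fact that $f_0,\ldots,f_{h-2}$ are key polynomials of $\phi$. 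Next I would invoke Lemma~\ref{igual caracteristica} (for the same data) to obtain $i_0(\phi,x)\equiv 0$ $\left(\hbox{\rm mod }\frac{n}{e_{h-1}}\right)$, so that $n'=i_0(\phi,x)=\nu\frac{n}{e_{h-1}}=\nu\frac{\overline{b_0}}{e_{h-1}}$ for some integer $\nu\ge 1$.

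To bound $\nu$ I would use that $e_{h-1}=n_h$ (since $e_h=1$), so $n=n_h\frac{n}{e_{h-1}}$; combined with $i_0(\phi,x)\le i_0(f_h,x)=n$ this forces $\nu\le n_h$, giving $\nu\in\{1,\ldots,n_h\}$ and the first assertion. For the second assertion, fix $k<h-1$: then $f_k$ is a $k$-th key polynomial of $\phi$, so by the definition of key polynomials $i_0(\phi,f_k)=\overline{b'_{k+1}}$; substituting the proportionality $\overline{b'_{k+1}}=\frac{n'}{n}\overline{b_{k+1}}$ together with $\frac{n'}{n}=\frac{\nu}{e_{h-1}}$ yields $i_0(\phi,f_k)=\nu\frac{\overline{b_{k+1}}}{e_{h-1}}$, as claimed. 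The only delicate point is the bookkeeping in the first step—correctly identifying the rescaled characteristic of $f_{h-1}$ with the threshold $\frac{n_{h-1}\overline{b_{h-1}}}{n}$ so that Theorem~\ref{igualdad polinomios} fires at index $h-1$; once this is done, the remaining steps are direct substitutions.
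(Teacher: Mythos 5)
Your proposal is correct, and its first half coincides exactly with the paper's argument: the paper too identifies the threshold $\frac{n_{h-1}\overline{b_{h-1}}}{n}$ with the top threshold of the branch $f_{h-1}$ (whose characteristic is $\left(\frac{\overline{b_0}}{e_{h-1}},\ldots,\frac{\overline{b_{h-1}}}{e_{h-1}}\right)$), applies Lemma \ref{igual caracteristica} to the pair $(f_{h-1},\phi)$ at index $h-1$ to get $i_0(\phi,x)\equiv 0 \pmod{\frac{n}{e_{h-1}}}$, and bounds $\nu\leq n_h=e_{h-1}$ via $i_0(\phi,x)\leq i_0(f_h,x)=n$. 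Where you genuinely diverge is the second assertion. The paper does not use the key-polynomial conclusion of Theorem \ref{igualdad polinomios} at all: it runs a fresh, self-contained STI computation on the three branches $\{f_k=0\}$, $\{f_{h-1}=0\}$, $\{\phi=0\}$, checking $d_x(f_{h-1},f_k)=\frac{e_k\overline{b_{k+1}}}{n^2}\leq\frac{e_{h-2}\overline{b_{h-1}}}{n^2}<d_x(f_{h-1},\phi)$, so that the STI forces $d_x(f_k,\phi)=d_x(f_{h-1},f_k)$, which unravels to $i_0(\phi,f_k)=\nu\frac{\overline{b_{k+1}}}{e_{h-1}}$. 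You instead invoke Theorem \ref{igualdad polinomios} (with $f_{h-1}$ as reference, $k=h-1$) to conclude that $f_0,\ldots,f_{h-2}$ are key polynomials of $\phi$ and that the characteristics are proportional, and then the formula is pure substitution. Both routes are sound and ultimately rest on the same Section 7 machinery (Lemma \ref{igual caracteristica} is itself a corollary of Theorem \ref{igualdad polinomios}); yours buys brevity by reusing the full strength of the theorem, while the paper's three-branch computation keeps the step local and avoids re-checking that the theorem's key-polynomial conclusion applies with the shifted indices. One small bookkeeping remark: for $h=1$ you dismiss only the second assertion as empty, but the first assertion also needs a word there — it holds trivially since $\frac{\overline{b_0}}{e_0}=1$ and $n_1=n$ — though the hypothesis of the lemma (which involves $n_{h-1}\overline{b_{h-1}}$) is only really meaningful for $h\geq 2$, which is also how the paper tacitly treats it.
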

\noindent \begin{proof}
Recall that  $\overline{\hbox{\rm char}_x} f_{h-1}=\left(\frac{\overline{b_0}}{e_{h-1}},\ldots,\frac{\overline{b_{h-1}}}{e_{h-1}}\right)$.
Applying Lemma \ref{igual caracteristica} to the irreducible power series $f_{h-1}$ and $\phi$ (note that $\frac{n_{h-1}\overline{b_{h-1}}}{n}=\frac{n_{h-1}\frac{\overline{b_{h-1}}}{e_{h-1}}}{\frac{n}{e_{h-1}}}$) we conclude that
$i_0(\phi,x)\equiv 0$ $\left(mod \; \frac{n}{e_{h-1}}\right)$. Therefore we can write $i_0(\phi,x)=\nu\frac{n}{e_{h-1}}$ with $\nu \leq e_{h-1}=n_h$ since $i_0(\phi,x)\leq i_0(f_h,x)=n$. Fix $k<h-1$ and consider the three branches $f_k=0$, $f_{h-1}=0$ and $\phi=0$. We get $d_x(f_k,\phi)=\frac{e_{h-1}e_ki_0(f_k,\phi)}{\nu n^2}$, $d_x(f_{h-1},\phi)=\frac{i_0(f_{h-1},\phi)}{i_0(\phi,x)\frac{n}{e_{h-1}}}>\frac{n_{h-1}\overline{b_{h-1}}}{n}\frac{e_{h-1}}{n}=\frac{e_{h-2}\overline{b_{h-1}}}{n^2},$
and
$d_x(f_{h-1},f_k)=\frac{\overline{b_{k+1}}/e_{h-1}}{(n/e_{h-1})(n/e_k)}
=\frac{e_k\overline{b_{k+1}}}{n^2}\leq \frac{e_{h-2}\overline{b_{h-1}}}{n^2}$,
 for $k<h-1$. 
Therefore $d_x(f_{h-1},f_k)<d_x(f_{h-1},\phi) $ and by the STI we get
$d_x(f_{h-1},f_k)=d_x(f_k,\phi)$, which implies $i_0(f_k,\phi)=\nu \frac{\overline{b_{k+1}}}{e_{h-1}}.$
\end{proof}

\medskip

\noindent Now we are in  a position to check that $f_h$ is an irreducible power series. Let $\phi$ be an irreducible factor of
$f_h$ such that in Lemma \ref{l2}. Since $f_h=f_{h-1}^{n_{h}}+cx^{a_0}f_0^{a_1}\cdots
f_{h-2}^{a_{h-1}}$ and $\phi$ is an irreducible factor of $f_h$ we get $i_0(f_{h-1}^{n_h},\phi)=i_0(x^{a_0}f_0^{a_1}\cdots
f_{h-2}^{a_{h-1}},\phi)$. Therefore, by Lemma \ref{l3} we have
\begin{eqnarray*}
n_hi_0(f_{h-1},\phi)&=&a_0i_0(x,\phi)+a_1i_0(f_0,\phi)+\cdots+a_{h-1}i_0(f_{h-2},\phi)\\
&=&a_0\nu\frac{\overline{b_0}}{e_{h-1}}+a_1\nu\frac{\overline{b_1}}{e_{h-1}}+\cdots+
a_{h-1}\nu\frac{\overline{b_{h-1}}}{e_{h-1}}=\frac{\nu}{e_{h-1}}n_h\overline{b_h}=\nu\overline{b_h}.
\end{eqnarray*}

\noindent Since $\nu\overline{b_h}\equiv 0$ (mod $n_h$) and $\overline{b_h}$, $n_h=e_{h-1}$ are coprime we
get $\nu\equiv 0$ (mod $n_h$) and $\nu=n_h$ because $1\leq \nu \leq n_h$.

\medskip

\noindent From Lemma \ref{l3} we get $i_0(\phi,x)=n_h\frac{\overline{b_0}}{e_{h-1}}=\overline{b_0}=n=i_0(f_h,x)$. Since $\phi$ divides $f_h$ we get
$f_h=\phi\psi$ in $\bK[[x,y]]$ with $\psi(0)\neq 0$. Therefore $f_h$ is irreducible.

\medskip

\noindent Now we prove the second statement of the theorem. First we check that $i_0(f_h,f_k)=\overline{b_{k+1}}$ for $k\in \{0,1,\ldots,h-1\}$.
We have $i_0(f_h,f_{h-1})=\overline{b_h}$ by Lemma \ref{l1}. Therefore
we may assume that $h>1$ and $k<h-1$. Applying Lemma \ref{l3} to the power series $\phi=f_h$ we get $i_0(f_h,f_k)=n_h\frac{\overline{b_{k+1}}}{e_{h-1}}=\overline{b_{k+1}}$ since $\nu=n_h$. Recall that $\deg_yf_k=\frac{n}{e_k}$ for $k\in\{0,1,\ldots,h-1\}$. Using Lemma 
\ref{il3} we conclude that $\overline{\hbox{\rm char}_x} f_h=\left(\overline{b_0},\ldots,\overline{b_h}\right)$ and that $f_0,\ldots,f_{h-1}$ is a sequence of key polynomials of $f_h$.
\end{proof}

\begin{teorema}[Bresinsky-Angerm\"uller]
\label{B-A}
Let $\overline{b_0},\ldots,\overline{b_h}$ be a sequence of positive integers. Then the following two conditions are equivalent:

\begin{enumerate}
\item There is an irreducible power series $f\in \bK[[x]][y]$ such that
$i_0(f,x)=\overline{b_0}$ and $\overline{b_0},\ldots,\overline{b_h}$ is the $\overline{b_0}$-minimal sequence of generators of the semigroup $\Gamma(f)$.
\item The numbers $\overline{b_0},\ldots,\overline{b_h}$ form a $\overline{b_0}$-characteristic sequence.
\end{enumerate}
\end{teorema}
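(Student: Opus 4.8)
The plan is to prove the two implications separately: the forward implication $(1)\Rightarrow(2)$ is essentially a restatement of the Semigroup Theorem, while the reverse implication $(2)\Rightarrow(1)$ is an induction on $h$ whose inductive step is Theorem \ref{caract}. For $(1)\Rightarrow(2)$ I would argue as follows. Suppose $f\in\bK[[x]][y]$ is irreducible with $n:=i_0(f,x)=\overline{b_0}$; since $n<+\infty$ we have $\{f=0\}\neq\{x=0\}$, so the Semigroup Theorem (Theorem \ref{structure}) applies to $f$. It asserts precisely that the $n$-minimal system of generators $(\overline{b_0},\ldots,\overline{b_h})$ of $\Gamma(f)$ satisfies the two axioms of a Seidenberg $n$-characteristic sequence, namely $n_k>1$ for all $k$ and $n_{k-1}\overline{b_{k-1}}<\overline{b_k}$ for $k>1$, together with $e_h=1$ coming from Lemma \ref{complementario}. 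Hence $(\overline{b_0},\ldots,\overline{b_h})$ is an $\overline{b_0}$-characteristic sequence, which is exactly $(2)$.

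For $(2)\Rightarrow(1)$ I would prove, by induction on $h$, the slightly stronger statement that every $n$-characteristic sequence is realized by a \emph{distinguished irreducible} polynomial of degree $n$; this strengthening is what allows Theorem \ref{caract} to feed into the next stage of the induction, and since a distinguished polynomial of degree $n$ satisfies $i_0(f,x)=\deg_y f=n$, producing it is enough for $(1)$. The base case $h=0$ forces $\overline{b_0}=\gcd(\overline{b_0})=1$, and $f=y$ works: it is distinguished and irreducible of degree $1$ with $\Gamma(y)=\bN$, whose $1$-minimal system of generators is $(1)$.

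For the inductive step with $h\ge1$, I would first observe that the truncation $\left(\frac{\overline{b_0}}{e_{h-1}},\ldots,\frac{\overline{b_{h-1}}}{e_{h-1}}\right)$ is an $\frac{n}{e_{h-1}}$-characteristic sequence, by the truncation remark of Section \ref{arithmetical-lemmas} applied with $k=h-1$ (the degenerate case $h=1$ just gives the sequence $(1)$, since then $e_{h-1}=e_0=n=\overline{b_0}$). The induction hypothesis then supplies a distinguished irreducible polynomial $f_{h-1}$ with $\overline{\hbox{\rm char}_x}f_{h-1}=\left(\frac{\overline{b_0}}{e_{h-1}},\ldots,\frac{\overline{b_{h-1}}}{e_{h-1}}\right)$, and the Semigroup Theorem provides a sequence $f_0,\ldots,f_{h-2}$ of key polynomials of $f_{h-1}$. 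With this data I would invoke Theorem \ref{caract} directly: taking the Bézout data $a_0>0$, $0\le a_i<n_i$ with $a_0\overline{b_0}+\cdots+a_{h-1}\overline{b_{h-1}}=n_h\overline{b_h}$ (whose existence and the positivity $a_0>0$ come from Lemma \ref{aritmetico} and the Remark following it) and any nonzero $c\in\bK$, the polynomial $f_h=f_{h-1}^{n_h}+cx^{a_0}f_0^{a_1}\cdots f_{h-2}^{a_{h-1}}$ is a distinguished irreducible polynomial of degree $n$ with $\overline{\hbox{\rm char}_x}f_h=(\overline{b_0},\ldots,\overline{b_h})$.

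This closes the induction: for $f_h$ we have $i_0(f_h,x)=\deg_y f_h=n=\overline{b_0}$ and the $\overline{b_0}$-minimal system of generators of $\Gamma(f_h)$ is $(\overline{b_0},\ldots,\overline{b_h})$, so $(1)$ holds with $f=f_h$. The genuine difficulty of the equivalence is entirely absorbed into Theorem \ref{caract} — specifically the irreducibility of $f_h$, which there rests on Lemmas \ref{l1}--\ref{l3} and the strong triangle inequality. The only point in the present argument that requires care is to phrase the induction so that it manufactures a \emph{distinguished irreducible} polynomial at each level, since that is exactly the hypothesis Theorem \ref{caract} demands of $f_{h-1}$; everything else is bookkeeping with the truncation remark and the Bézout data.
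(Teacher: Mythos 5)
Your proposal is correct and follows essentially the same route as the paper: the forward implication is read off from the Semigroup Theorem, and the reverse implication is the same induction on $h$ with base case $f=y$ and inductive step supplied by Theorem \ref{caract} applied to $f_{h-1}^{n_h}+cx^{a_0}f_0^{a_1}\cdots f_{h-2}^{a_{h-1}}$. The extra care you take in phrasing the induction hypothesis as producing a \emph{distinguished irreducible} polynomial, and in justifying the truncated characteristic sequence via the remark of Section \ref{arithmetical-lemmas}, is exactly the bookkeeping the paper leaves implicit.
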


\noindent \begin{proof}
The implication $(1)\Longrightarrow(2)$ follows from the Semigroup Theorem
(Theorem \ref{structure}). To check that  $(2)\Longrightarrow(1)$ we proceed by induction on the length $h$  of the characteristic sequence  using Theorem \ref{caract}. If $h=0$ then 
$(\overline{b_0})=(1)$ and we take $f=y$. Let $h>0$ and suppose that the implication $(2)\Longrightarrow(1)$ is true for $h-1$. Then there exists an irreducible distinguished polynomial $f_{h-1}\in\bK[[x]][y]$ such that $\Gamma(f_{h-1})=\frac{\overline{b_0}}{e_{h-1}}\bN+\cdots+
\frac{\overline{b_{h-1}}}{e_{h-1}}\bN$. Let $f_0,\ldots,f_{h-2}$ be a sequence of key polynomials of $f_{h-1}$. Take $f=f_{h-1}^{n_h}+x^{a_0}f_0^{a_1}\cdots f_{h-2}^{a_{h-1}}$,
where $0<a_0$ and $0\leq a_i<n_i$ for $i\in \{1,\ldots,h\}$ is  the (unique) sequence of integers such that $a_0\overline{b_0}+a_1\overline{b_1}+\cdots+a_{h-1}
\overline{b_{h-1}}=n_h\overline{b_h}$. Then by
Theorem \ref{caract} $f$ is an irreducible power series and $\Gamma(f)=\overline{b_0}\bN+\cdots+\overline{b_h}\bN$. 
\end{proof}

\medskip

\noindent Let $(\overline{b_0},\ldots,\overline{b_h})$  be an $n$-characteristic sequence. For any $k\in\{1,\ldots,h\}$ we have B\'ezout's relation $n_k\overline{b_k}=a_{k0}\overline{b_0}+a_{k1}\overline{b_1}+\cdots+
a_{kk-1}\overline{b_{k-1}}$,  where $a_{k0}>0$ and $0\leq a_{ki}<n_i$ for
$i\in \{1,\ldots,k-1\}$. Take $c_1,\ldots,c_h\in \bK\backslash \{0\}$ and define in a recurrent way the polynomials $g_0,\ldots,g_h$ by putting $g_0=y$, $g_1=g_0^{n_1}+c_1x^{a_{10}}=y^{n/e_1}+c_1
x^{\overline{b_1}/e_1}$,$\ldots,$ $g_h=g_{h-1}^{n_h}+c_hx^{a_{h0}}g_0^{a_{h1}}\cdots g_{h-2}^{a_{hh-1}}$.

\begin{teorema}
\label{T-R}
(cf. \cite{Teissier} and \cite{Reguera})
The polynomials $g_0,\ldots,g_h$ are distinguished and irreducible. We
have $\overline{\hbox{\rm char}_x} g_k=\left(\frac{\overline{b_0}}{e_k},\ldots,\frac{\overline{b_h}}{e_k}\right).$ The sequence $g_0,\ldots g_{k-1}$ is a sequence of key polynomials of $g_k$.
\end{teorema}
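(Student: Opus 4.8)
The plan is to establish all three assertions simultaneously by induction on $k$, at each step invoking Theorem \ref{caract} applied not to the whole sequence but to the rescaled truncation $(\overline{b_0}/e_k,\ldots,\overline{b_k}/e_k)$. For the base case $k=0$ the polynomial $g_0=y$ is distinguished and irreducible, $i_0(g_0,x)=1=\overline{b_0}/e_0$, and $\overline{\hbox{\rm char}_x} g_0=(\overline{b_0}/e_0)=(1)$; there are no preceding key polynomials to check.

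For the inductive step I would assume that $g_{k-1}$ is distinguished and irreducible with $\overline{\hbox{\rm char}_x} g_{k-1}=(\overline{b_0}/e_{k-1},\ldots,\overline{b_{k-1}}/e_{k-1})$ and that $g_0,\ldots,g_{k-2}$ are key polynomials of $g_{k-1}$. By the rescaling fact recalled in Section \ref{arithmetical-lemmas}, the truncation $(\overline{b_0}/e_k,\ldots,\overline{b_k}/e_k)$ is an $\frac{n}{e_k}$-characteristic sequence whose associated multiplicity sequence is $(n_1,\ldots,n_k)$. Reading Theorem \ref{caract} for this sequence (the length-$(k+1)$ analogue of $(\overline{b_0},\ldots,\overline{b_h})$), the gcd of its first $k$ entries is $\gcd(\overline{b_0}/e_k,\ldots,\overline{b_{k-1}}/e_k)=e_{k-1}/e_k=n_k$, so both the divisor ``$e_{h-1}$'' and the leading exponent ``$n_h$'' of that theorem equal $n_k$. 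The polynomial playing the role of $f_{h-1}$ must therefore have characteristic $\left(\frac{\overline{b_0}/e_k}{n_k},\ldots,\frac{\overline{b_{k-1}}/e_k}{n_k}\right)=(\overline{b_0}/e_{k-1},\ldots,\overline{b_{k-1}}/e_{k-1})$, where I use $e_k n_k=e_{k-1}$; this is precisely the inductive hypothesis on $g_{k-1}$.

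It then remains to match the B\'ezout data. Dividing the given relation $n_k\overline{b_k}=a_{k0}\overline{b_0}+\cdots+a_{kk-1}\overline{b_{k-1}}$ by $e_k$ gives $n_k(\overline{b_k}/e_k)=a_{k0}(\overline{b_0}/e_k)+\cdots+a_{kk-1}(\overline{b_{k-1}}/e_k)$ with the same coefficients, and the inequalities $0<a_{k0}$, $0\le a_{ki}<n_i$ are exactly those required by Theorem \ref{caract}. Since $g_k=g_{k-1}^{n_k}+c_kx^{a_{k0}}g_0^{a_{k1}}\cdots g_{k-2}^{a_{kk-1}}$ is literally the polynomial that Theorem \ref{caract} produces from $g_{k-1}$, its key polynomials $g_0,\ldots,g_{k-2}$, and the constant $c_k\in\bK\setminus\{0\}$, the theorem yields at once that $g_k$ is distinguished and irreducible of degree $n/e_k$, that $\overline{\hbox{\rm char}_x} g_k=(\overline{b_0}/e_k,\ldots,\overline{b_k}/e_k)$, and that $g_0,\ldots,g_{k-1}$ are key polynomials of $g_k$. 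This closes the induction.

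The only genuine obstacle is the bookkeeping of the two rescalings: truncating after index $k$ forces division by $e_k$, whereas the inductive input $g_{k-1}$ is normalized by $e_{k-1}$, and the two are reconciled solely through the identity $e_k n_k=e_{k-1}$ applied at exactly the right spot. Beyond this the argument is a direct transcription of Theorem \ref{caract} into the recursive notation, so I expect no further difficulty.
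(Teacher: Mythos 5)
Your proof is correct and follows exactly the paper's route: the paper's own proof of Theorem \ref{T-R} is the single sentence ``The theorem follows from Theorem \ref{caract} by induction on $k$,'' and your argument is precisely that induction with the rescaling bookkeeping (truncating to $(\overline{b_0}/e_k,\ldots,\overline{b_k}/e_k)$, identifying the relevant gcd and exponent with $n_k$ via $e_kn_k=e_{k-1}$, and matching the B\'ezout data) carried out explicitly and correctly. Your version even implicitly corrects the typo in the statement, where the last entry of $\overline{\hbox{\rm char}_x}\,g_k$ should read $\overline{b_k}/e_k$ rather than $\overline{b_h}/e_k$.
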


\noindent \begin{proof}
The theorem follows from Theorem \ref{caract} by induction on $k$.
\end{proof}

\medskip

\noindent {\bf Notes} 

\noindent Theorem \ref{B-A}
 characterizing the semigroups associated with branches is due to Bresinsky \cite{Bresinsky} (the case of characteristic $0$) and to Angerm\"uller \cite{Angermuller} (the case of arbitrary characteristic, see also 
\cite{Garcia-Stohr}). Both authors consider only {\em generic} case, i.e. $i_0(f,x)=\ord f$. Theorem \ref{T-R} which gives an explicit equation of the branch with given semigroup was obtained by Teissier by the method of deformations of the monomial curve associated with a branch. Another proof was given by Reguera L\'opez in \cite{Reguera}.

\section{ Description of branches with given semigroup}
\label{given semigroup}
\noindent We need two preliminary lemmas.

\medskip

\begin{lema}
\label{lemm1}
Let $f\in \bK[[x]][y]$ be a distinguished irreducible polynomial of degree $n>0$. Suppose that $\overline{\hbox{\rm char}_x} f=(\overline{b_0},\ldots,\overline{b_h})$ , $\overline{b_0}=n$ and let $f_0,f_1,\ldots,f_{h-1}$ be a sequence of key polynomials of $f$. Then any polynomial $g\in \bK[[x]][y]$ of $y$-degree strictly less than $n$ has a unique expansion of the form
\[
g=\sum g_{\alpha_1,\ldots,\alpha_h}f_0^{\alpha_1}\cdots f_{h-1}^{\alpha_h}, \;\; g_{\alpha_1,\ldots,\alpha_h}\in \bK[[x]],
\]

\noindent where $0\leq \alpha_1< n_1$, ..., $0\leq \alpha_h< n_h$. Moreover
\begin{enumerate}
\item the $y$-degrees of the terms appearing in the right-hand side of the preceding equality are all distinct,
\item $i_0(f,g)=\inf \{(\ord  g_{\alpha_1,\ldots,\alpha_h})n+\alpha_1 \overline{b_1}+\cdots+\alpha_h\overline{b_h}\;:\;
0\leq \alpha_i<n_i\;\hbox{\rm for } i=1,\ldots,h \}$.
\end{enumerate}
\end{lema}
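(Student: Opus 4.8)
The plan is to treat the two assertions separately: first the existence and uniqueness of the expansion (with claim (1)), which is essentially bookkeeping with $y$-degrees, and then the valuation formula (claim (2)), which carries the real content.

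For the expansion I would first record degrees. Since $f_{i-1}$ is monic in $y$ of degree $\frac{n}{e_{i-1}}$, and $\frac{n}{e_{i-1}}=\frac{e_0}{e_{i-1}}=\prod_{j=1}^{i-1}n_j$ by telescoping (using $e_0=n$ and $n_j=\frac{e_{j-1}}{e_j}$), the monomial $f_0^{\alpha_1}\cdots f_{h-1}^{\alpha_h}$ is monic of $y$-degree
\[
\delta(\alpha):=\sum_{i=1}^h\alpha_i\,(n_1\cdots n_{i-1}).
\]
With $0\le\alpha_i<n_i$ this is exactly the mixed-radix numeral in the bases $n_1,\ldots,n_h$, so $\alpha\mapsto\delta(\alpha)$ is a bijection of $\prod_{i=1}^h\{0,\ldots,n_i-1\}$ onto $\{0,1,\ldots,n-1\}$ (here $n=n_1\cdots n_h$ because $e_h=1$). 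This already gives claim (1): distinct multi-indices yield terms of distinct $y$-degree. Existence and uniqueness of the expansion then follow by triangularity: the $n$ monomials $f_0^{\alpha_1}\cdots f_{h-1}^{\alpha_h}$ are monic of the pairwise distinct degrees $0,1,\ldots,n-1$, so, ordered by degree, their coordinate matrix with respect to $1,y,\ldots,y^{n-1}$ is triangular with $1$'s on the diagonal, hence invertible over $\bK[[x]]$; thus they form a $\bK[[x]]$-basis of the free rank-$n$ module $\{g\in\bK[[x]][y]:\deg_y g<n\}$. (Alternatively one can build the expansion by iterated Euclidean division, starting with the $f_{h-1}$-adic expansion of $g$ and recursively expanding the coefficients by $f_0,\ldots,f_{h-2}$, which are key polynomials of $f_{h-1}$ by Proposition \ref{p1}.)

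For claim (2) I would first evaluate a single term $T_\alpha:=g_{\alpha}f_0^{\alpha_1}\cdots f_{h-1}^{\alpha_h}$. Since $v_f$ is a valuation with $v_f(x)=\overline{b_0}=n$, since $v_f(g_\alpha)=(\ord g_\alpha)\,n$ for $g_\alpha\in\bK[[x]]$, and since $v_f(f_{i-1})=\overline{b_i}$ by the Semigroup Theorem (Theorem \ref{structure}), we obtain
\[
v_f(T_\alpha)=(\ord g_\alpha)\,n+\alpha_1\overline{b_1}+\cdots+\alpha_h\overline{b_h}.
\]
Thus the asserted formula is precisely $i_0(f,g)=v_f\bigl(\sum_\alpha T_\alpha\bigr)=\min_\alpha v_f(T_\alpha)$, the minimum taken over $\alpha$ with $g_\alpha\neq 0$.

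The hard part, and the crux of the lemma, is that these term-values are pairwise distinct; once this is established the minimum is attained uniquely, and the ultrametric property of $v_f$ (property 3 of the intersection number: $i_0(f,g+g')\ge\inf\{i_0(f,g),i_0(f,g')\}$, with equality when the two values differ) yields $v_f(\sum_\alpha T_\alpha)=\min_\alpha v_f(T_\alpha)$ by an immediate induction on the number of terms. I expect the distinctness to reduce cleanly to Lemma \ref{Bezout}: the displayed expression for $v_f(T_\alpha)$ is exactly a B\'ezout representation of an integer in the $\overline{b_i}$, with $a_0=\ord g_\alpha\in\bZ$ and digits $0\le\alpha_i<n_i$. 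If $v_f(T_\alpha)=v_f(T_\beta)$, that integer would have two such representations, so the uniqueness part of Lemma \ref{Bezout} forces $\ord g_\alpha=\ord g_\beta$ and $\alpha_i=\beta_i$ for all $i$, i.e. $\alpha=\beta$. This finishes the plan: the only genuinely delicate step is recognizing the term-values as B\'ezout expressions and invoking uniqueness, everything else being degree bookkeeping and repeated use of the STI.
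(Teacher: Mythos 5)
Your proposal is correct and follows essentially the same route as the paper: the paper delegates the existence, uniqueness and distinct-degrees part of the expansion to Abhyankar's $g$-adic expansion machinery (which is exactly your mixed-radix/triangularity argument spelled out), and it derives the valuation formula from precisely your key observation, namely that the term-values $(\ord g_{\alpha})\overline{b_0}+\alpha_1\overline{b_1}+\cdots+\alpha_h\overline{b_h}$ are pairwise distinct by the uniqueness of B\'ezout's relation (Lemma \ref{Bezout}), so that the ultrametric property of $v_f$ gives the minimum. No gaps.
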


\noindent \begin{proof}
The existence and unicity of the expansion and the inequality for the degrees holds for polynomials with coefficients in arbitrary integral domain (see \cite{Abhyankar3}, Section 2). The formula for the intersection multiplicity follows from the observation that the intersection multiplicities $i_0(f,  g_{\alpha_1,\ldots,\alpha_h}f_0^{\alpha_1}\cdots f_{h-1}^{\alpha_h})=(\ord  g_{\alpha_1,\ldots,\alpha_h})n+\alpha_1 \overline{b_1}+\cdots+\alpha_h\overline{b_h}$ are pairwise distinct by the unicity of B\'ezout's relation. 
\end{proof}

\medskip

\begin{lema}
\label{lemm2}
Under the notation and assumptions introduced above, if $\deg_y g <  n/e_k$ then $i_0(f,g)=e_ki_0(f_k,g)$.
\end{lema}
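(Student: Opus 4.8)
The plan is to derive both sides of the asserted equality from the single machine provided by Lemma \ref{lemm1}, applying it first to $f$ and then to the key polynomial $f_k$, and to match the resulting infima. First I would invoke Lemma \ref{lemm1} for $f$: since $\deg_y g < n/e_k \le n$, the polynomial $g$ admits the unique expansion $g=\sum g_{\alpha_1,\ldots,\alpha_h}f_0^{\alpha_1}\cdots f_{h-1}^{\alpha_h}$ with $0\le \alpha_i<n_i$, whose terms have pairwise distinct $y$-degrees.

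The crux of the argument is to show that the degree hypothesis forces $\alpha_i=0$ for all $i>k$, so that only the key polynomials $f_0,\ldots,f_{k-1}$ actually occur. The $y$-degree of a term $f_0^{\alpha_1}\cdots f_{h-1}^{\alpha_h}$ equals $\sum_{i}\alpha_i\frac{n}{e_{i-1}}$, and the telescoping identity
\[
\sum_{i=1}^{k}(n_i-1)\frac{n}{e_{i-1}}=\sum_{i=1}^{k}\left(\frac{n}{e_i}-\frac{n}{e_{i-1}}\right)=\frac{n}{e_k}-1
\]
shows that any term built from $f_0,\ldots,f_{k-1}$ alone has degree at most $\frac{n}{e_k}-1$, whereas any term containing a factor $f_{i-1}$ with $i-1\ge k$ has degree at least $\frac{n}{e_{i-1}}\ge\frac{n}{e_k}$ (using that $e_0\ge e_1\ge\cdots$). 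Because the $y$-degrees of the surviving terms are distinct, $\deg_y g$ is the maximum of those degrees; as $\deg_y g<n/e_k$, every term with $\alpha_i\ne 0$ for some $i>k$ must be absent. Thus the expansion of $g$ truncates to a sum over $0\le\alpha_1<n_1,\ldots,0\le\alpha_k<n_k$.

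Next I would apply Lemma \ref{lemm1} to $f_k$ itself. This is legitimate by Proposition \ref{p1}: $f_k$ is a distinguished irreducible polynomial of degree $n/e_k$ with $\overline{\hbox{\rm char}_x}f_k=\left(\frac{\overline{b_0}}{e_k},\ldots,\frac{\overline{b_k}}{e_k}\right)$ and with $f_0,\ldots,f_{k-1}$ as its key polynomials; moreover its associated integers are $\gcd$-quotients $\frac{e_{i-1}/e_k}{e_i/e_k}=n_i$, unchanged from those of $f$. Hence the uniqueness clause yields for $g$ exactly the same truncated expansion, with the same coefficients $g_{\alpha_1,\ldots,\alpha_k}$ and the same index ranges. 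The intersection formula (part (2) of Lemma \ref{lemm1}) then gives $i_0(f,g)=\inf\{(\ord g_{\alpha_1,\ldots,\alpha_k})n+\alpha_1\overline{b_1}+\cdots+\alpha_k\overline{b_k}\}$ and $i_0(f_k,g)=\inf\{(\ord g_{\alpha_1,\ldots,\alpha_k})\frac{n}{e_k}+\alpha_1\frac{\overline{b_1}}{e_k}+\cdots+\alpha_k\frac{\overline{b_k}}{e_k}\}$, the two infima running over the same set of multi-indices. Since each summand in the first infimum is exactly $e_k$ times the corresponding summand in the second, the infima agree up to the factor $e_k$, proving $i_0(f,g)=e_k\,i_0(f_k,g)$. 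The only step requiring care is the identification of the two expansions and index sets, namely verifying that the degree bound removes precisely the high-index terms and that $f_k$ reproduces the same $n_i$; once that bookkeeping is in place the conclusion is immediate.
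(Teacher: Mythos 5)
Your proposal is correct and follows essentially the same route as the paper: expand $g$ by Lemma \ref{lemm1}, observe that the degree bound kills all terms involving $f_k,\ldots,f_{h-1}$, invoke Proposition \ref{p1} to view $f_0,\ldots,f_{k-1}$ as key polynomials of $f_k$ (with the same integers $n_i$), and compare the two intersection formulas term by term, each summand differing by the factor $e_k$. The paper states this more tersely, but your extra bookkeeping (the telescoping degree estimate and the identification of the two expansions via uniqueness) is exactly what its condensed argument relies on.
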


\noindent \begin{proof}
Suppose that  $\deg_y g <  n/e_k$. Then by Lemma \ref{lemm1} we get
$$
g=\sum g_{\alpha_1,\ldots,\alpha_h}f_0^{\alpha_1}\cdots f_{h-1}^{\alpha_h}, \;\;
$$

\noindent where $0\leq \alpha_i< n_i$, for $i\in \{1,\ldots,h\}$. Since  $\deg_y g <  n/e_k$ we have, by the first statement of
Lemma \ref{lemm1}, $\alpha_{k+1}=\cdots=\alpha_h=0$.

\medskip

\noindent By Proposition \ref{p1} $f_k$ is an irreducible distinguished polynomial,  $\overline{\hbox{\rm char}_x} f_k=\left(\frac{\overline{b_0}}{e_k},\ldots,\frac{\overline{b_k}}{e_k}\right)$ and $f_0,\ldots,f_{k-1}$ are key polynomials of
$f_k$. Therefore there exist $\alpha_1,\ldots,\alpha_k$ such that
\[
i_0(f_k,g)=(\ord  g_{\alpha_1,\ldots,\alpha_k,0,\ldots,0})\frac{n}{e_k}+\alpha_1 \frac{ \overline{b_1}}{e_k}+\cdots+\alpha_k\frac{\overline{b_k}}{e_k}=\frac{1}{e_k}i_0(f,g)
\]

\noindent and the lemma follows.
\end{proof}

\medskip

\noindent Let $\phi,f\in \bK[[x]][y]$ be distinguished polynomials such that $N=\frac{\deg_y f}{\deg_y \phi}$ is a positive integer.
Consider the $\phi$-adic expansion of $f$:

$$f=\phi^N+\alpha_1\phi^{N-1}+\cdots+\alpha_{N},\;\;\deg_y \alpha_i<\deg_y \phi \,\,\hbox{\rm for } i\in \{1,\ldots,N\}.$$

\noindent Put $\alpha_0=1$ and $I=\{i\in [0,N]\;:\;i_0(\alpha_i,\phi)\neq +\infty\}$. We define the {\em Newton polygon} $ \Delta_{x,\phi}(f)$ of $f$ with respect to the pair $(x,\phi)$ by setting
$$\Delta_{x,\phi}(f)=\hbox{\rm convex}\bigcup_{i\in I}\left\{(i_0(\alpha_i,\phi),N-i)+\bR^2_{\geq 0}\right\}.$$

\noindent The polygon $\Delta_{x,\phi}(f)$ intersects the vertical axis in the point $(0,N)$ and the horizontal axis in the point
$(i_0(f,\phi),0)$ provided that $i_0(f,\phi)\neq +\infty$. If $\phi=y$ then $\Delta_{x,\phi}(f)=\Delta_{x,y}(f)$ is the usual Newton polygon of $f$ in coordinates $(x,y)$. 

\medskip

\noindent In the sequel we use Teissier's notation (see \cite{Teissier2}): for any integers $k,l>0$ we put

$$\Teiss{k}{l}=\hbox{\rm convex}\{((k,0)+\bR^2_{\geq 0})\cup ((0,l)+\bR^2_{\geq 0})\}.$$

\begin{prop}
\label{newton}
If $f\in \bK[[x,y]]$ is an irreducible power series, $\overline{\hbox{\rm char}_x} f=(\overline{b_0},\ldots,\overline{b_h})$  and $f_0,f_1,\ldots,f_{h}$ is a sequence of key polynomials of $f$ then

\[\Delta_{x,f_{k-1}}(f_k)=\Teiss{\overline{b_k}/e_k}{e_{k-1}/e_k}.\]
\end{prop}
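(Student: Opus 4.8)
The plan is to read the Newton polygon straight off the $f_{k-1}$-adic expansion of $f_k$, using Proposition \ref{adic-expansion} to control the valuations of the coefficients and Lemma \ref{lemm2} to convert those valuations into the intersection numbers $i_0(a_i,f_{k-1})$ that define the polygon. First I would record that $N:=\deg_y f_k/\deg_y f_{k-1}=(n/e_k)/(n/e_{k-1})=e_{k-1}/e_k=n_k$, so the relevant expansion is
\[ f_k=f_{k-1}^{n_k}+a_1 f_{k-1}^{n_k-1}+\cdots+a_{n_k},\qquad \deg_y a_i<\deg_y f_{k-1}=n/e_{k-1}. \]
Since $v_f(f_k)=\overline{b_{k+1}}>n_k\overline{b_k}$ by the Semigroup Theorem, Proposition \ref{adic-expansion} applies with $h=f_{k-1}$ and $g=f_k$, giving $v_f(a_i)>i\overline{b_k}$ for $1\le i<n_k$ and $v_f(a_{n_k})=n_k\overline{b_k}$. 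In particular $a_{n_k}\neq 0$, so (together with $a_0=1$) the indices $0$ and $n_k$ both contribute to the polygon.

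The decisive step is converting $v_f(a_i)$ into $i_0(a_i,f_{k-1})$. Because $\deg_y a_i<n/e_{k-1}$, Lemma \ref{lemm2} (applied with $k$ replaced by $k-1$) yields $i_0(f,a_i)=e_{k-1}\,i_0(f_{k-1},a_i)$, hence $i_0(f_{k-1},a_i)=v_f(a_i)/e_{k-1}$; since $v_f$ is unchanged on replacing $f$ by its associated distinguished polynomial, the hypotheses of that lemma may be assumed. The vertex coming from $i=n_k$ is then $(i_0(f_{k-1},a_{n_k}),0)=(n_k\overline{b_k}/e_{k-1},0)=(\overline{b_k}/e_k,0)$, and the vertex from $i=0$ is $(0,N)=(0,e_{k-1}/e_k)$. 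These are precisely the two axis points of $\Teiss{\overline{b_k}/e_k}{e_{k-1}/e_k}$.

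Finally I would check that no intermediate term pushes the boundary below the segment joining these vertices. The point contributed by index $i$ (for $1\le i<n_k$) is $(v_f(a_i)/e_{k-1},\,n_k-i)$, and it lies on or above that segment exactly when $v_f(a_i)/e_{k-1}\ge i\overline{b_k}/e_{k-1}$, i.e. $v_f(a_i)\ge i\overline{b_k}$; the strict inequality $v_f(a_i)>i\overline{b_k}$ from Proposition \ref{adic-expansion} places every such point strictly inside the polygon. Therefore the convex hull of the translated quadrants equals the convex hull of $(0,e_{k-1}/e_k)+\bR^2_{\geq 0}$ and $(\overline{b_k}/e_k,0)+\bR^2_{\geq 0}$, which is $\Teiss{\overline{b_k}/e_k}{e_{k-1}/e_k}$.

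The one genuine obstacle is the bridge in the middle paragraph: establishing that the horizontal coordinates $i_0(a_i,f_{k-1})$ are governed by the valuation $v_f$ rather than requiring a direct computation. Everything else is bookkeeping with the arithmetic identity $n_k=e_{k-1}/e_k$, but this conversion is exactly what makes the two corner points come out as $\overline{b_k}/e_k$ and $e_{k-1}/e_k$, and Lemma \ref{lemm2} is precisely the tool that furnishes it.
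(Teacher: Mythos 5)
Your proof is correct and follows essentially the same route as the paper: both read the polygon off the $f_{k-1}$-adic expansion of $f_k$, invoke Proposition \ref{adic-expansion} (valid since $v_f(f_k)=\overline{b_{k+1}}>n_k\overline{b_k}$) to get $v_f(a_i)>i\overline{b_k}$ for $0<i<n_k$ and $v_f(a_{n_k})=n_k\overline{b_k}$, and then use Lemma \ref{lemm2} to convert these into $i_0(f_{k-1},a_i)$, yielding the two vertices $(0,e_{k-1}/e_k)$ and $(\overline{b_k}/e_k,0)$ with all intermediate points strictly inside. Your extra care in spelling out the convexity check and the reduction to the distinguished polynomial only makes explicit what the paper leaves implicit.
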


\noindent \begin{proof}
The $f_{k-1}$-adic expansion of $f_k$ is of the form $f_k=f_{k-1}^{n_k}+a_1f_{k-1}^{n_{k-1}}+\cdots+a_{n_k}$, where
$\deg_y a_i<\frac{n}{e_{k-1}}$ for $i\in \{1,\ldots,n_k\}$. By Proposition \ref{adic-expansion} we have $i_0(f,a_i)>i\overline{b_k}$ for $0<i<n_k$ and $i_0(f,a_{n_k})=n_k\overline{b_k}$. By Lemma \ref{lemm2} we get $i_0(f,a_i)=e_{k-1}i_0(f_{k-1},a_i)$ for $0<i \leq n_k$. Therefore we have $i_0(f_{k-1},a_i)>i\frac{\overline{b_k}}{e_{k-1}}$ for $0<i<n_k$ and $i_0(f_{k-1},a_{n_k})=\frac{\overline{b_k}}{e_{k}}$, which implies $\Delta_{x,f_{k-1}}(f_k)=\Teiss{\overline{b_k}/e_k}{e_{k-1}/e_k}.$
\end{proof}

\begin{prop}
\label{newton2}
Let $f$ be an irreducible distinguished polynomial of degree $n>1$. Let  $\overline{\hbox{\rm char}_x} f=(\overline{b_0},\ldots,\overline{b_h})$  and let $\phi\in \bK[[x]][y]$ be a $(h-1)$-key polynomial of $f$. Then
\begin{enumerate}
\item $\overline{\hbox{\rm char}_x} \phi=\left(\frac{\overline{b_0}}{e_{h-1}},\ldots,\frac{\overline{b_{h-1}}}{e_{h-1}}\right)$,
\item $\Delta_{x,\phi}(f)=\Teiss{\overline{b_h}}{e_{h-1}}$.
\end{enumerate}
\end{prop}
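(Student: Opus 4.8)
The plan is to dispatch part (1) immediately and to devote the real work to part (2), whose argument will run parallel to the proof of Proposition \ref{newton}. For part (1): since $\phi$ is an $(h-1)$-th key polynomial of $f$, Proposition \ref{p1} applies verbatim with $k=h-1$ and gives at once that $\phi$ is a distinguished irreducible polynomial with $\overline{\hbox{\rm char}_x}\phi=\left(\frac{\overline{b_0}}{e_{h-1}},\ldots,\frac{\overline{b_{h-1}}}{e_{h-1}}\right)$. Nothing further is required.

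For part (2), I would first record the numerical identity $N:=\deg_y f/\deg_y\phi=n/(n/e_{h-1})=e_{h-1}=n_h$, the last equality because $e_h=1$. Writing the $\phi$-adic expansion $f=\phi^{n_h}+\alpha_1\phi^{n_h-1}+\cdots+\alpha_{n_h}$ with $\deg_y\alpha_i<\deg_y\phi=n/e_{h-1}$, I would apply Proposition \ref{adic-expansion} with the $(h-1)$-th key polynomial $\phi$ playing the role of the key polynomial there and with $g=f$. This is legitimate: $f$ is monic of degree $n=n/e_h$ and $v_f(f)=+\infty>n_h\overline{b_h}$. The proposition then yields $v_f(\alpha_i)>i\overline{b_h}$ for $1\leq i<n_h$ and $v_f(\alpha_{n_h})=n_h\overline{b_h}$.

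Next I would convert these valuations into the intersection numbers with $\phi$ that define the Newton polygon. Since $\deg_y\alpha_i<n/e_{h-1}$, Lemma \ref{lemm2} applied with $k=h-1$ (so that $f_{h-1}=\phi$) gives $v_f(\alpha_i)=e_{h-1}\,i_0(\phi,\alpha_i)$, whence $i_0(\phi,\alpha_i)=v_f(\alpha_i)/e_{h-1}$. Combining with the previous step, $i_0(\phi,\alpha_i)>i\overline{b_h}/e_{h-1}$ for $1\leq i<n_h$, while $i_0(\phi,\alpha_{n_h})=n_h\overline{b_h}/e_{h-1}=\overline{b_h}$, using again $n_h=e_{h-1}$. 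Together with the term $\alpha_0=1$, which contributes the point $(0,n_h)=(0,e_{h-1})$, the expansion produces the vertical intercept $(0,e_{h-1})$ and the horizontal intercept $(\overline{b_h},0)$.

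Finally I would check that no intermediate term introduces an extra vertex. For $1\leq i<n_h$ the point $(i_0(\phi,\alpha_i),n_h-i)$ satisfies $e_{h-1}\,i_0(\phi,\alpha_i)+\overline{b_h}(n_h-i)=v_f(\alpha_i)+\overline{b_h}(n_h-i)>i\overline{b_h}+\overline{b_h}(e_{h-1}-i)=e_{h-1}\overline{b_h}$, so it lies strictly above the segment joining $(0,e_{h-1})$ to $(\overline{b_h},0)$; consequently $\Delta_{x,\phi}(f)=\Teiss{\overline{b_h}}{e_{h-1}}$. There is no serious obstacle here: the only points demanding care are the legitimacy of feeding $g=f$ into Proposition \ref{adic-expansion} and the repeated use of the identification $n_h=e_{h-1}$, after which the remaining content is the routine convexity estimate above.
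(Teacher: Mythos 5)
Your proposal is correct and is essentially the paper's own argument: the paper proves this proposition by citing Propositions \ref{p1} and \ref{newton}, and your part (1) is precisely the appeal to Proposition \ref{p1}, while your part (2) reproduces the proof of Proposition \ref{newton} in the special case $k=h$, with $f$ itself playing the role of the $h$-th key polynomial $f_h$ (so that $v_f(f_h)=+\infty>n_h\overline{b_h}$). The ingredients you use --- Proposition \ref{adic-expansion}, Lemma \ref{lemm2}, the identity $n_h=e_{h-1}$, and the final convexity estimate --- are exactly those underlying Proposition \ref{newton}, so nothing genuinely different is happening.
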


\noindent \begin{proof} The proposition follows from Propositions \ref{p1} and \ref{newton}.
\end{proof}

\medskip

\noindent The following theorem is the main result of this section.

\begin{teorema}
\label{newtont}
Let $(\overline{b_0},\ldots,\overline{b_h})$ be an $n$-characteristic sequence ($n>1$). Let $f\in \bK[[x]][y]$ be a distinguished polynomial of degree $n$ for which there exists an irreducible distinguished polynomial $\phi\in \bK[[x]][y]$ such that
\begin{enumerate}
\item $\overline{\hbox{\rm char}_x} \phi=\left(\frac{\overline{b_0}}{e_{h-1}},\ldots,\frac{\overline{b_{h-1}}}{e_{h-1}}\right)$,
\item $\Delta_{x,\phi}(f)=\Teiss{\overline{b_h}}{e_{h-1}}$.
\end{enumerate}

\noindent Then $f$ is irreducible, $\overline{\hbox{\rm char}_x} f=(\overline{b_0},\ldots,\overline{b_h})$ and $\phi$ is a key polynomial of $f$.
\end{teorema}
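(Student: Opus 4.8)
The plan is to show that $f$ has an irreducible factor $\psi$ with $i_0(\psi,x)=n$, which, by additivity of $i_0(\cdot,x)$ and $\deg_y f=n$, forces $f=\psi\cdot\mathrm{unit}$ and hence irreducibility; the characteristic is then read off through Lemma~\ref{il3}. First I would record the data encoded by the hypotheses. Since $\phi$ is distinguished with $\overline{\hbox{char}_x}\phi=(\overline{b_0}/e_{h-1},\ldots)$ we have $\deg_y\phi=\overline{b_0}/e_{h-1}=n/e_{h-1}$, so in the $\phi$-adic expansion $f=\phi^{N}+\alpha_1\phi^{N-1}+\cdots+\alpha_N$ (with $\deg_y\alpha_i<\deg_y\phi$) the exponent is $N=\deg_y f/\deg_y\phi=e_{h-1}=n_h$. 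Reading the Newton polygon $\Delta_{x,\phi}(f)=\Teiss{\overline{b_h}}{e_{h-1}}$ off its intercepts gives $i_0(f,\phi)=\overline{b_h}$ and $i_0(\phi,\alpha_N)=\overline{b_h}$, while every generating vertex satisfies $i_0(\phi,\alpha_i)\ge i\,\overline{b_h}/N$ (the points lie on or above the single compact edge), with equality at $i=0$ and $i=N$. I also note $\gcd(n_h,\overline{b_h})=\gcd(e_{h-1},\overline{b_h})=e_h=1$, which will be decisive at the end.

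Next I would produce a factor of maximal contact, exactly as in Lemma~\ref{l2}. Writing $f=\prod_j\psi_j$ with $\psi_j$ irreducible and assuming $i_0(\phi,\psi_j)/i_0(\psi_j,x)\le n_{h-1}\overline{b_{h-1}}/n$ for every $j$ would give $\overline{b_h}=i_0(f,\phi)=\sum_j i_0(\psi_j,\phi)\le (n_{h-1}\overline{b_{h-1}}/n)\,i_0(f,x)=n_{h-1}\overline{b_{h-1}}$, contradicting the axiom $n_{h-1}\overline{b_{h-1}}<\overline{b_h}$; hence some irreducible factor $\psi$ satisfies $i_0(\phi,\psi)/i_0(\psi,x)>n_{h-1}\overline{b_{h-1}}/n$. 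Viewing $\phi$ as the reference branch, this inequality is precisely the hypothesis of Lemma~\ref{igual caracteristica} and of Theorem~\ref{igualdad polinomios} at the top index $h-1$; applying them together with Lemma~\ref{igualdades} yields $i_0(\psi,x)=\nu\,n/e_{h-1}$ for some integer $1\le\nu\le e_{h-1}$, that $f_0,\ldots,f_{h-2}$ are key polynomials of $\psi$, and $i_0(\psi,f_k)=\nu\,\overline{b_{k+1}}/e_{h-1}$ for $k<h-1$. Expanding an arbitrary $\alpha$ with $\deg_y\alpha<n/e_{h-1}$ in the monomials $f_0^{\beta_1}\cdots f_{h-2}^{\beta_{h-1}}$ as in Lemma~\ref{lemm1} and using that $i_0(\psi,\cdot)$ is a valuation, I obtain the multiplicativity relation $i_0(\psi,\alpha)=\nu\,i_0(\phi,\alpha)$. (When $h=1$ the polynomial $\phi$ is smooth of degree $1$, so all of this is trivial and one simply takes any irreducible factor, with $\nu=i_0(\psi,x)$.)

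The crux, and the step I expect to be \textbf{the main obstacle}, is to pin down $\mu:=i_0(\psi,\phi)$ from the shape of the Newton polygon; this is what replaces the explicit monomial tail available in Theorem~\ref{caract}. Since $\psi\mid f$, the valuation $i_0(\psi,\cdot)$ sends $f$ to $+\infty$, so in $f=\sum_{i=0}^{N}\alpha_i\phi^{N-i}$ the minimum of the term-values $i_0(\psi,\alpha_i\phi^{N-i})=\nu\,i_0(\phi,\alpha_i)+(N-i)\,\mu$ must be attained at least twice. Using $i_0(\phi,\alpha_i)\ge i\,\overline{b_h}/N$ together with the equalities at $i=0$ and $i=N$, a short convexity estimate shows that this minimum is attained \emph{uniquely} — at $i=0$ when $N\mu<\nu\overline{b_h}$, and at $i=N$ when $N\mu>\nu\overline{b_h}$ — unless $N\mu=\nu\overline{b_h}$. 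Hence $e_{h-1}\,\mu=\nu\,\overline{b_h}$, so $e_{h-1}\mid\nu\overline{b_h}$; as $\gcd(e_{h-1},\overline{b_h})=1$ and $1\le\nu\le e_{h-1}$, this forces $\nu=e_{h-1}$. Consequently $i_0(\psi,x)=n=i_0(f,x)$, and therefore $f=\psi\cdot\mathrm{unit}$ is irreducible.

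Finally I would assemble the characteristic. With $\nu=e_{h-1}$ the formulas above read $i_0(f,f_k)=\overline{b_{k+1}}$ for $k<h-1$ and $i_0(f,\phi)=\overline{b_h}$, while $\deg_y f_k=n/e_k$ and $\deg_y\phi=n/e_{h-1}$. Setting $f_{h-1}:=\phi$, the monic polynomials $f_0,\ldots,f_{h-1}$ have exactly the degrees $n/e_0,\ldots,n/e_{h-1}$ and intersection numbers $\overline{b_1},\ldots,\overline{b_h}$ required by Lemma~\ref{il3}, which then gives $\overline{\hbox{char}_x}f=(\overline{b_0},\ldots,\overline{b_h})$ and identifies $f_0,\ldots,f_{h-1}$ — in particular $\phi$ — as key polynomials of $f$, completing the proof.
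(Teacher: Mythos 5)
Your proposal is correct, but it takes a genuinely different route from the paper's. Both arguments begin identically, reading off the polygon that $i_0(f,\phi)=i_0(\phi,\alpha_{n_h})=\overline{b_h}$ and $i_0(\phi,\alpha_i)\geq i\,\overline{b_h}/n_h$; after that the paper never looks at the factors of $f$. Instead it replaces the tail $\alpha_{n_h}$ by a monomial $cx^{l_0}\phi_0^{l_1}\cdots\phi_{h-2}^{l_{h-1}}$, where $\sum_i l_i\overline{b_i}=e_{h-1}\overline{b_h}$ is B\'ezout's relation and $c$ is chosen so that $i_0(\phi,\alpha_{n_h}-cx^{l_0}\phi_0^{l_1}\cdots\phi_{h-2}^{l_{h-1}})>\overline{b_h}$; Theorem \ref{caract} then yields an auxiliary irreducible $\tilde f=\phi^{n_h}+cx^{l_0}\phi_0^{l_1}\cdots\phi_{h-2}^{l_{h-1}}$ with characteristic $(\overline{b_0},\ldots,\overline{b_h})$, the paper bounds $i_0(\tilde f,f)>e_{h-1}\overline{b_h}$ term by term in the $\phi$-adic expansion (using Lemma \ref{lemm2}), and it concludes with the Abhyankar--Moh irreducibility criterion (Corollary \ref{coro-AM-irr}). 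You instead work directly with an irreducible factor $\psi$ of $f$: maximal contact with $\phi$ by the Lemma \ref{l2}-type averaging contradiction, the congruence $i_0(\psi,x)=\nu n/e_{h-1}$ from Lemma \ref{igual caracteristica}, the multiplicativity $i_0(\psi,\cdot)=\nu\, i_0(\phi,\cdot)$ in degree $<n/e_{h-1}$ from Lemma \ref{lemm1} (valid since the term values are $\nu$ times pairwise distinct numbers, hence pairwise distinct), and then your key new step: $i_0(\psi,f)=+\infty$ forces the minimum of the term values $\nu\, i_0(\phi,\alpha_i)+(n_h-i)\,i_0(\psi,\phi)$ to be attained at least twice, which by the convexity estimate you sketch (and which does check out, the strictness coming from $N\mu\neq\nu\overline{b_h}$ rather than from the interior lattice points) can only happen if $e_{h-1}\,i_0(\psi,\phi)=\nu\overline{b_h}$, whence $\nu=e_{h-1}$ since $\gcd(e_{h-1},\overline{b_h})=1$, so $i_0(\psi,x)=n$ and $f=\psi\cdot\mathrm{unit}$; Lemma \ref{il3} then delivers the characteristic and the key polynomials exactly as you say. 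The paper's route buys brevity by reusing Theorem \ref{caract} and Corollary \ref{coro-AM-irr} wholesale; your route buys independence from both --- indeed it essentially re-proves Theorem \ref{caract}, whose monomial tail is a special case of your polygon hypothesis --- and your ``minimum attained twice'' argument is a clean valuation-theoretic substitute for the comparison with $\tilde f$. Two points you leave implicit and should state when writing this up: $\phi$ does not divide $f$ (immediate from $i_0(f,\phi)=\overline{b_h}<+\infty$), which is needed so that $i_0(\psi,\phi)<+\infty$; and Theorem \ref{igualdad polinomios} with $\phi$ as reference branch requires $i_0(\phi,x)=n/e_{h-1}>1$, which is precisely why your separate remark for $h=1$ is not optional.
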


\noindent \begin{proof}
Let $\phi_0,\ldots,\phi_{h-2},\phi_{h-1}=\phi$ be a sequence of key polynomials of $\phi$.

\medskip
\noindent From the assumption about $\Delta_{x,\phi}(f)$ it follows that if $f=\phi^{n_h}+\alpha_1 \phi^{n_h-1}+\cdots+\alpha_{n_h}$, $\deg_y \alpha_i< \frac{n}{e_{h-1}}$ for $i\in\{1,\ldots,n_h\}$ is the $\phi$-adic expansion of $f$ then

$$(1)\;\;i_0(f,\phi)=i_0(\alpha_{n_h},\phi)=\overline{b_h},$$

\noindent  and

$$(2)\;\; i_0(\alpha_i,\phi)>i\frac{\overline{b_h}}{n_h}\;\; \hbox{\rm for } 0<i<n_h$$

\noindent (note that $\gcd(n_h,\overline{b_h})=\gcd(e_{h-1},\overline{b_h})=e_h=1$ whence the strict inequality in $(2)$).

\medskip

\noindent There exists a unique sentence of integers $l_0,\ldots,l_{h-1}$ such that $l_0\overline{b_0}+\cdots+l_{h-1}\overline{b_{h-1}}=e_{h-1}\overline{b_h}$,  where $l_0>0$ and $0\leq l_i<n_i$ for
$i\in\{1,\ldots,h-1\}$. Therefore we have $i_0(\phi,\alpha_{n_h})=\overline{b_h}=i_0(\phi, x^{l_0}\phi_0^{l_1}\cdots\phi_{h-2}^{l_{h-1}})$. Let $c\in \bK$ be a constant such that $i_0(\phi,\alpha_{n_h}-cx^{l_0}\phi_0^{l_1}\cdots\phi_{h-2}^{l_{h-1}})>i_0(\phi,\alpha_{n_h})=\overline{b_h}$. Put $\tilde{f}=\phi^{n_h}+cx^{l_0}\phi_0^{l_1}\cdots\phi_{h-2}^{l_{h-1}}$. Then by Theorem \ref{caract} $\tilde f\in \bK[[x]][y]$ is an irreducible distinguished polynomial of degree $n$, $\overline{\hbox{\rm char}_x} \tilde f=(\overline{b_0},\ldots,\overline{b_h})$ and $\phi$ is a key polynomial of degree $\frac{n}{e_{h-1}}$ of $\tilde f$.

\noindent We have $i_0(f,x)=i_0(\tilde f,x)=n$. Let $\tilde \alpha_{n_h}=\alpha_{n_h}-cx^{l_0}\phi_0^{l_1}\cdots\phi_{h-2}^{l_{h-1}}$ and consider

\begin{eqnarray*}(3)\;\; i_0(\tilde f,f)&=&i_0(\phi^{n_h}+cx^{l_0}\phi_0^{l_1}\cdots\phi_{h-2}^{l_{h-1}},\phi^{n_h}+\alpha_1 \phi^{n_h-1}+\cdots+\alpha_{n_h})\\
&=&i_0(\tilde f, \alpha_1 \phi^{n_h-1}+\cdots+\alpha_{n_{h-1}}\phi+\tilde{\alpha}_{n_h})\\
&\geq & \inf \{i_0(\tilde f, \alpha_1\phi^{n_h-1}),\ldots,i_0(\tilde f, \alpha_{n_h-1}\phi),i_0(\tilde f, \tilde {\alpha}_{n_h})\}
\end{eqnarray*}

\noindent since  $\tilde f$ is irreducible. Fix $i\in \{1,\ldots,n_h-1\}$. Then 
$$(4)\;\;i_0(\tilde f, \alpha_i \phi^{n_h-i})=i_0(\tilde f, \alpha_i)+(n_{ h}-i)i_0(\tilde f, \phi)=e_{h-1}i_0(\phi, \alpha_i)+(n_h-i)\overline{b_h}$$

\noindent since $\phi$ is a $(h-1)$-th key polynomial of $\tilde f$ and $i_0(\tilde f, \alpha_i)=e_{h-1}i_0(\phi,\alpha_i)$ by Lemma \ref{lemm2}. Using $(2)$ and $(3)$ we get

$$(5)\;\;i_0(\tilde f, \alpha_i \phi^{n_h-i})>e_{h-1}i\frac{\overline{b_h}}{e_{h-1}}+(n_h-i)\overline{b_h}=n_h\overline{b_h}$$

\noindent for $0<i<n_h$. Moreover, again by Lemma \ref{lemm2}

$$(6)\;\;i_0(\tilde f, \tilde{\alpha}_{n_h})=e_{h-1}i_0(\phi, \tilde {\alpha}_{n_h})>e_{h-1}i_0(\phi,\alpha_{n_h})=e_{h-1}\overline{b_h}.$$

\noindent Using $(3)$,  $(5)$ and $(6)$ we obtain $i_0(\tilde f, f)>e_{h-1}\overline{b_h}$  and the theorem follows from
  the Abhyankar-Moh irreducibility criterion (Corollary \ref{coro-AM-irr}).
\end{proof}

\medskip

\noindent Using Proposition \ref{newton2} and Theorem \ref{newtont} we get a recurrent description of the class of branches with given semigroup.

\begin{teorema}
Let  $(\overline{b_0},\ldots,\overline{b_h})$ be an $n$-characteristic sequence ($n>1$) and let $f\in \bK[[x]][y]$ be a distinguished polynomial of degree $n$. Then the following two conditions are equivalent
\begin{enumerate}
\item $f$ is irreducible and $\overline{\hbox{\rm char}_x} f=(\overline{b_0},\ldots,\overline{b_h})$,

\item there exists a distinguished irreducible polynomial $\phi \in \bK [[x]][y]$ such that
\begin{enumerate}
\item $\overline{\hbox{\rm char}_x} \phi=\left(\frac{\overline{b_0}}{e_{h-1}},\ldots,\frac{\overline{b_{h-1}}}{e_{h-1}}\right)$,
\item $\Delta_{x,\phi}(f)=\Teiss{\overline{b_h}}{e_{h-1}}$.
\end{enumerate}
\end{enumerate}
\end{teorema}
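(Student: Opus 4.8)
The plan is to prove the equivalence by splitting it into its two implications, each of which follows almost verbatim from the two results that immediately precede the statement, so that the theorem is essentially a repackaging of Theorem \ref{newtont} and Proposition \ref{newton2} into a single ``if and only if''.

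For the implication $(2)\Longrightarrow(1)$ I would simply invoke Theorem \ref{newtont}. The hypotheses of that theorem are exactly that $f$ is a distinguished polynomial of degree $n$ admitting an irreducible distinguished polynomial $\phi$ with $\overline{\hbox{\rm char}_x}\phi=\left(\frac{\overline{b_0}}{e_{h-1}},\ldots,\frac{\overline{b_{h-1}}}{e_{h-1}}\right)$ and $\Delta_{x,\phi}(f)=\Teiss{\overline{b_h}}{e_{h-1}}$, while its conclusion is precisely that $f$ is irreducible with $\overline{\hbox{\rm char}_x}f=(\overline{b_0},\ldots,\overline{b_h})$ and that $\phi$ is a key polynomial of $f$. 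Hence this direction needs no further argument at all.

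For the converse $(1)\Longrightarrow(2)$ I would start from an irreducible distinguished $f$ of degree $n$ with $\overline{\hbox{\rm char}_x}f=(\overline{b_0},\ldots,\overline{b_h})$. Since $f$ is distinguished we have $n=i_0(f,x)=\deg_y f$, and since $n>1$ the length of the characteristic satisfies $h\geq 1$. By the Semigroup Theorem (Theorem \ref{structure}) there is then a sequence of key polynomials $f_0,\ldots,f_{h-1}$ of $f$; I would set $\phi:=f_{h-1}$, an $(h-1)$-th key polynomial, which is a distinguished irreducible polynomial by Proposition \ref{p1}. It remains only to read off conditions (a) and (b), and both are produced at once by Proposition \ref{newton2} applied to the pair $(f,\phi)$: part (1) of that proposition yields $\overline{\hbox{\rm char}_x}\phi=\left(\frac{\overline{b_0}}{e_{h-1}},\ldots,\frac{\overline{b_{h-1}}}{e_{h-1}}\right)$, and part (2) yields the Newton-polygon equality $\Delta_{x,\phi}(f)=\Teiss{\overline{b_h}}{e_{h-1}}$.

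The only genuinely substantive ingredient is the direction $(2)\Longrightarrow(1)$, i.e.\ Theorem \ref{newtont}, whose weight in turn rests on the Abhyankar-Moh irreducibility criterion (Corollary \ref{coro-AM-irr}); the converse is purely a matter of exhibiting the key polynomial $\phi$ and extracting its invariants. Combined with Proposition \ref{newton2}, this equivalence gives the announced recurrent description: the semigroup data of a branch of characteristic length $h$ is encoded by a branch $\phi$ of length $h-1$ together with a single Newton polygon, and one may iterate the construction downward in $h$.
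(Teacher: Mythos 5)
Your proposal is correct and coincides with the paper's own argument: the paper proves this theorem exactly by citing Theorem \ref{newtont} for the implication $(2)\Longrightarrow(1)$ and Proposition \ref{newton2} (applied to an $(h-1)$-th key polynomial of $f$, whose existence and properties come from the Semigroup Theorem and Proposition \ref{p1}) for $(1)\Longrightarrow(2)$. Nothing further is needed.
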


\medskip

\noindent To illustrate the above result let us write down

\begin{coro}
\label{newtonc}
Let $f\in \bK[[x]][y]$ be a distinguished polynomial of degree $n>1$ and let $m>0$ be an integer such that $\gcd(n,m)=1$. Then $f$ is irreducible with $\overline{\hbox{\rm char}_x} f=(n,m)$ if and only if there exists a power series $\psi(x)\in \bK[[x]]$, $\psi(0)=0$ such that
$$f=(y+\psi(x))^n+\alpha_1(x)(y+\psi(x))^{n-1}+\cdots+\alpha_n(x),$$

\noindent where $\ord \alpha_i > i \frac{m}{n}$ for $0<i<n$  and $\ord \alpha_n=m$.
\end{coro}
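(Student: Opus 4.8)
The plan is to recognize this corollary as the case $h=1$ of the equivalence theorem just established, and to make the two conditions of that theorem completely explicit. First I would record the relevant arithmetic: the pair $(\overline{b_0},\overline{b_1})=(n,m)$ is an $n$-characteristic sequence precisely when $\gcd(n,m)=1$, the only nontrivial requirement being $d_1=\gcd(n,m)=1$, which forces $n_1=e_0=n>1$ and $e_1=1$ (axiom (2) is vacuous for $h=1$). In particular $e_{h-1}=e_0=n$ and $\overline{b_h}=\overline{b_1}=m$, so conditions (a) and (b) of Theorem \ref{newtont} read $\overline{\hbox{\rm char}_x}\phi=(\overline{b_0}/e_0)=(1)$ and $\Delta_{x,\phi}(f)=\Teiss{m}{n}$.

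Next I would identify the auxiliary polynomial $\phi$ with the series $\psi$ of the statement. A distinguished irreducible polynomial with $\overline{\hbox{\rm char}_x}\phi=(1)$ has $\deg_y\phi=n/e_0=1$, hence is of the form $\phi=y+\psi(x)$ with $\psi(0)=0$; conversely every such $\phi$ is a smooth (thus irreducible) distinguished polynomial with trivial characteristic $(1)$. Under this identification the $\phi$-adic expansion $f=\phi^{n}+\alpha_1\phi^{n-1}+\cdots+\alpha_n$, $\alpha_i\in\bK[[x]]$, is exactly the expansion displayed in the corollary, so condition (a) amounts to the mere existence of a $\psi$ producing that form.

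It then remains to translate condition (b), $\Delta_{x,\phi}(f)=\Teiss{m}{n}$, into the order inequalities. Since the branch $\{\phi=0\}$ is smooth I would parametrize it by $x=t$, $y=-\psi(t)$; then Theorem \ref{classical-formula} gives $i_0(\alpha_i,\phi)=\ord\alpha_i$ for $\alpha_i\in\bK[[x]]$ and, because $\phi$ vanishes identically along this parametrization, $i_0(f,\phi)=\ord\alpha_n$. Thus $\Delta_{x,\phi}(f)$ is the convex hull of the quadrants attached to the points $(\ord\alpha_i,\,n-i)$, which always includes the vertex $(0,n)$ coming from $\alpha_0=1$. This polygon equals $\Teiss{m}{n}$ exactly when its horizontal vertex is $(m,0)$, i.e. $\ord\alpha_n=m$, and every intermediate point lies on or above the segment joining $(0,n)$ to $(m,0)$, i.e. $n\,\ord\alpha_i\ge im$ for $0<i<n$.

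Finally I would sharpen the weak inequalities and assemble the argument. Because $\gcd(m,n)=1$, the number $im/n$ is not an integer for $0<i<n$, so the integer $\ord\alpha_i$ can never equal it; hence $n\,\ord\alpha_i\ge im$ is equivalent to the strict inequality $\ord\alpha_i>i\frac{m}{n}$ of the statement. Combining these observations with Theorem \ref{newtont} (for the implication producing $f$) and with Proposition \ref{newton2} specialized to $h=1$ (for the converse, which furnishes a degree-one key polynomial $\phi=y+\psi$ and computes its Newton polygon) yields the stated equivalence, with $\phi$ a key polynomial of $f$. The only genuinely delicate step is this last translation: one must check that equality of Newton polygons is captured \emph{exactly} by the order conditions, and the coprimality $\gcd(m,n)=1$ is precisely what promotes the boundary inequalities to strict ones and pins $\ord\alpha_n$ to $m$ rather than to a larger value.
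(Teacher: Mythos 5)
Your proposal is correct and follows exactly the route the paper intends: the corollary is presented as the case $h=1$ of the preceding equivalence theorem (combining Proposition \ref{newton2} and Theorem \ref{newtont}), with $\phi=y+\psi(x)$ as the degree-one key polynomial and the condition $\Delta_{x,\phi}(f)=\Teiss{m}{n}$ unwound into $\ord\alpha_n=m$ and $\ord\alpha_i>i\frac{m}{n}$, the coprimality $\gcd(n,m)=1$ turning the boundary inequalities strict. The paper leaves this translation to the reader; your write-up supplies it accurately, including the point that $\ord\alpha_n$ is forced to equal $m$ rather than exceed it.
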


\begin{ejemplo}[see \cite{Teissier3}, Example 4.23]
Let $p=\hbox{\rm char} \bK\neq 0$. Let $f=y^p-x^{p-1}(1+y)=y^p-x^{p-1}y-x^{p-1}$. Using Corollary \ref{newtonc} with
$\psi(x)\equiv 0$ we check that $f$ is irreducible and $\overline{\hbox{\rm char}_x} f=(p,p-1)$. 
\end{ejemplo}

\begin{teorema}[Abhyankar's irreducibility criterion]
\label{Acriterion}
Let $f\in \bK[[x]][y]$ be a distinguished polynomial of degree $n>1$. Assume that $n\not\equiv 0$ (mod char $\bK$). Then 
$f$ is irreducible if and only if there exists an $n$-characteristic sequence $\overline{b_0},\ldots,\overline{b_h}$ such that
\begin{enumerate}
\item $i_0(f,\sqrt[e_{k-1}]{f})=\overline{b_k} $ and
\item $\Delta_{x,\sqrt[e_{k-1}]{f}}(\sqrt[e_{k}]{f})=\Teiss{\overline{b_k}/e_k}{e_{k-1}/e_k}$  for $k\in\{1,\ldots,h\}.$
\end{enumerate}
\end{teorema}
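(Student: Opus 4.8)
The plan is to prove both implications, deriving $(\Rightarrow)$ directly from the Abhyankar--Moh theory and $(\Leftarrow)$ by an induction driven by Theorem \ref{newtont}. The first thing to record is that, since $e_k=\gcd(\overline{b_0},\ldots,\overline{b_k})$ divides $\overline{b_0}=n$ and $n\not\equiv 0$ (mod char $\bK$), each $e_k$ is a unit in $\bK[[x]]$; hence every approximate root $\sqrt[e_k]{f}$ is defined and has $y$-degree $n/e_k$. I also note the bookkeeping that makes everything fit: the truncated and rescaled sequence $(\overline{b_0}/e_k,\ldots,\overline{b_k}/e_k)$ is an $(n/e_k)$-characteristic sequence whose associated sequences are $(e_0/e_k,\ldots,e_k/e_k)$ and $(n_1,\ldots,n_k)$, exactly as recorded in Section \ref{arithmetical-lemmas}. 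Under this rescaling the ``$e_{h-1}$'' and ``$\overline{b_h}$'' of Theorem \ref{newtont} become $e_{k-1}/e_k$ and $\overline{b_k}/e_k$.

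For the implication $(\Rightarrow)$ I would take the sequence to be $\overline{\hbox{\rm char}_x}f=(\overline{b_0},\ldots,\overline{b_h})$, which is an $n$-characteristic sequence by the Semigroup Theorem (Theorem \ref{structure}). Property (1) is then immediate from the first part of Theorem \ref{Abhyankar-Moh} (applicable since each $e_{k-1}$ is a unit). For property (2), the Corollary to Theorem \ref{Abhyankar-Moh} shows that $\sqrt[e_0]{f},\ldots,\sqrt[e_h]{f}$ is a sequence of key polynomials of $f$, with $\sqrt[e_j]{f}$ a $j$-th key polynomial; applying Proposition \ref{newton} to this very sequence (i.e. with $f_{k-1}=\sqrt[e_{k-1}]{f}$ and $f_k=\sqrt[e_k]{f}$) yields $\Delta_{x,\sqrt[e_{k-1}]{f}}(\sqrt[e_k]{f})=\Teiss{\overline{b_k}/e_k}{e_{k-1}/e_k}$, which is precisely property (2).

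For the implication $(\Leftarrow)$ I would write $\phi_k:=\sqrt[e_k]{f}$, so that $\phi_0$ has degree $1$ and $\phi_h=\sqrt[1]{f}=f$. A preliminary point is that each $\phi_k$ is a \emph{distinguished} polynomial: writing $\phi_k(0,y)=y^{n/e_k}+c_1y^{n/e_k-1}+\cdots$ and comparing with $f(0,y)=y^n$ through $\deg_y(f-\phi_k^{e_k})<n-n/e_k$, the unit $e_k$ forces $c_1=\cdots=0$, so $\phi_k(0,y)=y^{n/e_k}$. I would then prove by induction that $\phi_k$ is irreducible and distinguished of degree $n/e_k$ with $\overline{\hbox{\rm char}_x}\phi_k=(\overline{b_0}/e_k,\ldots,\overline{b_k}/e_k)$, starting from the observation that $\phi_0=y+\psi(x)$, $\psi(0)=0$, is nonsingular with characteristic $(1)=(\overline{b_0}/e_0)$. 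For the step I apply Theorem \ref{newtont} to the distinguished polynomial $\phi_k$ of degree $n/e_k>1$, taking as ambient $(n/e_k)$-characteristic sequence $(\overline{b_0}/e_k,\ldots,\overline{b_k}/e_k)$ and as auxiliary polynomial $\phi_{k-1}$: its hypothesis (1) reads $\overline{\hbox{\rm char}_x}\phi_{k-1}=(\overline{b_0}/e_{k-1},\ldots,\overline{b_{k-1}}/e_{k-1})$, which is the inductive hypothesis, and its hypothesis (2) reads $\Delta_{x,\phi_{k-1}}(\phi_k)=\Teiss{\overline{b_k}/e_k}{e_{k-1}/e_k}$, which is exactly property (2) of the statement. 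Theorem \ref{newtont} then delivers irreducibility of $\phi_k$ and its characteristic; at $k=h$ this is the assertion that $f$ is irreducible with $\overline{\hbox{\rm char}_x}f=(\overline{b_0},\ldots,\overline{b_h})$.

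The main obstacle lies entirely in $(\Leftarrow)$, and it is organizational rather than computational: one must verify that the hypotheses of Theorem \ref{newtont} translate correctly under the rescaling by $e_k$, and that the approximate roots are genuinely distinguished so that Theorem \ref{newtont} applies at all. I expect property (1) of the statement to be redundant for $(\Leftarrow)$, because the horizontal intercept of the polygon in property (2) already records $i_0(\phi_k,\phi_{k-1})=\overline{b_k}/e_k$, which by Lemma \ref{lemm2} is equivalent to $i_0(f,\sqrt[e_{k-1}]{f})=\overline{b_k}$ once $\phi_k$ is known to be a key polynomial of $f$; thus the real content forcing irreducibility is carried by the Newton polygons, while property (1) serves to make the characterization symmetric and is the immediate output of Theorem \ref{Abhyankar-Moh} in the direction $(\Rightarrow)$.
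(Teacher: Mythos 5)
Your proof is correct and follows the paper's own route: necessity via Theorem \ref{Abhyankar-Moh} (and its Corollary on approximate roots being key polynomials) together with Proposition \ref{newton}, and sufficiency by an induction through the approximate roots powered by Theorem \ref{newtont} --- the paper phrases this as induction on the length $h$ of the characteristic sequence, which unwinds to exactly your ascent $\phi_0,\phi_1,\ldots,\phi_h=f$. The details you supply --- distinguishedness of each $\sqrt[e_k]{f}$, the rescaling bookkeeping for the truncated sequences, and the observation that condition (1) is redundant in the sufficiency direction because it can be recovered from the horizontal intercepts of the polygons via Lemma \ref{lemm2} --- are all sound and are precisely what the paper leaves to the reader.
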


\noindent  \begin{proof}
The conditions are necessary: if $f$ is irreducible and  $\overline{\hbox{\rm char}_x} f=(\overline{b_0},\ldots,\overline{b_h})$
then both statements hold by Theorem \ref{Abhyankar-Moh} and Proposition \ref{newton}.

\medskip

\noindent The conditions are sufficient: this assertion follows from Theorem \ref{newtont} by induction on the length $h$ of the $n$-characteristic sequence.
\end{proof}

\medskip

\noindent To check the first condition we determine the sequences  $\overline{b_0},\ldots,\overline{b_h}$ and $e_0,\ldots,e_h$ such that

\begin{itemize}
\item $\overline{b_0}=b_0=n$,
\item $\overline{b_k}=i_0(f,\sqrt[e_{k-1}]{f})$, $e_k=\gcd(e_{k-1},\overline{b_k})$ for $k\in \{1,\ldots,h\},$
\item $e_0>\cdots>e_h=\gcd (e_h, i_0(f,\sqrt[e_{h}]{f}))$.
\end{itemize}

\noindent The first condition holds if and only if $e_h=1$ and $n_{k-1}\overline{b_{k-1}}<\overline{b_k}$ for $k>1$.

\medskip

\begin{ejemplo} (see \cite{Popescu}, p. 301)
Let $f=(y^2-x^3)^2-4x^5y-x^7$. Assume that char $\bK\neq 2$. Let $\overline{b_0}=e_0=4$, $\overline{b_1}=i_0(f,\sqrt[1]{f})=i_0(f,y)=6$, $e_1=\gcd(4,6)=2$, $\overline{b_1}=i_0(f,\sqrt[2]{f})=i_0(f,y^2-x^3)=13$, $e_2=\gcd(2,13)=1$. The sequence $(\overline{b_0},\overline{b_1},\overline{b_2})=(4,6,13)$ is a  $4$-characteristic since $n_1\overline{b_1}=12$ is strictly less than $\overline{b_2}=13$. Thus condition (1) in Abhyankar's irreducibility criterion holds. To check (2) we compute

\[ \Delta_{x,\sqrt[4]{f}}(\sqrt[2]{f})=\Delta_{x,y}(y^2-x^3)=\Teiss{3}{2}=\Teiss{\overline{b_1}/e_1}{e_{0}/e_1} \]

\noindent and

\[\Delta_{x,\sqrt[2]{f}}(\sqrt[1]{f})=\Delta_{x,\sqrt[2]{f}}((\sqrt[2]{f})^2-4x^5y-x^7)=\Teiss{13}{2}=\Teiss{\overline{b_2}/e_2}{e_{1}/e_2}.\]

\noindent Therefore condition $(2)$ holds and by Theorem \ref{Acriterion} $f$ is irreducible provided that char $\bK\neq 2$. If
char $\bK=2$ then $f=y^4+x^6-x^7=(y^2+ix^6+\cdots)(y^2-ix^6+\cdots)$,  where $i^2=-1$ in $\bK$ and $f$ is not irreducible.

\end{ejemplo}
\medskip

\noindent {\bf Notes} 

\noindent The first description of the class of branches with given semigroup is due to Teissier \cite{Teissier} (see also \cite{Cassou} and \cite{Jaworski}). Our approach is inspired by the papers by Abhyankar \cite{Abhyankar2} and Kuo\cite{Kuo} (see also \cite{McCallum} and \cite{Assi2}). The generalization of the Newton polygon introduced by Kuo in \cite{Kuo} is useful in Valuation Theory \cite{Vaquie}, Section 5. Our presentation of Abhyankar's irreducibility criterion differs from the original one. Another version of Abhyankar's criterion is due to Cossart and Moreno-Soc\'{\i}as \cite{Cossart1}  and \cite{Cossart2} . A criterion of irreducibility based on different ideas was given recently by \cite{GB-G}. The $g$-adic expansions of polynomials and Newton polygons were applied to generalize the classical  Sh\"onemann-Eisenstein irreducibility criterion in the early  twentieth century (see \cite{Ore}).

\section{Merle-Granja's Factorization Theorem}
\label{Merle-Granja}
\noindent Let $\{f=0\}$ be a branch different from $\{x=0\}$. Let 
$\overline{\hbox{\rm char}_x} f=(\overline{b_0},\ldots,\overline{b_h})$ , $\overline{b_0}=n>1$. In this section we prove the following result on factorization of power series (see \cite{Granja} and \cite{Merle}).

\begin{teorema}[Merle-Granja's Factorization Theorem]
\label{factorization}
Fix $k$, $1\leq k\leq h$. Let $g=g(x,y)\in \bK[[x,y]]$ be a power series such that
\begin{enumerate}
\item $i_0(g,x)=\frac{n}{e_k}-1$,
\item $i_0(f,g)=\sum_{i=1}^k (n_i-1)\overline{b_i}$.
\end{enumerate}

\noindent Then there is a factorization $g=g_1\cdots g_k\in \bK[[x,y]]$ such that

\begin{enumerate}
\item $i_0(g_i,x)=\frac{n}{e_i}-\frac{n}{e_{i-1}}$ for $i\in \{1,\ldots,k\}$,
\item if $\phi\in \bK[[x,y]]$ is an irreducible factor of $g_i$, $i\in\{1,\ldots,k\}$ then 
\begin{enumerate}
\item $\frac{i_0(f,\phi)}{i_0(\phi,x)}=\frac{e_{i-1}\overline{b_i}}{n}$,
\item $i_0(\phi,x)\equiv 0$  $($mod $\frac{n}{e_{i-1}})$.
\end{enumerate}
\end{enumerate}
\end{teorema}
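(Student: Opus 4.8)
The plan is to prove the theorem by decomposing $g$ into irreducible factors and sorting them by their contact with $f$. For an irreducible factor $\phi$ of $g$ (necessarily with $\{\phi=0\}\neq\{x=0\}$, since $x\mid g$ would force $i_0(g,x)=+\infty$, against hypothesis~(1)), I set $c(\phi)=\frac{i_0(f,\phi)}{i_0(\phi,x)}$ and assign to $\phi$ the level $j(\phi)=k_x(f,\phi)$. I would then define $g_i$ to be the product of all irreducible factors $\phi$ with $j(\phi)=i$, so that $g=g_1\cdots g_k$ once the levels are shown to lie in $\{1,\ldots,k\}$. The guiding observation is that the prescribed value $\sum_{i=1}^k(n_i-1)\overline{b_i}$ is exactly the \emph{maximum} that $i_0(f,g)$ can attain under the degree constraint, so hypothesis~(2) will force a rigid structure on the factors.

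First I would extract the per-factor information. Since $i_0(\phi,x)\leq i_0(g,x)=\frac{n}{e_k}-1<\frac{n}{e_k}$, Lemma~\ref{igual caracteristica} (index $k$) excludes $c(\phi)>\frac{e_{k-1}\overline{b_k}}{n}$, as that would make $i_0(\phi,x)$ a positive multiple of $\frac{n}{e_k}$, contradicting $0<i_0(\phi,x)<\frac{n}{e_k}$. Hence $c(\phi)\leq\frac{e_{k-1}\overline{b_k}}{n}$ and $j(\phi)\in\{1,\ldots,k\}$. Writing $i=j(\phi)$, minimality in the definition of $k_x$ gives $c(\phi)>\frac{e_{i-2}\overline{b_{i-1}}}{n}$ when $i>1$, so Lemma~\ref{igual caracteristica} (now index $i-1$) yields $i_0(\phi,x)\equiv 0\ (\mathrm{mod}\ \frac{n}{e_{i-1}})$; for $i=1$ this is vacuous. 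This already gives conclusion~2(b). Writing $i_0(\phi,x)=m_\phi\frac{n}{e_{i-1}}$ with $m_\phi\geq 1$, the bound $c(\phi)\leq\frac{e_{i-1}\overline{b_i}}{n}$ produces the crucial per-factor estimate $i_0(f,\phi)=c(\phi)\,i_0(\phi,x)\leq m_\phi\overline{b_i}$.

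Next I aggregate: put $M_i=\sum_{j(\phi)=i}m_\phi$, so $i_0(g_i,x)=M_i\frac{n}{e_{i-1}}$, and summing over all factors gives $\sum_{i=1}^k M_i\frac{n}{e_{i-1}}=\frac{n}{e_k}-1$ together with $i_0(f,g)\leq\sum_{i=1}^k M_i\overline{b_i}$. Since $\frac{n}{e_{i-1}}=n_1\cdots n_{i-1}$ and $\frac{n}{e_k}-1=n_1\cdots n_k-1$, the degree identity is a mixed-radix identity whose unique ``all digits maximal'' solution is $M_i=n_i-1$.

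The main obstacle is the purely combinatorial maximization: I must show that for every nonnegative integer solution of $\sum_i M_i\frac{n}{e_{i-1}}=\frac{n}{e_k}-1$ one has $\sum_i M_i\overline{b_i}\leq\sum_i(n_i-1)\overline{b_i}$, with equality only when $M_i=n_i-1$. For this I would run a \emph{carrying} argument driven by the inequality $\overline{b_{i+1}}>n_i\overline{b_i}$ built into the characteristic sequence: the constraint forces $M_k\leq n_k-1$, and whenever $M_i\geq n_i$ for some $i<k$ one replaces $(M_i,M_{i+1})$ by $(M_i-n_i,M_{i+1}+1)$, which preserves the degree (as $n_i\frac{n}{e_{i-1}}=\frac{n}{e_i}$) while strictly increasing $\sum_i M_i\overline{b_i}$ and strictly decreasing $\sum_i M_i$. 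Iteration terminates at a configuration with all $M_i\leq n_i-1$, which the degree identity pins to $M_i=n_i-1$; as the value only rose, the maximum is $\sum_i(n_i-1)\overline{b_i}$. Feeding in hypothesis~(2), every inequality in $i_0(f,g)\leq\sum_i M_i\overline{b_i}\leq\sum_i(n_i-1)\overline{b_i}=i_0(f,g)$ becomes an equality. This forces $M_i=n_i-1$, hence $i_0(g_i,x)=(n_i-1)\frac{n}{e_{i-1}}=\frac{n}{e_i}-\frac{n}{e_{i-1}}$, giving conclusion~1; and it forces $c(\phi)=\frac{e_{i-1}\overline{b_i}}{n}$ for every factor $\phi$ of $g_i$, giving conclusion~2(a). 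Combined with 2(b) from the second step, the theorem follows.
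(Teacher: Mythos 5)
Your argument is correct, and it organizes the proof in a genuinely different way from the paper. The two proofs share the same opening moves: decompose $g$ into irreducible factors, use Lemma \ref{igual caracteristica} together with $i_0(\phi,x)\leq \frac{n}{e_k}-1$ to cap the contact of every factor at $\frac{e_{k-1}\overline{b_k}}{n}$, and extract the divisibility $i_0(\phi,x)\equiv 0$ (mod $\frac{n}{e_{i-1}}$). From there the paper works top--down by induction on $k$: it isolates $g_k$ as the product of the factors with contact exactly $\frac{e_{k-1}\overline{b_k}}{n}$, shows via the STI (Lemma \ref{le2}) that the complementary product $\tilde g$ satisfies $i_0(f,\tilde g)\in \overline{b_0}\bN+\cdots+\overline{b_{k-1}}\bN$ and via Lemma \ref{le3} that $i_0(f,g_k)\equiv 0$ (mod $\overline{b_k}$), and then invokes the uniqueness of B\'ezout's relation (Lemma \ref{le1}) to pin down $i_0(f,g_k)=(n_k-1)\overline{b_k}$ before inducting on $\tilde g$. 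You instead sort all factors at once by the level $k_x(f,\phi)$, turn the definition of $k_x$ into the per-factor bound $i_0(f,\phi)\leq m_\phi\overline{b_i}$, and close with an extremal mixed-radix computation: subject to $\sum_i M_i\frac{n}{e_{i-1}}=\frac{n}{e_k}-1$, the weight $\sum_i M_i\overline{b_i}$ is uniquely maximized at $M_i=n_i-1$, the carrying step being exactly where $n_i\overline{b_i}<\overline{b_{i+1}}$ enters. Hypothesis (2) then forces equality throughout, which yields all the conclusions simultaneously. Your route dispenses with the induction and with Lemmas \ref{le1}, \ref{le2} and \ref{le3} altogether (the carrying argument substitutes for B\'ezout uniqueness, and no sub-semigroup membership is needed), and it exposes the extremal meaning of hypothesis (2): $\sum_{i=1}^k(n_i-1)\overline{b_i}$ is the largest value $i_0(f,g)$ can attain given the constraint on $i_0(g,x)$ --- which also recovers the paper's closing remark that the inequality $i_0(g,x)\leq\frac{n}{e_k}-1$ plus condition (2) forces condition (1). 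What the paper's route buys in exchange is the structural fact that $i_0(f,\tilde g)$ lies in the smaller semigroup, which is what allows the statement to be run as an induction on $k$.
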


\noindent This statement  is very close to Granja's theorem (see \cite{Granja}) where the Ap\'ery sequences are used) and is a generalization of Merle's result on polar curves. 

\medskip

\begin{nota}
Let $f_0,\ldots,f_h$ be a sequence of key polynomials of $f$. Fix $k\in \{1,\ldots,h\}$. Take $g=f_0^{n_1-1}\cdots f_{k-1}^{n_k-1}$. Then $g$ satisfies conditions $(1)$ and $(2)$ of Theorem \ref{factorization} . Here $g_i=f_{i-1}^{n_i-1}$ for $i\in\{1,\ldots,k\}$.

\medskip

\noindent If $\phi$ is an irreducible factor of $g_i$ then $\phi=f_{i-1} \cdot $ unit. Clearly $\frac{i_0(f,\phi)}{i_0(\phi,x)}=\frac{e_{i-1}\overline{b_i}}{n}$ and $i_0(\phi,x)=\frac{n}{e_{i-1}}$.
\end{nota}

\medskip

\noindent To prove Theorem \ref{factorization} we need a few lemmas.

\begin{lema}
\label{le1}
Let $k$ be an integer such that $1\leq k\leq h$. Then

\begin{enumerate}
\item $(n_1-1)\overline{b_1}+\cdots+(n_k-1)\overline{b_k}\not\equiv 0$ $($mod $e_{k-1})$,
\item $(n_1-1)\overline{b_1}+\cdots+(n_k-1)\overline{b_k}<
\overline{b_{k+1}}$,
\item if $(n_1-1)\overline{b_1}+\cdots+(n_k-1)\overline{b_k}=a_0\overline{b_0}+
a_1\overline{b_1}+\cdots+a_k\overline{b_k}$ with integers $a_0,\ldots,a_k$ such that $a_0,a_k \geq 0$ and $0\leq a_i<n_i$ for $i\in\{1,\ldots,k-1\}$ then $a_0=0$ and $a_i=n_i-1$ for $i\in\{1,\ldots,k\}$.
\end{enumerate}
\end{lema}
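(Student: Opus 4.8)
The plan is to abbreviate $\sigma_k=\sum_{i=1}^{k}(n_i-1)\overline{b_i}$ and prove the three assertions in the order (2), (1), (3), since the size estimate coming from (2) and the congruence coming from (1) both feed into the proof of (3). Throughout I will use two standing facts: that $e_{k-1}=n_ke_k$ divides $\overline{b_i}$ for every $i\le k-1$ (because $e_{k-1}=\gcd(\overline{b_0},\dots,\overline{b_{k-1}})$), and that $\gcd\!\left(n_k,\overline{b_k}/e_k\right)=1$, which is immediate from $e_k=\gcd(e_{k-1},\overline{b_k})$.

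For (2) I argue by telescoping. Writing $(n_i-1)\overline{b_i}=n_i\overline{b_i}-\overline{b_i}$ and invoking the characteristic-sequence inequality $n_i\overline{b_i}<\overline{b_{i+1}}$ (axiom (2), valid for $1\le i\le h$ with the convention $\overline{b_{h+1}}=+\infty$), I obtain $\sigma_k<\sum_{i=1}^{k}(\overline{b_{i+1}}-\overline{b_i})=\overline{b_{k+1}}-\overline{b_1}<\overline{b_{k+1}}$, which is (2). Applying the identical computation at level $k-1$ gives the by-product $\sigma_{k-1}<\overline{b_k}$, whence $\sigma_k=\sigma_{k-1}+(n_k-1)\overline{b_k}<n_k\overline{b_k}$; this last inequality is exactly what I will need in (3).

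For (1) I reduce modulo $e_{k-1}$. Since $e_{k-1}$ divides $\overline{b_i}$ for $i\le k-1$, every term of $\sigma_k$ except the last one vanishes, so $\sigma_k\equiv(n_k-1)\overline{b_k}\pmod{e_{k-1}}$. Writing $\overline{b_k}=e_km$ with $\gcd(m,n_k)=1$ and $e_{k-1}=n_ke_k$, a congruence $(n_k-1)\overline{b_k}\equiv 0\pmod{e_{k-1}}$ would reduce to $-m\equiv 0\pmod{n_k}$, i.e. $n_k\mid m$, which is impossible because $n_k>1$ is coprime to $m$. Hence $\sigma_k\not\equiv 0\pmod{e_{k-1}}$.

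The main work is (3). Given a representation $\sigma_k=a_0\overline{b_0}+\cdots+a_k\overline{b_k}$ with $a_0,a_k\ge 0$ and $0\le a_i<n_i$ for $1\le i\le k-1$, I first reduce modulo $e_{k-1}$ exactly as in (1): comparing $\sigma_k\equiv(n_k-1)\overline{b_k}$ with $\sigma_k\equiv a_k\overline{b_k}$ and cancelling $\gcd(m,n_k)=1$ gives $a_k\equiv n_k-1\pmod{n_k}$. Next I bring in the size estimate: all terms are nonnegative, so $\sigma_k\ge a_k\overline{b_k}$, and thus $a_k\ge n_k$ would force $\sigma_k\ge n_k\overline{b_k}$, contradicting $\sigma_k<n_k\overline{b_k}$ from (2); therefore $0\le a_k<n_k$, and with the congruence this pins $a_k=n_k-1$. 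Subtracting $(n_k-1)\overline{b_k}$ leaves $\sigma_{k-1}=a_0\overline{b_0}+\cdots+a_{k-1}\overline{b_{k-1}}$, where $\sigma_{k-1}\in\bZ e_{k-1}$ since each $\overline{b_i}$ ($i\le k-1$) is a multiple of $e_{k-1}$; the uniqueness clause of Lemma \ref{Bezout}, applied to the sequence $\overline{b_0},\dots,\overline{b_{k-1}}$, then compares this with the manifest representation $a_0=0,\ a_i=n_i-1$ and yields $a_0=0$ and $a_i=n_i-1$ for all $i$. The one delicate point I expect to watch is the boundary case $k=1$, where $\sigma_0$ is the empty sum and the level-$(k-1)$ Bézout step degenerates to $a_0\overline{b_0}=0$; I anticipate that the bookkeeping around this degenerate case, rather than any conceptual difficulty, is where care will be needed.
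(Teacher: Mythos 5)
Your proof is correct and takes essentially the same route as the paper's: the identical telescoping computation for (2), the identical coprimality argument (that $\gcd(\overline{b_k}/e_k,n_k)=1$) modulo $e_{k-1}$ for (1), and, for (3), the same three ingredients, namely that congruence, the size bound flowing from (2), and the uniqueness clause of Lemma \ref{Bezout}. The only difference is organizational: in (3) you pin down $a_k<n_k$ directly from the by-product $\sum_{i=1}^{k}(n_i-1)\overline{b_i}<n_k\overline{b_k}$ and then invoke B\'ezout uniqueness once on the truncated sequence $(\overline{b_0},\ldots,\overline{b_{k-1}})$, whereas the paper performs the Euclidean division $a_k=qn_k+a'_k$, rearranges the hypothesis into a B\'ezout relation for $qn_k\overline{b_k}$, and applies uniqueness twice with the bound $qn_k\overline{b_k}\leq\sum_{i=1}^{k-1}(n_i-1)\overline{b_i}<\overline{b_k}$ in between.
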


\noindent \begin{proof}
Suppose that $(n_1-1)\overline{b_1}+\cdots+(n_k-1)\overline{b_k}\equiv 0$ (mod $e_{k-1}$). Then $(n_k-1)\overline{b_k}\equiv 0$ (mod $e_{k-1}$) and $(n_k-1)\frac{\overline{b_k}}{e_k}\equiv 0$ (mod $n_k$). We get a
contradiction because $\gcd(\frac{\overline{b_k}}{e_k},n_k)=1$.
\medskip

 \noindent Now we will prove the second statement: if $k=h$ then $\overline{b_{k+1}}=\overline{b_{h+1}}=+\infty$ and the inequality is obvious. Let $k<h$. Then $(n_1-1)\overline{b_1}+\cdots+(n_k-1)\overline{b_k}=
(n_1\overline{b_1}-\overline{b_1})+\cdots+(n_k\overline{b_k}-
\overline{b_k})<\overline{b_{k+1}}-\overline{b_1}<\overline{b_{k+1}}$.

\medskip
\noindent To finish the proof, let $a_k=qn_k+a'_k$ with $0\leq a'_k<n_k$. By assumption we get $qn_k\overline{b_k}=(-a_0)\overline{b_0}+(n_1-1-a_1)\overline{b_1}+\cdots+(n_k-1-a'_k)\overline{b_k}$. The identity above is a B\'ezout's relation. Since $qn_k\overline{b_k}\equiv 0$ (mod $e_{k-1}$) we get by the unicity of B\'ezout's relation $-a_0 \geq 0$ and $n_k-1-a'_k=0$ that is $a_0=0$ and
$a'_k=n_k-1$. Therefore we get $qn_k\overline{b_k}=(n_1-1-a_1)\overline{b_1}+\cdots+(n_{k-1}-1)\overline{b_{k-1}}\leq (n_1-1)\overline{b_1}+\cdots+(n_{k-1}-1)\overline{b_{k-1}}<\overline{b_k}$ by the second statement. Thus $q=0$ and again by the unicity of B\'ezout's relation we get the last statement.
\end{proof}

\begin{lema}
\label{le2}
Let $\phi\in \bK[[x,y]]$ be an irreducible power series such that $\frac{i_0(f,\phi)}{i_0(\phi,x)}<\frac{e_{k-1}\overline{b_k}}{n}$ for a $k>0$. Then $i_0(f,\phi)\in \overline{b_0}\bN+\cdots+\overline{b_{k-1}}\bN$.
\end{lema}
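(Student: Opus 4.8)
The plan is to reduce the statement to a single intersection-number computation against a $(k-1)$-th key polynomial $f_{k-1}$ of $f$, carried out via the strong triangle inequality. Recall from Proposition \ref{p1} that such an $f_{k-1}$ is irreducible and distinguished with $\overline{\hbox{\rm char}_x} f_{k-1}=\left(\frac{\overline{b_0}}{e_{k-1}},\ldots,\frac{\overline{b_{k-1}}}{e_{k-1}}\right)$, so that by the Semigroup Theorem (Theorem \ref{structure}) its associated semigroup is $\Gamma(f_{k-1})=\frac{\overline{b_0}}{e_{k-1}}\bN+\cdots+\frac{\overline{b_{k-1}}}{e_{k-1}}\bN$. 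Multiplying this identity by $e_{k-1}$ gives $e_{k-1}\Gamma(f_{k-1})=\overline{b_0}\bN+\cdots+\overline{b_{k-1}}\bN$; hence it suffices to establish the single equality $i_0(f,\phi)=e_{k-1}\,i_0(f_{k-1},\phi)$ together with the fact that $i_0(f_{k-1},\phi)\in\Gamma(f_{k-1})$.

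First I would record the three relevant log-distances for the branches $f,\phi,f_{k-1}$ (all different from $\{x=0\}$). Writing $n'=i_0(\phi,x)$, one has $d_x(f,f_{k-1})=\frac{e_{k-1}\overline{b_k}}{n^2}$ (because $i_0(f,f_{k-1})=\overline{b_k}$ and $\deg_y f_{k-1}=n/e_{k-1}$), while $d_x(f,\phi)=\frac{i_0(f,\phi)}{nn'}$ and $d_x(\phi,f_{k-1})=\frac{e_{k-1}\,i_0(f_{k-1},\phi)}{nn'}$. The hypothesis $\frac{i_0(f,\phi)}{i_0(\phi,x)}<\frac{e_{k-1}\overline{b_k}}{n}$ then translates exactly into $d_x(f,\phi)<d_x(f,f_{k-1})$.

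Next I would apply the strong triangle inequality to the triple $f,\phi,f_{k-1}$. Since $d_x(f,\phi)$ is strictly smaller than $d_x(f,f_{k-1})$, the isosceles form (iii') forces the two smallest distances to be $d_x(f,\phi)$ and $d_x(\phi,f_{k-1})$, whence $d_x(f,\phi)=d_x(\phi,f_{k-1})$; comparing the expressions above yields $i_0(f,\phi)=e_{k-1}\,i_0(f_{k-1},\phi)$. To finish I must check that $i_0(f_{k-1},\phi)$ is finite and lies in $\Gamma(f_{k-1})$: if $\phi$ were associated with $f_{k-1}$, then $\frac{i_0(f,\phi)}{i_0(\phi,x)}$ would equal $\frac{\overline{b_k}}{n/e_{k-1}}=\frac{e_{k-1}\overline{b_k}}{n}$, contradicting the strict hypothesis; hence $\{\phi=0\}\neq\{f_{k-1}=0\}$, so $i_0(f_{k-1},\phi)<+\infty$ belongs to $\Gamma(f_{k-1})$ by definition. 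Combining, $i_0(f,\phi)=e_{k-1}\,i_0(f_{k-1},\phi)\in e_{k-1}\Gamma(f_{k-1})=\overline{b_0}\bN+\cdots+\overline{b_{k-1}}\bN$, as required.

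The only delicate point, and the one I expect to be the main obstacle, is the precise use of the strong triangle inequality: one must verify that the strict inequality $d_x(f,\phi)<d_x(f,f_{k-1})$ is exactly what pins down \emph{which} two distances coincide (namely $d_x(f,\phi)=d_x(\phi,f_{k-1})$ rather than the competing pairing), and that this same strict inequality simultaneously excludes the degenerate case $\{\phi=0\}=\{f_{k-1}=0\}$, thereby guaranteeing that $i_0(f_{k-1},\phi)$ is a genuine finite element of $\Gamma(f_{k-1})$.
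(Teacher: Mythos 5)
Your proposal is correct and follows essentially the same route as the paper: take a $(k-1)$-th key polynomial $f_{k-1}$, translate the hypothesis into $d_x(f,\phi)<d_x(f,f_{k-1})$, apply the STI to get $d_x(f,\phi)=d_x(\phi,f_{k-1})$, hence $i_0(f,\phi)=e_{k-1}i_0(f_{k-1},\phi)\in \overline{b_0}\bN+\cdots+\overline{b_{k-1}}\bN$ via Proposition \ref{p1}. Your extra checks (pinning down which pair of distances coincide, and excluding $\{\phi=0\}=\{f_{k-1}=0\}$ so that $i_0(f_{k-1},\phi)$ is a finite element of $\Gamma(f_{k-1})$) are details the paper leaves implicit, not a different method.
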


\noindent \begin{proof}
Let $f_{k-1}$ be a  $(k-1)$-th key polynomial of $f$. Thus $i_0(f_{k-1},x)=\frac{n}{e_{k-1}}$, $i_0(f,f_{k-1})=\overline{b_k}$ and $\overline{\hbox{\rm char}_x} f_{k-1}=\left(\frac{\overline{b_0}}{e_{k-1}},\ldots,\frac{\overline{b_{k-1}}}{e_{k-1}}\right)$. Since $\frac{e_{k-1}\overline{b_k}}
{n}=\frac{i_0(f,f_{k-1})}{i_0(f_{k-1},x)}$ we get by  assumption the inequality  $\frac{i_0(f,\phi)}
{i_0(\phi,x)}<\frac{i_0(f,f_{k-1})}{i_0(f_{k-1},x)}$ that is $d_x(f,\phi)<d_x(f,f_{k-1})$. Using the STI to the power series $\phi,f_{k-1}$ and $f$ we get $d_x(f,\phi)=d_x(\phi,f_{k-1})$, which implies $i_0(f,\phi)=\frac{i_0(f,x)}{i_0(f_{k-1},x)}i_0(f_{k-1},\phi)=e_{k-1}i_0(f_{k-1},\phi)\in \overline{b_0}\bN+\cdots+\overline{b_{k-1}}\bN$.
\end{proof}

\begin{lema}
\label{le3}
Let $\phi$ be an irreducible power series such that $\frac{i_0(f,\phi)}{i_0(\phi,x)}=\frac{e_{k-1}\overline{b_k}}{n}$ for a $k>0$. Then $i_0(\phi,x)\equiv 0$  $\left(mod\;\frac{n}{e_{k-1}}\right)$ and $i_0(f,\phi)\equiv 0$ $($mod $\overline{b_k})$.
\end{lema}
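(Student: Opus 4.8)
The plan is to reduce the statement to Lemma \ref{igual caracteristica} applied one index lower than $k$, after disposing of a boundary case by hand.

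First I would treat the case $k=1$ directly. Here $n/e_{k-1}=n/e_0=1$, so the first congruence $i_0(\phi,x)\equiv 0$ (mod $1$) holds automatically; and the hypothesis reads $i_0(f,\phi)=\overline{b_1}\,i_0(\phi,x)$, which immediately gives $i_0(f,\phi)\equiv 0$ (mod $\overline{b_1}$). This case must be separated out precisely because Lemma \ref{igual caracteristica} is stated only for indices between $1$ and $h$.

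For $k\geq 2$ the key observation is that the sequence $\left(e_{i-1}\overline{b_i}\right)_i$ is strictly increasing: this follows from the inequality $n_{i-1}\overline{b_{i-1}}<\overline{b_i}$ of the Semigroup Theorem (Theorem \ref{structure}), upon multiplying by $e_{i-1}$ and using $e_{i-1}n_{i-1}=e_{i-2}$. Consequently $e_{k-2}\overline{b_{k-1}}<e_{k-1}\overline{b_k}$, so the hypothesized equality $\frac{i_0(f,\phi)}{i_0(\phi,x)}=\frac{e_{k-1}\overline{b_k}}{n}$ yields the strict inequality $\frac{i_0(f,\phi)}{i_0(\phi,x)}>\frac{e_{k-2}\overline{b_{k-1}}}{n}$. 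Applying Lemma \ref{igual caracteristica} with the index $k-1$ in place of $k$ then gives $i_0(\phi,x)\equiv 0$ (mod $\frac{n}{e_{k-1}}$), which is the first assertion.

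Finally, the second congruence is a direct computation. Writing $i_0(\phi,x)=\frac{n}{e_{k-1}}\,m$ with $m$ a positive integer (legitimate by the first assertion), the defining equality gives
$$i_0(f,\phi)=\frac{e_{k-1}\overline{b_k}}{n}\cdot\frac{n}{e_{k-1}}\,m=m\,\overline{b_k},$$
whence $i_0(f,\phi)\equiv 0$ (mod $\overline{b_k}$). I do not anticipate a genuine obstacle: the only subtlety is the index shift that forces the separate treatment of $k=1$, everything else being an application of the STI already packaged into Lemma \ref{igual caracteristica}.
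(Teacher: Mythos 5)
Your proof is correct and follows essentially the same route as the paper: reduce the second congruence to the first by the direct computation $i_0(f,\phi)=\frac{e_{k-1}\overline{b_k}}{n}\,i_0(\phi,x)$, dispose of $k=1$ trivially, and for $k>1$ use the strict monotonicity of $\bigl(e_{i-1}\overline{b_i}\bigr)_i$ to pass from the hypothesized equality to the strict inequality needed to invoke Lemma \ref{igual caracteristica} at index $k-1$. Your write-up is in fact slightly more careful than the paper's, which leaves the monotonicity and the $k=1$ case implicit.
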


\noindent \begin{proof}
Since $i_0(f,\phi)=\frac{e_{k-1}\overline{b_k}}{n}i_0(\phi,x)$ it suffices to check that $i_0(\phi,x)\equiv 0$  $\left(mod \;\frac{n}{e_{k-1}}\right)$. If $k=1$ it is clear, so assume $k>1$. We have $\frac{i_0(f,\phi)}{i_0(\phi,x)}=\frac{e_{k-1}\overline{b_k}}{n}> \frac{e_{k-2}\overline{b_{k-1}}}{n}$ hence $i_0(\phi,x)\equiv 0$ $\left(mod \;\frac{n}{e_k}\right)$  by Lemma \ref{igual caracteristica}.
\end{proof}

\medskip

\noindent Now we can prove the Factorization Theorem.

\medskip

\noindent {\bf Proof of Theorem \ref{factorization}}
\noindent Let us fix a $k\in\{1,\ldots,h\}$ and let $g\in \bK[[x,y]]$ be such that the conditions $(1)$ and $(2)$ hold. Let $g=\phi_1\cdots \phi_s$ with irreducible $\phi_j\in \bK[[x,y]]$ for $j\in \{1,\ldots,s\}$. Firstly we check

\medskip

\noindent $(*)\;$  if $\phi$ is an irreducible factor of $g$ then
$\frac{i_0(f,\phi)}{i_0(\phi,x)}\leq \frac{e_{k-1}\overline{b_k}}{n}.$

\medskip
\noindent Indeed, in the contrary case there would exist an irreducible factor $\phi$ of $g$ such that $\frac{i_0(f,\phi)}{i_0(\phi,x)}> \frac{e_{k-1}\overline{b_k}}{n}$ and we would get $i_0(\phi,x)\equiv 0$ (mod $\frac{n}{e_k}$) by Lemma \ref{igual caracteristica},  which is a contradiction since $i_0(\phi,x)\leq i_0(g,x)\leq \frac{n}{e_k}-1<\frac{n}{e_k}$.

\medskip
\noindent $(**)\;$ There exists (at least one) irreducible factor $\phi$ of $g$ such that $\frac{i_0(f,\phi)}{i_0(\phi,x)}=\frac{e_{k-1}\overline{b_k}}{n}.$

\medskip

\noindent If $\frac{i_0(f,\phi)}{i_0(\phi,x)}\neq \frac{e_{k-1}\overline{b_k}}{n}$ for all irreducible factors of $g$ then we would get by $(*)$ $\frac{i_0(f,\phi_j)}{i_0(\phi_j,x)} < \frac{e_{k-1}\overline{b_k}}{n}$ for all $j\in \{1,\ldots,s\}.$

\medskip

\noindent By Lemma \ref{le2} we would have $i_0(f,\phi_j)\in \overline{b_0}\bN+\cdots+\overline{b_{k-1}}\bN$ for $j\in\{1,\ldots,s\}$ and consequently $i_0(f,g)=\sum_{j=1}^s i_0(f,\phi_j)\in \overline{b_0}\bN+\cdots+\overline{b_{k-1}}\bN$. This is impossible because $i_0(f,g)=\sum_{i=1}^n (n_i-1)\overline{b_i}\not\equiv 0$ (mod $e_{k-1}$) by the first statement of  Lemma 
\ref{le1}.

\medskip

\noindent Now, let us put $g_k$ the product of all factors $\phi_j$ of $g$ such that $\frac{i_0(f,\phi_j)}{i_0(\phi_j,x)}=\frac{e_{k-1}\overline{b_k}}{n}$. Therefore we get $g=\tilde g g_k$ in $\bK[[x,y]]$. Using Lemma \ref{le2} we check that $i_0(f,\tilde g)\in \overline{b_0}\bN+\cdots+\overline{b_{k-1}}\bN$ and by Lemma \ref{le3} we get $i_0(f,g_k)\equiv 0$ (mod $\overline{b_k})$.

\medskip

\noindent Let us write $i_0(f,\tilde g)=a_0\overline{b_0}+a_1\overline{b_1}+\cdots+a_{k-1}\overline{b_{k-1}}$ with $a_0\geq 0$ and $0\leq a_i\leq n_i-1$ and $i_0(f,g_k)=a_k\overline{b_k}$, $a_k\geq 0$. Therefore we get $\sum_{i=1}^k(n_i-1)\overline{b_i}=i_0(f,g)=i_0(f,\tilde g)+i_0(f,g_k)=a_0\overline{b_0}+a_1\overline{b_1}+
\cdots+a_{k-1}\overline{b_{k-1}}+a_k\overline{b_k}$. 

\medskip

\noindent By the third statement of Lemma \ref{le1} we have $a_0=0$ and $a_k=n_k-1$. Thus $i_0(f,g_k)=(n_k-1)\overline{b_k}$ and $i_0(g_k,x)=\frac{n}{e_k}-\frac{n}{e_{k-1}}$ since $\frac{i_0(f,g_k)}{i_0(g_k,x)}=\frac{e_{k-1}\overline{b_k}}{n}$.

\medskip

\noindent If $k=1$ we are done (if $k=1$ then $i_0(f,\tilde g)=0$, that is $\tilde g$ is a unit and we put $\tilde g g_1$ instead of $g_1$). If $k> 1$ then $\tilde g=\frac{g}{g_k}$ satisfies the assumptions of Theorem \ref{factorization} with $k-1$. We use induction on $k$.
$\blacksquare$

\begin{nota}
In the proof of Merle-Granja's factorization theorem we used the inequality $i_0(g,x)\leq \frac{n}{e_k}-1$ instead of the equality 
$i_0(g,x)=\frac{n}{e_k}-1$. Therefore this inequality and condition (2) imply condition (1) of Theorem \ref{factorization}. 
\end{nota}

\begin{teorema}[Merle's factorization theorem]
Suppose that  $\overline{\hbox
{\rm char}_x} f=(\overline{b_0},\ldots,\overline{b_h})$,
$\overline{b_0}=n> 1$ and $n\not\equiv 0$ (mod char $\bK$). Then $\frac{\partial f}{\partial y}=g_1\cdots g_h$ in $\bK[[x,y]],$ where

\begin{enumerate}
\item $i_0(g_i,x)=\frac{n}{e_i}-\frac{n}{e_{i-1}}$ for $i\in \{1,\ldots,k\}$.
\item If $\phi\in \bK[[x,y]]$ is an irreducible factor of $g_i$, $i\in \{1,\ldots,h\}$, then $\frac{i_0(f,\phi)}{i_0(\phi,x)}=\frac{e_{i-1}\overline{b_i}}{n}$ and $i_0(\phi,x)\equiv 0$  $\left(mod \;\frac{n}{e_{i-1}}\right)$.
\end{enumerate}
\end{teorema}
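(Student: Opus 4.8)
The plan is to deduce Merle's theorem directly from the Merle--Granja Factorization Theorem (Theorem~\ref{factorization}) applied to the power series $g=\frac{\partial f}{\partial y}$ with $k=h$. By the Remark following Theorem~\ref{factorization} it suffices to verify the inequality $i_0\!\left(\frac{\partial f}{\partial y},x\right)\le \frac{n}{e_h}-1$ together with condition~(2) of that theorem, namely
\[
i_0\!\left(f,\frac{\partial f}{\partial y}\right)=\sum_{i=1}^{h}(n_i-1)\overline{b_i}.
\]
Since $e_h=1$ this reads $i_0(\frac{\partial f}{\partial y},x)\le n-1$, and once both facts are established Theorem~\ref{factorization} produces a factorization $\frac{\partial f}{\partial y}=g_1\cdots g_h$ satisfying exactly conclusions~(1) and~(2) of the statement; no further argument is needed. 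So the whole proof is organized around checking the two numerical inputs.

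The degree input is elementary. Writing $f(0,y)=u(0,y)\,y^{n}$ with $u(0,0)\neq 0$ (Weierstrass preparation), one computes $\frac{\partial f}{\partial y}(0,y)=u_y(0,y)\,y^{n}+u(0,y)\,n\,y^{n-1}$, whose coefficient of $y^{n-1}$ is $u(0,0)\,n\neq 0$ because $n\not\equiv 0$ (mod char $\bK$). Hence $\ord\frac{\partial f}{\partial y}(0,y)=n-1$, so $i_0(\frac{\partial f}{\partial y},x)=n-1$; here the permissibility hypothesis is used in an essential way. Moreover, if $f=u\tilde f$ with $\tilde f$ distinguished of degree $n$, then $\frac{\partial f}{\partial y}\equiv u\,\frac{\partial\tilde f}{\partial y}$ modulo $\tilde f$, so that $i_0(f,\frac{\partial f}{\partial y})=i_0(\tilde f,\frac{\partial\tilde f}{\partial y})$; this lets me assume $f$ itself is a distinguished polynomial of degree $n$ for the remaining computation.

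The main obstacle is condition~(2), the polar formula. Since $n\not\equiv 0$ (mod char $\bK$) the Puiseux parametrization is available, and after extracting an $n$-th root (legitimate as $n$ is invertible) I may take $x=t^{n}$, $y=\eta(t)$. Then $f(t^{n},y)=\prod_{\zeta^{n}=1}(y-\eta(\zeta t))$, whence $\frac{\partial f}{\partial y}(t^{n},\eta(t))=\prod_{\zeta\neq 1}(\eta(t)-\eta(\zeta t))$ and, by Theorem~\ref{classical-formula},
\[
i_0\!\left(f,\frac{\partial f}{\partial y}\right)=\sum_{\zeta^{n}=1,\ \zeta\neq 1}\ord\bigl(\eta(t)-\eta(\zeta t)\bigr).
\]
Grouping the roots of unity by their multiplicative order $d$, one checks that $\ord(\eta(t)-\eta(\zeta t))$ equals the $t$-adic Puiseux exponent $\beta_k$ exactly when $d\mid e_{k-1}$ but $d\nmid e_k$, and that there are precisely $e_{k-1}-e_k$ such $\zeta$; this yields $i_0(f,\frac{\partial f}{\partial y})=\sum_{k=1}^{h}(e_{k-1}-e_k)\beta_k$. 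The proof is then completed by the arithmetic identity $\sum_{k=1}^{h}(e_{k-1}-e_k)\beta_k=\sum_{k=1}^{h}(n_k-1)\overline{b_k}$, which follows by a short induction from Zariski's recursion $\overline{b_1}=\beta_1$, $\overline{b_k}=n_{k-1}\overline{b_{k-1}}+\beta_k-\beta_{k-1}$ ($k\ge 2$) relating the two normalizations. As a conceptual check, by the Conductor formula (Corollary~\ref{ST2}) condition~(2) is equivalent to $i_0(f,\frac{\partial f}{\partial y})=c(f)+n-1$, the classical different--conductor relation. The delicate points are precisely this bookkeeping over roots of unity and the passage between the $t$-adic exponents $\beta_k$ and the semigroup generators $\overline{b_k}$; everything else is a formal invocation of Theorem~\ref{factorization}.
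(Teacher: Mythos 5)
Your proposal is correct, and the final step (feeding $g=\frac{\partial f}{\partial y}$ with $k=h$ into Theorem~\ref{factorization}) is exactly what the paper does; the difference lies entirely in how you verify the numerical input $i_0\!\left(f,\frac{\partial f}{\partial y}\right)=\sum_{k=1}^{h}(n_k-1)\overline{b_k}$. The paper disposes of this in one line: it cites the classical Dedekind different--conductor relation $i_0\!\left(f,\frac{\partial f}{\partial y}\right)=c(f)+n-1$ (valid here because $n\not\equiv 0$ mod char $\bK$) and then substitutes its own Conductor formula (Corollary~\ref{ST2}); this keeps the argument inside the paper's parametrization-free framework. You instead reprove the Dedekind-type identity from scratch: you reduce to the distinguished case by Weierstrass preparation (a point the paper glosses over, and which you handle correctly since $\frac{\partial f}{\partial y}\equiv u\,\frac{\partial \tilde f}{\partial y}$ mod $\tilde f$), write the tame Puiseux parametrization $x=t^n$, $y=\eta(t)$, express $\frac{\partial f}{\partial y}(t^n,\eta(t))$ as $\prod_{\zeta\neq 1}(\eta(t)-\eta(\zeta t))$, and count roots of unity by the strata $d\mid e_{k-1}$, $d\nmid e_k$ to get $\sum_k(e_{k-1}-e_k)\beta_k$, which your induction via Zariski's recursion converts into $\sum_k(n_k-1)\overline{b_k}$. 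All of this is sound in the tame case ($p\nmid n$ guarantees exactly $n$ distinct $n$-th roots of unity and the cyclic Galois action $t\mapsto\zeta t$), and it has the virtue of actually computing the intersection number rather than quoting a formula. What it costs is precisely what this paper was written to avoid: you re-import Puiseux expansions, and --- the one point you should acknowledge more explicitly --- the ``recursion'' $\overline{b_1}=\beta_1$, $\overline{b_k}=n_{k-1}\overline{b_{k-1}}+\beta_k-\beta_{k-1}$ identifying the Puiseux characteristic exponents with the paper's semigroup generators (together with $\gcd(n,\beta_1,\ldots,\beta_k)=e_k$ and the equality of lengths $g=h$) is not proved anywhere in the paper and is a theorem of the same depth as the Semigroup Theorem itself, not mere bookkeeping. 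So both proofs lean on one unproven classical input (Dedekind's formula for the paper, Zariski's inversion dictionary for you); the paper's choice is the lighter import relative to its own toolkit, while yours is the more self-contained computation if one is willing to grant the classical Puiseux theory.
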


\noindent \begin{proof}
Since $n\not\equiv 0$ (mod char $\bK$) we have $i_0\left(\frac{\partial f}{\partial y},x\right)=n-1$. By the Dedekind formula and
the Conductor formula we have $i_0\left(f,\frac{\partial f}{\partial y}\right)=c(f)+n-1=\sum_{k=1}^h (n_k-1)\overline{b_k}$. We apply 
Theorem \ref{factorization} to the series $g=\frac{\partial f}{\partial y }$.
\end{proof}

\medskip

\noindent {\bf Notes} 

\noindent The first result on factorization of the derivative was proved by Henry J.S. Smith in \cite{Smith} but his work fell into
oblivion for a long time. Merle proved the factorization theorem in the generic case, the observation that the theorem is true in any coordinates is due to Ephraim \cite{Ephraim}. Granja's theorem is formulated in terms of Ap\'ery sequences and proved using the Hamburger-Noether expansions (see \cite{Granja}).

\medskip
\noindent {\bf Acknowledgements:} The authors are very grateful to Bernard Teissier for reading the manuscript and making valuable suggestions.

\medskip
\noindent
{\small Evelia Rosa Garc\'{\i}a Barroso\\
Departamento de Matem\'atica Fundamental\\
Facultad de Matem\'aticas, Universidad de La Laguna\\
38271 La Laguna, Tenerife, Espa\~na\\
e-mail: ergarcia@ull.es}

\medskip

\noindent {\small Arkadiusz P\l oski\\
Department of Mathematics\\
Technical University \\
Al. 1000 L PP7\\
25-314 Kielce, Poland\\
e-mail: matap@tu.kielce.pl}
\end{document}